\definecolor{mat}{HTML}{ffd6ad}
\definecolor{andre}{HTML}{a0ceff}
\definecolor{eric}{HTML}{ffadad}
\definecolor{georg}{HTML}{000000}
\newtheorem{thm}{Theorem}[subsection]
\newtheorem{theorem}[thm]{Theorem}
\newtheorem{lemma}[thm]{Lemma}
\newtheorem{proposition}[thm]{Proposition}
\newtheorem{corollary}[thm]{Corollary}
\theoremstyle{definition}
\newtheorem{definition}[thm]{Definition}
\newtheorem{examples}[thm]{Examples}
\newtheorem{remark}[thm]{Remark}
\newlist{exmpenum}{enumerate}{1}
\setlist[exmpenum]{label=\alph*), ref=\theproposition~(\alph*)}
\newlist{remenum}{enumerate}{1}
\setlist[remenum]{label=\alph*), ref=\theproposition~(\alph*)}
\newlist{thmenum}{enumerate}{1}
\setlist[thmenum]{label=\arabic*., ref=\theproposition~(\arabic*)}
\newlist{propenum}{enumerate}{1}
\setlist[propenum]{label=\arabic*., ref=\theproposition~(\arabic*)}
\newlist{corenum}{enumerate}{1}
\setlist[corenum]{label=\arabic*., ref=\theproposition~(\arabic*)}
\newlist{lemenum}{enumerate}{1}
\setlist[lemenum]{label=\arabic*., ref=\theproposition~(\arabic*)}
\newlist{defenum}{enumerate}{1}
\setlist[defenum]{label=\roman*), ref=\theproposition~(\roman*)}
\newenvironment{thm-intro}[1]
  {\thmintro\itshape}
  {\endthmintro}
\newenvironment{thm-introref}[2]
  {\thmintro(#2)\itshape}
  {\endthmintro}
\newenvironment{defn-intro}[1]
  {\defnintro\normalfont}
  {\enddefnintro}
\newcommand{\tto}{{\begin{tikzcd}[ampersand replacement=\&]{}\ar[r]\&{}\end{tikzcd}}}
\newcommand{\mto}{{\begin{tikzcd}[ampersand replacement=\&]{}\ar[r,mapsto]\&{}\end{tikzcd}}}
\newcommand{\mot}{{\begin{tikzcd}[ampersand replacement=\&]{}\&{}\ar[l,mapsto]\end{tikzcd}}}
\newcommand{\stto}{{\begin{tikzcd}[ampersand replacement=\&, sep=small]{}\ar[r]\&{}\end{tikzcd}}}
\newcommand{\subtto}{{\begin{tikzcd}[ampersand replacement=\&, sep=small]{}\ar[r, hook]\&{}\end{tikzcd}}}
\newcommand{\ntto}[1]{{\begin{tikzcd}[ampersand replacement=\&]{}\ar[r,"{#1}"]\&{}\end{tikzcd}}}
\newcommand{\nntto}[1]{{\begin{tikzcd}[ampersand replacement=\&]{}\ar[rr,"{#1}"]\&\&{}\end{tikzcd}}}
\newcommand{\adjunction}{{\begin{tikzcd}[ampersand replacement=\&]{}\ar[r, shift left=1]\&\ar[l,shift left=1]{}\end{tikzcd}}}
\newcommand\Ra{\Rightarrow}
\renewcommand\iff{\Leftrightarrow}
\newcommand\slice[1]{_{/#1}}
\newcommand\pbmark{\ar[dr, phantom, "\ulcorner" very near start, shift right=1ex]}
\newcommand\pomark{\ar[ul, phantom, "\lrcorner" very near start, shift right=1ex]}
\newcommand\pbmarkk{\ar[drr, phantom, "\ulcorner" very near start, shift right=1ex]}
\newcommand\pomarkk{\ar[ull, phantom, "\lrcorner" very near start, shift right=1ex]}
\newcommand\fperp{\upModels}
\newcommand\lorth[1]{{}^\perp{#1}}
\newcommand\rorth[1]{{#1}^\perp}
\newcommand\lforth[1]{{}^\fperp{#1}}
\newcommand\rforth[1]{{#1}^\fperp}
\newcommand\lrforth[1]{{}^\fperp{\left({#1}^\fperp\right)}}
\DeclareMathOperator*{\colim}{colim}
\newcommand\Map[2]{\mathrm{Map}{\left(#1,#2\right)}}
\newcommand\PSh[1]{\mathrm{PSh}{\left(#1\right)}}
\newcommand\Sh[1]{\mathrm{Sh}{\left(#1\right)}}
\newcommand\Op[1]{\mathrm{Op}{\left(#1\right)}}
\newcommand\ie{i.e.\ }
\newcommand\eg{e.g.\ }
\newcommand\cA{\mathscr{A}}
\newcommand\cB{\mathscr{B}}
\newcommand\cC{\mathscr{C}}
\newcommand\cD{\mathscr{D}}
\newcommand\cE{\mathscr{E}}
\newcommand\cF{\mathscr{F}}
\newcommand\cG{\mathscr{G}}
\newcommand\cI{\mathscr{I}}
\newcommand\cJ{\mathscr{J}}
\newcommand\cK{\mathscr{K}}
\newcommand\cL{\mathscr{L}}
\newcommand\cM{\mathscr{M}}
\newcommand\cN{\mathscr{N}}
\newcommand\cP{\mathscr{P}}
\newcommand\cR{\mathscr{R}}
\newcommand\cS{\mathscr{S}}
\newcommand\cT{\mathscr{T}}
\newcommand\cW{\mathscr{W}}
\renewcommand\AA{\mathbb{A}}
\newcommand\NN{\mathbb{N}}
\newcommand\RR{\mathbb{R}}
\newcommand\ZZ{\mathbb{Z}}
\newcommand{\tA}{\mathsf A}
\newcommand{\tU}{\mathsf U}
\newcommand{\tV}{\mathsf V}
\newcommand{\tX}{\mathsf X}
\newcommand{\tY}{\mathsf Y}
\newcommand{\tZ}{\mathsf Z}
\newcommand{\tone}{\mathsf 1}
\renewcommand\powerset[1]{\mathcal{P}{\left(#1\right)}}
\newcommand\xto\xrightarrow
\newcommand\xot\xleftarrow
\newcommand\ot\leftarrow
\newcommand\Tau{\mathrm{T}}
\newcommand\Fin{\mathrm{\cF in}}
\newcommand\Finp{\mathrm{\cF in^\bullet}}
\newcommand\Cat{\mathsf{Cat}}
\newcommand\CAT{\mathsf{CAT}}
\newcommand\op{^\mathrm{op}}
\newcommand\sfop{^\mathsf{op}}
\newcommand\ho[1]{\mathsf{ho}(#1)}
\newcommand\fun[2]{\left[#1,#2\right]}
\newcommand\Topos{\mathsf{Topos}}
\newcommand\Logos{\mathsf{Logos}}
\newcommand\lex{^\mathsf{lex}}
\newcommand\cclex{^\mathsf{lex}_\mathsf{cc}}
\newcommand\oo{$\infty$\=/}
\newcommand\ooo{$(\infty,1)$\=/}
\newcommand\oooo{$(\infty,2)$\=/}
\newcommand\pointed{^{\bullet}}
\newcommand\Arr[1]{{#1}^{\to}}
\newcommand\arr{^\to}
\newcommand\sat{^\mathsf{s}}
\newcommand\ssat{^\mathsf{ss}}
\newcommand\loc{^\mathsf{loc}}
\newcommand\ac{^\mathsf{a}}
\renewcommand\cong{^\mathsf{c}}
\newcommand\gtop{^\mathsf{G}}
\newcommand\diag{^\Delta}
\newcommand\bc{^\mathsf{bc}}
\newcommand\mono{^\mathsf{mono}}
\newcommand\epi{^\mathsf{epi}}
\newcommand\quot{^\mathsf{quot}}
\newcommand\nil{^\mathsf{nil}}
\newcommand\wnil{^\mathsf{wnil}}
\newcommand\reduced{^\mathsf{red}}
\newcommand\hred{^\mathsf{hred}}
\newcommand\cons{^\mathsf{cons}}
\newcommand\wcons{^\mathsf{wcons}}
\newcommand\jred{^\mathsf{jred}}
\newcommand\hcrad[1]{\!\sqrt[{\sf Cov}]{#1}}    
\newcommand\acrad[1]{\!\sqrt[{\sf Ac}]{#1}}  
\newcommand\hrad[1]{\!\sqrt[{\sf Hyp}]{#1}}    
\newcommand\Jac[1]{\!\sqrt[{\sf Jac}]{#1}}    
\newcommand\nrad[1]{\sqrt[{\sf Nil}]{#1}}  
\newcommand\nred{\mathsf{red}}    
\newcommand\ampli{^\mathsf{ample}}
\newcommand\idem{^\mathsf{idem}}
\newcommand\Acmap[1]{\mathsf{AcMap}{\left(#1\right)}}
\newcommand\Hypoab[1]{\mathsf{HypoAb}{\left(#1\right)}}
\newcommand\idl[1]{\langle#1\rangle}
\newcommand{\pp}{\square}
\newcommand{\join}{\star}
\newcommand\aprod{\pp\ac}
\newcommand\cprod{\pp\cong}
\newcommand{\pbh}[2]{\left\langle #1, #2 \right\rangle}                
\newcommand{\magic}[2]{\left\{ #1, #2 \right\}}                
\newcommand{\exc}[1]{^{(#1)}}                
\newcommand\Iso{\mathsf{Iso}}
\newcommand\All{\mathsf{All}}
\newcommand\Mono{\mathsf{Mono}}
\newcommand\Surj{\mathsf{Surj}}
\newcommand\Trunc[1]{\mathsf{Trunc}_{#1}}
\newcommand\Conn[1]{\mathsf{Conn}_{#1}}
\renewcommand\Im{\mathrm{Im}}
\newcommand\im[1]{\mathsf{im}{\left(#1\right)}}
\newcommand\coim[1]{\mathsf{coim}{\left(#1\right)}} 
\newcommand\truncated[1]{^{\leq #1}}
\newcommand\connected[1]{_{> #1}}
\newcommand\Acyclic[1]{\mathsf{Acy}{\left(#1\right)}}
\newcommand\Cong[1]{\mathsf{Cong}{\left(#1\right)}}
\newcommand\Congsg[1]{\mathsf{Cong_{sg}}{\left(#1\right)}}
\newcommand\Class[1]{\mathsf{ClassMaps}{\left(#1\right)}}
\newcommand\MAcyclic[1]{\mathsf{MAcy}{\left(#1\right)}}
\newcommand\EAcyclic[1]{\mathsf{EAcy}{\left(#1\right)}}
\newcommand\MCong[1]{\mathsf{MCong}{\left(#1\right)}} 
\newcommand\RCong[1]{\mathsf{HRCong}{\left(#1\right)}} 
\newcommand\GTop{\mathsf{GTop}}
\newcommand\CTop{\mathsf{CTop}}
\newcommand\quotient{/\!/} 
\newcommand\susp[1]{\mathsf{S}{\left(#1\right)}}
\newcommand\suspn[2]{\mathsf{S}^{#1}{\left(#2\right)}}
\newcommand\suspinfty[1]{\mathsf{S}^\infty{\left(#1\right)}}
\newcommand\dec[1]{\mathsf{D}{\left(#1\right)}}
\newcommand\decn[2]{\mathsf{D}^{#1}{\left(#2\right)}}
\newcommand\decinfty[1]{\mathsf{D}^\infty{\left(#1\right)}}
\newcommand\decname{\mathsf{D}}
\newcommand\decinftyname{\mathsf{D}^\infty}
\renewcommand\S[1]{\mathscr S\left[#1\right]}
\newcommand\Sp{\mathrm{\cS p}}
\newcommand\PSp{\mathrm{\cP\cS p}}
\newcommand\splx{^{\Delta\op}}
\renewcommand\div{\backslash}
\renewcommand\upslice[1]{^{\geq #1}}
\renewcommand\downslice[1]{_{\leq #1}}
\newcommand\Ideal[1]{\mathsf{Ideal}{\left(#1\right)}}
\newcommand\RIdeal[1]{\mathsf{RIdeal}{\left(#1\right)}}
\newcommand\Spec[1]{\mathsf{Spec}{\left(#1\right)}}
\newcommand\TLattice{\mathsf{\otimes\textsf{-}Lattice}}
\newcommand\TFrame{\mathsf{\otimes\textsf{-}Frame}}
\newcommand\Frame{\mathsf{Frame}}
\newcounter{intpara}
\title{
Left-exact Localizations of $\infty$\=/Topoi III:\\
The Acyclic Product
}
\author{
Mathieu Anel%
\footnote{Carnegie Mellon University,
	\href{mailto:mathieu.anel@protonmail.com}{mathieu.anel@protonmail.com}} ,
Georg Biedermann%
\footnote{SFB 1085 Higher Invariants, Universität Regensburg,
	\href{mailto:gbm@posteo.de}{gbm@posteo.de}} ,
Eric Finster%
\footnote{University of Birmingham,
	\href{mailto:e.l.finster@bham.ac.uk}{e.l.finster@bham.ac.uk}} ,
and Andr\'{e} Joyal%
\footnote{CIRGET, UQ\`AM, 
	\href{mailto:joyal.andre@uqam.ca}{joyal.andre@uqam.ca}} 
}
\date{}
\begin{document}

\maketitle

\begin{abstract}
We define a commutative monoid structure on the poset of left-exact localizations of a higher topos, that we call the acyclic product.
Our approach is anchored in a structural analogy between the poset of left-exact localizations of a topos and the poset of ideals of a commutative ring.
The acyclic product is analogous to the product of ideals.
The sequence of powers of a given left-exact localization defines a tower of localizations.
We show how this recovers the towers of Goodwillie calculus in the unstable homotopical setting.
We use this to describe the topoi of $n$\=/excisive functors as classifying $n$\=/nilpotent objects.
\end{abstract}

\setcounter{tocdepth}{2}
\tableofcontents

\section{Introduction}

The purpose of this work is to develop a conceptual framework for Goodwillie's calculus of functors in the unstable homotopical setting.
We continue the point of view introduced in \cite{ABFJ:GC} that Goodwillie's calculus defines towers of left-exact localizations of \oo topoi, and we formalize in a successful way the intuition that these towers are akin to the completion tower of a ring at an ideal.
This work is the continuation of \cite{ABFJ:GC,ABFJ:HS,ABFJ:GT} but it should somehow be read first, since it presents the broad context subsuming our previous works.

\medskip
The precise statements of our results will demand the introduction of a number of notions.
We shall thus start with a simplified version.
Our main result is the following.

\begin{thm-intro}{Theorem A}
\label{thmA}
The poset of left-exact localizations of an \oo topos $\cE$ has a symmetric monoidal closed structure.
\end{thm-intro}
The product will be derived from the pushout product of maps.
We shall postpone its description for now and only say what is needed to state our main application.
The unit of the monoidal structure is the maximal localization (inverting all maps).
The powers of a given left-exact localization $P_0:\cE\to \cE_0$ with respect to this monoidal structure define a tower of left-exact localizations
\[
\cE\stto \dots\stto \cE_2\stto \cE_1\stto \cE_0\,.
\]
The simplest example of this phenomenon is instructive.
Let $\cS$ be the \oo category of spaces (\ie \oo groupoids) and $\Fin\subset\cS$ the subcategory of finite spaces.
We consider the \oo topos $\cE=\fun \Fin \cS$ of functors $\Fin\to \cS$.
The calculus of functors of Goodwillie \cite{G03,ABFJ:GC} also defines a tower of left-exact localizations $P_n:\fun \Fin \cS \to \fun \Fin \cS\exc n$ where $\fun \Fin \cS\exc n\subseteq \fun \Fin \cS$ is the subcategory of $n$\=/excisive functors (precise definitions will be recalled later).
In particular, the $0$\=/excisive functors are the constant functors and the localization $P_0:\fun \Fin \cS \to \cS$ coincides with the evaluation at $1\in \Fin$.
Our main application is the following.

\begin{thm-intro}{Theorem B}
\label{thmB}
Goodwillie's left-exact localization $P_n$ is the $(n+1)$\=/st power of  ${P_0:\fun \Fin \cS\to \cS}$.
\end{thm-intro}

This connection with the calculus of homotopy functors shows that our product of localizations is not a mere abstraction but recovers known constructions in homotopy theory.
This theorem will lead us to describe the topoi of $n$\=/excisive functors as classifying nilpotent objects, a concept which we shall explain in due course.

\medskip
The point that we would like to make now is that such a tower of localizations should be considered in the light of the completion tower of a (commutative) ring at an ideal.
The structural analogy between the theories of topoi and rings is folkloric but underexploited.
One goal of this work is to deepen and extend it.
The rest of this introduction will present the duality between topoi and logoi introduced in  \cite{Anel-Joyal:topo-logie}.
We will then develop the comparison with the duality of schemes and rings until it becomes clear that Theorems~A and~B are analogues of the classical results:
\begin{enumerate}[label=\Alph*., parsep=0em]
\item the poset of ideals of a ring has a symmetric monoidal closed structure given by the product of ideals;
\item the quotient of the ring $\RR[x]$ by the $(n+1)$\=/power of the ideal $(x)$ is the ring $\RR[x]/(x)^{n+1}$ of polynomials of degree $n$.
\end{enumerate}
We will see on the way that it is possible to define for topoi analogues of $I$\=/adic towers, $I$\=/adic completion and separation, Jacobson radical, nilpotent elements, nilradical, and more.
We will use these developments as an incentive to renew a lot of terminology of topos theory in a way we find more suggestive (\eg \cref{table:analogies3}).

\bigskip
From now on, we shall drop all `\oo' prefixes and simply talk about categories and topoi to mean their higher versions.

\subsection{Topoi, logoi, congruences}

\paragraph{Topo-logy}

The theory of topoi is an instance of a geometric--algebraic duality, akin to that between affine schemes and rings, or locales and frames \cite{Anel-Joyal:topo-logie}.
To fully take advantage of this, it will be convenient to distinguish the two sides by using different names for the objects.
Failing to do so leads to an utterly confusing state of affairs: imagine a version of algebraic geometry, where a ring $A$ is called a scheme, where the word `ring' is never used, and where a morphism of schemes is denoted $A\to B$ to mean a ring morphism $B\to A$.
This is the current state of topos theory.
To clarify the situation, we shall introduce the terminology of a {\it logos} to refer to the algebraic counterpart of a {\it topos}, that is to refer to the category of sheaves on a topos.
This will make clear which statements of the theory are better thought of algebraically (in terms of sheaves) or geometrically (in terms of points).

Formally, a {\it logos} is defined as an accessible left-exact localization of a presheaf category, or more intrinsically as a presentable category with descent (\cref{def:logos}).
A morphism of logoi is simply a functor preserving colimits and finite limits (cocontinuous and left-exact functor).
This structure is akin to that of a ring with colimits and finite limits playing the role of addition and multiplication, and descent that of distributivity \cite{Anel-Joyal:topo-logie}.
The category of topoi is then defined as the opposite of that of logoi $\Topos:=\Logos\sfop$
(we shall always consider these categories 
without their non-invertible 2\=/cells).
We shall talk about morphisms of topoi and morphisms of logoi rather than geometric and algebraic morphisms of topoi, which we did in \cite{ABFJ:GT}.

The forgetful functor $\Logos \to \CAT$ (into large categories) has a left adjoint defined on small categories, called  the {\it free logos} functor $\S - : \Cat \to \Logos$ (\cref{thm:free-logos}).
If $C$ is a small category, $\S C = \PSh {C\lex}$ where $C\lex$ is the free completion of $C$ for finite limits.
This is similar to the construction of the free ring on a set, by first constructing the monoid of monomials and then considering sums of monomials.
The free logos on no generator is the initial logos $\S\emptyset=\cS$, the category of spaces.
The free logos on one generator $X$ is the category of functors $\S X = \fun\Fin \cS$ from the category of finite spaces $\Fin$ to $\cS$, since $\Fin$ is the finite colimit completion of a singleton.
The functor associated to the generator is the canonical inclusion $X:\Fin \to \cS$ (represented by $1\in \Fin$).
A logos morphism $\S X \to \cE$ is equivalent to an object of $\cE$, and $\S X$ is also known as the {\it object classifier}.
The dual topos will be denoted $\tA$ and called the {\it topos line}.
A {\it sheaf} on a topos $\tX$ is defined as a morphism of topoi $\tX\to \tA$.
We denote by $\Sh \tX$ the category of sheaves on a topos $\tX$.
The universal property of $\S X$ identifies $\Sh \tX$ with the logos dual to $\tX$.
The situation is similar to algebraic geometry, where the functions $S\to \AA^1$ on an affine scheme $S$ reconstruct the ring $A$ dual to $S$.

\begin{table}[H]
\caption{First analogies between rings and logoi}
\label{table:analogies1}

\begin{center}	
\renewcommand{\arraystretch}{1.5}
\begin{tabularx}{.95\textwidth}{
|>{\centering\arraybackslash}X
|>{\centering\arraybackslash}X|
}
\hline
Affine scheme (over $\RR$)  &  Topos \\
\hline
$\RR$\=/algebra &  Logos\\
\hline
Sum, products and distributivity & Colimits, finite limits and descent\\
\hline
Ground field $\RR$ & Logos $\cS$ of spaces\\
\hline
Free ring $\RR[x]$ / Affine line $\AA^1$ & Free logos $\S X = \fun \Fin \cS$ / Topos line $\tA$ \\
\hline
Free ring $\RR[x_1,\dots,x_n]$ / Affine space $\AA^n$ & Free logos $\S C = \PSh {C\lex}$ / Affine topos $\tA^C$ \\
\hline
Rational function = $S\to \AA^1$ & Sheaf = $\tX\to \tA$\\
ring of functions $O(S) = \mathsf{Hom_{Scheme}}(S,\AA^1)$ & logos of sheaves  $\Sh\tX = \mathsf{Hom}_\Topos(\tX,\tA)$\\
\hline
\end{tabularx}
\end{center}	
\end{table}

\paragraph{Quotients and congruences}
Accessible left-exact localizations of logoi will be called more suggestively {\it quotients} of logoi.
The dual morphism of topoi will be called an {\it inclusion}.
As with other kinds of algebraic structures, such quotients are in bijection with a notion of {\it congruence} of logoi (\cref{def:congruence}), developed in our previous work \cite{ABFJ:HS,ABFJ:GT}.
Congruences are an axiomatization of the classes of maps inverted by a morphism of logoi.
Recall that a congruence on a ring $A$ is a subring $R\subseteq A\times A$ satisfying reflexivity, transitivity and symmetry.
A congruence on a logos $\cE$ is a sublogos $\cK \subseteq \Arr\cE$ (full subcategory of the arrow category of $\cE$ closed under colimits and finite limits) containing all isomorphisms (reflexivity) and closed under composition (transitivity).%
\footnote{In the context of inverting arrows, the symmetry condition does not make sense anymore.}
It can be deduced from these axioms that congruences satisfy the 3-for-2 condition.
Given a logos morphism $\phi:\cE\to \cF$, the class $\cK_\phi = \phi^{-1}(\Iso)$ of maps inverted by $\phi$ is always a congruence, called the {\it congruence of $\phi$}.
Every class of maps $\Sigma$ in $\cE$ is contained in a smallest congruence $\Sigma\cong$.
We shall say that $\Sigma\cong$ is generated by $\Sigma$.
A congruence $\cK$ is of {\it small generation} if $\cK = \Sigma\cong$ for some {\it set} of maps $\Sigma$.
It is not known if every congruence is of small generation.

Congruences correspond exactly to the `strongly saturated classes of maps closed under base change' introduced in \cite{Lurie:HTT}.
There, Lurie proves that quotients of logoi are in bijection with congruences of small generation (see \cite[Section 4.2.11]{ABFJ:HS}).
The quotient of a logos $\cE$ by a congruence $\cK$ of small generation will be denoted $\cE\quotient \cK$ instead of the more classical notations $\cK^{-1}\cE$ or $\cE[\cK^{-1}]$ (the double bar is meant to suggest a quotient and to prevent a confusion with the slice).
An important example of a congruence is the class $\Conn\infty$ of \oo connected maps, whose associated quotient $\cE\quotient\Conn\infty$ is the hypercompletion of $\cE$ (\cref{exmp:congruence4}).

Every logos can be presented as a quotient of a free logos, $\cE=\S C\quotient \cK$.
This means that a morphism of logos $\cE\to \cF$ is equivalent to a diagram $C\to \cF$ satisfying some equations written in terms of small colimits and finite limits.
We shall say that $\cE$ is the logos {\it classifying} such diagrams.
For example, the condition for an object $X$ to be $n$\=/truncated is such a condition, since it says that the higher diagonal $\Delta^{n+2}X$ is invertible. 
The classifying logos is $\S X\quotient\{\Delta^{n+2}X\}\cong$, which is the logos classifying $n$\=/truncated objects (see \cref{sec:congruence} or~\cite{ABFJ:HS} for more examples).

We shall denote by $\Cong\cE$ the poset of all congruences ordered by inclusion, and we shall mention explicitly when small generation assumptions are needed (we shall, however, neglect this issue in this introduction).
Classically, congruences on a ring $A$ are in bijection with ideals of $A$, and we shall compare the poset $\Cong\cE$ to the poset $\Ideal A$.
The inclusion of logos congruences is opposed to the inclusion of the corresponding subtopoi (as for ideals and subschemes).
The minimal element of $\Cong\cE$ is the congruence $\Iso$ of all isomorphisms, and the maximal element is the congruence $\All$ of all maps.
The initial logos $\cS$ has only these two trivial congruences (\ie $\Cong\cS = \{\Iso, \All\}$).
This compares it to a {\it ground field}
(hence our choice of $\RR$\=/algebras in \cref{table:analogies1}).%
\footnote{Let us add, for the reader who is familiar with these notions, that the duality of topoi and logoi---particularly in the light of our approach of Goodwillie calculus---compares  better with that of $C^\infty$\=/schemes and $C^\infty$\=/rings \cite{Moerdijk-Reyes}, than with that of schemes and rings over $\ZZ$ (see \eg \cref{rem:triple-facto-tower}).
}

\begin{table}[H]
\caption{Congruences and quotients}
\label{table:analogies2}

\begin{center}	
\renewcommand{\arraystretch}{1.6}
\begin{tabularx}{.9\textwidth}{
|>{\centering\arraybackslash}X
|>{\centering\arraybackslash}X|
}
\hline
\it Commutative rings & \it Logoi\\
\hline
congruence $R\subseteq A\times A$ ($\Leftrightarrow$ ideal~$I\subseteq A$) & congruence $\cK\subseteq \Arr\cE$\\
\hline
ideal $(S)$ generated by a subset $S$ & congruence $\Sigma\cong$ generated by a set of maps $\Sigma$\\
\hline
quotients $A\to A/I$ & quotient $\cE\to \cE\quotient \cK$ (left-exact~localizations)\\ 
\hline
kernel of a morphism $\phi:A\to B$ & congruence $\cK_\phi:=\phi^{-1}(\Iso)$ \mbox{of a logos morphism $\phi:\cE\to \cF$}\\
\hline
image factorisation $\phi:A\to A/\ker \phi\to B$ & image factorization $\phi:\cE\to \cE\quotient {\cK_\phi}\to \cF$\\
\hline
\end{tabularx}
\end{center}	
\end{table}

\paragraph{Monogenic and epic congruences}
Several notions of congruences (\ie left-exact localizations) are classically distinguished since \cite{Lurie:HTT}.
A congruence is {\it topological} if it can be generated by a class of monomorphisms, and it is {\it cotopological} if it is contained in the congruence $\Conn\infty$ of \oo connected maps.
Here we found more meaningful to change the terminology.
We shall call a congruence in a logos $\cE$ {\it monogenic} if it can be generated by monomorphisms, and {\it epic} if it is contained in $\Conn\infty$.
The latter name is motivated by the fact that for a congruence $\cK$ one has $\cK\subseteq\Conn\infty \iff \cK\subseteq \Surj$, where $\Surj$ is the class of surjections in $\cE$ (aka effective epimorphisms).

When $\cE=\PSh C$ is a presheaf logos, monogenic congruences of $\cE$ are classically in bijection with Grothendieck topologies on $C$.
It is actually possible to extend the notion of Grothendieck topology to arbitrary logoi by axiomatizing the classes of monomorphisms inverted by a logos morphism (\cref{def:GT}).
These {\it extended Grothendieck topologies} were introduced in \cite{ABFJ:GT} where we showed that they are naturally in bijection with monogenic congruences.

Any congruence $\cK$ has a monogenic part $\cK\mono:=(\cK\cap\Mono)\cong\subseteq \cK$, which is the largest monogenic congruence inside $\cK$,
and an epic part $\cK\epi$, which is the image of $\cK$ by the quotient $\cE\to \cE\quotient\cK\mono$.
In a manner of speaking, any congruence $\cK$ is an ``extension'' of its epic part $\cK\epi$ by its monogenic part $\cK\mono$. 
If $\phi:\cE\to \cF$ is a morphism of logoi, with associated congruence $\cK_\phi = \phi^{-1}(\Iso)$, we can refine the image factorization from~\cref{table:analogies2} into
\[
\cE\stto \cE\quotient\cK\mono \stto (\cE\quotient \cK\mono)\quotient\cK\epi = \cE\quotient \cK \stto \cF\ ,
\]
where the first part is a {\it monogenic quotient}, the second an {\it epic quotient}, and the third one a conservative morphism of logoi (\ie reflecting isomorphisms). We call this (unique) factorization the {\it quotient triple factorization}.

For example, if we consider the congruence $\idl X = \{X\to 1\}\cong$ in the logos $\S X= \fun\Fin\cS$, we shall see that the monogenic quotient is the logos $\S {X\connected \infty}:=\S X\quotient \idl X \mono$ classifying \oo connected objects (\cref{exmp:mono-congruence:oo-conn-obj}).
The epic part $\idl X\epi$ is the congruence of $\Conn\infty$ of \oo connected maps in $\S {X\connected \infty}$ and the above sequence becomes
$\S X \to \S {X\connected\infty} \to \S {X\connected\infty}\quotient\Conn\infty = \S X \quotient \idl X = \cS$.

\begin{table}[H]
\caption{Terminology for quotients and subtopoi}
\label{table:analogies3}

\begin{center}	
\renewcommand{\arraystretch}{1.6}
\begin{tabularx}{.95\textwidth}{
|>{\centering\arraybackslash}X
|>{\centering\arraybackslash}X
|>{\centering\arraybackslash}X|
}
\hline
{\it Classical terminology}  & \multicolumn{2}{c|}{\it New terminology }\\
\hline
Topos & Logos & Topos\\
\hline
left-exact localization, subtopos & quotient & inclusion, subtopos\\
\hline
strongly saturated class closed under base change & congruence & ---\\
\hline
topological localization & monogenic congruence, monogenic quotient & ample inclusion, amplification, ample~subtopos\\
\hline
cotopological localization & epic congruence, epic quotient & tight inclusion, tight subtopos\\
\hline
\end{tabularx}
\end{center}	
\end{table}

We shall use the following terminology for the geometric dual of the above factorization.
The inclusions of topoi dual to monogenic and epic quotients will be called {\it ample} and {\it tight}, respectively.
A morphism of topoi dual to a conservative morphism of logoi will be called a {\it surjection} of topoi.
Geometrically, the previous factorization says that any morphism of topoi $\tY\to \tX$ factors into
\[
\tY\stto \tZ \stto \tZ\ampli \stto \tX\,,
\]
that is, into a surjection, 
followed by a tight inclusion, 
followed by an ample inclusion.
We shall say that $\tZ\ampli$ is the {\it amplification} of $\tZ$.
This terminology will be justified below.
(An analogue of this triple factorization in algebraic geometry is proposed in 
\cref{rem:loc=mono,rem:triple-facto}, see also \ref{intro:hyper-reduction} in this introduction.)

\subsection{The acyclic product}
For a commutative ring $A$ the monoidal structure of $\Ideal A$ is the product of ideals. In a similar way we shall define a product on $\Cong\cE$.
This product will be derived from the pushout product $\pp:\Arr\cE \times \Arr\cE\to \Arr\cE$ of maps in $\cE$, but we shall need a few steps.

Before we consider congruences, let us start with arbitrary classes of maps. 
If $\Sigma$ and $\Tau$ are two classes of maps in $\cE$, we define their product by $\Sigma\pp\Tau := \{f\pp g\,|\, f\in \Sigma,\, g\in \Tau\}$.
It is easy to see that this defines a symmetric monoidal closed structure on classes of maps in $\cE$ (\cref{prop:mon-all}).
If $\cK$ and $\cK'$ are now two congruences, the product $\cK\pp \cK'$ is not in general a congruence but can be completed into one.
The naive idea is then to define the product of congruences by $(\cK\pp\cK')\cong$.
This formula will end up being correct, but somehow only ``by accident''.
Also this formula is actually complicated to work with in practice because the congruence completion will end up {\it not} being a monoidal functor (\cref{rem:not-monoidal}).
Instead, we will construct our product in another way: by defining first a monoidal structure on {\it acyclic classes}, and then proving that it restricts to congruences.

\paragraph{The algebra of acyclic classes}
An acyclic class is a saturated class of maps $\cA\subseteq\Arr\cE$ which is closed under base change.
It can also be defined as a class of maps containing all isomorphisms, closed under composition, base change, and under colimits in the category $\Arr\cE$ (\cref{def:acyclic}).
Acyclic classes have been introduced in \cite{ABFJ:HS} and were further developed in \cite{ABFJ:GT}.
We continue their study here in \cref{sec:suspension}.
Fundamental examples of acyclic classes are the class $\Surj$ of surjections (aka effective epimorphisms), the classes $\Conn n$ of $n$\=/connected maps, as well as the class of acyclic maps in the sense of homotopy theory (from where we took the name)
\cite{Raptis:acyclic,Hoyois:acyclic}.
We denote by $\Acyclic\cE$ the poset of acyclic classes.
Most importantly, any congruence is an acyclic class and the inclusion $\Cong\cE\subseteq \Acyclic\cE$ is particularly nice since it has both a left and a right adjoint (\cref{thm:adjCongAcyclic}).
This will allow us to manipulate congruences with the greater flexibility enjoyed by acyclic classes.

Any class of maps $\Sigma$ is contained in a smallest acyclic class $\Sigma\ac$, and an acyclic class $\cA$ is said to be of small generation if $\cA=\Sigma\ac$ for a {\it set} of maps $\Sigma$.
Acyclic classes of small generation are the left classes of {\it modalities}, which can be defined as (unique) factorization systems stable by base change.
A modality on a logos defines a truncation operation and can be thought of as an analogue of a t-structure on a stable category (\cref{rem:t-structure}).
Additionally, when the class $\Sigma$ consists of a single map $W \to 1$ for some space $W$, the factorization system associated to the acyclic class $\Sigma\ac$ coincides with the \emph{fiberwise nullification} studied in the works of Bousfield \cite{bousfield1994localization}, Farjoun \cite{farjoun1995cellular} and Chach\'{o}lski \cite{Chach:inequ}.  Where our results coincide with or generalize results already found in this body of work, we will provide appropriate references.

We define the {\it acyclic product} of two acyclic classes $\cA$ and $\cA'$ as the acyclic class $\cA\aprod \cA' := (\cA\pp\cA')\ac$.
To state the following result, let us define a {\it $\otimes$\=/frame} as a symmetric monoidal suplattice whose unit is the top element (see \cref{sec:tensor-frames}).
An example of $\otimes$\=/frame is the poset $\Ideal A$ for the product of ideals.

\begin{thm-intro}{\cref{thm:aprod}}
The poset $\Acyclic\cE$ of acyclic classes is a $\otimes$\=/frame for the acyclic product $\aprod$.
\end{thm-intro}

\noindent The unit of this product is the class $\All$ of all maps, and the class $\Iso$ of isomorphism is an absorbing element. 
The acyclic product is relatively easy to compute in practice, because the acyclic completion functor $\Sigma\mapsto \Sigma\ac$ is monoidal, \ie $\Sigma\ac \aprod \Tau\ac = (\Sigma\pp\Tau)\ac$ (this will actually be false for the congruence completion, see \cref{rem:not-monoidal}). 
An easy application of this gives the nice formula $\Surj \aprod \Conn n = \Conn {n+1}$ (\cref{ex:aprod:n-conn}).
More generally, we shall define the {\it suspension} of an acyclic class $\cA$ as the product $\susp\cA:=\Surj\aprod\cA$.
The suspension operation will be an important technical device in some proofs (see \cref{sec:suspension}).
The acyclic product $\cA\aprod -$ has a right adjoint $\cA\div -$, where $\cA\div\cB = \{u\in \Arr\cE\,|\, u\pp \cA\subseteq \cB\}$, called the {\it acyclic division}.
The acyclic division $\Surj\div \Iso$ is the class of acyclic maps in the sense of homotopy theory mentioned above (\cref{ex:aprod:acyclic-maps}).

\paragraph{The algebra of congruences}
Our main result is to prove that congruences are closed under the acyclic product.
The following statement is the more precise version of \cref{thmA}.

\begin{thm-intro}{\cref{thm:cprod}}
The acyclic product $\cK\aprod \cK'$ of two congruences is a congruence and $\Cong\cE\subseteq \Acyclic\cE$ is sub-$\otimes$\=/frame.
\end{thm-intro}
\noindent 
Our definition for the product of congruences will then be $\cK\aprod \cK' = (\cK\pp\cK')\ac$.
This formula happens to coincide with the previous ``naive'' one $(\cK\pp\cK')\cong$ 
(one inclusion is always true, the other one is given by the theorem) 
but it is in fact more convenient for computations.
Indeed, for a congruence $\cK$, it is often easy to find a class $\Sigma\subseteq \cK$ such that $\cK= \Sigma\ac$.
We call such a class $\Sigma$ a {\it lex generator} for $\cK$ (\cref{def:lex-generator}).
For example, our main result in \cite{ABFJ:HS} says that, for $\cK=\Sigma\cong$, we have also $\cK=(\Sigma\diag)\ac$ (where $\Sigma\diag$ is the completion for diagonals).
Then for two congruences $\cJ=\Sigma\ac$ and $\cK=\Tau\ac$ with lex generators, we get that $\cJ\aprod\cK = (\Sigma\pp\Tau)\ac$.
In other words, the product of lex generators is a lex generator (\cref{boxprodlexgen}).
This will come handy in our study of Goodwillie's calculus (\eg to prove \cref{thm:Goodwillie-main}).

\medskip
The product of monogenic congruences is an important special case.
As it happens, any class $\Sigma$ of monomorphisms is a lex generator: the congruence generated by $\Sigma$ coincides with the acyclic class generated by $\Sigma$ ($\Sigma\cong=\Sigma\ac$).
This makes the acyclic product of monogenic congruences particularly easy to compute.
Moreover, we shall see that the product of monogenic congruences is idempotent!
Recall that a frame is a suplattice where finite meets distribute over suprema.
Any frame is a $\otimes$\=/frame, and the converse is true as soon as the product is idempotent.
The following result is a simplified version of \cref{thm:gtop} and the following remarks.

\begin{thm-intro}{Theorem C}
\label{thmC}
The poset of monogenic congruences $\MCong\cE$ is a sub-$\otimes$\=/frame of $\Cong\cE$ which is a frame. 
\end{thm-intro}
\noindent Interpreted in terms of Grothendieck topologies, this result recovers the classical fact that they have a frame structure (see \cite[Theorem 3.0.1]{ABFJ:GT}).
Moreover, in these terms, the product of Grothendieck topologies is simply their intersection as classes of maps (\cref{rem:formula-gtop}).
The presentation in terms of the acyclic product provides explicit formulas to compute generators for this intersection (see \cref{rem:formula-gtop,ex:aprod-gtop}).

\medskip
We shall see in \cref{prop:naturality} that all the $\otimes$\=/frames structures of \cref{thm:aprod,thm:cprod,thmC} are natural in the logos $\cE$.

\paragraph{Sketch of the proofs}
Let us give a brief technical paragraph about the proof of \cref{thm:cprod}.
The proof of \cref{thm:aprod} is formal and relies on the fact that the functor $\Sigma\mapsto \Sigma\ac$ will be monoidal.
We will deduce it from a general result on quotients of $\otimes$\=/frames (\cref{localizationquantale}).
The same technique cannot be used for \cref{thm:cprod} because the functor $\Sigma\mapsto \Sigma\cong$ is not be monoidal (\cref{rem:not-monoidal}).
It will then rely on a number of technical lemmas about acyclic classes.
The main tool will be the operation of {\it decalage} of an acyclic class $\cA$, which is the class $\dec \cA := \{f\in \cA\,|\,\Delta(f) \in \cA\}$ (where $\Delta(f)$ is the diagonal of the map $f$, see \cref{def:decalage}).
We introduced the decalage in \cite{ABFJ:HS} (albeit without this name) where we showed that it is an acyclic class (Proposition 3.3.5 and Theorem 3.3.9 together, {\it loc. cit.}).

The decalage is closely related to the suspension defined above.
If $\cA$ is an acyclic class contained in $\Surj$, then $\dec\cA = \susp\cA = \Surj \aprod \cA$ (\eg $\dec{\Conn n} = \Conn{n+1}$), but the two operations are different in general.
Their intricate relationship is the purpose of \cref{sec:suspension}.
The decalage operation is useful because congruences are exactly the fixed points of the operator $\decname$ (\cref{lem:decalage}).
The proof of \cref{thm:cprod} will then follow from the remark that $\decname$ is a lax monoidal endofunctor on $\Acyclic\cE$ (\cref{boxofsatellites}).
This last result relies on an important decomposition formula for $\dec\cA$ (\cref{thm:explicit-decalage}) which is the endgame of \cref{sec:suspension}.

\subsection{Completion towers}

\paragraph{The structure of ideals}
To explain how we are going to use the product on $\Cong\cE$, 
we recall first some classical constructions within the $\otimes$\=/frame $\Ideal A$.
Let $X$ be the affine scheme dual to the ring $A$.
We fix an ideal $I$ and we let $Z\subseteq X$ be the associated subscheme.
One can associate to $I$ a string of other ideals, 
a tower of associated quotients of $A$,
and a filtration of corresponding geometric objects:
\[
\begin{tikzcd}[sep=small]
0 \ar[r,hook]&
\bigcap_n I^n \ar[rr,hook]&{}
&\dots \ar[r,hook]&
I^3 \ar[r,hook]&
I^2 \ar[r,hook]&
I \ar[r,hook]&
\sqrt I \ar[r,hook]&
\Jac I
\\
A \ar[r,two heads]&
A/\bigcap_n I^n \ar[r]&
A^\wedge_I \ar[r,two heads]&
\dots \ar[r,two heads]&
A/I^3 \ar[r,two heads]&
A/I^2 \ar[r,two heads]&
A/I \ar[r,two heads]&
A/\sqrt I \ar[r,two heads]&
A/\Jac I
\\
X \ar[from=r,hook']&
Z^\flat \ar[from=r]&
Z^\wedge \ar[from=r,hook']&
\dots \ar[from=r,hook']&
Z\exc 2 \ar[from=r,hook']&
Z\exc 1 \ar[from=r,hook']&
Z \ar[from=r,hook']&
Z\reduced \ar[from=r,hook']&
Z\jred
\,.
\end{tikzcd}
\]
Let us explain all these objects:
\begin{enumerate}[label=--]

\item The ideals $I^n$ are the powers of $I$.
The geometric object $Z\exc n$ dual to $A/I^{n+1}$ is the $n$\=/th order {\it infinitesimal neighborhood} of $Z$ in $X$.

\item The sequence of quotients $\dots \to A/I^2\to A/I$ defines the {\it completion tower} of $A$ at $I$.
The ring ${A^\wedge_I=\lim_n A/I^n}$ is the {\it completion} of $A$ with respect to the $I$\=/adic topology.
Its dual geometric object $Z^\wedge$ is an ind-scheme called the {\it formal neighborhood} of $Z$ in $X$.

\item The ring morphism $A\to A^\wedge_I$ sends an element of $A$ to its {\it Taylor tower}, and its kernel is the ideal $\bigcap_n I^n$ of {\it $I$\=/flat functions}.
The ring $A/\bigcap_n I^n$ is the {\it separation} of $A$ with respect to the $I$\=/adic topology.
We shall call the dual geometric object $Z^\flat$ the {\it flat neighborhood} of $Z$ in $X$.

\item The ideal $\sqrt I$ is the {\it nilradical} of $I$, spanned by elements of $A$ that become nilpotent in $A/I$.
The quotient $A/\sqrt I$ is the {\it nilreduction} of $A/I$.

\item The ideal $\Jac I$ is the {\it Jacobson radical} of $I$, spanned by all the elements of $A$ that vanish on the closed points of $Z$.
The quotient $A/\!\Jac I$ is the {\it Jacobson reduction} of $A/I$.

\end{enumerate}
We will define the analogues of all these notions in topos theory.

\paragraph{Completion towers}
The finite powers $\cK^n$ of a congruence $\cK$ are defined with the acyclic product.
Because the unit of $\Cong\cE$ is the class $\All$ of all maps, the $\cK^n$ form a decreasing sequence, associated with a tower of quotients, and a filtration of subtopoi
\[
\begin{tikzcd}[row sep=5]
&\dots \ar[r,hook]&
\cK^3 \ar[r,hook]&
\cK^2 \ar[r,hook]&
\cK\\
\cE^\wedge_\cK:=\lim_n\cE\quotient\cK^n\ar[r]&
\dots \ar[r,two heads]&
\cE\quotient\cK^3 \ar[r,two heads]&
\cE\quotient\cK^2 \ar[r,two heads]&
\cE\quotient\cK\\
\tZ^\wedge \ar[from=r,hook']&\dots \ar[from=r,hook']&
\tZ\exc 2 \ar[from=r,hook']&
\tZ\exc 1 \ar[from=r,hook']&
\tZ\,.
\end{tikzcd}
\]
We shall call the sequence of powers the {\it $\cK$\=/adic filtration} of $\cE$, 
and the tower of quotients the {\it completion tower} of $\cK$.
The limit of the tower $\cE^\wedge_\cK$ is called the {\it completion} of $\cE$ with respect to the $\cK$\=/adic filtration.
It can be consider either as a logos, or as a pro-object in logoi.
The dual topos is called the {\it formal neighborhood} of $\tZ$ and denoted $\tZ^\wedge$.
The topos dual to $\cE\quotient\cK^{n+1}$ is called the {\it $n$\=/th infinitesimal  neighborhood} of $\tZ$ and denoted $\tZ\exc n$.

\paragraph{Separation}
The congruence of the logos morphism $\widehat\phi:\cE\to \cE^\wedge_\cK$ is $\bigcap_n\cK^n$.
The quotient $\cE\quotient \bigcap_n\cK^n$ is called the {\it separation of $\cE$} for the $\cK$\=/adic filtration.
We will see that the monogenic parts of the powers $\cK^n$ are all equal to $\cK\mono$ (\cref{prop:idem-mono-cong}).
And since the poset morphism $(-)\mono$ is a right adjoint, we have also $(\bigcap_n\cK^n)\mono = \cK\mono$.
This proves that the quotient triple factorization of $\widehat\phi$ is 
\[
\cE\stto \cE\quotient\cK\mono \stto \cE\quotient \bigcap_n\cK^n \stto \lim_n\cE\quotient\cK^n\,.
\]
This provides the top part of the tower.
The geometric dual of $\cE\quotient \bigcap_n\cK^n$ will be denoted $\tZ^\flat$ and called the {\it flat neighborhood} of $\tZ$.

\paragraph{Hyper-reduction}
\label[paragraph]{intro:hyper-reduction}
A famous feature of higher topoi is the failure of Whitehead theorem: there exist logoi with non-trivial \oo connected maps.
We denote the class of \oo connected maps in a logos $\cE$ by $\Conn\infty$. It is a congruence, whose associated quotient $\cE\quotient\Conn\infty$ is usually called {\it hypercompletion} of $\cE$.
More generally, for any congruence $\cK$, the inverse image of $\cK':=\phi^{-1}(\Conn\infty)$ by the quotient morphism $\phi:\cE\to \cE\quotient\cK$ defines a congruence $\cK'\supseteq \cK$, whose quotient $\cE\quotient\cK'$ is the hypercompletion of $\cE\quotient\cK$.
However, this quotient $\cE\quotient\cK'$ does not sit at all at the completion stage of the tower associated to $\cK$. Quite on the contrary it sits below the $0$\=/th level: $\cE\to\lim_n \cE\quotient\cK^n\to\hdots\to\cE\quotient\cK \to\cE\quotient\cK'$. 

In the analogy with rings, we found it illuminating to compare the congruence $\Conn\infty$ to the Jacobson radical.
This is suggested by the following fact: a sheaf on a topological space is \oo connected if and only if all its stalks are contractible spaces.
In other words, if sheaves are thought of as functions with values in spaces, \oo connected sheaves are functions ``vanishing'' at every point.
The Jacobson radical captures a similar idea in algebraic geometry.
In the same line of ideas, the notion of epic quotient (\ie cotopological localization) compares well to quotient by ideal within the Jacobson radical (see \cref{rem:triple-facto}).
Yet another argument is that we will develop a theory of nilpotent maps in a logos, and the resulting nilradical will sit within $\Conn\infty$ (see \ref{intro-sec:nilradical} of this introduction).

This analogy suggests to look at the hypercompletion $\cE\quotient \Conn\infty$ as a reduction, and we shall rather call it the {\it hyper-reduction}.
For a congruence $\cK$, we shall call the congruence $\cK'$ defined above the {\it hyper-radical} of $\cK$ and denote it $\hrad \cK$.
The quotient $\cE\quotient\hrad\cK$ is the hyper-reduction of $\cE\quotient\cK$.
The congruence $\Conn\infty=\hrad\Iso$ will be called the {\it hyper-radical} of $\cE$.
Geometrically, if $\tZ\subseteq \tX$ is the subtopos corresponding to $\cK$, we shall denote $\tZ\hred\subseteq \tZ$ the subtopos corresponding to $\hrad\cK$ and call it the {\it hyper-reduction} of $\tZ$.

Recall that $\MCong\cE\subseteq \Cong\cE$ denoted the subposet of monogenic congruences. We showed in \cite{ABFJ:GT} that this inclusion has a right adjoint given by the monogenic part $\cK\mapsto \cK\mono$, and a further right adjoint essentially given by the hyper-radical $\cK\mapsto \hrad\cK$ defined above (\cref{thm:mono-cong}).
Moreover, the image of the second right adjoint is precisely the subposet of hyper-radical congruences.
In particular, monogenic and hyper-radical congruences are in bijection.%

Finally, every subtopos sits between its hyper-reduction and its amplification:
\[
\begin{tikzcd}[row sep=small]
\tZ\hred \ar[r,hook]& \tZ \ar[r,hook] & \tZ\ampli
\\
\cE\quotient\hrad\cK & \cE\quotient\cK  \ar[l, two heads] & \cE\quotient\cK\mono \ar[l, two heads]\,.
\end{tikzcd}
\]
We shall see that $\tZ\hred$ and $\tZ\ampli$ are the two extremities of a sequence of subtopoi canonically associated to $\tZ$.

\paragraph{Nilradical}
\label{intro-sec:nilradical}
We are left with the definition of the nilradical of a congruence. This will take a few steps.
We will say that a congruence $\cK$ is {\it $n$\=/nilpotent} if $\cK^{n+1}=\Iso$ holds in $\Cong\cE$ (\cref{def:n-nilpotent}).
A map $f$ is $n$\=/nilpotent if the congruence $\{f\}\cong$ generated by $f$ is $n$\=/nilpotent,
and an object $X$ is $n$\=/nilpotent if the map $X\to 1$ is $n$\=/nilpotent.
Explicitly, a map $f$ is nilpotent if all the $(n+1)$\=/fold pushout products 
$\Delta^{k_0} f \pp \dots \pp \Delta^{k_n} f$
of iterated diagonals of $f$ are isomorphisms.
More generally, given a congruence $\cK$, we will say that a map $f$ is {\it $\cK$\=/nilpotent of order $n$} if $(\{f\}\cong)^{n+1} \subseteq \cK$.
A congruence $\cK$ is {\it nilradical} if, for every $n$, every $\cK$\=/nilpotent map of order $n$ is in $\cK$.
The {\it nilradical} of a congruence $\cK$, denoted $\nrad\cK$, is defined as the smallest nilradical congruence containing $\cK$.
The {\it nilradical} of $\cE$ is defined as $\nrad \Iso$.

We shall see that nilpotent maps are necessarily \oo connected, 
and that we have an inclusion $\nrad \Iso\subseteq \hrad \Iso = \Conn\infty$ (\cref{cor:nilrad-surj}).
This inclusion compares to that of the nilradical into the Jacobson radical for rings.
More generally, the nilradical of a congruence is always contained in its hyper-radical $\nrad \cK\subseteq \hrad \cK$.
If $\tZ$ and $\tZ\nil$ are the topoi dual to $\cE\quotient\cK$ and $\cE\quotient\nrad\cK$, we shall say that $\tZ\nil$ is the {\it nilreduction} of $\tZ$.
We have inclusions $\tZ\hred\subseteq \tZ\nil\subseteq\tZ$.

\bigskip
The following diagram summarizes all the objects associated to a congruence/subtopos (we shall give them names in \cref{table:tower}).%
\footnote{
Notice that the logos version of the completion tower has one more stage $\cE\quotient\cK\mono$ at the top than the completion tower of a ring.
An analogue in commutative algebra is proposed in \cref{rem:triple-facto}.
}

\[
\begin{tikzcd}[sep=small]
0 \ar[r,hook]&
\cK\mono \ar[r,hook]&
\bigcap_n \cK^n \ar[rr,hook]&{}
&\dots \ar[r,hook]&
\cK^3 \ar[r,hook]&
\cK^2 \ar[r,hook]&
\cK \ar[r,hook]&
\nrad\cK \ar[r,hook]&
\hrad\cK
\\
\cE \ar[r,two heads]&
\cE\quotient\cK\mono \ar[r,two heads]&
\cE\quotient\bigcap_n \cK^n \ar[r]&
\cE^\wedge_\cK \ar[r,two heads]&
\dots \ar[r,two heads]&
\cE\quotient\cK^3 \ar[r,two heads]&
\cE\quotient\cK^2 \ar[r,two heads]&
\cE\quotient\cK \ar[r,two heads]&
\cE\quotient\nrad\cK \ar[r,two heads]&
\cE\quotient\hrad\cK
\\
\tX \ar[from=r,hook']&
\tZ\ampli \ar[from=r,hook']&
\tZ^\flat \ar[from=r]&
\tZ^\wedge \ar[from=r,hook']&
\dots \ar[from=r,hook']&
\tZ\exc 2 \ar[from=r,hook']&
\tZ\exc 1 \ar[from=r,hook']&
\tZ \ar[from=r,hook']&
\tZ\nil \ar[from=r,hook']&
\tZ\hred
\end{tikzcd}
\]

\subsection{Goodwillie calculus}

Our motivating example of a completion tower is given by Goodwillie's calculus.
Although we shall not discuss it here, Weiss' orthogonal calculus \cite{Weiss:OC} provides another example, see \cite{ABFJ:TM}.

\paragraph{Goodwillie Calculus}
We consider the free logos on one generator $\S X = \fun\Fin\cS$.
It is also the category of finitary functors $\cS\to \cS$. 
Goodwillie's calculus of functors distinguishes the subcategories 
\[
\cS=\fun\Fin\cS\exc 0
\ \subseteq\ 
\fun\Fin\cS\exc 1
\ \subseteq\ 
\dots
\ \subseteq\ 
\fun\Fin\cS
\]
spanned by $n$\=/excisive functors and proves that they are reflective $P_n:\fun\Fin\cS\to \fun\Fin\cS\exc n$.
The explicit formula given for $P_n$ shows that they are left-exact functors.
The $P_n$ thus define a tower of quotients of the logos $\S X = \fun\Fin\cS$ that we call the {\it Goodwillie tower}.
The Goodwillie tower of a functor $F:\Fin\to \cS$ 
\[
F\stto \dots\stto P_1F \stto P_0F=F(1)\,.
\]
is the tower of its $n$\=/excisive approximations.
On the other hand, we can consider the congruence $\idl X:= \{X\to 1\}\cong$.
The corresponding quotient $\S X\quotient \idl X$ is $\cS$ and the quotient map is sending a functor $F:\Fin\to \cS$ to its value $F(1)$ (where 1 is the terminal object of $\Fin$).
The following result is the more precise version of \cref{thmB}.
It confirms very much the intuition that the Goodwillie tower is indeed a version of the Taylor series from classical calculus.

\begin{thm-intro}{\cref{thm:Goodwillie:tower}}
\begin{enumerate}

\item
The Goodwillie tower of $\S X = \fun\Fin\cS$ coincides with the completion tower of the congruence $\idl X$.

\item
The monogenic part is the logos $\S{X\connected\infty} = \S X \quotient \idl X \mono$ classifying \oo connected objects.

\item
The $n$\=/th stage of the Goodwillie tower is the logos $\S {X\exc n} := \S X \quotient \idl X ^{n+1} = \fun\Fin\cS\exc n$ classifying $n$\=/nilpotent objects.

\item
The hyper-radical of all the stages but the limit one is generated by the image of $X$.
The hyper-reduction of all the stages is the canonical morphism to $\cS$ induced by $P_0$.

\end{enumerate}
\end{thm-intro}

Let us say a word on the other stages of the tower.
The objects of the $\idl X$\=/adic completion $\S X^\wedge_{\idl X}$ are formal Goodwillie towers (towers of excisive functors).
As a pro-object in logoi, it classifies nilpotent objects of finite order (\cref{rem:pro-lim}).
We do not know what it classifies as a logos.
The canonical functor ${\S X \to \S X^\wedge_{\idl X}}$ sends a functor to its Goodwillie tower, and the 
separation quotient ${\S X \to \S X\quotient \bigcap_n \idl X ^n}$ identifies two functors if they have the same Goodwillie tower.
This logos classifies \oo connected objects with an unidentified extra property.
Finally, the logos $\cS = \S X\quotient\idl X$ being hyper-reduced, the nil-reduction and hyper-reduction stages of the tower are simply $\cS$.

Geometrically, the tower of \cref{thm:Goodwillie:tower} studies the infinitesimal neighborhood of the terminal object $1\in \tA$ in the topos line.
We have the following filtration of subtopoi of the topos $\tA$ (their classifying properties are given below)
\[
\overbrace{
\underbrace{
\tone
=\tA\hred
=\tA\reduced
=\tA\exc 0
}_{\text{contractible objects}}
}^{\text{hyper-reduced subtopos}}
\subtto
\overbrace{
\underbrace{
\tA\exc 1
\subtto
\tA\exc 2
\subtto
\dots
\subtto
\tA^\wedge
}_{\text{nilpotent objects}}
}^{\text{infinitesimal and formal neighborhoods}}
\stto
\!\!\!\!
\underbrace{
\overbrace{\tA^\flat}
}_{\text{unknown}}^{\text{flat nbd}}
\!\!\!\!
\subtto
\!\!\!\!\!\!\!\!
\underbrace{\overbrace{\tA\connected\infty}
}_{\text{\oo conn. objects}}^{\text{amplification}}
\!\!\!\!\!\!\!\!\!\!
\subtto
\!\!\!\!\!\!\!
\underbrace{\overbrace{\tA}
}_{\text{all objects}}^{\text{ambient topos}}
\!\!\,.
\]

\paragraph{Pointed Goodwillie calculus}
\Cref{thm:Goodwillie:tower} has a pointed analogue.
We consider the logos $\S {X\pointed} := \fun \Finp \cS = \fun\Fin\cS\slice X$ (where $\Finp$ is the category of pointed finite spaces).
A logos morphism $\S {X\pointed} \to \cE$ is the same thing a pointed object $1\to E$ in $\cE$, and the universal pointed object $X\pointed\in \S{X\pointed}$ is the functor $\Finp\to \cS$ forgetting the base point.
We consider the congruence $\idl{X\pointed} := \{X\pointed\to 1\}\cong$.
Goodwillie's definition of $n$\=/excision still make sense for functors $\Finp\to \cS$ and the pointed analogue of \cref{thm:Goodwillie:tower} holds.

In this pointed context, a theorem of Goodwillie \cite{G03} identifies the category $\fun \Finp \cS\exc 1$ of 1-excisive functors $\Finp\to \cS$ to the category $\PSp$ of parametrized spectra (\ie bundles of spectra over variable spaces).
In this correspondence, spectra $\Sp\subseteq \PSp$ correspond to 1-excisive functors $F:\Finp\to \cS$ that also {\it reduced} (\ie such that $F(1)=1$).
We then deduce the following classifying property of $\PSp$.

\begin{thm-intro}{\cref{prop:parametrized-spectra}}
The logos of parametrized spectra $\PSp = \S{X\pointed}\quotient\idl{X\pointed}^2$ classifies pointed 1\=/nilpotent objects.
\end{thm-intro}

\noindent By definition, an object $E$ is 1\=/nilpotent if the maps $\Delta^m E \pp \Delta^n E$ are invertible for every $m,n\in \NN$.
When $E$ is pointed, this simplifies into the condition that the canonical maps
$\Omega^m E\vee \Omega^n E \to \Omega^m E\times \Omega^n E$ are invertible (where $\Omega^n E$ are the iterated loop spaces at the base point).
In other words, a pointed object $E$ is 1\=/nilpotent if the subcategory spanned by the $\Omega^n E$ is {\it additive} (in the sense that finite sums and products coincides).
In $\PSp$, the 1\=/nilpotent objects are precisely spectra $\Sp\subseteq \PSp$.
In particular, we recover that they are \oo connected.

\paragraph{Modality towers}
Although our main focus here is the towers associated to congruences, 
we want to underline that there exists also important towers associated to acyclic classes/modalities.
The paradigmatic example is given by Postnikov towers.
We mentioned that the acyclic powers of the class $\Surj$ of surjections are the classes $\Conn n$ of $n$\=/connected maps ($\Surj^{n+2} = \Conn n$).
These powers can be organized in a decreasing sequence 
\[
\Iso
\ \subseteq\ 
\Conn\infty=\bigcap_n\Conn n
\ \subseteq\ 
\dots
\ \subseteq\ 
\Conn 1
\ \subseteq\ 
\Conn 0
\ \subseteq\ 
\Conn {-1}=\Surj
\]
Let $\cE\quotient\Conn n$ denote the presentable category obtained by inverting all the maps in $\Conn n$.
The quotient ${\cE\to \cE\quotient\Conn n}$ is given by the factorization system (the modality) generated by $\Conn n$.
A classical computation shows that ${\cE\quotient\Conn n = \cE\truncated n}$ is the category of $n$\=/truncated objects in $\cE$.
The previous sequence is then dual to the {\it Postnikov sections} of the logos $\cE$
\[
\cE
\stto
\cE\quotient\Conn\infty
\stto
\lim_n \cE\quotient\Conn n
\stto
\dots
\stto
\cE\quotient\Conn 0 = \cE\truncated 0
\stto
\cE\quotient\Conn {-1} = \cE\truncated {-1}\,.
\]
In this tower, only $\cE\quotient\Conn\infty$ and $\lim_n \cE\quotient\Conn n$ are logoi.
Moreover the two morphisms (and their composite) $\cE\to \cE\quotient\Conn\infty \to \lim_n \cE\quotient\Conn n$ are morphisms of topoi, and they provide the image factorization of the 
$\cE \to \lim_n \cE\quotient\Conn n$ (into a quotient followed by a conservative morphism of logoi).
This construction can be generalized in a number of ways to arbitrary acyclic classes/modalities, see \cref{sec:modality-tower}.

\paragraph{\oo Connected maps}
When higher topoi were first studied \cite{Rezk:topos,TV:hag1}, the existence of \oo connected maps (\ie the failure of Whitehead theorem) was an oddity.
They have since been accepted as an important feature \cite{Lurie:HTT,Hoyois:Locus}, but still, somehow, an odd one.
This work hopes to shed a new light on them.
We have mentioned that nilpotent maps are always \oo connected.
Completion towers are entirely made of epic quotients (inverting \oo connected maps).
This puts \oo connected maps at the core of the differential calculus of topoi sketched here. 
Not every \oo connected map is expected to be nilpotent (although we do not have a counter-example at this time, see \cref{rem:nilrad-in-hrad}).
This is why we chose to compare the class of \oo connected maps to the Jacobson radical, which is larger than the nilradical.
This analogy has been very enlightening to us.

The central role played by \oo connected maps in the calculus of topoi also explains why this calculus is invisible in 1-topoi and set theory, and why it took homotopy theory---that is the study of \oo groupoids---to unravel it.
In 1-topos theory (and also $n$\=/topoi, for $n<\infty$), all \oo connected maps are isomorphisms and all congruences are monogenic.
The acyclic product becomes thus idempotent (\cref{thmC}) and the whole completion tower collapses into the single quotient map $\cE\to \cE\quotient\cK$.
Geometrically, one could say that there is no calculus because every sub-1-topos is ``ample'', that is, it already contains its infinitesimal neighborhood.

\setcounter{secnumdepth}{3}

\paragraph{Acknowledgments}
The authors thanks 
Marc Hoyois and Mike Shulman for useful discussions on the material of this paper.
They also thank the anonymous reviewer for their careful reading and all their suggestions.

The first author gratefully acknowledges the support of the Air Force Office of Scientific Research through grant FA9550-23-1-0434.
The second author gratefully acknowledges the support of the Simons Foundation and the Centre de Recherches Math\'ematiques (CRM) at the Universit\'e de Montr\'eal through a CRM-Simons Visiting Researcher position, as well as the Centre interuniversitaire de recherches en g\'eom\'etrie et topologie (CIRGET) at the Universit\'e du Qu\'ebec \`a Montr\'eal (UQAM) through a Visiting Professor position.
He would like to acknowledge also the support of the SFB 1085 Higher Invariants at the Universit\"at Regensburg and in particular the support of Denis-Charles Cisinski.
The third author gratefully acknowledges the support of the Air Force Office of Scientific Research through grant FA9550-23-1-0029.
The last author acknowledges the support of the Natural Sciences and Engineering Research Council of Canada through grant 371436.

\section{Preliminaries}
\label{sec:preliminaries}

This section introduces the basic definitions and recollects results from our previous works.
To spare some back and forth between papers to the reader, we have reproduced the definitions and statements (even some very basic ones) from \cite{ABFJ:GC,ABFJ:GBM,ABFJ:HS,ABFJ:GT} that were necessary for this paper.
This has substantially lengthened it.
The reader familiar with the material in this section may focus on the few new statements (the ones with proofs).
For the others, these preliminaries are intended to be an introduction to our previous work.

The original material is mostly in \cref{sec:suspension}, where the intricate relations between the suspension and the decalage of acyclic classes are established
(\cref{thm:suspension-acyclic,thm:explicit-decalage}), and in \cref{sec:lex-gen} where we introduce the useful notion of a lex generator for a congruence.

\subsection*{Conventions}
\label{sec:conventions}

Throughout the paper, we use the language of higher category theory.
We will simplify the vocabulary and drop the prefix ``$\infty$'' when referring to higher categories and their associated notions.
The word \emph{category} refers to \ooo category, and all constructions are assumed to be homotopy invariant.
Furthermore, we work in a model independent style, which is to say, we do not choose an explicit set theoretical model for \ooo categories such as quasicategories, but rather give arguments which we feel are robust enough to hold in any model.
We will refer to the work of Lurie \cite{Lurie:HTT,Lurie:HA} for the general theory of \oo categories and \oo topoi.
Other useful references are \cite{Cisinski:book,Riehl-Verity:EICT}.

We use the word \emph{space} to refer generically to a homotopy type or \oo groupoid.
We denote the category of spaces by $\cS$.
We shall say that a map between two spaces $f:X\to Y$ is an {\it isomorphism} if it is a homotopy equivalence.
We say an object is \emph{unique} if the space it inhabits is contractible.
For example, the inverse of an isomorphism is unique in this sense.

If $\cC$ is a category, we write $A\in\cC$ to indicate that $A$ is an object of $\cC$.
We shall denote by $\cC(A,B)$ or by $\Map AB$ the space of maps between two objects $A$ and $B$, we shall write $f:A\to B$ to indicate that $f\in \cC(A,B)$.
The \emph{opposite} of a category $\cC$ is denoted $\cC\op$. 
We write $\cC\slice A$ for the slice category of $\cC$ over an object $A$.
If $f : X \to A$ is a map of $\cC$, we often write $(X,f) \in \cC\slice{A}$, as it is frequently convenient to have both the object and structure map visible when working in a slice category.

Every category $\cC$ has a {\it homotopy category} $\ho\cC$ which is a 1-category with the same objects as $\cC$, but where $\ho\cC(A,B)=\pi_0\cC(A,B)$.
We shall say that a map $f:A\to B$ in $\cC$ is {\it invertible}, or that it is an {\it isomorphism}, if the map is invertible in the homotopy category $\ho\cC$. 
We make a small exception to this terminology with regard to equivalence of categories: we continue to employ the more traditional term {\it equivalence}.
A category is a groupoid if every map is invertible.
Every category has a maximal subgroupoid, which is the subcategory of isomorphisms.
We shall sometimes refer to this groupoid as the {\it core}.
As it is common un algebraic topology, we shall also use the {\it space} as a synonym of groupoid.

We will use the classical terminology of sets and classes to talk about small and large collections of objects in a category, but with the following meaning.
We shall only consider properties of objects that are invariant under isomorphisms.
By a {\it class of objects} in a category $\cC$ we mean a full subgroupoid of its core.
By a {\it set of objects} in a category $\cC$ we mean a class of objects whose collection of connected components is small.
With this convention, {\it any class/set of objects is always replete}.

\medskip
The category of small categories is denoted $\Cat$, and that of large categories is denoted $\CAT$.
By default we shall consider these categories as \ooo categories and not \oooo categories.
We denote the category of functors from $\cC$ to $\cD$ alternatively by $\fun \cC \cD$ or $\cD^{\cC}$ as seems appropriate from the context.
The arrow category of $\cC$ is $\Arr\cC:=\fun{[1]}\cC$ where $[1]=\{0<1\}$ is the category with one arrow.
By a {\it class (set) of maps} in a category $\cC$ we mean a class (set) of objects in $\Arr\cC$.

We shall say that a functor $F:\cC\to \cD$ is {\it surjective} if for every object $X\in\cD$ there exists an object $A\in\cC$ together with an isomorphism $X\simeq FA$.
We shall say that $F$ is {\it fully faithful} if the induced map $\cC(A,B)\to \cD(FA,FB)$ is invertible for every pair of objects $A,B\in\cC$.
A functor $F$ is an equivalence of categories if and only if it is fully faithful and surjective.

\medskip
When a functor $F:\cC\to \cD$ is left adjoint to a functor $G:\cD\to \cC$, we shall write $F\dashv G$.
When representing adjoint functors horizontally, our convention will be that the functor on top is left adjoint to the one below.
For example, for three adjoint functors $F\dashv G\dashv H$, we shall write 
\[
\begin{tikzcd}
\cC \ar[rr,"G" description] \ar[from=rr, shift right = 3,"F"'] \ar[from=rr, shift left = 3,"H"]
&& \cD
\end{tikzcd}
\]
Beware that, with this convention, left adjoint functors are not always oriented from the left to the right (and vice-versa for right adjoints).

\subsection{Topo-logy}
\label{sec:topos}

In this section, we collect results on congruences from \cite{ABFJ:HS,ABFJ:GT} and
introduce our new vocabulary for topos theory, extending the one from 
\cite{Anel-Lejay:topos-exp,Anel-Joyal:topo-logie}.

\medskip

\begin{definition}[Logos {\cite[Definitions 6.1.0.4 and 6.3.1.1]{Lurie:HTT}}]
\label{def:logos}
A category $\cE$ is a {\it logos} if it is an accessible left-exact reflection of the category $\PSh C$ of presheaves over a small category $C$.
A {\it morphism of logoi} is a functor $\phi:\cE\to \cF$ which is preserves colimits and finite limits (cocontinuous and left-exact functor).
Such a morphism has automatically a right adjoint $\phi_*$.
We denote by $\Logos$ the \ooo category of logoi and their morphisms.
We shall also consider the category $\fun\cE\cF\cclex$ of morphisms between two logoi $\cE$ and $\cF$.
The maximal subgroupoid of $\fun\cE\cF\cclex$ is the hom space $\Map\cE\cF$ in $\Logos$.
\end{definition}

More intrinsically, a logos is a presentable category with descent \cite[Section 6.1.3]{Lurie:HTT}, or with enough univalent morphisms \cite[Section 6.1.6]{Lurie:HTT}.
We shall not use these characterizations in this paper.

\begin{definition}[Topos]
\label{def:topos}
The category of topoi is defined as the opposite of that of logoi: $\Topos:=\Logos\sfop$.
We denote by $\fun\tX\tY$ the category of morphisms between two topoi $\tX$ and $\tY$.
The dual logos of a topos $\tX$ will sometimes be denoted $\Sh\tX$ and called the {\it category of sheaves on $\tX$} (see \cref{def:sheaf}).
\end{definition}
In the notations of \cite{Lurie:HTT}, we have 
\begin{align*}
\fun \tX\tY &= \mathrm{Fun}_*(\Sh\tX,\Sh\tY) &\Topos &= \mathrm{\cR\cT op}  \\
\fun \cE\cF\cclex &= \mathrm{Fun}^*(\cE,\cF)  &\Logos &= \mathrm{\cL\cT op}\,.
\end{align*}

The category of logoi has the advantage to have a nice forgetful functor to the category of large categories $\Logos\to \CAT$.
For example, this functor creates limits of logoi \cite[Proposition 6.3.2.3]{Lurie:HTT}.
Also, this functor has a left adjoint defined on small categories.
If $C$ is a small category, we denote $C\lex$ the completion of $C$ for finite limits.
This category is still small and the presheaf category $\PSh{C\lex}$ is a logos.
The following result is
\cite[Proposition 6.1.5.2]{Lurie:HTT} or
\cite[Proposition 2.3.2]{Anel-Lejay:topos-exp}.

\begin{theorem}[Free logos]
\label{thm:free-logos}
For any logos $\cE$, the restriction along the composition ${C\to C\lex \to \PSh {C\lex}}$ induces an equivalence of categories
\[
\fun {\PSh{C\lex}}\cE\cclex
\ =\ 
\fun C\cE\,.
\]
\end{theorem}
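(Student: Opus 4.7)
The plan is to factor the restriction along $C \to C\lex \to \PSh{C\lex}$ as a composition of two universal properties, one for each arrow. The first arrow is the unit of the finite-limit completion, and the second is the Yoneda embedding; each contributes one of the two structural properties (left exactness and cocontinuity) demanded by $\cclex$.

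For the first arrow, I would appeal to the universal property of the finite-limit completion: for any category $\cD$ admitting finite limits, restriction induces an equivalence between left-exact functors $C\lex \to \cD$ and arbitrary functors $C \to \cD$. Since any logos has finite limits, applying this with $\cD = \cE$ yields a natural equivalence
\[
\fun{C\lex}{\cE}\lex \ \simeq\ \fun{C}{\cE}.
\]
This property of the lex completion is classical and can be treated as a black box.

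For the second arrow, I would establish the refinement of the free cocompletion property stating that when $D$ is a small category with finite limits, restriction along the Yoneda embedding $y\colon D \to \PSh D$ induces an equivalence
\[
\fun{\PSh D}{\cE}\cclex \ \simeq\ \fun{D}{\cE}\lex.
\]
Composing this (applied to $D = C\lex$) with the first equivalence yields the theorem, the composite equivalence being precisely restriction along $C \to C\lex \to \PSh{C\lex}$.

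The main obstacle is this second step. One direction is immediate: a cocontinuous, left-exact $F\colon \PSh{C\lex}\to\cE$ restricts along $y$ to a left-exact functor, since $y$ preserves the finite limits that exist in $C\lex$. Conversely, given left-exact $F_0\colon C\lex \to \cE$, its left Kan extension along $y$ is a cocontinuous functor $F\colon\PSh{C\lex}\to \cE$, and one must verify that $F$ remains left-exact. The standard route is to observe that every presheaf on $C\lex$ can be written canonically as a filtered colimit of representables, and, using that $C\lex$ is closed under finite limits in $\PSh{C\lex}$ via $y$, that a finite limit of such filtered colimits can be rearranged as a filtered colimit of finite limits of representables. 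Since $F_0$ is left-exact on representables, $F$ is cocontinuous, and in the logos $\cE$ filtered colimits commute with finite limits, one concludes that $F$ preserves finite limits. This rearrangement is the technical heart of the proof; once it is in place, the two equivalences compose to give the stated universal property of $\PSh{C\lex}$ as the free logos on $C$.
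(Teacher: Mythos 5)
The paper itself does not prove \cref{thm:free-logos}: it is recalled from \cite[Proposition 6.1.5.2]{Lurie:HTT} and \cite[Proposition 2.3.2]{Anel-Lejay:topos-exp}, and your two-step factorization (universal property of the lex completion, then the Yoneda/presheaf step) is exactly the route behind those references, so the architecture of your argument is sound. The problem is the mechanism you propose for the step you yourself single out as the technical heart.

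It is false that every presheaf on $C\lex$ can be written canonically as a filtered colimit of representables. The canonical presentation of $X\in\PSh{C\lex}$ as a colimit of representables is indexed by its category of elements $(C\lex)\slice{X}$, which is not filtered in general (finite limits in $C\lex$ provide no cocones there); filtered colimits of representables span only $\Ind{C\lex}\subsetneq\PSh{C\lex}$. Concretely, in $\S X=\PSh{1\lex}=\fun\Fin\cS$ the representables are the functors $X^A$ for $A\in\Fin$, whose value at $1\in\Fin$ is contractible; since evaluation at $1$ preserves colimits and a filtered colimit of contractible spaces is contractible, the object $X\sqcup X$, whose value at $1$ is $S^0$, is not a filtered colimit of representables. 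So the rearrangement ``finite limit of filtered colimits of representables equals filtered colimit of finite limits of representables'' never gets started, and your argument for left-exactness of the Kan extension collapses. The filteredness that does exist lives on the other side: for $\cE=\cS$, the left Kan extension of a left-exact $F_0\colon C\lex\to\cS$ is a colimit of evaluation functors indexed by the opposite of the category of elements of $F_0$, which is filtered because $F_0$ left-exact and $C\lex$ finitely complete imply that this category of elements has finite limits; left-exactness then follows from commutation of filtered colimits with finite limits in $\cS$. For a general logos $\cE$ there is no category of elements of an $\cE$-valued functor, and this step is genuinely harder — it is precisely \cite[Proposition 6.1.5.2]{Lurie:HTT}, which you should either cite as a black box or reprove by a flatness-type argument using the structure of $\cE$ (descent, universality of colimits), not by the filtered-colimit-of-representables claim.
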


\begin{definition}[Free logos, affine logos]
\label{def:free-logos}
We shall denote the logos $\PSh{C\lex}$ by $\S C$ and call it the {\it free logos on $C$}.
The topos dual to the free logos $\S C$ is denoted $\tA^C$ and called an {\it affine} topos. 
\end{definition}

When $C=\emptyset$, we have $\S \emptyset = \cS$, the initial logos.
The dual topos is terminal, we denote it by $\tone$.
When $C=1$ is a category with one object, we denote the free logos by $\S X:=\PSh{1\lex}$ and say it is the logos {\it classifying objects}.
In this case, we have $1\lex = \Fin\op$ where $\Fin$ is the category of finite spaces, and $\S X = \fun\Fin\cS$.
The {\it universal object} $X$ is the functor corepresented by the terminal object $1\in \Fin$, that is the canonical inclusion $\Fin\to \cS$.
For any logos $\cE$, the evaluation at $X$ provides an equivalence $\fun {\S X}\cE \cclex = \cE$, hence the name of $\S X$.
The topos dual to $\S X$ will be denoted $\tA$ and refer to as the {\it topos line}.

The functor represented by $A\in \Fin$ will be denoted $X^A$, since it is the $A$\=/power of the object $X$ in $\S X = \fun\Fin\cS$.
For any $F:\Fin\to \cS$, the Yoneda lemma gives the formula
\[
F=\int^{K\in \Fin} F(A)\times X^A\,,
\]
which shows that every object $F\in \S X$ is some kind of ``polynomial" $F=F(X)$ in the universal object $X\in \S X$.
For an object $E$ in a logos $\cE$, let $\epsilon_E:\S X\to \cE$ be the morphism of logoi corresponding to $E$ by \cref{thm:free-logos}.
By construction, for every $F\in \S X$ we have 
$\epsilon_E(F)=\int^{K\in \Fin} F(A)\times E^A$
and this formula shows that $\epsilon_E(F)$ is the {\it value} $F(E)$ of the polynomial $F=F(X)$ at $E$.
Thus, $\epsilon_E$ is the {\it evaluation functor} $F\mapsto F(E)$.

\begin{definition}[Sheaf on a topos]
\label{def:sheaf}
We shall say that a {\it sheaf} on a topos $\tX$ is a morphism of topoi $\tX\to \tA$.
By definition, this is the same thing as a logos morphism $\S X \to \Sh\tX$, and, by universal property of $\S X$, the same thing as an object of $\Sh\tX$.
Thus, the logos $\Sh\tX$ dual to $\tX$ is equivalent to the category of sheaves on $\tX$.	
\end{definition}

\subsubsection{Quotients and congruences of presentable categories}
\label{sec:quotient-presentable}
When categories are thought of as a collection of arrows, localizations of categories are analogous to localizations of rings (since they invert arrows).
But when categories are thought of as a collection of objects, localizations of categories are better compared to {\it quotients} (since they identify objects).
For this reason we shall change the classical terminology and always talk about {\it quotients} rather than {\it localizations} of categories.

\begin{definition}[Quotient of presentable categories]
\label{def:quotient-presentable}
A cocontinuous functor $\phi:\cC\to \cD$ between two presentable categories is called a {\it quotient of presentable categories} (or simply a {\it quotient} if the context is clear) if it is a reflective localization, that is if its right adjoint $\phi_*$ is fully faithful.
The class of maps inverted by $\phi$ is a strongly saturated class in the sense of \cite[Definition 5.5.4.5]{Lurie:HTT}.
\end{definition}

Every class of maps $\Sigma$ in a presentable category $\cC$ is contained in a smallest strongly saturated class $\Sigma\ssat$.
A strongly saturated class of maps $\cW$ is said to be of small generation if $\cW=\Sigma\ssat$ for some {\it set} of maps $\Sigma$.
A quotient of presentable categories $\phi:\cC\to \cD$ is said to be {\it generated by $\Sigma$} if it is initial among the quotients inverting $\Sigma$ (sending $\Sigma$ to isomorphisms in $\cD$), or equivalently inverting $\Sigma\ssat$.
Such a quotient is uniquely determined and we shall denote its codomain by $\cC\quotient\Sigma\ssat$.
For any strongly saturated class of small generation $\cW$ in $\cC$, the quotient of $\cC$ by $\cW$ exists and we shall denote it by $\cC\quotient\cW$.

\begin{remark}
In the theory of algebraic structures (\eg groups, rings...), quotients are controlled by {\it congruences}, which are equivalence relations in the categories of these structures.
Strongly saturated class of maps can (and must) be understood as congruences for the notion of cocomplete categories and cocontinuous functors.
\end{remark}

\subsubsection{Quotients and congruences of logoi, inclusions of topoi}
\label{sec:congruence}

\begin{definition}[Quotient of logos, inclusion of topoi]
\label{def:quotient-logos}
A morphism of logoi $\phi:\cE\to \cF$ is called a {\it quotient} if it is a quotient of presentable categories, that is if its right adjoint $\phi_*$ is fully faithful.
By abuse, we shall sometimes say that $\cF$ is a quotient of $\cE$ and forget about $\phi$.
A morphism of topoi $\tX\to \tY$ is called an {\it inclusion} if it is dual to a quotient of logoi. 
We shall also say that $\tX$ is a {\it subtopos} of $\tY$.
\end{definition}

In the same way that quotients of rings are in bijection with ring congruences (equivalences relations in the category of rings), quotients of a logoi $\cE$ are in bijection with a notion of logos congruence.

\begin{definition}[Congruence {\cite[Definition 4.2.1]{ABFJ:HS}}]
\label{def:congruence}
We say that a class of maps $\cK$ in a logos $\cE$ is a {\it logos congruence} if the following conditions hold:
\begin{defenum}
\item $\cK$ contains the isomorphisms (reflexivity) and is closed under composition (transitivity);
\item $\cK$ is closed under colimits and finite limits in the arrow category of $\cE$ (\ie the inclusion $\cK\to \Arr\cE$ is a fully faithful morphism of logoi).
\end{defenum}
When it is clear that we are working within the category of logoi, we shall talk simply of a {\it congruence}.
\end{definition}

A class of maps in a logos is a congruence if and only if it is strongly saturated and closed under base changes \cite[Proposition 4.2.3]{ABFJ:HS}.
In particular, any congruence satisfies the 3-for-2 property.

\medskip
Any intersection of congruences is a congruence.
Any class of maps $\Sigma$ in a logos $\cE$ is contained in a smallest congruence $\Sigma\cong$.
We say that $\Sigma\cong$ is the congruence \emph{generated} by the class of maps $\Sigma$.
A congruence $\cK$ is said to be {\it of small generation} if $\cK=\Sigma\cong$ for some {\it set} of maps $\Sigma$.
A morphism of logoi $\phi:\cE\to \cF$ inverts $\Sigma$ if and only if it inverts the whole congruence $\Sigma\cong$ ($\Sigma\subseteq \cK_\phi \Leftrightarrow \Sigma\cong\subseteq \cK_\phi$).
A quotient of logoi $\phi:\cE\to \cF$ is said to be {\it generated by $\Sigma$} if it is initial among the quotients inverting $\Sigma$ (sending $\Sigma$ to isomorphisms in $\cF$), or equivalently inverting $\Sigma\cong$.
Such a quotient is uniquely determined and we shall denote its codomain by $\cE\quotient\Sigma\cong$.
If $\cK$ is a congruence of small generation on $\cE$, the quotient of $\cE$ by $\cK$ exists and will be denoted by $\cE\quotient\cK$.
Notice that, since congruences are strongly saturated classes, quotients of logoi are quotients of presentable categories and there is no problem in using the same notation as in \cref{sec:quotient-presentable}.

\begin{examples}
\label{exmp:congruence} 

\begin{exmpenum}
\item\label{exmp:congruence1} 
The classes $\Iso$ of isomorphisms and $\All$ of all maps are respectively the smallest and the largest congruences (for the inclusion relation).
They are of small generation since they are generated respectively by $\{1\xto = 1\}$ and $\{0\to 1\}$.

\item\label{exmp:congruence:simple}
Let us say that a logos is {\it simple} if its only congruences are $\Iso$ and $\All$.
In the analogy with rings, such logoi behave like fields.
The initial logos $\cS$ is simple.
More generally, for any group $G$ in $\cS$ with classifying space $BG$, the slice logos $\cS\slice {BG}$ (which is also the logos of spaces with a $G$\=/action) is simple. 
Are there more simple logoi?

\item\label{exmp:congruence2} 
Let $\phi:\cE\to \cF$ be a morphism of logoi.
Recall that such morphisms preserve isomorphisms, compositions, colimits and finite limits.
Then for any congruence $\cK$ in $\cF$, the class $\phi^{-1}(\cK) =\{f\in \cE\ |\ \phi(f)\in \cK\}$ is a congruence on $\cE$.
In particular, the class $\cK_\phi:= \phi^{-1}(\Iso)$ of maps inverted by $\phi$ is a congruence.
We shall refer to $\cK_\phi$ as the {\it congruence of the morphism $\phi$}. 
\label{Luriethm2} 
The congruence $\cK_\phi$ is always of small generation \cite[Proposition 5.5.4.16]{Lurie:HTT}.

\item\label{exmp:congruence3} 
Let $\phi:\cE\to \cF$ be a left-exact localization.
Then for any congruence $\cK$ in $\cE$ such that $\cK_\phi\subseteq \cK$, its image $\phi(\cK)$ is a congruence on $\cF$.
Moreover, we have $\cK = \phi^{-1}(\phi(\cK))$.

\item\label{exmp:congruence4}
The class $\Conn \infty$ of \oo connected maps is a congruence of small generation \cite[Proposition 6.5.2.8]{Lurie:HTT}.

\end{exmpenum}
\end{examples}

The following theorem from \cite{Lurie:HTT} proves that congruences control all left-exact localizations.
Contrary to the case of 1-logoi, it is not known whether all left-exact localizations of logoi are accessible.
Therefore, a condition of small generation must be imposed.
For $\cE$ a fixed logos, we consider
the (large) poset $\Cong\cE$ of all congruences in $\cE$ (ordered by inclusion), 
the subposet $\Congsg\cE\subseteq \Cong\cE$ of congruences of small generation in $\cE$,
and the poset $\mathsf{LexLoc_{acc}}(\cE)$ of (isomorphism classes of) accessible left-exact localizations of $\cE$.
The map $\phi\mapsto \cK_\phi$ defines a morphism of posets $\mathsf{LexLoc_{acc}}(\cE) \to \Congsg\cE$.
Conversely, if $\cK=\Sigma\cong$ is a congruence of small generation, then the localization $\phi_\cK:\cE\to \cE\quotient\cK$ exists and is accessible \cite[Propositions 5.5.4.15 and 6.2.1.2 together]{Lurie:HTT} and we get a function
$\Congsg\cE \to \mathsf{LexLoc_{acc}}(\cE)$.

\begin{theorem}[{\cite[Propositions 5.5.4.2 and 6.2.1.1 together]{Lurie:HTT}}]
\label{thm:bij-congruence-lexloc}
The functions $\phi\mapsto \cK_\phi$ and $\cK \mapsto \phi_\cK$ define inverse isomorphisms of posets
\[
\mathsf{LexLoc_{acc}}(\cE)
\ \simeq\ 
\mathsf{Cong_{sg}}(\cE)\,.
\]
\end{theorem}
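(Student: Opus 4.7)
The plan is to verify that the two assignments are well-defined, mutually inverse, and order-preserving; since the statement is explicitly compiled from two propositions of \cite{Lurie:HTT}, most of the content reduces to assembling those facts in the right order.

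\textbf{Well-definedness.} First I would check that $\phi \mapsto \cK_\phi$ lands in $\Congsg{\cE}$. For any morphism of logoi $\phi$, the class $\cK_\phi = \phi^{-1}(\Iso)$ is a congruence by \cref{exmp:congruence2}; for accessible $\phi$ it is moreover of small generation by \cite[Proposition 5.5.4.16]{Lurie:HTT}. Conversely, given $\cK = \Sigma\cong \in \Congsg{\cE}$, the reflective localization $\phi_\cK : \cE \to \cE\quotient\cK$ exists as a presentable-category localization and is accessible by \cite[Proposition 5.5.4.15]{Lurie:HTT}. The nontrivial point is that $\phi_\cK$ is actually left-exact, and this is exactly the content of \cite[Proposition 6.2.1.1]{Lurie:HTT}: an accessible reflective localization of a presentable category is left-exact if and only if its strongly saturated class of inverted maps is closed under base change---which is precisely the extra axiom characterizing congruences among strongly saturated classes.

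\textbf{Mutual inverses.} For $\cK \in \Congsg{\cE}$, the inclusion $\cK \subseteq \cK_{\phi_\cK}$ is immediate by construction of $\phi_\cK$. For the reverse inclusion, $\cK_{\phi_\cK}$ is the smallest strongly saturated class containing the generators of $\cK$; but $\cK$ itself is already strongly saturated (being a congruence) and contains those generators, so $\cK_{\phi_\cK} \subseteq \cK$. Going the other way, given an accessible left-exact localization $\phi : \cE \to \cF$, the universal property of $\phi_{\cK_\phi}$ produces a unique factorization $\cE\quotient{\cK_\phi} \to \cF$; since $\phi$ is itself a reflective localization and the factorization is cocontinuous and bijective on the localized objects (both sides being the full reflective subcategory of $\cK_\phi$-local objects), it is an equivalence.

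\textbf{Order preservation.} Both assignments visibly respect the order: if $\cK \subseteq \cK'$ in $\Congsg{\cE}$, then $\phi_{\cK'}$ inverts $\cK$, so by universal property $\phi_\cK$ factors through $\phi_{\cK'}$; conversely if $\phi$ factors through $\phi'$ among accessible lex localizations, every map inverted by $\phi'$ is inverted by $\phi$, giving $\cK_{\phi'} \subseteq \cK_\phi$, with the orientations matching the conventions on the poset $\mathsf{LexLoc_{acc}}(\cE)$.

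\textbf{Main obstacle.} The only step with genuine mathematical content is the left-exactness of $\phi_\cK$ from closure under base change, i.e.\ \cite[Proposition 6.2.1.1]{Lurie:HTT}; the rest of the argument is a formal manipulation of strongly saturated classes and universal properties of reflective localizations. I would not attempt to reprove that proposition here but simply cite it, since the theorem as stated is explicitly presented as a repackaging of Lurie's result in the new language of congruences.
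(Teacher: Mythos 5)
The paper offers no proof of this statement at all: it is quoted as a repackaging of \cite[Propositions 5.5.4.2 and 6.2.1.1]{Lurie:HTT}, so your plan of assembling Lurie's propositions, citing 6.2.1.1 for the only substantive point, is exactly the intended route, and your treatment of order preservation and of the equivalence $\cE\quotient\cK_\phi\simeq\cF$ is fine. There is, however, one genuine gap, and it sits precisely at the distinction this series of papers is built on.

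In the direction $\cK\mapsto\phi_\cK$ you invoke \cite[Proposition 5.5.4.15]{Lurie:HTT} for a set $\Sigma$ with $\cK=\Sigma\cong$, and in the ``mutual inverses'' step you assert both that $\cK\subseteq\cK_{\phi_\cK}$ holds ``by construction'' and that $\cK_{\phi_\cK}$ is the smallest strongly saturated class containing the generators of $\cK$. Read literally, the second claim says $\cK_{\phi_\cK}=\Sigma\ssat$, and combined with the first it would force $\Sigma\cong=\Sigma\ssat$. That is false in general: the strongly saturated class generated by $\Sigma$ is usually not closed under base change or diagonals, and the discrepancy between $\Sigma\ssat$ and $\Sigma\cong$ is exactly what \cref{thm:sigma-sheaves} and \cref{sigmaac=sigmacong} are about (otherwise the naive localization at $\Sigma$ would already be left exact and sheafification would be superfluous). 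The point is that the presentable localization at $\Sigma$ furnished by 5.5.4.15 is \emph{not} $\phi_\cK$; the quotient $\phi_\cK:\cE\to\cE\quotient\cK$ inverts the whole class $\cK$. What your outline glosses over is that a congruence of small generation is also of small generation \emph{as a strongly saturated class}---for instance via $\Sigma\cong=(\Sigma\diag)\ac=\bigl((\Sigma\diag)\bc\bigr)\sat$ (\cref{sigmaac=sigmacong,lem:generation-acyclic-class}), or via Lurie's construction in \cite[Proposition 6.2.1.2]{Lurie:HTT}, which is why the paper cites ``5.5.4.15 and 6.2.1.2 together'' for the existence and accessibility of $\phi_\cK$. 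Once $\cK$ is known to admit a small set of strongly saturated generators, applying 5.5.4.15/5.5.4.2 to that set yields $\cK_{\phi_\cK}=\cK$ in one stroke, 6.2.1.1 gives left exactness from pullback-stability, and the remainder of your argument goes through as written.
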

In the rest of the paper, we will work with arbitrary congruences, and not only those of small generation.
We shall mention explicitly when the hypothesis of small generation is needed.
Notice that the poset $\Cong\cE$ is a large suplattice.
The subposet $\mathsf{Cong_{sg}}(\cE)$ is also large, but with small suprema only.
This makes the structure of $\Cong\cE$ sligthly easier to handle.

\medskip

For a congruence of small generation, the quotient $\cE\to \cE\quotient\cK$ has a fully faithful right adjoint that identifies $\cE\quotient\cK$ to the category of local objects for the class $\cK$.
When $\cK=\Sigma\cong$, our main theorem in \cite{ABFJ:HS} describes these local objects as the  {\it $\Sigma$\=/sheaves}:
let $\Sigma\diag$ be the closure of $\Sigma$ by diagonals, 
and let $(\Sigma\diag)\bc$ be the closure of $\Sigma\diag$ under base change, then a $\Sigma$\=/sheaf is an object local for the class $(\Sigma\diag)\bc$.
We shall come back to this condition in \cref{sec:recognition}.

\begin{theorem}[{\cite[Theorem 4.3.3]{ABFJ:HS}}]
\label{thm:sigma-sheaves}
For any set of maps $\Sigma$ in a logos $\cE$, the right adjoint to the quotient $\cE\to \cE\quotient\Sigma\cong$ identifies $\cE\quotient\Sigma\cong$ to the full subcategory of $\Sigma$\=/sheaves.
\end{theorem}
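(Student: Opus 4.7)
The plan is to use \cref{thm:bij-congruence-lexloc} to reduce the statement to an identification of two classes of local objects. By that theorem, the quotient $\cE \to \cE\quotient\Sigma\cong$ is an accessible left-exact localization whose fully faithful right adjoint identifies the target with the full subcategory of $\Sigma\cong$-local objects of $\cE$. It therefore suffices to show that, for any $X \in \cE$, being $\Sigma\cong$-local is equivalent to being $(\Sigma\diag)\bc$-local.

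One direction is immediate from \cref{def:congruence}: a congruence is closed under finite limits in $\Arr\cE$ and hence contains the iterated diagonals of its members; being also closed under base change, it contains all base changes of those. Therefore $(\Sigma\diag)\bc \subseteq \Sigma\cong$, and every $\Sigma\cong$-local object is $(\Sigma\diag)\bc$-local.

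For the reverse inclusion, I would show that the full subcategory $\cC \subseteq \cE$ of $(\Sigma\diag)\bc$-local objects is itself an accessible left-exact reflective subcategory. Accessibility and reflectivity follow from small generation of $\Sigma$ by standard presentability arguments. The heart of the argument is left-exactness of the reflector $L \colon \cE \to \cC$: closure of the generators under iterated diagonals ensures that $\cC$ is closed under finite limits in $\cE$ (the standard criterion for a subcategory of local objects to be closed under finite limits), while closure under base change forces the class of maps inverted by $L$ to be stable under base change, i.e.\ to be a congruence. Granted this, the congruence $\cK$ associated to $L$ contains $(\Sigma\diag)\bc \supseteq \Sigma$, so $\cK \supseteq \Sigma\cong$, giving $\cK$-local $\subseteq \Sigma\cong$-local. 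Combined with $\Sigma\cong$-local $\subseteq (\Sigma\diag)\bc$-local $= \cK$-local from the easy direction, we conclude that the two classes of local objects agree and that $\cK = \Sigma\cong$.

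The main obstacle is precisely the step that $\cC$ is left-exact reflective, i.e.\ that closure of the generators under diagonals and base change is enough to produce a left-exact localization without any further closure operation. This is the substantive content of \cite[Theorem 4.3.3]{ABFJ:HS}, and the proof essentially amounts to reproducing the argument from there, which hinges on a careful analysis of how base changes and iterated diagonals interact with strong saturation.
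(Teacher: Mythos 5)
A preliminary remark: the paper gives no proof of this statement at all — it is recalled verbatim from \cite[Theorem 4.3.3]{ABFJ:HS} — so there is no in-paper argument to compare yours with; the question is whether your sketch stands on its own. Your reduction via \cref{thm:bij-congruence-lexloc} and your easy direction are fine: since a congruence is closed under finite limits and base change in $\Arr\cE$, one has $(\Sigma\diag)\bc\subseteq\Sigma\cong$, so every $\Sigma\cong$-local object is a $\Sigma$-sheaf. The gap is in the reverse direction. Your parenthetical ``standard criterion'' is vacuous: \emph{any} reflective subcategory of $\cE$ is closed under all limits computed in $\cE$, so closure of $\cC$ under finite limits holds for the local objects of an arbitrary set of maps, needs no diagonal condition, and says nothing about left-exactness of the reflector $L$, which is what is actually required. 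Similarly, base-change stability of the class of maps inverted by $L$ does not follow formally from base-change stability of the generating class: the delicate point is precisely that (strong) saturation of a class closed under diagonals and base change remains closed under diagonals and base change, and that is the main theorem of \cite{ABFJ:HS}. You concede this yourself in your last paragraph, so as written your text is a reduction of the statement to the cited theorem rather than a proof of it; that is in effect what the paper also does, but then your description of ``the heart of the argument'' is misleading, since the two mechanisms you invoke do not carry the weight.

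If citing is acceptable (as it is for the paper), there is a shorter route using only results already recalled here: by \cref{sigmaac=sigmacong} one has $\Sigma\cong=(\Sigma\diag)\ac$, and by \cref{lem:generation-acyclic-class} (with all objects as generators) $(\Sigma\diag)\ac=\big((\Sigma\diag)\bc\big)\sat$; since $(-)\sat=\lorth{(\rorth{(-)})}$, saturation does not change the right-orthogonal class, hence an object is $\Sigma\cong$-local if and only if it is $(\Sigma\diag)\bc$-local, i.e.\ a $\Sigma$-sheaf. Combining this with the fact, stated just before \cref{thm:sigma-sheaves}, that for a congruence of small generation (which $\Sigma\cong$ is, $\Sigma$ being a set) the fully faithful right adjoint identifies $\cE\quotient\Sigma\cong$ with the $\Sigma\cong$-local objects, gives the statement. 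Of course the substantive content is then \cref{sigmaac=sigmacong}, which is again the main theorem of \cite{ABFJ:HS}; there is no way around proving that input, and your sketch does not supply it.
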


This theorem is particularly handy since any logos can be presented as the quotient of a free logos $\cE=\S C\quotient\Sigma\cong$ for some small category $C$ and a set of maps $\Sigma$.
We call such a pair $(C,\Sigma)$ a {\it presentation} of the logos $\cE$.
It is an alternative to sites (in the sense of \cite[Definition 3.1.1]{TV:hag1} and \cite[Definition 6.2.2.1]{Lurie:HTT}), which is better suited for many purposes.
Given a presentation, we shall say that $\cE$ {\it classifies} the $C$\=/diagrams inverting $\Sigma$.

\begin{examples}
\label{exmp:classifying-logos}

\begin{exmpenum}

\item\label{exmp:classifying-logos:X}
Any quotient of the free logos $\S X$ corresponds to a property on $X$ which is preserved by logoi morphisms. 
Such quotients distinguish subtopoi $\tA'\subseteq \tA$ of the topos line.

\item\label{exmp:classifying-logos:trunc}
Recall that an object $X$ is {\it $n$\=/truncated} (for $-1\leq n$) if the higher diagonal $\Delta^{n+2} X:X\to X^{S^{n+1}}$ is invertible.
The quotient $\S X\quotient\{\Delta^{n+2} X\}\cong$ classifies $n$\=/truncated objects.
See \cite[Section 5.3]{ABFJ:HS} for details.

\item\label{exmp:classifying-logos:conn}
Similarly, an object is {\it $n$\=/connected} (for $-1\leq n\leq \infty$) if the diagonals $\Delta^k X$ are all surjective for ${0\leq k\leq n+1}$.
If $\im f$ is the image of the map $f$, the logos $\S X\quotient\{\im{\Delta^k X}\,|\,k\leq n+1\}\cong$ classifies $n$\=/connected objects.
See \cite[Section 5.4]{ABFJ:HS} for details.

\item\label{exmp:classifying-logos:pointed}
Let $\S {Y\to X}$ be the logos free on one arrow. 
The quotient by the congruence $\{Y\to 1\}\cong$ is the logos $\S {X\pointed}$ classifying pointed objects.
This logos can also be presented as $\S {X\pointed}=\fun\Finp\cS$.

\end{exmpenum}
\end{examples}

\begin{lemma}
\label{lem:SX}
We consider $\S X= \fun \Fin \cS$, the free logos on one generator, and the congruence $\idl X:= \{X\to 1\}\cong$ generated by $X$.
The quotient $\S X \quotient \idl X$ is $\cS$ and the quotient morphism $\S X\to \cS$ is given by evaluation at $1\in \Fin$.
\end{lemma}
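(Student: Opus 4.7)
The plan is to exploit the universal property of $\S X$ as the free logos on one generator (\cref{thm:free-logos}): a logos morphism $\phi : \S X \to \cE$ is uniquely determined by the object $\phi(X) \in \cE$, and I denote by $\epsilon_E : \S X \to \cE$ the morphism corresponding to $E \in \cE$ as in the discussion following \cref{def:free-logos}. Under this identification, $\epsilon_E$ inverts the generating map $X \to 1$ precisely when $\epsilon_E(X \to 1) = (E \to 1)$ is invertible in $\cE$, that is, when $E$ is a terminal object. Since a logos morphism inverts a set of maps $\Sigma$ iff it inverts the generated congruence $\Sigma\cong$, this gives that $\epsilon_E$ inverts the entire congruence $\idl X = \{X \to 1\}\cong$ iff $E$ is terminal in $\cE$.

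Consequently, for any logos $\cE$, the mapping space $\Map{\S X \quotient \idl X}{\cE}$ coincides with the subspace of $\Map{\S X}{\cE}$ consisting of morphisms that invert $\idl X$, which by the above is equivalent to the space of terminal objects of $\cE$. The latter is always contractible, so $\S X \quotient \idl X$ admits a contractible mapping space into every logos, which is precisely the universal property of the initial logos $\cS \in \Logos$. This yields the identification $\S X \quotient \idl X \simeq \cS$.

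It remains to identify the quotient map with evaluation at $1 \in \Fin$. By \cref{thm:free-logos}, the quotient morphism $q : \S X \to \cS$ is of the form $\epsilon_{q(X)}$, and since $q$ inverts $X \to 1$ we have $q(X) \simeq 1 \in \cS$, so $q = \epsilon_1$. By the general formula $\epsilon_E(F) = F(E)$ recalled above, $\epsilon_1(F) = F(1)$, which is exactly evaluation at the terminal object $1 \in \Fin$. I anticipate no genuine obstacle in this argument; the only point requiring a bit of care is the reduction from inverting the congruence $\idl X$ to inverting its single generator $X \to 1$, which is immediate from the definition of the generated congruence.
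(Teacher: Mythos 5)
Your argument is correct, but it follows a genuinely different route from the paper's. You work entirely by universal properties: using \cref{thm:free-logos} you identify $\Map{\S X}{\cE}$ with the core of $\cE$, observe that a morphism $\epsilon_E$ inverts $X\to 1$ (equivalently, all of $\idl X$) exactly when $E$ is terminal, deduce that $\Map{\S X\quotient\idl X}{\cE}$ is the (contractible) space of terminal objects of $\cE$, and conclude that the quotient is the initial logos $\cS$, with quotient map $\epsilon_1=\mathrm{ev}_1$. The paper instead argues ``from below'': it checks directly that evaluation at $1\in\Fin$ is a left-exact localization (its right adjoint is the inclusion of constant functors, which is fully faithful, and $\mathrm{ev}_1$ preserves limits), and then identifies its congruence $\cG$ with $\{X\to 1\}\cong$ by an explicit generation argument — closure of congruences under finite limits gives $X^A\to 1\in\idl X$, writing $F$ as a colimit of representables over its category of elements gives $F\to F(1)\in\idl X$, and 3-for-2 finishes. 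Your version is shorter and cleaner, at the cost of invoking the mapping-space universal property of the quotient (which the paper states only as ``initial among quotients inverting $\Sigma$,'' though it uses the stronger form freely elsewhere) and the formula $\epsilon_E(F)=F(E)$, where for $E=1$ one should note that the coend $\int^{A}F(A)\times 1^A=\colim_\Fin F$ equals $F(1)$ because $1$ is terminal in $\Fin$. The paper's computation is more explicit and its colimit-of-representables technique is reused later (e.g.\ in the proof of \cref{thm:Goodwillie-main}), which is presumably why that route was chosen; but as a proof of \cref{lem:SX} itself, yours is complete.
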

\begin{proof}
The evaluation at $1\in \Fin$ is the colimit functor $\fun\Fin\cS\to \cS$ since 1 is terminal in $\Fin$.
Its right adjoint is then the inclusion of constant functors, which is a fully faithful functor.
Thus $\fun\Fin\cS\to \cS$ is a localization.
It is a left-exact localization, hence quotient of logoi, since evaluation at 1 preserves all limits.
Let $\cG$ be the congruence of $ev_1:\fun\Fin\cS\to \cS$.
It is the class of maps $F\to G$ inducing an equivalence $F(1)=G(1)$.
The maps $X\to 1$ is inverted by evaluation at 1, hence $\{X\to 1\}\cong\subseteq \cG$.
Let us now prove to converse inclusion.
Congruences are closed under finite limits, so all the maps $X^A\to 1$, for $A\in\Fin$, are in $\{X\to 1\}\cong$.
Recall that any functor $F:\Fin\to \cS$ is a colimit of the $X^A$ indexed by its category of elements $el(F)$. 
The colimit of the maps $X^A\to 1$ indexed by $el(F)$ is the map $F\to F(1)$, where the value $F(1)$ is viewed as a constant functor.
Since congruences are closed under colimits, all maps $F\to F(1)$ are in $\{X\to 1\}\cong$.
Then any map $F\to G$ inducing an equivalence $F(1)=G(1)$, is also in $\{X\to 1\}\cong$ by the 3-for-2 condition.
This proves $\cG\subseteq \{X\to 1\}\cong$ and the statement.
\end{proof}

\subsubsection{Monogenic and hyper-radical congruences, Grothendieck topologies}
\label{sec:mono-cong}
\label{sec:Gtop}

We recall results from \cite{ABFJ:GT} and introduce some new terminology.
We denote by $\Mono$ the class of monomorphisms in a logos.

\begin{definition}[Monogenic congruence]
A congruence $\cK$ is said to be {\it monogenic} if $\cK=\Sigma\cong$ for $\Sigma$ a class of monomorphisms in $\cE$.
In fact, $\Sigma$ can be assumed to be a set, since any monogenic congruence is always of small generation (see \cite[Corollary 6.2.1.6]{Lurie:HTT} or \cite[Proposition 4.1.4]{ABFJ:GT}).
Consequently, the quotient $\cE\quotient\cK$ by a monogenic congruence always exists.
We shall say that it is a {\it monogenic quotient}.
In \cite{ABFJ:GT}, monogenic congruences were called {\it topological}.
Every congruence $\cK$ contains a maximal monogenic congruence $\cK\mono\subseteq\cK$ 
which is $\cK\mono=(\cK\cap \Mono)\cong$ \cite[Corollary 4.1.11]{ABFJ:GT}.
We always have $\cK\mono\subseteq \cK$ and we shall say that $\cK\mono$ is the {\it monogenic part} of $\cK$.
\end{definition}

\begin{examples}
\label{exmp::mono-congruence} 
\begin{exmpenum}
\item\label{exmp:mono-congruence1} 
The congruences $\Iso$ and $\All$ are monogenic since they are generated respectively by $\{1\xto = 1\}$ and $\{0\to 1\}$.

\item\label{exmp:mono-congruence2}
When $\Sigma = \{U\to 1\}$ for a subterminal object $U$, we shall say that the quotient $\cE\to \cE\quotient\{U\to 1\}$ is {\it open} and that the dual inclusion $\tZ\to \tX$ is an {\it open inclusion}.
In that case, the quotient $\cE\to \cE\quotient{\{U\to 1\}}$ is the functor $U\times-:\cE\to \cE\slice U$ with values in the slice of $\cE$ over $U$.

\item\label{exmp:mono-congruence3}
When $\Sigma = \{0\to U\}$ for a subterminal object $U$, we shall say that the quotient is {\it closed} and that the dual inclusion $\tZ\to \tX$ is an {\it closed inclusion}.

\item\label{exmp:mono-congruence:site}
The category $\Sh{C,\tau}$ of sheaves on a site defines a quotient $\phi:\PSh C \to \Sh{C,\tau}$ whose associated congruence $\cK_\phi$ is monogenic.
Precisely, if $\Sigma$ is the collection of all covering sieves of the topology $\tau$, then $\cK_\phi=\Sigma\cong$ \cite[Lemma 4.3.9]{ABFJ:HS}.

\item\label{exmp:mono-congruence:conn}
The congruence used in \cref{exmp:classifying-logos:conn} to present the logos $\S {X\connected n}$ classifying $n$\=/connected objects is monogenic.

\item\label{exmp:mono-congruence:oo-conn}
Recall the congruence $\Conn\infty$ of \oo connected maps from \cref{exmp:congruence4}.
Since $\Conn\infty\subseteq\Surj$, we have $\Conn\infty\cap\Mono \subseteq \Surj\cap\Mono\subseteq\Iso$, hence $\Conn\infty\mono = \Iso$.

\item\label{exmp:mono-congruence:oo-conn-obj}
Recall $\S X= \fun \Fin \cS$ and the congruence $\idl X:= \{X\to 1\}\cong$.
From \cref{lem:SX}, we know that $\S X \quotient \idl X=\cS$.
The monogenic part $\idl X\mono$ is the congruence $\{\im{\Delta^nX}\}\cong$.
This is the congruence forcing all iterated diagonals $\Delta^n X$ to be surjective, that is forcing $X$ to be \oo connected, see \cref{exmp:classifying-logos:conn}.
The corresponding quotient is denoted $\S {X\connected\infty}$, where the universal \oo connected object $X\connected\infty$ is the image of $X$.

\item\label{exmp:mono-congruence:all-mono}
All the congruences in $\cS$ are monogenic by \cref{exmp:mono-congruence1}.
More generally, for any space $A\in \cS$ all the congruences of the slice logos $\cS\slice A$ are monogenic.
To see it, we can use $\cS\slice A=\prod_i \cS\slice {A_i}$ where the $A_i$ are the connected components of $A$.
A congruence in $\cS\slice A$ is then a choice of congruence in each $\cS\slice {A_i}$ and the result follows from \cref{exmp:congruence:simple}
Are there other examples of logoi for which all quotients are monogenic?

\end{exmpenum}
\end{examples}

\begin{remark}[Monogenic quotients as localizations of rings]
\label{rem:loc=mono}
In the analogy between logoi and rings, one difference is that all subtopoi (open, closed, or else) are dual to logos quotients, but not all subschemes are dual to ring quotients.
Only the closed subschemes are dual to ring quotients, others can be defined by localizations, or by a mix of both constructions.
As it happens, there are good reasons to compare monogenic quotients to localizations.
Developing this in details would be too long, so we shall only give an informal explanation.
If $A\to A[S^{-1}]$ is a localization of rings, it is the union (in the poset of localizations) 
of the family of principal localizations $A\to A[s^{-1}]$, for every $s$ in $S$.
Geometrically, this means that the subscheme dual to $A[S^{-1}]$ is the intersection of the open subschemes dual to the $A[s^{-1}]$.
On the logos side, now, every monogenic congruence $\cK$ can be generated by a single monomorphism $A\to B$ \cite[Proposition 4.1.4]{ABFJ:GT}.
Morally, any monomorphism can be thought of as a family $A(b)$ of subterminal objects in $\cE$ indexed by the sheaf $B$.
Using this intuition, the quotient $\cE\to\cE\quotient\cK$ becomes the union (in the poset of quotients) of the family of slices $\cE\to \cE\slice {A(b)}$, for ``$b$ in $B$''.
Geometrically, this proposes to think of the subtopos dual to $\cE\quotient\cK$ as the intersection of the open subtopoi dual to the $\cE\slice {A(b)}$.
See \cref{rem:triple-facto} for a continuation of this remark.
\end{remark}

\begin{definition}[Hyper-radical of a congruence]
\label{def:hrad}
For a congruence $\cK$ in a logos $\cE$, 
a map $f$ in $\cE$ is called a {\it $\cK$\=/covering} if $\im f\in \cK$,
and a {\it $\cK$\=/hypercovering} if all iterated diagonals of $f$ are $\cK$\=/coverings (\ie if, for all $n$, $\im{\Delta^n f}\in \cK$) \cite[Definition 5.1.1]{ABFJ:GT}.
Every map in $\cK$ is a $\cK$\=/hypercovering.
The class of all $\cK$\=/hypercoverings is denoted $\hrad\cK$ and called the {\it hyper-radical} of $\cK$.
It is always a congruence of small generation 
\cite[Proposition 5.1.5 and Corollary 5.1.16]{ABFJ:GT}.
Moreover, we always have ${\hrad{\hrad\cK} = \hrad\cK}$ \cite[Lemma 5.1.9]{ABFJ:GT}.
A congruence $\cK$ is said to be {\it hyper-radical} if $\cK=\hrad\cK$ (we used to say hypercomplete in \cite{ABFJ:GT}).
If $\cK$ is of small generation, the quotient $\cE\quotient\cK\to \cE\quotient{\hrad\cK}$ is the hypercompletion of $\cE\quotient\cK$ \cite[Corollary 5.1.13]{ABFJ:GT}, but we shall rather say that it is the {\it hyper-reduction} of $\cE\quotient\cK$.
Moreover, a congruence $\cK$ of small generation is hyper-radical if and only if the logos $\cE\quotient\cK$ is hyper-reduced \cite[Corollary 5.1.16]{ABFJ:GT}.
 \end{definition}

We recall some lemmas that will be useful.

\begin{lemma}[{\cite[Lemma 5.1.4]{ABFJ:GT}}]
\label{lem:hypercovering}
Given a quotient $\phi:\cE\to \cE\quotient\cK$, a map $f$ is a $\cK$\=/hypercovering in $\cE$ if and only if $\phi(f)$ is \oo connected.
\end{lemma}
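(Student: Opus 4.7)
The plan is to reduce the statement to the single observation that, for any map $g$ in $\cE$, the image $\im g$ lies in $\cK$ if and only if $\phi(g)$ is surjective in $\cE\quotient\cK$. Once this is established, the conclusion follows by applying it to every iterated diagonal of $f$ and using that $\phi$ commutes with diagonals.

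First I would check the equivalence for a single map. Given $g\colon A\to B$, factor it as $A\twoheadrightarrow \im g \hookrightarrow B$ through a surjection followed by a monomorphism. Because $\phi$ is a morphism of logoi, it preserves colimits and finite limits, hence it preserves both surjections (which are colimits of their kernel pairs) and monomorphisms. Therefore $\phi(A)\twoheadrightarrow \phi(\im g)\hookrightarrow \phi(B)$ is the image factorization of $\phi(g)$, so $\im{\phi(g)} = \phi(\im g)$. By definition, $\im g$ (viewed as the map $\im g\to B$) belongs to $\cK$ precisely when $\phi$ inverts $\im g\to B$, and since this map is already a mono into $\phi(B)$, inverting it is the same as asking that $\im{\phi(g)}\to\phi(B)$ be an isomorphism, i.e.\ that $\phi(g)$ be surjective. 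Hence $\im g\in\cK\iff \phi(g)\in\Surj$.

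Next I would propagate this through diagonals. Since $\phi$ preserves finite limits, it commutes with the diagonal operator: $\phi(\Delta f)=\Delta\phi(f)$, and inductively $\phi(\Delta^n f)=\Delta^n\phi(f)$ for all $n\geq 0$. Combining with the previous step applied to $g=\Delta^n f$ yields
\[
\im{\Delta^n f}\in\cK \ \iff\ \Delta^n\phi(f)\in\Surj.
\]
Quantifying over all $n$, the left-hand side is, by \cref{def:hrad}, the condition that $f$ is a $\cK$\=/hypercovering, while the right-hand side is, by the characterization of $\infty$\=/connected maps as those whose iterated diagonals are all surjective (as recalled in \cref{exmp:classifying-logos:conn}), the condition that $\phi(f)\in\Conn\infty(\cE\quotient\cK)$. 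This gives the desired equivalence.

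There is no real obstacle here; the only point requiring care is the preservation of image factorizations by $\phi$, which is a standard consequence of left-exactness plus cocontinuity, together with the identification of the membership $\im g\in\cK$ (a property of an arrow, not of an object) with surjectivity of $\phi(g)$.
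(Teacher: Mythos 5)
Your proof is correct, and since the paper only quotes this lemma from [ABFJ:GT, Lemma 5.1.4] without reproving it, your argument (preservation of image factorizations and of diagonals by the left-exact cocontinuous $\phi$, then quantifying over all iterated diagonals) is exactly the intended one. The single refinement worth making is that ``$\im{g}\in\cK$ iff $\phi$ inverts $\im{g}$'' is not purely ``by definition'': the forward direction is trivial, but the converse uses that the congruence of the quotient $\phi:\cE\to\cE\quotient\cK$ is precisely $\cK$, i.e.\ \cref{thm:bij-congruence-lexloc} (with the small-generation hypothesis implicit in writing $\cE\quotient\cK$).
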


\begin{lemma}[{\cite[Lemma 5.1.12]{ABFJ:GT}}]
\label{lem:hypercovering-mono}
For any congruence $\cK$ we have $\hrad\cK = \hrad{\cK\mono}$.
\end{lemma}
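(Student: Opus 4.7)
The plan is to prove the two inclusions of $\hrad\cK = \hrad{\cK\mono}$ separately; both follow very directly from the definitions recalled in \cref{def:hrad}, together with the elementary fact that image factorizations produce monomorphisms.

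First I would establish $\hrad{\cK\mono} \subseteq \hrad\cK$. This is pure monotonicity of the operator $\hrad{(-)}$: since $\cK\mono = (\cK \cap \Mono)\cong \subseteq \cK$, a map $f$ all of whose iterated-diagonal images $\im{\Delta^n f}$ lie in $\cK\mono$ automatically has $\im{\Delta^n f} \in \cK$ for all $n$, hence $f \in \hrad\cK$.

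The reverse inclusion $\hrad\cK \subseteq \hrad{\cK\mono}$ is the substantive direction, and it comes out in one step. Let $f \in \hrad\cK$. By definition of hyper-radical, $\im{\Delta^n f} \in \cK$ for every $n \geq 0$. The key observation is that the image of any map is, by construction, a monomorphism, so $\im{\Delta^n f} \in \cK \cap \Mono \subseteq (\cK \cap \Mono)\cong = \cK\mono$. Thus every iterated-diagonal image of $f$ lies in $\cK\mono$, which means exactly that $f$ is a $\cK\mono$-hypercovering, i.e.\ $f \in \hrad{\cK\mono}$.

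There is really no obstacle here; the content of the lemma is the small but important conceptual point that $\hrad\cK$ depends only on which monomorphisms belong to $\cK$, since the only elements of $\cK$ that are ever tested in the definition of hyper-radical are the images $\im{\Delta^n f}$, which are already monomorphisms. Accordingly, replacing $\cK$ by the monogenic congruence it generates on its monomorphisms leaves the hyper-radical unchanged.
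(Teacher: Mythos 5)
Your proof is correct. The paper does not reprove this lemma here (it is imported from \cite{ABFJ:GT}, Lemma 5.1.12), but your argument is exactly the intended one: both inclusions follow from the definition of the hyper-radical, using that each $\im{\Delta^n f}$ is a monomorphism and that $\cK\cap\Mono\subseteq\cK\mono\subseteq\cK$, so $\hrad\cK$ depends only on the monomorphisms in $\cK$.
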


Recall that a map is \oo connected if and only if, for all $n$, $\im{\Delta^n f}\in \Iso$.
This shows that $\hrad \Iso = \Conn\infty$.
We shall give a special name to the congruence $\Conn\infty$.

\begin{definition}[Hyper-radical of a logos]
\label{def:radical-logos}
For a logos $\cE$, the congruence $\Conn \infty$ of \oo connected maps is called the {\it hyper-radical} of $\cE$.
We shall sometimes denote it by $\hrad 1$ or $\hrad \Iso$.
We shall say that a logos (and the dual topos) is {\it hyper-reduced} (instead of hypercomplete) if $\hrad 1 = \Iso$.
The logos morphism $\cE\to \cE\quotient{\hrad1}$ is the {\it hyper-reduction} of $\cE$.
\end{definition}

\begin{remark}
\label{rem:will-def-nilrad}
We use the prefix `hyper-' to keep in touch with the existing terminology but also to distinguish the hyper-radical from the nil-radical that we shall define in \cref{sec:nilradical}.
\end{remark}

\begin{examples}
\label{exmp::rad-congruence} 
\begin{exmpenum}

\item\label{exmp:rad-congruence1}
By \cref{lem:SX} we have $\S X\quotient\idl X=\cS$. This shows that the congruence $\idl X$ is hyper-radical, since $\cS$ is hyper-reduced.

\item\label{exmp:rad-congruence2}
In $\S {X\connected\infty}$ of \cref{exmp:mono-congruence:oo-conn-obj}, 
let us see that the congruence $\idl{X\connected\infty}$ is the hyper-radical.
We have $\S {X\connected\infty}\quotient\idl{X\connected\infty} = \S X\quotient\idl X = \cS$.
Since $\cS$ is hyper-reduced, we must have $\hrad 1\subseteq \idl{X\connected\infty}$.
Conversely, since $X\connected\infty$ is \oo connected, we must have $\idl{X\connected\infty}\subseteq\hrad 1$.
This shows that $\cS$ is the hyper-reduction of $\S {X\connected\infty}$.
\end{exmpenum}
\end{examples}

\begin{remark}
\label{rem:ampli-reduc}	
Any congruence $\cK$ sits between its monogenic part $\cK\mono$ and its radical $\hrad\cK$.
Geometrically, if $\tZ\subseteq \tX$ is the inclusion of topoi dual to the quotient $\cE\to \cE\quotient\cK$, we shall call the topos dual to $\cE\quotient\cK\mono$ the {\it amplification} of $\tZ$, and the topos dual to $\cE\quotient\hrad\cK$ the {\it hyper-reduction} of $\tZ$.
Any subtopos $\tZ\subseteq \tX$ sits between its reduction and its amplification.
\[
\begin{tikzcd}[row sep=5]
\Iso \ar[r, hook] & \cK\mono \ar[r, hook] & \cK\ar[r, hook] & \hrad\cK\\
\cE \ar[r, two heads] & \cE\quotient\cK\mono \ar[r, two heads] & \cE\quotient\cK\ar[r, two heads] & \cE\quotient\hrad\cK \\
\tX \ar[from=r, hook'] & \tZ\ampli \ar[from=r, hook'] & \tZ \ar[from=r, hook'] & \tZ\hred\,.
\end{tikzcd}
\]
\end{remark}

\medskip

We denote by $\MCong\cE$ and $\RCong\cE$ the posets of monogenic and hyper-radical congruences.
We showed in \cite[Theorem 5.2.4]{ABFJ:GT} that these posets are naturally in bijection.
More precisely, they are both in bijection with the poset $\GTop(\cE)$ of extended Grothendieck topologies on $\cE$.
Extended Grothendieck topologies are an axiomatization of the classes of monomorphisms inverted by morphisms of logoi.
We shall mostly work with monogenic congruences, but some statements are sometimes more meaningful when formulated in terms of topologies.
We recall the definition for the reader's convenience.

\begin{definition}[{\cite[Definition 3.1.2]{ABFJ:GT}}]
\label{def:GT}
An {\it extended Grothendieck topology} on a logos $\cE$ is a class of monomorphisms $\cG$ such that 
\begin{enumerate}[label=\roman*)]
\item $\cG$ is a local class \cite[Definition 2.3.9]{ABFJ:HS},
\item $\cG$ contains all isomorphisms and is closed under composition,
\item\label{GT:axiom3} if the composite of two monomorphisms $u:A\to B$ and $v:B\to C$ belongs to $\cG$, then so does $v$.
\end{enumerate}
\end{definition}

When $\cE=\PSh C$ is a presheaf category, there is a bijection between 
Grothendieck topologies on $C$ in the usual sense
and extended Grothendieck topologies on $\PSh C$
\cite[Proposition 3.1.6]{ABFJ:GT}.
The latter notion is a version of the former that is independent of any generators.
The adjective `extended' is meant to prevent the confusion between the two notions.
When the context is clear though, we shall drop it and simply talk of a Grothendieck topology, or a topology, on a logos $\cE$.

For any congruence $\cK$, the class $\cK\cap \Mono$ is always a Grothendieck topology and all Grothendieck topologies can be obtained this way.
This defines a surjective morphisms of posets $\Cong\cE\to \GTop(\cE)$.
This morphism has a fully faithful left adjoint that sends a Grothendieck topology $\cG$ to the monogenic congruence $\cG\cong$,
and a fully faithful right adjoint sending a Grothendieck topology $\cG$ to the hyper-radical congruence $\hrad{\cG\cong}$ \cite[Theorem 5.2.4]{ABFJ:GT}.
The fully faithful adjoints provide the equivalences $\MCong\cE=\GTop(\cE)=\RCong\cE$.
We reformulate this result through the equivalence $\GTop(\cE)=\MCong\cE$.

\begin{theorem}[{\cite[Theorem 5.2.4 with Remark 5.2.6]{ABFJ:GT}}]
\label{thm:mono-cong}
The inclusion $\MCong\cE\to \Cong\cE$ has a right adjoint given by $\cK\mapsto\cK\mono$ and a second right adjoint given the morphism sending a monogenic congruence to its hyper-radical.
\[
\begin{tikzcd}
\Cong\cE 
&&&\MCong\cE
\ar[lll, "{\sf can.}"', hook', shift right = 3]
\ar[from=lll, "(-)\mono" description] 
\ar[lll, "\hrad-", hook', shift left = 3]
\end{tikzcd}	
\]
This further right adjoint is fully faithful and its image is the subposet $\RCong\cE$ of hyper-radical congruences.
\end{theorem}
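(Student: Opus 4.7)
The plan is to establish the two adjunctions in sequence and then read off full-faithfulness and the image from the resulting unit and counit. For the first adjunction $\iota \dashv (-)\mono$, given a monogenic $\cJ$ and an arbitrary congruence $\cK$, the inclusion $\cJ \subseteq \cK$ implies $\cJ \cap \Mono \subseteq \cK \cap \Mono$; passing to generated congruences and using that $\cJ = (\cJ \cap \Mono)\cong$ (since $\cJ$ is monogenic) yields $\cJ \subseteq \cK\mono$. The converse is immediate from $\cK\mono \subseteq \cK$, so this part is essentially a tautology from the definition of $(-)\mono$.

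The heart of the argument is identifying the candidate further right adjoint to $(-)\mono$ as $R := \hrad{-}\colon \MCong\cE \to \Cong\cE$, and for this I would isolate one key computation: for every monogenic $\cJ$,
\[
\hrad\cJ \,\cap\, \Mono \;=\; \cJ \,\cap\, \Mono.
\]
The inclusion $\supseteq$ is immediate from $\cJ \subseteq \hrad\cJ$. For $\subseteq$, a monomorphism $u$ has invertible diagonal $\Delta u$, so all higher diagonals $\Delta^n u$ for $n \geq 1$ are isomorphisms; the defining condition $\im{\Delta^n u} \in \cJ$ for every $n$ thus collapses to $\im u \in \cJ$, and $\im u = u$ because $u$ is already a monomorphism. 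Consequently $(\hrad\cJ)\mono = (\cJ \cap \Mono)\cong = \cJ$. With this in hand, the adjunction $(-)\mono \dashv R$ reduces to the biconditional $\cK\mono \subseteq \cJ \iff \cK \subseteq \hrad\cJ$ for $\cK \in \Cong\cE$ and $\cJ \in \MCong\cE$. The forward direction invokes \cref{lem:hypercovering-mono} ($\hrad\cK = \hrad{\cK\mono}$), the containment $\cK \subseteq \hrad\cK$, and monotonicity of $\hrad$, giving $\cK \subseteq \hrad\cK = \hrad{\cK\mono} \subseteq \hrad\cJ$. The reverse direction intersects with $\Mono$ and applies the key computation.

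Full-faithfulness of $R$ is then automatic, since the counit $(\hrad\cJ)\mono = \cJ$ is an isomorphism. The image of $R$ is identified with $\RCong\cE$ by the idempotence $\hrad{\hrad\cJ} = \hrad\cJ$ on one hand, and by noting that every hyper-radical $\cK$ is recovered as $\cK = \hrad\cK = \hrad{\cK\mono}$ via \cref{lem:hypercovering-mono} on the other. The one genuinely non-formal step is the lemma $\hrad\cJ \cap \Mono = \cJ \cap \Mono$, which is the only place where the specific shape of the hyper-radical (iterated diagonals plus image) really interacts with monogeneity; everything else is formal bookkeeping with posets.
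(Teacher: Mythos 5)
Your proof is correct. Note that the paper itself offers no argument for this statement: it is recalled verbatim from the earlier work, where (as the surrounding discussion indicates) both adjoints are obtained by factoring through the poset $\GTop(\cE)$ of extended Grothendieck topologies via $\cK\mapsto\cK\cap\Mono$, with fully faithful left adjoint $\cG\mapsto\cG\cong$ and fully faithful right adjoint $\cG\mapsto\hrad{\cG\cong}$, which simultaneously yields the equivalences $\MCong\cE=\GTop(\cE)=\RCong\cE$. You instead argue directly on congruences, and the one substantive point you isolate — $\hrad\cJ\cap\Mono=\cJ\cap\Mono$ for $\cJ$ monogenic, proved by observing that a monomorphism has invertible higher diagonals so that only the $n=0$ covering condition survives — is exactly the interaction between the hyper-radical and monogeneity that the topology-mediated route encodes in the statement $(\hrad{\cG\cong})\cap\Mono=\cG$. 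Your forward implication $\cK\subseteq\hrad\cK=\hrad{\cK\mono}\subseteq\hrad\cJ$ correctly leans on \cref{lem:hypercovering-mono} and monotonicity of $\hrad-$, and the identification of the image with $\RCong\cE$ uses the idempotence $\hrad{\hrad\cK}=\hrad\cK$ recalled in \cref{def:hrad}; both of these, together with the fact that $\hrad\cK$ is a congruence at all, are the genuinely nontrivial inputs, and you are entitled to them since the paper states them in the preliminaries. The trade-off is that your direct route is shorter and self-contained at the level of this paper, while the cited route buys the extra identification of the middle term with Grothendieck topologies, which the paper uses elsewhere.
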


\subsubsection{Epic congruences and the quotient triple factorization}
\label{sec:triple-facto}

Let $\cK_\phi$ be the congruence of a morphism of logoi $\phi:\cE\to \cF$, and $\cK_\phi\mono$ its monogenic part.
Then $\phi:\cE\to \cF$ can be factored in three morphisms
\begin{equation}
\label{eq:triple-facto}	
\begin{tikzcd}
\cE 
\ar[r,"\phi\mono"]\ar[rr, bend right, "\phi\quot"']
&\cE\quotient\cK_\phi\mono  
\ar[r,"\phi\epi"]\ar[rr, bend left, "\phi\wcons"]
&\cE\quotient\cK_\phi 
\ar[r,"\phi\cons"']
&\cF \,.    
\end{tikzcd}
\end{equation}

\begin{definition}[Quotient/image factorization]
\label{def:quotient-facto}
We shall refer to the diagram~\eqref{eq:triple-facto} as the {\it quotient triple factorization} of $\phi$, or the {\it image triple factorization} if we think about it in terms of topoi.
We shall use the terminology and notation of \cref{table:facto} to talk about the various objects involved.
\end{definition}

\begin{definition}[Epic congruence and quotient]
\label{def:epic}
We shall say that a congruence $\cK$ is {\it epic} if $\cK\subseteq \Conn\infty$.
Equivalently, $\cK$ is epic if $\cK\subseteq \Surj$ \cite[Proposition 4.2.2]{ABFJ:GT}.
In \cite{ABFJ:GT}, epic congruences were called {\it cotopological}.
We shall say that a quotient of logoi $\phi:\cE\to \cF$ is {\it epic} if the congruence $\cK_\phi$ is epic.
\end{definition}

\begin{examples}
\label{exmp:epic-congruence} 
\begin{exmpenum}
\item $\Conn\infty$ is the largest epic congruence. The hyper-reduction $\cE\to \cE\quotient\Conn\infty$ is an epic quotient.
\item For an arbitrary congruence $\cK$, its image by the quotient $\phi\mono:\cE\to \cE\quotient\cK\mono$ is an epic congruence \cite[Proposition 4.2.5]{ABFJ:GT} that we shall denote $\cK\epi$.
When $\cK$ is of small generation, we have
\[
\cE\quotient\cK = (\cE\quotient\cK\mono)\quotient\cK\epi\,.
\]
\end{exmpenum}
\end{examples}

The following result is \cite[Proposition 4.3.2]{ABFJ:GT} specialized to a quotient of logoi.
For any $-1\leq n\leq \infty$, we shall say that an algebraic morphism of topoi $\phi:\cE\to \cF$ {\it reflects $n$\=/connected maps} if, for a map $f$ in $\cE$, 
$f$ is $n$\=/connected if and only if $\phi(f)$ is $n$\=/connected in $\cF$.
Since $f\in \Conn n(\cE)\Rightarrow \phi(f)\in \Conn n(\cF)$ is always true, $\phi$ reflects $n$\=/connected maps if and only if $\phi^{-1}(\Conn n(\cF))\subseteq \Conn n(\cE)$.

\begin{proposition}[Characterization of epic quotients]
\label{morphismvslemmacotop}
The following conditions on a morphism of logoi $\phi:\cE\to \cF$ are equivalent:
\begin{propenum}
\item\label{morphismvslemmacotop:0} $\phi$ is an epic quotient ;
\item\label{morphismvslemmacotop:1} $\phi$ is a quotient inverting no monomorphism (\ie $\cK_\phi\cap \Mono=\Iso$);
\item\label{morphismvslemmacotop:2} $\phi$ is a quotient reflecting surjective maps;
\item\label{morphismvslemmacotop:3} $\phi$ is a quotient reflecting $n$\=/connected maps for all $-1\leq n\leq \infty$;
\item\label{morphismvslemmacotop:4} $\phi$ is a quotient reflecting \oo connected maps.
\end{propenum}
\end{proposition}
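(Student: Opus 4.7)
The plan is to prove the equivalences in the circular chain
$(1)\Rightarrow(2)\Rightarrow(1)\Rightarrow(3)\Rightarrow(4)\Rightarrow(5)\Rightarrow(1)$.
All five conditions presuppose that $\phi$ is a quotient, so I may assume this throughout.
The two technical ingredients I will use are: (a) the surjection--monomorphism factorization in a logos, and (b) the fact that any logos morphism $\phi$ preserves this factorization as well as iterated diagonals, since $\phi$ preserves colimits and finite limits (and surjections are effective epimorphisms).

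For $(1)\Leftrightarrow(2)$: the forward implication is immediate from $\Surj\cap\Mono=\Iso$. For the converse, I would take any $f\in\cK_\phi$ and factor it as $A\to\im f\to B$ with surjective first factor and monic second factor. Applying $\phi$ preserves this factorization, and since $\phi(f)$ is an isomorphism, the monic factor $\phi(\im f\to B)$ is both surjective and mono, hence an isomorphism. Therefore $\im f\to B\in\cK_\phi\cap\Mono=\Iso$, so $\im f\to B$ is itself an isomorphism, and $f$ is surjective. This yields $\cK_\phi\subseteq\Surj$, which is (1).

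For $(1)\Rightarrow(3)$: suppose $\phi(f)$ is surjective. Using the same image factorization $f=ip$ with $p$ surjective and $i$ mono and applying $\phi$, the composite $\phi(i)\phi(p)$ is surjective while $\phi(p)$ already is; by 3-for-2 for surjections in a logos, $\phi(i)$ is both surjective and mono, hence an isomorphism. So $i\in\cK_\phi\cap\Mono=\Iso$ by the already-established (2), and $f$ is surjective. For $(3)\Rightarrow(4)$: recall from \cref{exmp:classifying-logos:conn} that $n$-connectedness is defined by the surjectivity of all iterated diagonals $\Delta^k f$ up to level $n+1$ (and all levels when $n=\infty$). Since $\phi$ commutes with diagonals, $\phi(\Delta^k f)=\Delta^k\phi(f)$, and reflection of surjections applied at each level gives reflection of $n$-connected maps for every $-1\leq n\leq\infty$. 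The implication $(4)\Rightarrow(5)$ is the special case $n=\infty$.

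Finally, for $(5)\Rightarrow(1)$, any isomorphism is \oo connected, so for $f\in\cK_\phi$ the map $\phi(f)$ is \oo connected, and reflection gives that $f$ is \oo connected. Hence $\cK_\phi\subseteq\Conn\infty$, which by the equivalence $\cK\subseteq\Conn\infty\iff\cK\subseteq\Surj$ for congruences (cited in \cref{def:epic}) is exactly the epicness condition (1). The only subtle point of the argument is the compatibility of the image factorization and of iterated diagonals with $\phi$; but both are formal consequences of $\phi$ being a logos morphism together with $\cK_\phi$ being a congruence (hence stable under diagonals as finite limits), so no real obstacle arises and the proof reduces to a sequence of diagram chases in image factorizations.
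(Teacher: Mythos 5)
Your argument is correct, but note that there is nothing in this paper to compare it with: the proposition is not proved here, it is imported from \cite[Proposition 4.3.2]{ABFJ:GT} ``specialized to a quotient of logoi''. Your cycle $(1)\Leftrightarrow(2)$, $(1)\Rightarrow(3)\Rightarrow(4)\Rightarrow(5)\Rightarrow(1)$ is a legitimate self-contained substitute, and every ingredient you use is available in the present text: a logos morphism preserves the $(\Surj,\Mono)$ factorization and iterated diagonals (cocontinuity plus left-exactness), surjections have the right cancellation property and satisfy $\Surj\cap\Mono=\Iso$, the class $\Conn{n}$ is exactly the class of maps whose diagonals $\Delta^k f$ are surjective for $0\leq k\leq n+1$ (all $k$ when $n=\infty$), and the ``preservation'' half of each reflection statement is automatic, so only the backward implications need an argument---which is what you supply. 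Two cosmetic remarks: in $(5)\Rightarrow(1)$ you can conclude directly, since ``epic'' is defined by $\cK_\phi\subseteq\Conn\infty$ (\cref{def:epic}), the equivalence with $\cK_\phi\subseteq\Surj$ being needed only if you prefer to phrase epicness via surjections; and in the converse of $(1)\Leftrightarrow(2)$ the key point you rely on---that $\phi$ carries the factorization $f=\im{f}\circ\coim{f}$ to the factorization of $\phi(f)$, so that an invertible $\phi(f)$ forces $\phi(\im{f})$ to be a surjective monomorphism---is exactly the fact used elsewhere in the paper (e.g.\ in the proof of \cref{prop:naturality}), so it is safe to invoke without further justification.
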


\begin{lemma}
\label{lem:cancel-epic}
Given two quotients of logoi $\phi:\cE\to \cF$ and $\psi:\cF\to \cG$, 
the composition $\psi\phi$ is epic if and only if both $\phi$ and $\psi$ are epic.
\end{lemma}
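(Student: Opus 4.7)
The plan is to invoke the characterization of epic quotients from \cref{morphismvslemmacotop:4}, namely that a quotient is epic precisely when it reflects \oo connected maps. Before using this, I would note two generalities. First, $\psi\phi$ is again a quotient, since its right adjoint $\phi_*\psi_*$ is fully faithful as a composite of fully faithful functors. Second, unwinding definitions, $\cK_{\psi\phi}=\phi^{-1}(\cK_\psi)$, so that the problem translates into inclusions of congruences inside $\Conn\infty$.

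For the forward direction, suppose $\psi$ is epic, so that $\cK_\psi \subseteq \Conn\infty(\cF)$. Then
\[
\cK_{\psi\phi} \;=\; \phi^{-1}(\cK_\psi) \;\subseteq\; \phi^{-1}(\Conn\infty(\cF)) \;\subseteq\; \Conn\infty(\cE),
\]
where the last inclusion uses that $\phi$, being epic, reflects \oo connected maps by \cref{morphismvslemmacotop:4}. Hence $\psi\phi$ is epic.

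For the converse, suppose $\psi\phi$ is epic, and take any $g \in \cK_\psi$. Because $\phi$ is a reflective localization, its counit $\phi\phi_* \to \id_\cF$ is an isomorphism, so setting $f := \phi_*(g)$ gives $\phi(f)\simeq g$. Then $\psi\phi(f)\simeq\psi(g)\simeq\id$, so $f\in\cK_{\psi\phi}\subseteq\Conn\infty(\cE)$ by hypothesis. Since logos morphisms preserve finite limits (hence iterated diagonals) and surjections, they preserve \oo connected maps, and therefore $g\simeq\phi(f)\in\Conn\infty(\cF)$. This shows $\cK_\psi\subseteq\Conn\infty(\cF)$, i.e., $\psi$ is epic.

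There is no genuine obstacle: the argument is essentially formal once the characterization in \cref{morphismvslemmacotop:4} is in hand. It is worth noting that the converse direction does not use the hypothesis that $\phi$ is epic---the cancellation follows from the essential surjectivity of $\phi$ on arrows provided by its fully faithful right adjoint, together with the preservation of \oo connected maps by logos morphisms.
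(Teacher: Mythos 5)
Your argument is correct and is essentially the paper's proof, which simply invokes the reflection characterization of epic quotients from \cref{morphismvslemmacotop} (your use of $\cK_{\psi\phi}=\phi^{-1}(\cK_\psi)$, the reflection of \oo connected maps by $\phi$, and the lifting of maps in $\cF$ via the fully faithful right adjoint is exactly the "direct" verification the paper has in mind). Your closing remark is also consistent: in the converse direction the epicness of $\phi$ is automatic, since $\cK_\phi\subseteq\cK_{\psi\phi}\subseteq\Conn\infty(\cE)$.
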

\begin{proof}
The condition is clearly sufficient by \cref{morphismvslemmacotop}.
Conversely, let us put $\cK=\cK_\phi$ and $\cK'=\cK_{\psi\phi}$. 
If $\psi\phi$ is epic, we have $\cK\subseteq \cK'\subseteq\Conn\infty$ in $\cE$.
This proves that $\phi$ is epic.
The quotient $\psi$ is generated by the congruence $\psi(\cK')\cong$.
The inclusions $\psi(\cK')\subseteq \psi(\Conn\infty) \subseteq \Conn\infty$ show that 
$\psi(\cK)\cong\subseteq \Conn\infty$ and that $\psi$ is epic.
\end{proof}

\begin{lemma}
\label{lem:transport-hyper-radical}
If $\phi:\cE\to \cF$ is an epic quotient, then $\phi(\Conn\infty)=\Conn\infty$ in $\cF$.
\end{lemma}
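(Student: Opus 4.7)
The plan is to prove both inclusions $\phi(\Conn\infty(\cE)) \subseteq \Conn\infty(\cF)$ and $\Conn\infty(\cF) \subseteq \phi(\Conn\infty(\cE))$ separately.

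For the first inclusion, I would argue that any morphism of logoi preserves $\infty$-connected maps: since $\phi$ preserves finite limits and colimits, it preserves images and diagonals, and therefore sends any map $f$ with $\im{\Delta^n f}\in\Iso$ for all $n$ to a map with the same property. This direction does not use that $\phi$ is epic; it only uses that $\phi$ is a morphism of logoi.

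The interesting inclusion is the reverse one, and here the epic hypothesis enters through \cref{morphismvslemmacotop}. The plan is to exploit the fact that $\phi$ being a quotient means its right adjoint $\phi_*$ is fully faithful, so the counit $\phi\phi_* \Rightarrow \id_\cF$ is an isomorphism. Given $g \in \Conn\infty(\cF)$, I would set $f := \phi_*(g) \in \Arr\cE$ and observe that $\phi(f) \simeq g$ by the counit. To conclude $g \in \phi(\Conn\infty(\cE))$, it remains to show $f \in \Conn\infty(\cE)$. This is where the epic hypothesis is used: by \cref{morphismvslemmacotop:4}, $\phi$ reflects $\infty$-connected maps, so from $\phi(f) \simeq g \in \Conn\infty(\cF)$ we obtain $f \in \Conn\infty(\cE)$, as desired.

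The proof is essentially a two-line argument combining fully faithfulness of $\phi_*$ with the reflection property characterizing epic quotients, so there is no real obstacle; the main thing to be careful about is simply to invoke the correct item of \cref{morphismvslemmacotop} and to remember that $\phi(\Conn\infty)$ denotes the essential image (closed under isomorphism) so that $\phi(\phi_*(g)) \simeq g$ suffices to place $g$ in it.
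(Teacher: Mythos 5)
Your proof is correct and follows essentially the same route as the paper: the paper also combines the reflection of $\infty$-connected maps from \cref{morphismvslemmacotop:4} (giving $\phi^{-1}(\Conn\infty)=\Conn\infty$) with the fully faithful right adjoint of the quotient (giving $\phi(\phi^{-1}(\Sigma))=\Sigma$), which is exactly your $f=\phi_*(g)$ argument phrased more compactly. Nothing is missing.
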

\begin{proof}
\Cref{morphismvslemmacotop:4} implies that $\Conn\infty=\phi^{-1}(\Conn\infty)$ in $\cE$.
Because $\phi$ is a quotient, its fully faithful right adjoint ensures that $\phi(\phi^{-1}(\Sigma))=\Sigma$ for any class of maps in $\cF$.
Thus $\phi(\Conn\infty)=\phi(\phi^{-1}(\Conn\infty))=\Conn\infty$.
\end{proof}

\begin{lemma}
\label{lem:hyper-reduction-epic}
If $\phi:\cE\to \cF$ is an epic quotient, the induced morphism between the hyper-reductions $\cE\quotient\Conn\infty\to \cF\quotient\Conn\infty$ is an equivalence.
\end{lemma}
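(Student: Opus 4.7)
The plan is to identify both $\cE\quotient\Conn\infty(\cE)$ and $\cF\quotient\Conn\infty(\cF)$ with the same quotient of $\cE$ via the universal property of localizations, and observe that the induced map $\bar\phi$ is the resulting equivalence.

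First, I would form the commutative square
\[
\begin{tikzcd}
\cE \ar[r, "\phi"] \ar[d, "p_\cE"'] & \cF \ar[d, "p_\cF"] \\
\cE\quotient\Conn\infty(\cE) \ar[r, "\bar\phi"'] & \cF\quotient\Conn\infty(\cF)
\end{tikzcd}
\]
where $p_\cE$ and $p_\cF$ are the hyper-reduction quotients. The induced morphism $\bar\phi$ exists because $\phi$ preserves $\infty$-connected maps (being a logos morphism, it is left-exact and cocontinuous, hence preserves surjections and diagonals, so it preserves each $\Conn n$ and their intersection).

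The key input is \cref{morphismvslemmacotop:4}, which for an epic quotient $\phi$ asserts $\phi^{-1}(\Conn\infty(\cF)) = \Conn\infty(\cE)$. Using this, I compute the congruence of the composite $p_\cF\circ\phi$:
\[
\cK_{p_\cF\circ\phi} \;=\; \phi^{-1}(p_\cF^{-1}(\Iso)) \;=\; \phi^{-1}(\Conn\infty(\cF)) \;=\; \Conn\infty(\cE).
\]
Moreover, $p_\cF\circ\phi$ is a composite of two quotients of logoi, hence itself a quotient. So $p_\cF\circ\phi$ is a quotient of $\cE$ with congruence $\Conn\infty(\cE)$; by \cref{thm:bij-congruence-lexloc} (since $\Conn\infty(\cE)$ is of small generation by \cref{exmp:congruence4}), such a quotient is unique up to equivalence, and is canonically identified with $p_\cE$.

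It remains to conclude that $\bar\phi$ is an equivalence. Since $p_\cE$ is a quotient and $\bar\phi\circ p_\cE = p_\cF\circ\phi$ is also a quotient, $\bar\phi$ is a quotient. Its congruence satisfies
\[
p_\cE^{-1}(\cK_{\bar\phi}) \;=\; \cK_{\bar\phi\circ p_\cE} \;=\; \cK_{p_\cF\circ\phi} \;=\; \Conn\infty(\cE) \;=\; p_\cE^{-1}(\Iso),
\]
and since $p_\cE$ is a quotient (so $p_\cE^{-1}$ reflects congruences in the target), $\cK_{\bar\phi}=\Iso$. Thus $\bar\phi$ is a quotient with trivial congruence, hence an equivalence. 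I expect no serious obstacle: the whole argument is a bookkeeping exercise with the characterizations already established in \cref{morphismvslemmacotop,thm:bij-congruence-lexloc}, and the only point requiring mild care is confirming the well-definedness of $\bar\phi$ and the identification $\cK_{p_\cF\circ\phi}=\Conn\infty(\cE)$, for which \cref{morphismvslemmacotop:4} is exactly what is needed.
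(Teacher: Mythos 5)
Your proposal is correct and follows essentially the same route as the paper: the key step in both is that $\phi^{-1}(\Conn\infty(\cF))=\Conn\infty(\cE)$ by \cref{morphismvslemmacotop:4}, so the composite $\cE\to\cF\to\cF\quotient\Conn\infty$ is a quotient with congruence $\Conn\infty(\cE)$ and hence identifies with the hyper-reduction of $\cE$. Your version merely spells out the bookkeeping (well-definedness of $\bar\phi$, composite of quotients is a quotient, trivial congruence implies equivalence) that the paper's two-line proof leaves implicit.
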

\begin{proof}
Since $\phi:\cE\to\cF$ is epic, $\phi^{-1}(\Conn\infty) = \Conn\infty$ by  \cref{lem:transport-hyper-radical}.
This shows that the kernel of the quotient $\psi:\cE\to \cF\to \cF\quotient\Conn\infty$ is $\Conn\infty$ and thus $\cE\quotient\Conn\infty = \cF\quotient\Conn\infty$.
\end{proof}

\begin{table}[htbp]
\caption{Terminology for the quotient/image triple factorization \eqref{eq:triple-facto}	}
\label{table:facto}

\begin{center}	
\renewcommand{\arraystretch}{1.6}
\begin{tabularx}{.8\textwidth}{
|>{\centering\arraybackslash}X
|>{\centering\arraybackslash}X
|>{\centering\arraybackslash}X|
}
\hline
morphism of topoi $\tY\to \tX$ & morphism of logoi $\phi:\Sh\tX\to \Sh \tY$ & condition on $\phi$\\
\hline
inclusion & quotient & $\phi=\phi\quot$\\
\hline
ample inclusion & monogenic quotient & $\phi=\phi\mono$\\
\hline
thin inclusion & epic quotient & $\phi=\phi\epi$\\
\hline
surjection & conservative morphism & $\phi=\phi\cons$\\
\hline
weak surjection & weak conservative morphism & $\phi=\phi\wcons$\\
\hline
image factorization & quotient factorization & $\phi=\phi\cons\circ \phi\quot$\\
\hline
weak image factorization & weak quotient factorization & $\phi=\phi\wcons\circ \phi\mono$\\
\hline
thin--ample factorization of inclusions & monogenic--epic factorization of quotients & $\phi\quot=\phi\epi\circ \phi\mono$\\
\hline
\end{tabularx}
\end{center}
\end{table}

\begin{remark}[Epic quotients in algebraic geometry]
\label{rem:triple-facto}
We shall propose an analogue of the factorization \eqref{eq:triple-facto} in algebraic geometry.
Let $\phi:A\to B$ be a ring morphism with kernel $I$.
The image factorization $A\to A/I\to B$ is an obvious analogue of the image factorization $\phi:\cE\to \cE\quotient \cK_\phi\to \cF$.
Let us see now what could be the analogue of the monogenic--epic factorization of the quotient $\cE\to \cE\quotient \cK_\phi$.
Let $Z\subseteq X$ be the inclusion of schemes dual to $A\to A/I$.
As argued in \cref{rem:loc=mono}, an analogue of $\cE\quotient \cK\mono$ is the subscheme $Z'$ of $X$ defined as the intersection of all open subschemes containing $Z$.
This way, we get a factorization $Y\to Z\to Z'\to X$ into a dominant morphism onto a closed subscheme $Z$, followed by the inclusion of the closed subscheme into its `local neighborhood' $Z'$ and the inclusion of this neighborhood in the ambient scheme $X$.

Algebraically, $Z'$ correspond to the localization $A\to A[S^{-1}]$ with respect to the set $S\subseteq A$ of elements inverted by $A\to A/I$.
This provides a triple factorization $A\to A[S^{-1}]\to A/I\to B$.
The morphism $A[S^{-1}]\to A/I$ is the quotient by the image $I'$ of $I$ in $A[S^{-1}]$.
Using that $A[S^{-1}]\to A/I$ is conservative by construction, it is easy to see that $I'$ is inside the Jacobson radical of $A[S^{-1}]$.
Let us say that a ring quotient $B\to B/J$ is sub-Jacobson if $J$ is inside the Jacobson radical of $B$.
This is the analogue that we propose for epic quotients of logoi.
This fits well with the correspondance between the Jacobson radical and the congruence $\Conn\infty$ outlined already.
(An example of this triple factorization is given in \cref{rem:triple-facto-tower}.)
\end{remark}

\subsection{Pushout product, orthogonality, modalities}

In this section, we collect some technical results about the pushout product of maps in a logos, and we recall the definitions of modalities and fiberwise orthogonality from \cite{ABFJ:GC,ABFJ:HS}.
We refer to \cite[Section~5.2.8]{Lurie:HTT} or \cite[Section~3.1]{ABFJ:HS} for more details on factorization systems, and to \cite[Section~3.2]{ABFJ:HS} and \cite{RSS} for modalities.

\subsubsection{Pushout product and pullback hom}
\label{sec:pp-pbh}
\label{classical:boxmonoidal}
\label{sec:codiagonal}

The poset $[1] = \{0<1\}$ is monoidal for the $\sf inf$ structure.
A logos $\cE$ is in particular a cartesian closed category, thus the category $\Arr\cE:=\fun {[1]}\cE$ can be equipped with the closed symmetric monoidal Day convolution structure \cite{Glasman:Day,Lurie:HA}.
The resulting monoidal structure on $\Arr\cE$ is the {\it pushout product}.
For two maps $u:A\to B$ and $f:X \to Y$, it can be defined as the cocartesian gap map $u\pp f$ of the following commutative square:
\begin{equation}
\label[diagram]{pushout-prod1}
\begin{tikzcd}
A\times X \ar[r,"u\times X"] \ar[d,"A\times f"']
& B\times X \ar[d,"{i_2}"] \ar[ddr, bend left, "B\times f"]
\\
A\times Y \ar[r,"i_1"] \ar[drr, bend right,"u\times Y"]
& (A\times Y) \bigsqcup_{A\times X}  (B\times X)   \ar[dr,"{u\pp f}"']
\\
&& B\times Y\,.
\end{tikzcd}
\end{equation}
The unit of the pushout product is the canonical map $0\to 1$.
When the two maps have the terminal object as codomain, their pushout product reduces to the {\it join} product of the domains:
$(A\to 1)\pp(X\to 1) = A\join X\to 1$, where
$A\join X$ is the pushout of $A\ot A\times X\to X$.

The corresponding internal hom is the {\it pullback hom}.
For two maps $u:A\to B$ and $f:X \to Y$, it can be defined as the cartesian gap map $\pbh u f$ of the following commutative square:
\[
\begin{tikzcd}
\Map B X \ar[rrrd,"{\Map u X}", bend left=20] \ar[dddr,"{\Map B f}"', bend right] \ar[rd,"\pbh u f"] \\
&\Map B Y \times_{\Map A Y}\Map A X \ar[rr,"p_2"] \ar[dd,"p_1"'] && 
\Map A X \ar[dd,"{\Map A f}"] \\
\\
&\Map B Y \ar[rr,"{\Map u Y}"]
&& \Map A Y\,.
\end{tikzcd}
\]

\begin{proposition}[Day convolution]
\label{boxadjunction}
For any map $u$ in a logos $\cE$, the functor $u\pp-:\Arr\cE\to \Arr\cE$ is left adjoint to the functor $\pbh u -$:
Moreover, for every triple of maps $u,v,f\in \cE$, we have
\[
\pbh{ u\pp v} f\ =\ \pbh u {\pbh v f}\,.
\]
\end{proposition}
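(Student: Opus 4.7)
The plan is to recognize the statement as a direct instance of Day convolution applied to the arrow category. The poset $[1]$ is symmetric monoidal for the meet structure, so $\Arr\cE = \fun{[1]}{\cE}$ inherits a closed symmetric monoidal Day convolution structure from the cartesian closed logos $\cE$ (as already invoked in the paragraph preceding the statement). The whole proposition then reduces to identifying the Day tensor product and Day internal hom with the explicit pushout-product and pullback-hom constructions, plus a formal consequence of associativity.

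First, I would compute the Day tensor product of two arrows $u : A \to B$ and $v : X \to Y$. By the defining left Kan extension, the value at $1 \in [1]$ is $B \times Y$, while the value at $0$ is the colimit over the comma category $(- \wedge - \downarrow 0)$, which is precisely the pushout $(A \times Y) \sqcup_{A \times X} (B \times X)$. The induced map between these levels is exactly $u\pp v$ as drawn in \cref{pushout-prod1}. A dual computation identifies the Day internal hom with $\pbh u f$: the values at $0$ and $1$ are $\Map{B}{X}$ and $\Map{B}{Y} \times_{\Map{A}{Y}} \Map{A}{X}$ respectively, with the pullback arising from the right Kan extension formula and cartesian closedness of $\cE$.

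Next, to establish the adjunction $u\pp - \dashv \pbh{u}{-}$ concretely, I would transpose the defining data of a square $u\pp v \to f$. By the universal property of the pushout defining $u\pp v$, such a square is equivalent to a pair of compatible squares $(u\times Y,\, f)$ and $(B\times v,\, f)$ agreeing on $A\times X$, and then cartesian closedness of $\cE$ rewrites this pair as a single square $v \to \pbh{u}{f}$ by the pullback universal property defining $\pbh{u}{f}$. This step is purely a bookkeeping argument but is the one place where one actually verifies, rather than invokes, the adjunction.

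Finally, the formula $\pbh{u\pp v}{f} = \pbh{u}{\pbh{v}{f}}$ is a formal consequence of associativity and symmetry of $\pp$ together with the adjunction. By the Yoneda lemma it suffices to compute, for an arbitrary map $w$,
\[
\Arr\cE(w,\pbh{u\pp v}{f}) \simeq \Arr\cE((u\pp v)\pp w,\,f) \simeq \Arr\cE(v\pp w,\pbh{u}{f}) \simeq \Arr\cE(w,\pbh{v}{\pbh{u}{f}}),
\]
where the middle step uses associativity of Day convolution and the outer steps use the adjunction from the previous paragraph; symmetry of $\pp$ then permits swapping $u$ and $v$. The main obstacle is not mathematical but presentational: in the $\infty$\=/categorical setting all these bijections are equivalences of mapping spaces and associativity/symmetry are coherent data, so the cleanest route is to cite the general results on Day convolution in \cite{Lurie:HA} and \cite{Glasman:Day} and merely perform the identifications of the tensor and internal hom with the pushout-product and pullback-hom.
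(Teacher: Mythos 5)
Your proposal is correct and follows essentially the same route as the paper, which offers no separate proof but derives the proposition directly from the closed symmetric monoidal Day convolution structure on $\Arr\cE=\fun{[1]}{\cE}$ with references to Glasman and Lurie. Your extra unpacking — computing the Day tensor and internal hom levelwise to identify them with the pushout product and pullback hom, verifying the adjunction by transposing across the pushout/pullback universal properties and cartesian closedness, and deducing $\pbh{u\pp v}{f}=\pbh{u}{\pbh{v}{f}}$ by Yoneda from associativity and symmetry — is a sound elaboration of exactly the argument the paper leaves implicit.
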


\medskip

Recall that the {\it codiagonal} $\nabla(u):B\sqcup_A B\to B$ of a map $u:A\to B$ is defined by the following diagram with a pushout square
\[
\begin{tikzcd}
A\ar[d, "{u}"'] \ar[r, "{u }"] &  B\ar[d] \ar[ddr, equal, bend left] &  \\
B \ar[r] 
\ar[drr, equal, bend right] & \pomark B\sqcup_A B   \ar[dr,"{\nabla(u)}"'] &   \\
&& B\,.
\end{tikzcd}
\]
The codiagonal of the map $A\to 1$ is the map $\Sigma A\to 1$, where $\Sigma A$ is the (unreduced) {\it suspension} of the object $A$.
By universality of colimits in $\cE$, the fibers of the codiagonal $\nabla(u):B\sqcup_A B\to B$ are the suspensions of the fibers of $A\to B$.
The $n$\=/spheres in $\cE$ are the iterated suspensions of $S^{-1}:=\emptyset$.
We have $S^0 := \Sigma \emptyset = 1+1$, and $S^{n+1} := \Sigma S^n$.
We denote $s^n$ the map $S^n\to 1$.
The diagram defining $\nabla(u)$ shows that $\nabla(u)=s^0\pp u$, where $s^0$ is the map $S^0\to 1$.
In particular, we have $s^n = s^0\pp \dots \pp s^0$ ($n+1$ factors).

\medskip
Recall that the diagonal of a map $X\to Y$ is the cartesian gap map of the following square
\[
\begin{tikzcd}
X \ar[dr, "\Delta(f)"] \ar[drr, equal, bend left] \ar[ddr, equal, bend right] && \\
& X\times_Y X\ar[r, "p_2"] \ar[d, "p_1"'] \pbmark & X\ar[d, "f"] \\
& X \ar[r, "f"']  & Y\,.
\end{tikzcd}
\]

For $K$ an object in a logos $\cE$, let $k$ be the map $K\to 1$. 
For any map $f:X\to Y$, we will say that the map $\pbh k f$ is the {\it $K$\=/diagonal} of $f$.
The diagram defining $\Delta(f)$ shows that $\Delta(u)=\pbh {s^0} f$, where $s^0$ is the map $S^0\to 1$.
We define the iterated diagonals of $f$ by $\Delta^{0}(f) = f$ and $\Delta^{n+1} f = \Delta (\Delta^n f)$.
The formula $s^n = s^0\pp \dots \pp s^0$ shows that $\Delta^{n+1} f = \pbh {s^n} f$.

\begin{remark}
\label{1boxadjunction} 
Specializing \cref{boxadjunction} with $u=s^0$, we get the classical adjunction $\nabla \dashv \Delta$ between the codiagonal and diagonal functors.
Moreover, we have
$\pbh{\nabla(u)} f
=
\pbh u {\Delta(f)}$
for every pair of maps $u,f\in \cE$.
\end{remark}

\subsubsection{Compatibility with base change}

We will say that a morphism $\alpha:f'\to f$ in the category $\Arr\cE$ is {\it cartesian} ({\it cocartesian}) 
and that $f'$ is a {\it base change} of $f$ ($f$ a {\it cobase change} of $f'$)
if the corresponding square in the category $\cE$ is cartesian (cocartesian):
\[
\begin{tikzcd}
X' \ar[r] \ar[d, "f'"'] & X \ar[d, "f"] \\
Y'  \ar[r]& Y\,.
\end{tikzcd}
\]

\begin{lemma}
\label{basechangebox}
If $f'\to f$ is a cartesian morphism, then, for every map $u$, the morphism $u\pp f'\to u\pp f$ is cartesian.
\end{lemma}
\begin{proof}
Let $u:A\to B$ be a map.
The map $u\pp f'\to u\pp f$ corresponds to a square computed from the cube (in solid lines)
\begin{equation}
\label{cube:pp}
\begin{tikzcd}
A\times X' \ar[rr] \ar[dd] \ar[rd]
&& A\times Y' \ar[dd] \ar[rd,dotted] \ar[rrrdd, bend left=10]
\\
&B\times X' \ar[rr, crossing over, dotted]
&&\bullet \ar[rrd,dashed, "u\pp f'"']
\\
A\times X \ar[rr] \ar[rd]
&& A\times Y  \ar[rd,dotted] \ar[rrrdd, bend left=10]
&&& B\times Y'  \ar[dd] 
\\
&B\times X  \ar[rr, dotted] \ar[from=uu, crossing over] \ar[rrrrd, bend right=5]
&&\bullet \ar[rrd,dashed, "u\pp f"']
\ar[from=uu, dashed, crossing over] \ar[from=2-2, to=3-6, crossing over, bend right=5]
\\
&&&&& B\times Y
\end{tikzcd}
\end{equation}
by taking the cocartesian gap maps on the top and bottom sides (dotted lines).
Notice that all the faces of the cube are cartesian squares.
In particular, the top square can be seen as the base change of the bottom one along $B\times Y'\to B\times Y$.
Since base change preserves pushouts, this shows that the cocartesian gap square (in dashed lines), which is the morphism $u'\pp f'\to u\pp f$, is cartesian.
\end{proof}

\begin{lemma}
\label{cobasechangebox}
If $f'\to f$ is a cocartesian morphism, then, for every map $u$, the morphism $u\pp f'\to u\pp f$ is cocartesian.
\end{lemma}
\begin{proof}
We use the cube \eqref{cube:pp}.
The front and back solid faces are now cocartesian.
We want to show that the dashed square is cocartesian.
The top and bottom dotted squares are cocartesian by construction.
Using these squares and the back square, we get that the front dotted square is cocartesian.
Using this and the front (solid) face, we then get that the dashed square is cocartesian.
\end{proof}

\begin{lemma}
\label{box-compo}
If $f:X\to Y$, $g:Y\to Z$ and $u:A\to B$, then 
\[
u\pp gf
\ =\ 
(u\pp g)(u\pp f)'\,,
\]
where $(u\pp f)'$ is a cobase change of $u\pp f$.
\end{lemma}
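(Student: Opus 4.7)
The plan is to exhibit $(u\pp f)'$ explicitly as the canonical comparison map between the relevant pushouts, and then verify the two claims (cobase change and factorization) by direct use of universal properties.

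Writing $u\colon A \to B$, set $P_f := (A\times Y)\sqcup_{A\times X}(B\times X)$, $P_g := (A\times Z)\sqcup_{A\times Y}(B\times Y)$, and $P_{gf} := (A\times Z)\sqcup_{A\times X}(B\times X)$, so that the three pushout products are $u\pp f\colon P_f\to B\times Y$, $u\pp g\colon P_g\to B\times Z$, and $u\pp(gf)\colon P_{gf}\to B\times Z$. First I would arrange the six objects $\{A,B\}\times\{X,Y,Z\}$ into a $2\times 3$ commutative rectangle and apply the pasting lemma for pushouts to the left column–middle column pushout followed by middle–right, giving a canonical identification $P_{gf} \simeq (A\times Z)\sqcup_{A\times Y} P_f$ and hence a canonical map $P_f \to P_{gf}$. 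Then, by the universal property of $P_{gf}$, the maps $A\times Z \xrightarrow{\mathrm{id}} A\times Z \to P_g$ and $B\times X \xrightarrow{B\times f} B\times Y \to P_g$ agree on $A\times X$ and thus induce a map $(u\pp f)'\colon P_{gf}\to P_g$.

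Next I would show that $(u\pp f)'$ is a cobase change of $u\pp f$ by verifying that the square
\[
\begin{tikzcd}
P_f \ar[r] \ar[d, "u\pp f"'] & P_{gf} \ar[d, "(u\pp f)'"] \\
B\times Y \ar[r] & P_g
\end{tikzcd}
\]
is a pushout. This is a direct computation: giving a cocone $P_{gf}\to W$ and $B\times Y\to W$ that agree on $P_f$ unpacks to a triple $(\alpha\colon A\times Z\to W,\ \beta\colon B\times X\to W,\ \gamma\colon B\times Y\to W)$ satisfying $\beta = \gamma\circ(B\times f)$ together with $\alpha\circ(A\times g) = \gamma\circ(u\times Y)$; the first equation expresses $\beta$ as determined by $\gamma$, and the remaining data is precisely a cocone on the pushout diagram defining $P_g$. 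Hence $P_{gf}\sqcup_{P_f}(B\times Y)\simeq P_g$, as required.

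Finally, I would verify the factorization $u\pp(gf) = (u\pp g)\circ(u\pp f)'$ by checking the composite on the two canonical inclusions into $P_{gf}$: on $A\times Z$ it reads $A\times Z\xrightarrow{\mathrm{id}}A\times Z\to P_g \xrightarrow{u\times Z} B\times Z$, and on $B\times X$ it reads $B\times X\xrightarrow{B\times f}B\times Y\to P_g \xrightarrow{B\times g} B\times Z$. These are respectively $u\times Z$ and $B\times(gf)$, which is exactly the data defining $u\pp(gf)$ out of $P_{gf}$, so the two maps agree by the universal property of $P_{gf}$. No step presents a real obstacle; the only care required is in keeping track of the canonical inclusions into the various iterated pushouts, and the pasting lemma for pushouts (together with cocontinuity of $-\times A$ and $-\times B$) does all the work.
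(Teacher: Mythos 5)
Your proposal is correct and takes essentially the same route as the paper: the paper's proof is precisely the pasting diagram of three pushout squares in which the pasted object is your $P_{gf}$, the map $(u\pp f)'$ is the cobase change of $u\pp f$ along $P_f\to P_{gf}$, and its codomain is identified with $P_g$ by pasting, so that $u\pp (gf)$ factors as claimed. Your explicit cocone verifications merely spell out the universal-property checks that the paper's diagram leaves implicit.
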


\begin{proof}
The result follows from the following diagram, where the three squares are pushouts:
\[
\begin{tikzcd}
A\times X \ar[rr, "{u\times X}"] \ar[d, "{A\times f}"'] && 
B\times X   \ar[d, "{i_2}"] \ar[drr, "{B\times f}"]  && && \\
A\times Y  \ar[rr, "{i_1}"] \ar[d, "{A\times g}"'] &&\bullet \pomarkk \ar[d, "{i_2}"]   \ar[rr, "{u\pp f}"]  && B\times Y  \ar[d, "{i_2}"]    \ar[drr, "{B\times g}"]  &&  \\
A\times Z 
\ar[rrrrrr, "{u\times Z}", bend right]   \ar[rr, "{i_1}"] &&  \bullet \pomarkk \ar[rr, "{(u\pp f)'}"] \ar[rrrr, "{u\pp gf}", bend right=20]   &&  \bullet \pomarkk  \ar[rr, "{u\pp g}"]  && B\times Z  \,.
\end{tikzcd}
\]
\end{proof}

\subsubsection{Pushout products and monomorphisms}

\begin{lemma}
\label{lem:cocartgapmapmono}
In a logos $\cE$, the cocartesian gap map of a pullback square of monomorphisms
\[
\begin{tikzcd}
W \ar[r] \ar[d] & V \ar[d] \\
U \ar[r]& X
\end{tikzcd}
\]
is a monomorphism.
\end{lemma}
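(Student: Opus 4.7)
The plan is to prove that $f\colon U \sqcup_W V \to X$ is a monomorphism by showing that the canonical map
\[
U \sqcup_W V \longrightarrow (U \sqcup_W V) \times_X (U \sqcup_W V)
\]
is an isomorphism, i.e.\ that the diagonal $\Delta(f)$ is invertible. The key tool is descent (universality of colimits) in the logos $\cE$, combined with the fact that for a monomorphism $m\colon A\hookrightarrow B$ one has $A\times_B A = A$.

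First I would record the hypothesis: since $U\to X$, $V\to X$ are monos and the square is a pullback, the two remaining edges $W\to U$ and $W\to V$ are also monos, and moreover the composite $W\to X$ is a mono. This gives the useful identities
\[
U\times_X U = U,\qquad V\times_X V = V,\qquad W\times_X W = W,\qquad U\times_X V = W.
\]

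Next I would compute $(U\sqcup_W V)\times_X (U\sqcup_W V)$ by applying descent twice. Pulling the pushout $U\sqcup_W V$ back along $U\to X$ yields
\[
U\times_X(U\sqcup_W V) \;=\; (U\times_X U)\sqcup_{U\times_X W}(U\times_X V) \;=\; U\sqcup_W W \;=\; U,
\]
and the analogous computation with $V$ and with $W$ in place of $U$ gives $V\times_X(U\sqcup_W V) = V$ and $W\times_X(U\sqcup_W V) = W$. A second application of descent then yields
\[
(U\sqcup_W V)\times_X (U\sqcup_W V) \;=\; U\sqcup_W V,
\]
and one checks that under these identifications the diagonal of $f$ is the identity, so $f$ is a monomorphism.

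The computation is essentially routine once descent is invoked; the only place one needs to be slightly careful is identifying the gluing maps in the iterated pushouts (verifying that $W\to W$ is the identity and $W\to U$, $W\to V$ are the original structure maps), but this is immediate from the universal property of each pullback square used.
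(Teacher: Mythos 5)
Your proof is correct and follows essentially the same route as the paper: both use universality of colimits (descent) to expand $(U\sqcup_W V)\times_X(U\sqcup_W V)$, use the monomorphism hypotheses and cartesianness to identify the pieces ($U\times_X U=U$, $U\times_X V=W$, etc.), and conclude that the fiber product collapses to $U\sqcup_W V$ with the diagonal becoming invertible. The only differences are cosmetic: the paper first slices over $X$ to reduce to $X=1$ and expands the whole $3\times 3$ grid of products at once, whereas you stay over $X$ and apply descent twice in succession.
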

\begin{proof}
We can assume $X=1$ by working in $\cE\slice X$.
Let us not assume that the square is a pullback yet, but only that all maps are monomorphisms (\ie that all objects are subterminal).
We let $Y:=U\sqcup_WV\to 1$ be the cocartesian gap map.
Using the universality of colimits, the computation of $Y\times Y$ expands in the colimit of the diagram
\[
\begin{tikzcd}
U\times V & W\times W \ar[r]\ar[l,dashed]& V=V\times V\\
U\times W\ar[u,dashed]\ar[d]&W\ar[r,equal]\ar[l,equal]\ar[u,equal]\ar[d,equal]&V\times W\ar[u]\ar[d,dashed]\\
U\times U=U&W\times W\ar[r,dashed]\ar[l]& V\times U
\end{tikzcd}
\]
When the diagram is cartesian (\ie when $W=U\times V$), the dashed arrows are isomorphisms and the colimit of diagram is $U\sqcup_WV=Y$.
This proves $Y^2=Y$ and that $Y\to 1$ is a monomorphism.
\end{proof}

Of course, this pushout is the union of $U$ and $V$ in the poset of subobjects of $X$.

\begin{lemma}
\label{boxprodmono}
In a logos, the pushout product of two monomorphisms is a monomorphism.
\end{lemma}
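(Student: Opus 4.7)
The plan is to reduce \cref{boxprodmono} to the preceding \cref{lem:cocartgapmapmono}. Let $u : A \to B$ and $v : C \to D$ be two monomorphisms. The pushout product $u \pp v$ is by definition the cocartesian gap map of the commutative square
\[
\begin{tikzcd}
A\times C \ar[r,"A\times v"] \ar[d,"u\times C"'] & A\times D \ar[d,"u\times D"] \\
B\times C \ar[r,"B\times v"']& B\times D\,.
\end{tikzcd}
\]
So by \cref{lem:cocartgapmapmono} it suffices to check two things: that this square is cartesian, and that each of its four edges is a monomorphism.

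First I would check that each edge is a monomorphism. The map $u\times D$ is the base change of $u : A \to B$ along the projection $B\times D \to B$, hence a monomorphism since monomorphisms are stable under base change. The same argument handles $u\times C$, $A\times v$, and $B\times v$. Second, I would check that the square above is cartesian. This holds for any pair of maps $u$ and $v$, since the square is the image in $\cE$ of the product $u\times v$ formed in the arrow category $\Arr\cE$; equivalently, one can verify it on functors of points: a map $T\to A\times C$ is a pair $(T\to A,\, T\to C)$, which is exactly the data of an element of $(A\times D)\times_{B\times D}(B\times C)$, since the equation in $B\times D$ forces the other two components to be $u\circ(T\to A)$ and $v\circ(T\to C)$.

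Combining these two observations, the hypotheses of \cref{lem:cocartgapmapmono} are satisfied, and the cocartesian gap map $u\pp v$ is a monomorphism. There is no real obstacle here: the lemma is essentially a corollary of \cref{lem:cocartgapmapmono}, the only content being the routine verification that the defining square of the pushout product is a pullback of monomorphisms whenever $u$ and $v$ are.
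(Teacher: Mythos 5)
Your proof is correct and is essentially the paper's own argument: the paper likewise observes that $u\pp v$ is the cocartesian gap map of the cartesian square formed by the products $u\times -$ and $-\times v$, whose edges are monomorphisms by base change, and then invokes \cref{lem:cocartgapmapmono}. Your explicit verification that the square is cartesian (true for arbitrary $u,v$) just spells out what the paper asserts.
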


\begin{proof}
By definition, the pushout product $(U\to A)\pp (V\to B)$ of two monomorphisms $u:U\to A$ and $v:V\to B$ is the cocartesian gap map of the following cartesian square:
\[
\begin{tikzcd}
U\times V \ar[rr, "{u\times V}"] \ar[d, "{U\times v}"'] && 
A\times V \ar[d, "{A\times v}"] \\
 U\times B  \ar[rr, "{u\times B}"]&& A\times B\,.
\end{tikzcd}
\]
Since all maps are monomophisms, the result follows from \cref{lem:cocartgapmapmono}.
\end{proof}

\begin{lemma}
\label{lem:im-box-im}
If $m:A\to B$ is a monomorphism in a logos, then $m$ is a base change of $m\pp m$.
\end{lemma}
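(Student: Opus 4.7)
The plan is to exhibit $m$ as the pullback of $m\pp m:P\to B\times B$ along the diagonal $\Delta_B:B\to B\times B$, where $P=(A\times B)\sqcup_{A\times A}(B\times A)$ is the apex of the pushout used to define $m\pp m$ in diagram~\eqref{pushout-prod1}. In other words, I claim the square
\[
\begin{tikzcd}
A \ar[r] \ar[d,"m"'] \pbmark & P \ar[d,"{m\pp m}"] \\
B \ar[r,"{\Delta_B}"'] & B\times B
\end{tikzcd}
\]
is cartesian.

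To verify this, I would use universality of colimits in $\cE$ (pullback is cocontinuous, being a left adjoint in a logos). Pulling the defining pushout square of $P$ back along $\Delta_B$ gives
\[
P\times_{B\times B}B \;=\; \bigl((A\times B)\times_{B\times B}B\bigr)\sqcup_{(A\times A)\times_{B\times B}B}\bigl((B\times A)\times_{B\times B}B\bigr),
\]
so the argument reduces to computing three fiber products. For the left summand, $(A\times B)\times_{B\times B}B$ is the pullback of $m\times\id_B$ along $\Delta_B$, which is canonically $A$ via $a\mapsto(a,m(a))$, with structure map to $B$ given by $m$; the right summand is symmetric.

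For the middle term, $(A\times A)\times_{B\times B}B$ is the pullback of $m\times m$ along $\Delta_B$, that is, the ``kernel pair'' $\{(a,a'):m(a)=m(a')\}$; since $m$ is a monomorphism, the square with corners $A,A,A,B$ and sides $\id_A,\id_A,m,m$ is cartesian, so this middle term is again $A$, with both projections equal to the identity. Consequently the pushout collapses to $A\sqcup_A A=A$, and the induced map $A\to B$ coincides on each summand with $m$, giving the desired cartesian square.

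The only real subtlety is the middle pullback, where the monomorphism hypothesis is essential to collapse the kernel pair to the diagonal; the rest is routine bookkeeping using descent.
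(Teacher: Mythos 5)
Your proof is correct and follows essentially the same route as the paper's: both exhibit $m$ as the pullback of $m\pp m$ along $\Delta_B$ by pulling the defining pushout back along the diagonal, using universality of colimits together with the fact that the kernel pair of a monomorphism collapses to $A$. The paper merely packages the three pullback computations as a cube with all faces cartesian, while you compute them directly; the content is the same.
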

\begin{proof}
Consider the cube
\[
\begin{tikzcd}
A \ar[rr, equal] \ar[dd, "\Delta_A"'] \ar[rd, equal]
&& A \ar[dd, "A\times m"' near start] \ar[rd]
\\
& A \ar[rr, crossing over]
&& B  \ar[dd, "\Delta_B"]
\\
A\times A \ar[rr] \ar[rd]
&& A\times B  \ar[rd]
\\
&B\times A  \ar[rr] \ar[from=uu, crossing over, "m\times A"' near start]
&& B\times B\,.
\end{tikzcd}
\]
Let us see that all the faces are cartesian.
The bottom square is always cartesian.
The top square is cartesian because $m$ is a monomorphism.
The front face is cartesian because the right and outer squares of the diagram
\[
\begin{tikzcd}
A\ar[d,"m"']\ar[r,"\Delta_B"]&B\times A \ar[d] \ar[r,"p_2"]& A\ar[d,"m"]\\
B\ar[r,"\Delta_B"]&B\times B \ar[r,"p_2"] &B
\end{tikzcd}
\]
are cartesian. 
The argument is the same for the right side.
The cartesian nature of the top, right  and bottom sides proves that the left side is cartesian.
The cartesian nature of the top, front and bottom sides proves that the back side is cartesian.

Then by universality of pushouts, the pushouts of the top and bottom square define a cartesian square
\[
\begin{tikzcd}
A \ar[rr]\ar[d,"m"'] 
&&A\times B \coprod_{A\times A}B\times A \ar[d,"m\pp m"]\\
B\ar[rr, "\Delta_B"]
&&B\times B
\end{tikzcd}
\]
which proves the statement.
\end{proof}

For a class of maps $\Sigma$, we denote $\Sigma\bc$ its closure under base change.
\begin{lemma}
\label{lem:square-box-mono}
Let $\cM$ be a class of monomorphisms in a logos $\cE$ which is closed under base change, 
and satisfies Axiom~\ref{GT:axiom3} of \cref{def:GT} ($(vu\in \cM,u\in \Mono) \Ra v\in \cM$),
then $\cM = (\cM \pp \cM)\bc$.
\end{lemma}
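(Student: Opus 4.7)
The plan is to prove both inclusions separately.

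The inclusion $\cM \subseteq (\cM \pp \cM)\bc$ is immediate from \cref{lem:im-box-im}: for any $m \in \cM$, that lemma exhibits $m$ as a base change of $m \pp m \in \cM \pp \cM$, so $m \in (\cM \pp \cM)\bc$. No use of axiom~\ref{GT:axiom3} is needed here.

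For the reverse inclusion $(\cM \pp \cM)\bc \subseteq \cM$, since $\cM$ is assumed closed under base change, it suffices to show $\cM \pp \cM \subseteq \cM$. So fix $m:A \to B$ and $m':A' \to B'$ in $\cM$. Because $m$ and $m'$ are monomorphisms, the commutative square
\[
\begin{tikzcd}
A\times A' \ar[r, "m\times A'"]\ar[d, "A\times m'"'] & B\times A' \ar[d, "B\times m'"] \\
A\times B' \ar[r, "m\times B'"'] & B\times B'
\end{tikzcd}
\]
is a cartesian square of monomorphisms, and the pushout-product $m\pp m' \colon P \to B\times B'$ (where $P := (A\times B')\sqcup_{A\times A'}(B\times A')$) is a monomorphism by \cref{boxprodmono}. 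Since $\cM$ is closed under base change, the map $m\times B'$ belongs to $\cM$. The key observation is that this map factors as
\[
m\times B'\ =\ (m\pp m')\circ i_1,
\]
where $i_1\colon A\times B'\to P$ is the structural map into the pushout. Since $m\times B'$ is a monomorphism and $m\pp m'$ is a monomorphism, left-cancellability of monos forces $i_1$ to be a monomorphism as well. Now we are in position to apply axiom~\ref{GT:axiom3} of \cref{def:GT}: the composite of the two monomorphisms $i_1$ and $m\pp m'$ lies in $\cM$, hence so does $m\pp m'$. This shows $\cM\pp \cM \subseteq \cM$, and taking the closure under base change preserves this since $\cM$ is already closed under base change.

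No real obstacle is expected; the proof is essentially a clean application of axiom~\ref{GT:axiom3} together with \cref{boxprodmono} and \cref{lem:im-box-im}. The only subtle point is recognising $i_1$ as a monomorphism, which follows for free from left-cancellability of monos once one has identified the factorization $m\times B' = (m\pp m')\circ i_1$.
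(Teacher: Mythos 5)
Your proof is correct and follows essentially the same route as the paper: the inclusion $\cM \subseteq (\cM\pp\cM)\bc$ via \cref{lem:im-box-im}, and the reverse inclusion via the factorization $m\times B' = (m\pp m')\circ i_1$, closure of $\cM$ under base change, \cref{boxprodmono}, and Axiom~\ref{GT:axiom3}. If anything, your treatment of $i_1$ is slightly more careful than the paper's, which asserts that $i_1$ lies in $\cM$ ``by cobase change'' (a closure property not among the hypotheses), whereas you only establish that $i_1$ is a monomorphism --- which, since $m\pp m'$ is a monomorphism, is exactly what Axiom~\ref{GT:axiom3} requires.
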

\begin{proof}
We use the notations of Diagram~\eqref{pushout-prod1}
with $u:A\to B$ and $f:X\to Y$ in $\cM$.
The map $i_1$ is a monomorphism by cobase change (see \eg \cite[Proposition 2.2.6]{ABFJ:GBM})
and the map $u\times Y$ is in $\cM$ by base change. 
Then the map $u\pp f$ is in $\cM$ by Axiom~\ref{GT:axiom3}.
This proves that $\cM \pp \cM \subseteq\cM$ and that $(\cM \pp \cM)\bc\subseteq\cM$.
The converse inclusion is \cref{lem:im-box-im}.
\end{proof}

\begin{lemma}
\label{lem:box-surj}
If $u:A\to B$ is surjective then for any map $f:X\to Y$, the map $u\pp f$ is surjective.
\end{lemma}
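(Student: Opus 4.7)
The plan is to exploit the factorization $u \times Y = (u\pp f)\circ i_1$ visible in Diagram~\eqref{pushout-prod1}, and then use two standard properties of surjections in a logos: stability under base change, and right-cancellation.

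First I would note that $u \times Y : A\times Y \to B\times Y$ factors through the pushout $P := (A\times Y)\sqcup_{A\times X}(B\times X)$, namely as $u\times Y = (u\pp f)\circ i_1$, where $i_1 : A\times Y\to P$ is the canonical map into the pushout. This is exactly the information contained in the outer bent arrow of Diagram~\eqref{pushout-prod1}.

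Next I would observe that $u\times Y$ is a base change of $u$ along $B\times Y \to B$: the square
\[
\begin{tikzcd}
A\times Y \ar[r]\ar[d,"u\times Y"'] & A \ar[d,"u"]\\
B\times Y \ar[r] & B
\end{tikzcd}
\]
is cartesian. Since surjections (effective epimorphisms) in a logos are stable under base change, $u\times Y$ is surjective.

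Finally, surjections in a logos satisfy right-cancellation: if $g\circ h$ is surjective, then so is $g$ (this follows from the fact that the surjection--monomorphism factorization system is a factorization system, so the class of surjections is closed under composition on the right by arbitrary maps in the appropriate sense; equivalently, a map factoring a surjection is surjective). Applying this to the factorization $u\times Y = (u\pp f)\circ i_1$ yields that $u\pp f$ is surjective. The whole argument is essentially a one-line observation; there is no real obstacle, and in particular no new technology beyond the definition of $u\pp f$ and the basic closure properties of surjections recalled in the preliminaries.
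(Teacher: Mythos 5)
Your proof is correct and follows essentially the same route as the paper: both read off the factorization $u\times Y=(u\pp f)\circ i_1$ from Diagram~\eqref{pushout-prod1}, get $u\times Y$ surjective by base change, and conclude by cancellation. The only (harmless) difference is that the paper first checks $i_1$ is surjective as a cobase change of $u\times X$ and then invokes right cancellation within the class $\Surj$, whereas you use the stronger fact that a map through which a surjection factors is itself surjective, skipping that intermediate step.
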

\begin{proof}
We use the notations of Diagram~\eqref{pushout-prod1}.
Surjections are closed under base change, thus $u\times X$ and $u\times Y$ are surjections.
Surjections are also closed under cobase base change, thus the map $i_1$ is a surjection.
Then the map $u\pp f$ is a surjection by right cancellation.
\end{proof}

\begin{lemma}
\label{lem:image-box}
For any map $f$ and $g$ in a logos, we have $\im {f\pp g} = \im f \pp \im g$.
\end{lemma}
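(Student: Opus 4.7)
The plan is to factor each map through its image and then expand the pushout product using \cref{box-compo}. Write $f = m_f \circ e_f$ and $g = m_g \circ e_g$ for the canonical image factorizations, so $e_f, e_g$ are surjections and $m_f = \im f$, $m_g = \im g$ are monomorphisms. The goal will be to produce a surjection/monomorphism factorization of $f \pp g$ whose monomorphism part is precisely $m_f \pp m_g$; since this factorization is unique, the conclusion follows.

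First I would apply \cref{box-compo} (together with symmetry of $\pp$) to decompose
\[
f \pp g \ =\ (g \pp m_f) \circ (g \pp e_f)'\,,
\]
where $(g \pp e_f)'$ is a cobase change of $g \pp e_f$. Then I would apply \cref{box-compo} again to the first factor, writing $g = m_g \circ e_g$, to obtain
\[
g \pp m_f \ =\ (m_g \pp m_f) \circ (e_g \pp m_f)'\,.
\]
Concatenating gives the three-term factorization
\[
f \pp g \ =\ (m_f \pp m_g) \circ (e_g \pp m_f)' \circ (g \pp e_f)'\,.
\]

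Now I would analyze the three factors. By \cref{boxprodmono}, the map $m_f \pp m_g$ is a monomorphism. By \cref{lem:box-surj}, the maps $g \pp e_f$ and $e_g \pp m_f$ are surjections (since $e_f$ and $e_g$ are). Surjections are stable under cobase change (they are part of an acyclic class, or more directly, effective epimorphisms are closed under cobase change), so both $(g \pp e_f)'$ and $(e_g \pp m_f)'$ are surjections; their composite is therefore a surjection. Hence the displayed factorization above is a (surjection, monomorphism) factorization of $f \pp g$, and by uniqueness of image factorizations we conclude
\[
\im{f \pp g} \ =\ m_f \pp m_g \ =\ \im f \pp \im g\,.
\]

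The only step requiring attention is the verification that the cobase changes $(g \pp e_f)'$ and $(e_g \pp m_f)'$ remain surjective; this is immediate once surjections are known to be closed under cobase change, which is standard in a logos. The rest is a routine assembly of \cref{box-compo,boxprodmono,lem:box-surj}.
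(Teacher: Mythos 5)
Your proof is correct and follows essentially the same route as the paper's: two applications of \cref{box-compo} to expand $f\pp g$ along the image factorizations of $f$ and $g$, then \cref{boxprodmono} for the monomorphism factor, \cref{lem:box-surj} for the surjective factors, and uniqueness of the (surjection, monomorphism) factorization. The only difference is cosmetic — you spell out explicitly that surjections are stable under cobase change, a point the paper's proof passes over more quickly.
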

\begin{proof}
We apply \cref{box-compo} twice to the factorizations $g= \im g \coim g$ and $f= \im f \coim f$:
\begin{align*}
f\pp g  & = (f\pp \im g) \circ (f\pp \coim g')   \\ 
        & = (\im f\pp \im g) \circ (\coim f' \pp \im g) \circ (f\pp \coim g') \,.
\end{align*}
By \cref{lem:box-surj}, both maps $\coim f' \pp \im g$ and $f\pp \coim g'$ are surjective since one of the factors is.
Thus the composite map $k = (\coim f' \pp \im g) \circ (f\pp \coim g')$ is surjective.
The map $\im f\pp \im g$ is a monomorphism by \cref{boxprodmono}.
This proves that $f\pp g  = (\im f\pp \im g) \circ k$ is the surjection--mono factorization of $f\pp g$.
The lemma follows.
\end{proof}

\subsubsection{Factorization systems and modalities}
\label{sec:modality}

Recall that a map $u:A\to B$ in a category $\cE$ is said to be (left) \emph{orthogonal} 
to a map $f:X\to Y$ if their pullback hom  $\pbh u f$ is invertible.
We write $u\perp f$ and say 
that $u$ is \emph{left orthogonal} to $f$, 
and that $f$ is \emph{right orthogonal} to $u$.
If $\cA$ and $\cB$ are two classes of maps in a category $\cE$, we shall write $\cA\perp \cB$ to mean that we have $u\perp v$ for every $u\in \cA$ and $v\in \cB$.
We shall denote by $\rorth\cA$ (resp. $\lorth \cA$) the class of maps in $\cE$ that are right orthogonal (resp. left orthogonal) to every map in $\cA$.
We have
\[
\cA\subseteq \lorth\cB 
\quad \Leftrightarrow \quad
\cA\perp \cB 
\quad \Leftrightarrow  \quad
\rorth\cA \supseteq \cB\,.
\]

A pair $(\cA, \cB)$ of classes of maps in a category $\cE$ is said to be a \emph{factorization system} if the following conditions hold:
\begin{enumerate}[label=\roman*)]
\item\label{defFactSystem:2} $\cA\perp \cB$;
\item\label{defFactSystem:3} every map $f:X\to Y$ in $\cE$ admits a factorization $f=pu:X\to P(f)\to Y$ with $u\in \cA$ and $p\in \cB$.
\end{enumerate}
We refer to \cite[5.2.8]{Lurie:HTT} and \cite[Section 3.1]{ABFJ:HS} for a more detailed study of factorization systems. The latter reference also shows that the condition of closure under retracts of \cite[5.2.8.8]{Lurie:HTT} is not necessary.

If $(\cA, \cB)$ is a factorization system, then $\cB=\rorth \cA$ and $\cA=\lorth \cB$.
The class $\cA$ is said to be the \emph{left class} of the factorization system and the class $\cB$ to be the \emph{right class}. 
We shall say that the factorization in \ref{defFactSystem:3} is a {\it $(\cA,\cB)$\=/factorization} of the map $f:X\to Y$. 
The $(\cA, \cB)$\=/factorization of a map is unique (up to unique isomorphism), and the function $f\mapsto P(f)$ defines a functor $\Arr\cE\to \cE$.
For any object $E\in \cE$, we let $\cA\slice E$ and $\cB\slice E$ be inverse image of $\cA$ and $\cB$ by the forgetful functor $\cE\slice E\to\cE$.
The pair $(\cA\slice E,\cB\slice E)$ is a factorisation system in $\cE\slice E$.

\begin{definition}[Modality]
\label{defmodality} 
We shall say that a factorization system $(\cA, \cB)$ in a logos $\cE$ is a \emph{modality} if its left class $\cA$ is closed under base change. 
A modality $(\cA,\cB)$ is {\it left-exact}, or simply {\it lex}, if the functor $P:\Arr\cE\to \cE$ is left-exact.
\end{definition}

Since the right class of a factorization system is always closed under base change, a factorization system is a modality if and only if both classes are closed under base change, if and only if the factorization is preserved by base change.

\begin{remark}
Modalities were introduced in the context of homotopy type theory in \cite[7.7]{hottbook} and various equivalent definitions are studied in \cite{RSS}.
Notice that these characterization are all given in the internal language of the ambient logos $\cE$, whereas our definition of modality in terms of stable factorization system is external.
\end{remark}

\begin{examples}
\label{examplemodality}
\begin{exmpenum}
\item\label{examplemodality:1}
    The factorization system $(\Surj,\Mono)$ of surjections and monomorphisms is a modality \cite[Exemple 3.2.12~(b)]{ABFJ:HS}.
    For a map $f:A\to B$, we shall denote its factorization by $f={\coim f}\circ {\im f}:A\to \Im(f)\to B$ where ${\coim f}:A\to \Im(f)$ is a surjection and ${\im f}:\Im(f)\to B$ a monomorphism.

\item\label{examplemodality:2} 
    The factorization system system $(\Conn n,\Trunc n)$ of $n$\=/connected maps and $n$\=/truncated maps is a modality for every $n \geq -1$ \cite[Example 3.4.2]{ABFJ:GBM}.
    Notice that $(\Conn {-1},\Trunc {-1})=(\Surj,\Mono)$.

{\bf Warning.} An $n$\=/connected map in our sense is $(n+1)$\=/connected in the conventional topological indexing and is called $(n+1)$\=/connective in \cite{Lurie:HTT}.

\item\label{examplemodality:3} 
    If $\phi:\cE\to \cF$ is a morphism of logoi, the pair $(\cK_\phi,\cK_\phi^\perp)$ is a left-exact modality in $\cE$ \cite[Proposition 3.2.3]{ABFJ:HS}.

\item\label{examplemodality:4}\label{inducedmodality}
If $(\cA,\cB)$ is a modality in a logos $\cE$, then so is the factorisation system $(\cA\slice E,\cB\slice E)$ in the category $\cE\slice E$ for every object $E\in \cE$.
Moreover, if $(\cA,\cB)$ is a lex modality, so are the $(\cA\slice E,\cB\slice E)$.

\item\label{examplemodality:acyclic} 
Recall from \cite{Raptis:acyclic,Hoyois:acyclic} that a map in a logos $\cE$ is called {\it acyclic} if its codiagonal $\nabla f$ is invertible (\ie if it is an epimorphism in $\cE$).
We denote by $\Acmap\cE$ the class of acyclic maps in a logos $\cE$.
It is proved in \cite[Theorem 3.3]{Raptis:acyclic} and \cite[Lemma 1]{Hoyois:acyclic} that $\Acmap\cE$ is the left class of a modality.
The factorization of a map $X\to Y$ is Quillen plus construction in the logos $\cE\slice Y$.

\end{exmpenum}
\end{examples}

\begin{remark}
The left classes of factorization systems are saturated classes.
On can show that a modality is left-exact if and only if its left class is a congruence \cite[Proposition 4.2.6]{ABFJ:HS}.
This motivated us to axiomatize left classes of modalities (saturated classes closed under base change) as an intermediate between saturated classes and congruences.
This is the definition of our notion of acyclic class (see \cref{def:acyclic}).
\end{remark}

\begin{remark}[Modality as truncation]
\label{rem:t-structure}
Given a factorization system $(\cA,\cB)$ on a logos $\cE$, we denote by $\cB_X\subseteq \cE\slice X$ the full subcategory spanned by maps $Y\to X$ in $\cB$.
The factorization of a map $Y\to X$ provides a reflection $\cE\slice X\to \cB_X$.
For any map $f:X'\to X$, the base change $f^*:\cE\slice X\to \cE\slice {X'}$ sends $\cB_X$ to $\cB_{X'}$ since $\cB$ is stable under base change.
Then $(\cA,\cB)$ is a modality precisely when $f^*$ commute with these reflections.
This presents a modality as a system of truncation operations defined uniformly on all slices.
In \cref{examplemodality:2}, this recovers the Postnikov truncations.
\end{remark}

\subsubsection{Fiberwise orthogonality and the fiberwise diagonal}

\begin{definition}[Fiberwise orthogonality]
\label{fperp} 
We shall say that a map $u:A\to B$ in a logos is \emph{fiberwise left orthogonal} to a map $f:X\to Y$, and write $u\fperp f$ (and say that $f$ is \emph{fiberwise right orthogonal} to $u$) if every base change $u'$ of $u$ is left orthogonal to $f$.
\end{definition}

If $\cA$ and $\cB$ are two classes of maps in a logos, we shall write $\cA\fperp \cB$ to mean that we have $u\fperp f$ for every $u\in \cA$ and $f\in \cB$.
We shall denote by $\rforth\cA$ (resp. $\lforth\cA$) the class of maps in $\cE$ that are fiberwise right orthogonal (resp.  fiberwise left orthogonal) to every map in $\cA$.
We have
\[
\cA\subseteq \lforth\cB 
\quad \Leftrightarrow \quad
\cA\fperp \cB
\quad \Leftrightarrow
\quad \rforth\cA \supseteq \cB\,.
\]
A factorization system $(\cL, \cR)$ in a logos is a modality if and only if $\cL \fperp \cR$, in which case $\cR=\rforth\cL$ and $\cL=\lforth\cR$.
For any set of maps $\Sigma$ in a logos, the pair $(\lforth(\rforth\Sigma),\rforth\Sigma)$ is a modality, said to be generated by $\Sigma$ \cite[Theorem 3.2.20]{ABFJ:HS}.
Notice that, if $(\cL,\cR)$ is already known to be a modality, it is generated by $\Sigma$ if and only if $\rforth\Sigma=\cR$ (since we already know that $\cL=\lforth\cR$).

\medskip
We now recall a few things about the {\it fiberwise diagonal} introduced in \cite[Section 2.5]{ABFJ:GC}.
For two maps $u:A\to B$ and $f:X\to Y$ in a logos $\cE$ the relation $u\perp f$ holds if and only if $\pbh u f$ is invertible.
The fiberwise diagonal is a map $\magic u f$ which is invertible if and only if the stronger relation $u\fperp f$ holds.
It is constructed as follows.
We start by pulling back $u$ and $f$ over the same base $B\times Y$,
and we look at them as defining two objects $(A_Y,u_Y)$ and $(X_B,f_B)$ in the slice logos $\cE\slice {B\times Y}$.
Then we compute the pullback hom between them $\pbh {(A_Y,u_Y)}{(X_B,f_B)}$, which is the following diagonal map in $\cE\slice {B\times Y}$
\[
\magic u f :=(X_B,f_B)\stto (X_B,f_B)^{(A_Y,u_Y)}\,.
\]
Then we look at this map back in $\cE$ using the forgetful functor $\cE\slice {B\times Y}\to \cE$.

\medskip
The computation of the fiberwise diagonal 
$\magic u f$ becomes easy when the map $u$ is essentially an object.
\begin{lemma}
\label{lem:magic}
For two maps $f$ and $u=A\to 1$, we have a natural isomorphism 
$\magic u f = \pbh u f$.
\end{lemma}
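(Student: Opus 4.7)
The plan is to unwind both constructions under the hypothesis that $u \colon A \to 1$, so that $B = 1$ and hence $B \times Y = Y$. With this, the slice $\cE\slice{B \times Y}$ collapses to $\cE\slice{Y}$, and the pulled-back maps simplify: $u_Y$ becomes the constant family $p^{*}A = (A \times Y \to Y)$ for $p \colon Y \to 1$, while $f_B$ is just $f$. Under this simplification, $\magic{u}{f}$ reduces to the unit map $(X, f) \to (X, f)^{p^{*}A}$ of the internal-hom adjunction in $\cE\slice{Y}$, then viewed back in $\cE$ via the forgetful functor.

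Next, I will identify the exponential $(X, f)^{p^{*}A}$ in $\cE\slice{Y}$. Using its universal property in the locally cartesian closed category $\cE\slice{Y}$---namely that maps $(R, r) \to (X, f)^{p^{*}A}$ over $Y$ correspond to maps $R \times A \to X$ over $Y$---and then the standard $(-)^{A}$ adjunction in $\cE$, one computes this exponential as the pullback $X^{A} \times_{Y^{A}} Y$ with structure map the projection to $Y$, where $X^{A}$ lies over $Y^{A}$ via $f^{A}$ and $Y \to Y^{A}$ is the constant map induced by $A \to 1$. The unit map $(X, f) \to (X, f)^{p^{*}A}$, being adjoint to the fiberwise projection $(X \times A, f \circ \mathrm{pr}_{X}) \to (X, f)$, will correspond in $\cE$ to the map $X \to X^{A} \times_{Y^{A}} Y$ whose two components are the constant map $X \to X^{A}$ (adjoint to $\mathrm{pr}_{X} \colon X \times A \to X$) and $f \colon X \to Y$.

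Finally, I will compare with $\pbh{u}{f}$. Directly from the definition, $\pbh{u}{f}$ is the gap map of the commutative square with sides $f \colon X \to Y$, $f^{A} \colon X^{A} \to Y^{A}$, and the two constant maps $X \to X^{A}$ and $Y \to Y^{A}$ induced by $u$. This gap map is precisely $X \to Y \times_{Y^{A}} X^{A}$ with components $f$ and the constant map; up to the order of the factors in the pullback, it is exactly the map computed above for $\magic{u}{f}$, and naturality in $f$ is automatic.

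The only real difficulty will be bookkeeping: keeping track of which category we are working in, and carefully identifying the exponential $(X,f)^{p^{*}A}$ in $\cE\slice{Y}$. Once these are spelled out, the two maps agree on the nose, and no subtlety arises because the assumption $B = 1$ trivializes the ``fiberwise'' aspect of the construction.
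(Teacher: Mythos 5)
Your proposal is correct and follows essentially the same route as the paper's proof: specialize to $B=1$ so the base becomes $Y$, identify the exponential of $(X,f)$ by the constant family $A\times Y\to Y$ in $\cE\slice Y$ as the fiber product $X^A\times_{Y^A}Y$, and observe that the resulting map $X\to X^A\times_{Y^A}Y$ is precisely the gap map $\pbh{u}{f}$. Your version simply spells out the adjunction bookkeeping that the paper leaves implicit.
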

\begin{proof}
When $B=1$, the construction of the fiberwise diagonal simplifies into $(X,v)\to (X,v)^A$.
The exponential by $A\in \cE$ in $\cE\slice B$ is computed as the fiber product $X^A\times_{Y^A}Y$.
The fiberwise diagonal becomes $X \to X^A\times_{Y^A}Y$ which is exactly $\pbh u v$.
\end{proof}

\begin{proposition}[{\cite[Propositions 2.5.3 and 2.5.4]{ABFJ:GC}}]
\label{mathieuslemma}
If $u$, $v$ and $f$ are three maps in a logos, then
\begin{propenum}
\item\label{mathieuslemma:1} the map $\magic u f$ is invertible if and only if $u\fperp f$;
\item\label{mathieuslemma:2} there exists a natural isomorphism
$
\magic {u\pp v} f
\ =\ 
\magic u {\magic v f}
$.
\end{propenum}
\end{proposition}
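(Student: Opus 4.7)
My plan is to reduce both claims to formal manipulations with the cartesian closed structure of slice logoi, using \cref{boxadjunction} and \cref{lem:magic}.

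For part (1), the argument proceeds by descent along $B\times Y$. A map in $\cE\slice{B\times Y}$ is invertible iff, for every generalized point $t=(b,y): T\to B\times Y$, its base change $t^*$ is invertible in $\cE\slice T$. Since the base-change functor $t^*$ preserves the cartesian closed structure of the slices, $t^*(\magic u f)$ is the analogous fiberwise-diagonal construction applied to $b^*u$ and $y^*f$. Taking $T$ to be a point and invoking \cref{lem:magic}, the fiber becomes the ordinary pullback hom $\pbh{u_b}{f_y}$, whose invertibility is precisely $u_b\perp f_y$. Because orthogonality against $f$ is preserved by arbitrary pullback of $f$ along $y: T\to Y$ (and $f$ is itself a base change of $f$), the family of conditions $\{u_b\perp f_y\}_{(b,y)}$ is equivalent to $\{u_b\perp f\}_b$, which is exactly the definition of $u\fperp f$.

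For part (2), the identity $\magic{u\pp v}{f}=\magic u{\magic v f}$ is the fiberwise avatar of $\pbh{u\pp v}{f}=\pbh u{\pbh v f}$ from \cref{boxadjunction}. After pulling $u$, $v$, and $f$ back to a common base $B_u\times B_v\times Y$, both sides of the desired equation become instances of iterated internal exponentiation inside the cartesian closed logos $\cE\slice{B_u\times B_v\times Y}$, and the identity reduces to the tautology $f^{u\pp v}=(f^v)^u$. In other words, one runs the proof of \cref{boxadjunction} internally to this slice logos.

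The main technical obstacle in both parts is the bookkeeping of base changes: in (1), the naturality of the fiberwise diagonal under the pullback functors $t^*$; in (2), the fact that the two sides, which a priori live over different bases ($B_u\times E$ versus $B_u\times B_v\times Y$, where $E$ is the codomain of $\magic v f$), agree once everything is transported to the common base $B_u\times B_v\times Y$. Both rely on Beck--Chevalley compatibilities of the cartesian closed structure under pullback between slice logoi, which hold in any logos by universality of colimits.
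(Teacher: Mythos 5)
A preliminary remark: the paper does not prove \cref{mathieuslemma} at all — it imports it from \cite[Propositions 2.5.3 and 2.5.4]{ABFJ:GC} — so what follows assesses your sketch on its own terms. Both parts have genuine gaps. For (1), the fatal step is the reduction to honest points. A logos need not have enough points, so invertibility of $\magic u f$ in $\cE\slice{B\times Y}$ cannot be tested on its fibers over global points $1\to B\times Y$ (this family is not a cover and may be empty); and, independently, the definition of $u\fperp f$ in \cref{fperp} quantifies over base changes of $u$ along \emph{arbitrary} maps $T\to B$, not over the fibers $u_b$ at points, so your closing identification ``$\{u_b\perp f\}_b$ is exactly the definition of $u\fperp f$'' is strictly weaker than $u\fperp f$ in general. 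Keeping $T$ general does not repair the argument as structured, because ``invertible after base change along every $t:T\to B\times Y$'' is trivially equivalent to invertibility (take $t=\mathrm{id}$) and never makes contact with ordinary orthogonality. The workable替substitute is a Yoneda argument in the slice: $\magic u f$ is invertible iff $\mathsf{Map}_{\cE\slice{B\times Y}}\!\left((T,t),-\right)$ sends it to an equivalence for every object $(T,t)$, and computing these mapping spaces identifies the totality of these conditions with ``$t_B^*u\perp f$ for every $T\to B$'', i.e.\ with $u\fperp f$; it is mapping spaces out of $(T,t)$, not fibers over $t$, that detect invertibility.

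For (2), the difficulty you flag is precisely the one your sketch does not resolve. Writing $u:A\to B$, $v:C\to D$ and $E:=\cod{\magic v f}$, the left-hand side is an exponential-diagonal formed in $\cE\slice{B\times D\times Y}$, whereas the \emph{outer} exponential of the right-hand side is formed in $\cE\slice{B\times E}$, and $E$ maps down to $D\times Y$. Regarding the right-hand side over the common base is done by composing along $B\times E\to B\times D\times Y$, i.e.\ by a dependent sum, and this does not commute with internal homs; ``Beck--Chevalley compatibilities of the cartesian closed structure under pullback'' only say that base change preserves exponentials, which transports data \emph{up} along pullbacks and is of no help here. So the two sides do not both become iterated exponentials inside $\cE\slice{B\times D\times Y}$, and the identity is not a tautology there: the comparison over the common base \emph{is} the content of the proposition and requires an actual computation — for instance showing that both maps corepresent the same lifting data on the slice, using that maps out of the pushout defining $u\pp v$ decompose as a fiber product of mapping spaces (the same mechanism that proves $\pbh{u\pp v}{f}=\pbh u{\pbh v f}$ in \cref{boxadjunction}). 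The ingredients you invoke that are correct and useful — slice base-change functors of a logos are cartesian closed, and the construction $\magic - -$ is compatible with slicing — do not by themselves close either part.
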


The following corollary strengthens \cref{1boxadjunction}.

\begin{corollary}[Codiagonal--diagonal adjunction]
\label{cor:codiag-diag-ortho}
If $u$ and $f$ are two maps in a logos, then
\[
\nabla u \fperp f
\quad\Leftrightarrow\quad
u\fperp \Delta f\,.
\]
\end{corollary}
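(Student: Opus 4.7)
The plan is to reduce the corollary to \cref{mathieuslemma} by rewriting both sides of the equivalence in terms of the fiberwise diagonal $\magic{-}{-}$. First, I would collect the two identifications from \cref{sec:codiagonal}: the codiagonal satisfies $\nabla u = s^0 \pp u$, and the diagonal satisfies $\Delta f = \pbh{s^0}{f}$, where $s^0 \colon S^0 \to 1$. Since $s^0$ is a map into the terminal object, \cref{lem:magic} gives $\pbh{s^0}{f} = \magic{s^0}{f}$, so in fact $\Delta f = \magic{s^0}{f}$.

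Next, using the symmetry of the pushout product to write $\nabla u = u \pp s^0$, and applying \cref{mathieuslemma:2}, I would compute
\[
\magic{\nabla u}{f} \;=\; \magic{u \pp s^0}{f} \;=\; \magic{u}{\magic{s^0}{f}} \;=\; \magic{u}{\Delta f}.
\]
In other words, the fiberwise diagonals appearing on the two sides of the claimed equivalence coincide as maps.

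To conclude, I would invoke \cref{mathieuslemma:1} on both expressions: $\magic{\nabla u}{f}$ is invertible precisely when $\nabla u \fperp f$, and $\magic{u}{\Delta f}$ is invertible precisely when $u \fperp \Delta f$. The equality established in the previous step then yields the desired equivalence. The proof is essentially formal once the two identifications $\nabla u = s^0 \pp u$ and $\Delta f = \magic{s^0}{f}$ are in hand, so there is no serious obstacle; the only point worth checking is that the symmetry isomorphism $u \pp s^0 \simeq s^0 \pp u$ in the monoidal structure on $\Arr\cE$ is compatible with the identification of $\nabla u$, which follows directly from the symmetry of Day convolution mentioned in \cref{classical:boxmonoidal}.
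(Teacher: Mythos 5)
Your argument is exactly the paper's own proof: both rewrite $\nabla u = s^0\pp u$ and $\Delta f = \pbh{s^0}{f}$, use \cref{lem:magic} together with \cref{mathieuslemma:2} to identify $\magic{\nabla u}{f}$ with $\magic{u}{\Delta f}$, and conclude via the invertibility criterion of \cref{mathieuslemma:1}. The only difference is that you make explicit the use of the symmetry $s^0\pp u \simeq u\pp s^0$, which the paper leaves implicit; this is a harmless and correct elaboration.
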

\begin{proof}
Recall from \cref{sec:pp-pbh} that $\nabla u=s^0\pp u$ and $\Delta f=\pbh {s^0} f$.
By \cref{lem:magic} and \cref{mathieuslemma:2}, we have 
$\magic {s^0\pp u} f = \magic u {\pbh{s^0} f}$,
so one map is invertible if and only if the other is.
The equivalence of the statement follows from \cref{mathieuslemma:1}.
\end{proof}

\begin{examples}
\label{examplefperp}
\begin{exmpenum}
\item\label{examplefperp:-2}
The modality $(\Iso,\All)$ is generated by the map $1\xto{id} 1$:
The class of all base changes of $1\xto{id} 1$ is the class $\Iso$ of all isomorphisms, thus $\{1\xto{id} 1\}^{\fperp}=\Iso^\perp = \All$.

\item\label{ex:acyclicgene:-1} 
The modality $(\All,\Iso)$ is generated by the single map $0\to 1$.
The base changes of the map $0\to 1$ form the class $\{0\to A\,|\,A\in \cE\}$.
A map $f:X\to Y$ is orthogonal to $0\to A$ if and only if $\Map AX\simeq \Map AY$.
If this is true for every $A$, then $f$ is an isomorphism, and
$\{0\to 1\}^{\fperp}=\{0\to A\,|\,A\in \cE\}^\perp = \Iso$.

\item\label{examplefperp:0} 
The modality $(\Surj,\Mono)$ is generated by the single map $s^0:S^0\to 1$ where $S^0 = 1\sqcup 1$.
Using \cref{lem:magic}, we get that a map $f$ is a mono if and only if $\Delta f = \pbh {s^0} f = \magic {s^0} f$ is invertible if and only if $s^0\fperp f$.
This shows that $\Mono = \rforth{\{s^0\}}$.

\item\label{examplefperp:n} 
For $-1\leq n<\infty$, let $S^n$ be the $n$\=/sphere in $\cS$. 
We shall denote also $S^n$ its image by the canonical logos morphism $\cS\to \cE$.
The modality $(\Conn n, \Trunc n)$ is generated by the single map $s^{n+1}:S^{n+1}\to 1$ \cite[Lemma 2.2.22]{ABFJ:GT}.
Recall that a map $f$ is $n$\=/truncated if $\Delta^{n+2} f = \pbh {s^{n+1}} f$ is invertible.
Using \cref{lem:magic}, this is equivalent to $\magic {s^{n+1}} f$ being invertible, and thus to $s^{n+1}\fperp f$.
This shows that $\Trunc n = \rforth{ \{s^{n+1}\} }$.

\item
The right orthogonal to the class $\Acmap\cE$ of acyclic maps from \cref{examplemodality:acyclic} is the class $\Hypoab\cE$ of hypoabelian morphism \cite[Corollary 10]{Hoyois:acyclic}.
Since $\Acmap\cE$ is closed under base change, we have in fact $\Hypoab\cE = \Acmap\cE^\fperp$.

\item\label{examplefperp:Sullivan}(Sullivan modality)
In the category $\cS$, let $\Sigma$ be the class of maps $A\to 1$ where $A$ is a connected $\pi$\=/finite space.
Since the set of isomorphisms classes of such spaces is countable \cite[Lemma 4.1.1]{Anel:pi-finite}, we can assume that $\Sigma$ is a countable set.
Therefore, it generates a modality $(\overline\Sigma,\rforth\Sigma)$.
We do not know a concrete description of the class $\mathsf{Sull}:=\overline\Sigma$, nor of its orthogonal, nor of the factorization of a map.
But, from the Sullivan conjecture, the class $\rforth\Sigma$ contains all maps whose fibers are finite spaces, so it is non-trivial.
In particular, $\Mono\subsetneq\rforth\Sigma$, but there are strictly more maps.
Consequently, the inclusion $\overline\Sigma\subsetneq\Surj$ is also strict.

\end{exmpenum}
\end{examples}

\subsection{Acyclic classes, decalage, suspension}
\label{sec:acyclic}

We recall the notion of acyclic class from \cite{ABFJ:HS,ABFJ:GT}.

\begin{definition}[Acyclic class {\cite[Definition 3.2.8]{ABFJ:HS}}]
\label{def:acyclic}
We say that a class of maps $\cA$ in a logos $\cE$ is {\it acyclic} if the following conditions hold:
\begin{defenum}
\item\label{def:acyclic:1} the class $\cA$ contains the isomorphisms and is closed under composition;
\item\label{def:acyclic:3} the class ${\cA}$ is closed under base change;
\item\label{def:acyclic:2} the class ${\cA}$ is closed under colimits (in the arrow category of $\cE$).
\end{defenum} 
\end{definition}

Equivalently, a class of maps $\cA$ is acyclic if and only if it is closed under base change and a saturated class in the sense of \cite[Definition 5.5.5.1]{Lurie:HTT}.
In particular, every acyclic class is closed under cobase change and has the right cancellation property.
Every acyclic class is also a local class \cite[Proposition 2.2.35]{ABFJ:GT}.

\begin{examples}
\label{ex:acyclic} 

\begin{exmpenum}
\item\label{ex:acyclic:1} 
The classes $\Iso$ and $\All$ of isomorphisms and all maps in a logos $\cE$ are respectively the smallest and the largest acyclic classes (for the inclusion relation).

\item\label{ex:acyclic:2}
Any congruence is an acyclic class.
We will recall in \cref{lem:caraccong} a condition for when an acyclic class is a congruence.

\item\label{ex:acyclic:3bis}
The left class of a modality is always acyclic.
In fact, by \cite[Proposition 3.2.14]{ABFJ:HS}, the class  $\lforth\cM$ is acyclic for any class of maps $\cM$ in a logos $\cE$.

\item\label{ex:acyclic:3}
In particular, the classes $\Surj$ and all the $\Conn n$ (for $-1\leq n\leq \infty$) are acyclic since they are the left classes of some modalities \cite[Examples 3.2.12~(b) and (c)]{ABFJ:HS}. 

\item\label{ex:acyclic:3ter}
For the same reason, the classes $\Acmap\cE$ of acyclic maps \cref{examplemodality:acyclic} and the ``Sullivan acyclic class" $\mathsf{Sull}$ of \cref{examplefperp:Sullivan} are also acyclic.

\item\label{ex:acyclic:4} 
Any morphism of logoi $\phi:\cE\to \cF$ preserves isomorphisms, composition, colimits and pullbacks, therefore the class $\phi^{-1}(\cA) =\{f\in \cE\ |\ \phi(f)\in \cA\}$ is an acyclic class in $\cE$, for any acyclic class $\cA\subseteq \cF$.
In particular, the class $\phi^{-1}(\Surj)$ of maps sent to surjections by $\phi$ is acyclic. 
More generally, $\phi^{-1}(\Conn n)$ is acyclic for $-1\leq n\leq \infty$.

\end{exmpenum}
\end{examples}

Any intersection of acyclic classes is acyclic.
Every class of maps $\Sigma$ in a logos $ \cE$ is contained in a smallest acyclic class $\Sigma\ac$ called the acyclic class {\it generated} by $\Sigma$.
We shall say that an acyclic class $\cA$ is of small generation if $\cA=\Sigma\ac$ for a set of maps $\Sigma\subseteq \cA$.
It is not known if every acyclic class is of small generation.

Given a logos $\cE$, we shall say that a class of objects $\cG$ of $\cE$ is a {\it class of generators}, if every object of $\cE$ is the colimit of a small diagram of objects in $\cG$.
The class of all objects is always a class of generators.
We recall a description $\Sigma\ac$ in terms of saturated classes.

\begin{lemma}[{\cite[Corollary 3.2.19]{ABFJ:HS}}]
\label{lem:generation-acyclic-class}
Let $\cG$ be a class of generators in a logos $\cE$.
For $\Sigma$ a class of maps in $\cE$, 
let $\Sigma\bc$ be the class of all its base changes over the objects of $\cG$, then 
$\Sigma\ac = (\Sigma\bc)\sat$,
where $(-)\sat = \lorth{(\rorth{(-)})}$ is the saturation.
\end{lemma}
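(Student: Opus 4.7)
My plan is to prove the two inclusions separately.

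The forward inclusion $(\Sigma\bc)\sat \subseteq \Sigma\ac$ is straightforward. Since $\Sigma\ac$ is acyclic it is closed under base change, hence $\Sigma\bc \subseteq \Sigma\ac$. Because $\Sigma\ac$ is closed under colimits in $\Arr\cE$ (axiom \ref{def:acyclic:2}) and contains isomorphisms and compositions (axiom \ref{def:acyclic:1}), it is saturated in the sense of $\lorth{(\rorth{-})}$, so $(\Sigma\bc)\sat \subseteq \Sigma\ac$.

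For the reverse inclusion $\Sigma\ac \subseteq (\Sigma\bc)\sat$, I will show that $(\Sigma\bc)\sat$ is an acyclic class containing $\Sigma$. Containment $\Sigma \subseteq (\Sigma\bc)\sat$ follows once we know closure under base change, since then $\Sigma\bc \subseteq (\Sigma\bc)\sat$ and any $f\in\Sigma$ is a base change of itself. The saturation already provides axioms \ref{def:acyclic:1} and \ref{def:acyclic:2}, so the only real content is closure under base change (axiom \ref{def:acyclic:3}).

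The key device for closure under base change is to define the auxiliary class
\[
\cB \;:=\; \bigl\{\, f\in\Arr\cE \;\big|\; \text{every base change of } f \text{ lies in } (\Sigma\bc)\sat\,\bigr\}
\]
and to prove that $(\Sigma\bc)\sat \subseteq \cB$. Since every $f\in\cB$ is in particular its own base change along the identity, this gives $\cB = (\Sigma\bc)\sat$ and hence the desired closure. To obtain $(\Sigma\bc)\sat \subseteq \cB$, it suffices to show (i) that $\cB$ contains $\Sigma\bc$, and (ii) that $\cB$ is saturated.

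For (i), given $f\in\Sigma$ with codomain $B$ and an arbitrary $g\colon X\to B$, I use the fact that $\cG$ generates to write $X = \colim_I G_i$ with $G_i\in\cG$. By universality of colimits the base change $A\times_B X\to X$ equals $\colim_I(A\times_B G_i\to G_i)$ in $\Arr\cE$, and each $A\times_B G_i\to G_i$ is by definition in $\Sigma\bc$; hence the base change is in $(\Sigma\bc)\sat$. This covers $\Sigma\subseteq\cB$, and the case of an arbitrary element of $\Sigma\bc$ follows because a base change of a base change is a base change.

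For (ii), let $f = \colim_I f_i$ in $\Arr\cE$ with each $f_i\colon A_i\to B_i$ in $\cB$, and let $g\colon X\to B := \colim_I B_i$ be arbitrary. Setting $X_i := X\times_B B_i$, universality of colimits gives $X = \colim_I X_i$ and
\[
X\times_B A \;=\; \colim_I\bigl(X_i\times_{B_i} A_i\bigr),
\]
so the base change $g^* f$ is the colimit in $\Arr\cE$ of the maps $X_i\times_{B_i}A_i\to X_i$. Each of these is a base change of $f_i$, hence lies in $(\Sigma\bc)\sat$ because $f_i\in\cB$; the colimit then lies in $(\Sigma\bc)\sat$ by saturation. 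This shows $f\in\cB$ and completes the argument.

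The only point requiring care is the universality step in (ii): one must check that in $\Arr\cE$ the object $g^*f$ really is the colimit of the system $(X_i\times_{B_i}A_i\to X_i)_{i\in I}$ with its induced transition maps. This uses that colimits in $\Arr\cE$ are computed pointwise and that base change $\cE\slice{B}\to\cE\slice{X}$ is cocontinuous, both standard consequences of descent in a logos. I expect this universality bookkeeping to be the only place where some verification is needed; everything else is formal manipulation of the definitions.
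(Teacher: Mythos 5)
Your two inclusions both rest on the same unjustified principle, and it is exactly the nontrivial heart of the lemma: you treat $(-)\sat=\lorth{(\rorth{(-)})}$ as if it were the closure of a class under isomorphisms, composition and colimits, i.e.\ you use without proof that the double orthogonal of $\Sigma\bc$ is contained in \emph{every} class containing $\Sigma\bc$ that enjoys those closure properties. In the forward direction this is the sentence ``$\Sigma\ac$ is closed under colimits, contains isomorphisms and compositions, hence is saturated in the sense of $\lorth{(\rorth{-})}$'': those closure axioms of \cref{def:acyclic} do not imply that a class is a fixed point of the double-orthogonal operator. That every acyclic class is such a fixed point is true, but in this paper it is the content of \cref{lem:acyclicsat}, whose proof is deduced from the very lemma you are proving, so it cannot be invoked here; and in general ``closed under colimits, composition and isomorphisms $\Rightarrow$ left-orthogonality class'' is an orthogonal-subcategory-type statement that needs a factorization/smallness argument, not a formal consequence of the definitions. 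In the reverse direction the same principle reappears when you conclude $(\Sigma\bc)\sat\subseteq\cB$ from ``$\cB$ contains $\Sigma\bc$ and is saturated'': you only verify colimit-closure of $\cB$, and minimality of the double orthogonal among such classes is precisely what is not available. (Note also that the paper itself does not prove this statement; it imports it from \cite[Corollary 3.2.19]{ABFJ:HS}.)

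What is genuinely right in your writeup is the descent computation: writing an arbitrary base change of $f\in\Sigma$, and of a colimit of maps, as a colimit in $\Arr\cE$ of base changes over the generators. This is indeed why the class $\cG$ suffices, and it can be promoted to a complete proof if you replace the appeal to ``saturation'' by orthogonality. For instance, your computation shows that for every $s$ which is a base change of a map of $\Sigma$ and every $r\in\rorth{(\Sigma\bc)}$ one has $s\perp r$; combined with the base-change/pushforward adjunction available in a logos (locally cartesian closedness), this shows that $\rorth{(\Sigma\bc)}$ is closed under pushforwards, hence that $\lorth{(\rorth{(\Sigma\bc)})}$ is closed under base change, so $(\Sigma\bc)\sat$ is acyclic and contains $\Sigma$, giving $\Sigma\ac\subseteq(\Sigma\bc)\sat$ without any appeal to minimality among ``saturated'' classes. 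For the opposite inclusion you need an actual identification of $\Sigma\ac$ with an orthogonality class, e.g.\ $\Sigma\ac=\lforth{(\rforth\Sigma)}$ from \cite[Lemma 3.2.15]{ABFJ:HS} together with $\rforth\Sigma\subseteq\rorth{(\Sigma\bc)}$ and the analogous pushforward argument showing $\lorth{(\rforth\Sigma)}=\lforth{(\rforth\Sigma)}$; without some such input, $(\Sigma\bc)\sat\subseteq\Sigma\ac$ does not follow from the closure axioms of acyclic classes alone.
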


Recall the notion of fiberwise orthogonality from \cref{fperp}.

\begin{lemma}
\label{lem:acyclicsat}
For any class of maps $\Sigma$, we have 
$\Sigma\ac = \lrforth\Sigma = \lorth{(\rforth\Sigma)}$
and $\rforth\Sigma = \rforth{(\Sigma\ac)} = \rorth{(\Sigma\ac)}$.
\end{lemma}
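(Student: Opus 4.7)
The plan is to deduce both chains of equalities from two general principles: (i) $\lforth{\cM}$ is an acyclic class for any class of maps $\cM$ (\cref{ex:acyclic:3bis}), and (ii) if $\cR$ is closed under base change, then $\lorth{\cR} = \lforth{\cR}$. The second is immediate: given $f \perp \cR$ and a base change $f'$ of $f$, any lifting problem of $f'$ against $r \in \cR$ extends to a lifting problem of $f$ against $r$ (using the universal property of the pullback defining $f'$), and the lift obtained transports back.

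First I would prove the right-hand equalities $\rforth\Sigma = \rforth{(\Sigma\ac)} = \rorth{(\Sigma\ac)}$. The inclusion $\rforth{(\Sigma\ac)} \subseteq \rforth\Sigma$ is monotonicity. For the reverse, if $g \in \rforth\Sigma$ then $\Sigma \subseteq \lforth g$; since $\lforth g$ is acyclic by~(i), it contains $\Sigma\ac$, giving $g \in \rforth{(\Sigma\ac)}$. The inclusion $\rforth{(\Sigma\ac)} \subseteq \rorth{(\Sigma\ac)}$ is automatic since fiberwise orthogonality is stronger than orthogonality. Conversely, $\Sigma\ac$ is closed under base change, so $\rorth{(\Sigma\ac)}$ already enforces orthogonality against every base change of a map in $\Sigma\ac$, hence $\rorth{(\Sigma\ac)} \subseteq \rforth{(\Sigma\ac)}$.

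Next I would establish the left-hand equalities $\Sigma\ac = \lorth{(\rforth\Sigma)} = \lforth{(\rforth\Sigma)}$. Observe that $\rforth\Sigma$ is closed under base change (right orthogonal classes are always stable under base change, by pulling lifting problems back along the structure map). Hence by principle~(ii), $\lorth{(\rforth\Sigma)} = \lforth{(\rforth\Sigma)}$. It remains to identify this class with $\Sigma\ac$. Unpacking definitions gives $\rforth\Sigma = \rorth{(\Sigma\bc)}$, since $f$ is fiberwise right orthogonal to $\Sigma$ iff it is right orthogonal to every base change of a map in $\Sigma$. Applying \cref{lem:generation-acyclic-class} with $\cG$ the class of all objects yields
\[
\Sigma\ac \;=\; (\Sigma\bc)\sat \;=\; \lorth{\bigl(\rorth{(\Sigma\bc)}\bigr)} \;=\; \lorth{(\rforth\Sigma)},
\]
which combines with the previous equality to give the desired chain.

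The main obstacle is the subtle verification that $\rforth\Sigma$ is closed under base change, which upgrades ordinary orthogonality to fiberwise orthogonality in the formula for $\Sigma\ac$. Everything else is formal manipulation of the Galois connection $\lorth{(-)} \dashv \rorth{(-)}$ together with the fact from \cref{ex:acyclic:3bis} that fiberwise left orthogonals are automatically acyclic.
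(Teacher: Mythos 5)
Your right-hand equalities are fine, and your identification $\Sigma\ac = \lorth{(\rforth\Sigma)}$ via $\rforth\Sigma = \rorth{(\Sigma\bc)}$ and \cref{lem:generation-acyclic-class} is exactly the paper's own argument. The genuine gap is your ``principle (ii)'': it is \emph{false} in general that $\lorth{\cR} = \lforth{\cR}$ when $\cR$ is closed under base change, and your one-line justification runs the universal property of the pullback in the wrong direction. If $f'$ is a base change of $f$, the comparison maps go \emph{into} the domain and codomain of $f$, so a lifting square for $f'$ against $r$ does not ``extend'' to one for $f$ against $r$; the argument you sketch is the (correct) proof that \emph{right} orthogonal complements are stable under base change, not left ones — and indeed your sketch never uses the hypothesis on $\cR$ at all. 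If (ii) were true, then for every class $\cM$ the saturated class $\lorth{(\rorth{\cM})}$ would equal $\lforth{(\rorth{\cM})}$, hence be closed under base change (since $\rorth{\cM}$ always is), so every saturated class would be acyclic and every factorization system in a logos would be a modality (\cref{defmodality}); this is precisely the distinction the notion of acyclic class is designed to record (plain versus fiberwise nullification). Since you invoke (ii) with $\cR=\rforth\Sigma$ to obtain the first equality $\Sigma\ac=\lrforth\Sigma$ — the one the paper does not reprove but imports from \cite[Lemma~3.2.15]{ABFJ:HS} — that equality is left unproved in your write-up.

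The good news is that the gap closes with ingredients you already have, provided you reverse the order. First prove $\Sigma\ac=\lorth{(\rforth\Sigma)}$ as you do; then use that $\Sigma\ac$, being an acyclic class, is closed under base change: any base change $u'$ of a map $u\in\Sigma\ac$ again lies in $\Sigma\ac=\lorth{(\rforth\Sigma)}$, hence $u'\perp f$ for every $f\in\rforth\Sigma$, which says exactly $u\fperp f$. Thus $\Sigma\ac\subseteq\lforth{(\rforth\Sigma)}$, and the reverse inclusion is trivial, giving $\Sigma\ac=\lrforth\Sigma$ without the false principle. With that repair your route is sound, and your treatment of the right-hand chain (deducing $\rforth\Sigma=\rforth{(\Sigma\ac)}$ directly from the acyclicity of $\lforth{\{g\}}$, \cref{ex:acyclic:3bis}, rather than from the cited lemma) is a legitimate, slightly more self-contained alternative to the paper's proof.
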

\begin{proof}
The equality $\Sigma\ac = \lrforth\Sigma$ is from \cite[Lemma~3.2.15]{ABFJ:HS} and we shall not reprove it here.
Let us show the others.
Using the previous formula, we have $\rforth{(\Sigma\ac)} = \rforth{\left(\lrforth\Sigma\right)} = \rforth\Sigma$, where the last equality is true for any orthogonality relation.
To show $\Sigma\ac = \lorth{(\rforth\Sigma)}$, we will use the formula $\Sigma\ac = (\Sigma\bc)\sat$ from \cref{lem:generation-acyclic-class} (with the class of all objects are generators).
By definition of the fiberwise othorgonality, we have $\rforth\Tau = \rorth{(\Tau\bc)}$, for any class of maps $\Tau$. 
Thus, $\lorth{(\rforth\Sigma)} = \lorth{(\rorth{(\Sigma\bc)})} = (\Sigma\bc)\sat = \Sigma\ac$.
If $\Tau$ is closed under base change, we have $\rforth\Tau = \rorth\Tau$.
Applied to $\Tau=\Sigma\ac$, this shows $\rforth{(\Sigma\ac)} = \rorth{(\Sigma\ac)}$.
\end{proof}

The following result is \cite[Theorem 3.2.20]{ABFJ:HS}.
\begin{proposition}[Acyclic classes and modalities]
\label{prop:acyclic2modality}
Any acyclic class of small generation $\cA=\Sigma\ac$ is the left class of the modality $(\cA,\rforth\cA)=(\Sigma\ac,\rforth\Sigma)$.
\end{proposition}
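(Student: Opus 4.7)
The plan is to show that $(\cA,\rforth\cA)$ satisfies the two axioms of a factorization system, since closure of the left class under base change is immediate from the acyclicity of $\cA$, and then to identify $\rforth\cA$ with $\rforth\Sigma$. That last identification, together with the formula $\cA = \lorth(\rforth\Sigma)$, is already recorded in \cref{lem:acyclicsat}, so orthogonality is essentially for free: every $u\in\cA = \lorth(\rforth\Sigma) = \lorth(\rforth\cA)$ is left orthogonal to every $f\in\rforth\cA$.

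The substantive content is the production of factorizations. My strategy is to exhibit $\cA$ as the saturation of a \emph{set} of maps and then apply the small object argument. Pick a small set $\cG$ of generators for the presentable category $\cE$, and let $\Sigma\bc$ denote the class of pullbacks of maps in $\Sigma$ along morphisms into objects of $\cG$. Since $\Sigma$ and $\cG$ are both sets, so is $\Sigma\bc$. \Cref{lem:generation-acyclic-class} then gives $\cA = (\Sigma\bc)\sat$. The small object argument applied to the set $\Sigma\bc$ produces, for every map $f$ in $\cE$, a factorization $f = p\circ u$ with $u\in\lorth(\rorth(\Sigma\bc)) = (\Sigma\bc)\sat = \cA$ and $p\in\rorth(\Sigma\bc)$. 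By the very definition of fiberwise orthogonality, $\rorth(\Sigma\bc) = \rforth\Sigma$, and \cref{lem:acyclicsat} identifies this further with $\rforth\cA$, so the right factor lies in $\rforth\cA$, as needed.

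This shows that $(\cA,\rforth\cA)$ is a factorization system; since $\cA$ is closed under base change (\cref{def:acyclic}), it is in fact a modality by \cref{defmodality}. The parenthetical equality $(\cA,\rforth\cA) = (\Sigma\ac,\rforth\Sigma)$ is the content of the identification $\rforth\cA = \rforth\Sigma$ from \cref{lem:acyclicsat}.

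The only real obstacle is checking that the small object argument genuinely applies to $\Sigma\bc$ in the presentable setting — that is, that the domains and codomains involved are suitably small and that the transfinite composition terminates. This is however standard in locally presentable \oo categories (see \cite[Proposition 5.5.5.11]{Lurie:HTT}), and no new ingredient beyond \cref{lem:generation-acyclic-class} and \cref{lem:acyclicsat} is required.
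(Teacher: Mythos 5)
The paper does not actually prove this statement: it is quoted verbatim from [ABFJ:HS, Theorem 3.2.20] (combined with \cref{lem:acyclicsat}), so there is no in-paper argument to compare against. Your proposal essentially reconstructs the standard proof of that theorem — generate a factorization system from a set of base changes of $\Sigma$, identify its two classes via \cref{lem:generation-acyclic-class} and \cref{lem:acyclicsat}, and obtain the modality property from the closure of $\cA$ under base change (\cref{def:acyclic}, \cref{defmodality}). The outline is correct.

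Two steps need tightening. First, once $\Sigma\bc$ denotes only the base changes of $\Sigma$ over a chosen \emph{small} set of generators $\cG$, the equality $\rorth{(\Sigma\bc)}=\rforth{\Sigma}$ is not ``by the very definition of fiberwise orthogonality'' — the definition quantifies over \emph{all} base changes. It is still true, via
$\rorth{(\Sigma\bc)}=\rorth{\big((\Sigma\bc)\sat\big)}=\rorth{(\Sigma\ac)}=\rforth{(\Sigma\ac)}=\rforth{\Sigma}$,
using \cref{lem:generation-acyclic-class} for the second equality and \cref{lem:acyclicsat} for the last two; alternatively, use all objects as generators when computing orthogonals and the small set $\cG$ only to guarantee smallness. (A sentence justifying that $\Sigma\bc$ is a set — mapping spaces in a presentable category are essentially small — would also not hurt.) Second, the input you need from Lurie is the \emph{orthogonal} form of the small object argument, i.e.\ the statement that a small set of maps $T$ in a presentable category generates a factorization system $\big(\lorth{(\rorth{T})},\rorth{T}\big)$ with contractible spaces of lifts; the weak-lifting form of the argument does not by itself give membership of the right factor in $\rorth{(\Sigma\bc)}$. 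The reference you cite sits in the part of \cite{Lurie:HTT} dealing with saturated classes; the factorization-system statement is in the section on factorization systems (\S 5.2.8), and is in effect what \cite[Theorem 3.2.20]{ABFJ:HS} packages. With these substitutions your argument goes through.
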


The proof of the following lemma is left to the reader.

\begin{lemma}
\label{inducedacyclic}
If $\cA$ is an acyclic class in a logos $\cE$, then so is the class $\cA\slice{B}$ in the logos $\cE\slice{B}$ for any object $B\in \cE$.
By definition, a map $f:(X,p)\to (Y,q)$ in $\cE\slice{B}$ belongs to $\cA\slice{B}$ if and only if the map $f:X\to Y$ belongs to $ \cA$.
\end{lemma}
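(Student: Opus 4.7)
The plan is to verify each of the three axioms of \cref{def:acyclic} for $\cA\slice{B}$ directly, by reducing to the corresponding axiom for $\cA$ via the forgetful functor $U:\cE\slice{B}\to\cE$. The key properties to exploit are that $U$ preserves and reflects isomorphisms, preserves composition, preserves connected limits (in particular pullbacks), and is a left adjoint (to the functor $X\mapsto B\times X\to B$) so preserves all colimits. By definition, a map $f$ in $\cE\slice{B}$ belongs to $\cA\slice{B}$ if and only if $U(f)\in\cA$.

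For axiom \ref{def:acyclic:1}, since $U$ preserves and reflects isomorphisms, every isomorphism of $\cE\slice{B}$ lies over an isomorphism of $\cE$, which is in $\cA$; hence $\cA\slice{B}$ contains all isomorphisms. Closure under composition in $\cA\slice{B}$ follows because $U$ is a functor: given $f,g\in\cA\slice{B}$ composable in $\cE\slice{B}$, we have $U(gf)=U(g)U(f)\in\cA$.

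For axiom \ref{def:acyclic:3}, a pullback square in $\cE\slice{B}$ is sent by $U$ to a pullback square in $\cE$ (since $U$ preserves pullbacks as a connected limit). If $f\in\cA\slice{B}$ and $f'$ is a base change of $f$ in $\cE\slice{B}$, then $U(f')$ is a base change of $U(f)\in\cA$ in $\cE$, and so $U(f')\in\cA$ by closure of $\cA$ under base change; thus $f'\in\cA\slice{B}$.

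For axiom \ref{def:acyclic:2}, I would use that the induced forgetful functor $U^{[1]}:\Arr(\cE\slice{B})\to\Arr\cE$ preserves colimits, because colimits in functor categories are computed pointwise and $U$ preserves colimits. Hence any diagram of arrows $f_i\in\cA\slice{B}$ satisfies $U^{[1]}(\colim_i f_i)=\colim_i U(f_i)$, which lies in $\cA$ by closure of $\cA$ under colimits in $\Arr\cE$; therefore $\colim_i f_i\in\cA\slice{B}$. There is no real obstacle here; the proof is a routine application of the good properties of the forgetful functor from a slice logos, and could reasonably be left to the reader as the excerpt does.
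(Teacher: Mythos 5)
Your proof is correct, and since the paper explicitly leaves this verification to the reader, it supplies exactly the intended routine argument: reduce each axiom of \cref{def:acyclic} to the corresponding axiom for $\cA$ via the forgetful functor $\cE\slice{B}\to\cE$, which preserves isomorphisms, composition, pullbacks (as connected limits), and all colimits (being left adjoint to $B\times -$). No gaps.
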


\begin{lemma} \label{smallgenpres}
An acyclic class $\cA$ in a logos $\cE$ is of small generation 
if and only if the full subcategory $\underline{\cA}\subseteq\Arr\cE$ spanned by the maps in $\cA$ is presentable.
\end{lemma}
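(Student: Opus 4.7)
The plan is to reduce both directions to a single fact: an acyclic class $\cA$ is by definition closed under small colimits inside $\Arr\cE$, which is itself presentable since $\cE$ is. Therefore $\underline{\cA}$ automatically has all small colimits (computed in $\Arr\cE$), and ``presentable'' is equivalent to ``accessible'' for $\underline{\cA}$. Both implications thus become assertions comparing small-set generation of $\cA$ under the acyclic closure operation $(-)\ac$ with small-set generation of $\underline{\cA}$ under small colimits.

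For the forward direction, I would start from the assumption $\cA=\Sigma\ac$ with $\Sigma$ a set. Using \cref{lem:generation-acyclic-class} with a small generating class $\cG$ for $\cE$ (which exists since $\cE$ is presentable), I would rewrite $\cA=(\Sigma\bc)\sat$ where now $\Sigma\bc$ is a set (a set of maps base-changed over a set of objects). The strongly saturated class generated by a set of maps in a presentable $\infty$-category is known to be an accessible subcategory of $\Arr\cE$ (this is HTT 5.5.4.16); combined with closure under all small colimits, this makes $\underline{\cA}$ presentable.

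For the converse, I would assume $\underline{\cA}$ is presentable and pick a small set $\Sigma$ of $\kappa$-compact objects of $\underline{\cA}$ that generate it under small (even $\kappa$-filtered) colimits. Clearly $\Sigma\ac\subseteq\cA$. Conversely, every $f\in\cA$ is, as an object of $\underline{\cA}$, a small colimit of objects of $\Sigma$; since the inclusion $\underline{\cA}\hookrightarrow\Arr\cE$ preserves these colimits (colimit-closure being one of the axioms of an acyclic class), the same colimit diagram computes $f$ inside $\Arr\cE$. Hence $f$ lies in $\Sigma\ac$ by acyclic-closure under colimits, giving $\cA=\Sigma\ac$.

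The main obstacle is the accessibility claim in the forward direction: although colimit-closure is built into the definition, accessibility of $\underline{\cA}$ is not immediate from $\cA$ being generated as an acyclic class by a set. The passage through \cref{lem:generation-acyclic-class} is what converts the acyclic-type generation (colimits, composition, and base change) into ordinary set-theoretic generation of a strongly saturated class, so that Lurie's accessibility result applies. Once that step is in hand, the rest is essentially bookkeeping about colimits in full subcategories.
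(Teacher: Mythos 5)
Your overall plan for the forward direction is the same as the paper's (rewrite $\cA=\Sigma\ac$ as the saturation of a small set of base changes via \cref{lem:generation-acyclic-class}, then quote Lurie for presentability of the spanned subcategory), and your observation that $\underline{\cA}$ is cocomplete, so that presentable amounts to accessible, is correct. But the key citation in your forward step is wrong in a way that matters: the class $(\Sigma\bc)\sat$ produced by \cref{lem:generation-acyclic-class} is the \emph{saturated} class in the sense of \cite[Definition 5.5.5.1]{Lurie:HTT} (the saturation $\lorth{(\rorth{(-)})}$), not the \emph{strongly saturated} class of \cite[Definition 5.5.4.5]{Lurie:HTT}. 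Acyclic classes are generally not strongly saturated --- $\Surj$ or $\Conn n$ fail left cancellation --- so no statement about strongly saturated classes applies to $(\Sigma\bc)\sat$; and if you instead took the strongly saturated closure of $\Sigma\bc$ you would get a larger class, not $\cA$. Moreover \cite[Proposition 5.5.4.16]{Lurie:HTT}, which you invoke, says something different (the class of maps inverted by a cocontinuous functor of presentable categories is strongly saturated of small generation); it is not the accessibility statement you need. The statement you actually need is the one the paper cites, \cite[Proposition 5.5.5.9]{Lurie:HTT}: a saturated class is generated by a small set if and only if the full subcategory of $\Arr\cE$ it spans is presentable. With that substitution your forward direction becomes the paper's argument.

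Your converse direction is fine and is argued differently from the paper: you use that a presentable $\underline{\cA}$ is generated under small colimits by an essentially small set $\Sigma$ of $\kappa$-compact objects, that colimits in $\underline{\cA}$ are computed in $\Arr\cE$ because $\cA$ is closed under colimits there, and hence that $\cA=\Sigma\ac$. This is a correct, more hands-on replacement for the ``only if'' half of \cite[Proposition 5.5.5.9]{Lurie:HTT}, which the paper uses for both directions at once; it buys you independence from half of Lurie's statement at the cost of a small compactness argument. So the only genuine gap is the saturated/strongly saturated conflation and the attendant misquotation in the forward direction.
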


\begin{proof} 
Every acyclic class is saturated.
Let us denote by $\Sigma\sat$ the saturated class generated by a set $\Sigma$ of maps in $\cE$ \cite[Definition 3.1.13]{ABFJ:HS}. 
By definition, a saturated class $\cA$ of maps is of small generation if $\cA=\Sigma\sat$ for a set $\Sigma$.
An acyclic class is of small generation as an acyclic class if and only if it is of small generation as a saturated class:
the inclusion $\Sigma\sat\subseteq \Sigma\ac$ is an equality if $\Sigma\sat$ is acyclic,
and, conversely, we use $\Sigma\ac =(\Sigma\bc)\sat$ from \cref{lem:generation-acyclic-class}. 
But a saturated class $\cA$ is of small generation as a saturated class if and only if 
the corresponding full subcategory $\underline{\cA}\subseteq \Arr\cE$ is presentable by \cite[Proposition 5.5.5.9]{Lurie:HTT}.
Thus, the acyclic class $\cA$ is of small generation (as an acyclic class) if and only if the category $\underline{\cA}$ is presentable.
\end{proof}

\begin{proposition}
\label{smallintersmallgen}
The intersection of a set of acyclic classes of small generation is of small generation.
\end{proposition}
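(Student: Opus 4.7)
The plan is to reduce the claim, via \cref{smallgenpres}, to the standard fact that a small intersection of accessible subcategories of an accessible category is accessible.

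Let $(\cA_i)_{i\in I}$ be a set-indexed family of acyclic classes of small generation in $\cE$, and set $\cA := \bigcap_{i\in I}\cA_i$. All three closure conditions of \cref{def:acyclic} are stable under arbitrary intersection, so $\cA$ is itself an acyclic class; what needs to be established is that it is of small generation. By \cref{smallgenpres} this is equivalent to the full subcategory $\underline{\cA}\subseteq \Arr\cE$ being presentable. Noting $\underline{\cA} = \bigcap_{i\in I}\underline{\cA_i}$, and that each $\underline{\cA_i}$ is presentable by hypothesis and \cref{smallgenpres}, the problem reduces to showing that a small intersection of presentable full subcategories of $\Arr\cE$ is presentable.

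The structural observation driving the argument is that each inclusion $\underline{\cA_i} \hookrightarrow \Arr\cE$ preserves small colimits, because $\cA_i$ is closed under colimits in $\Arr\cE$ by \cref{def:acyclic}. Each $\underline{\cA_i}$ is therefore an accessible subcategory of the presentable category $\Arr\cE$ in the sense of \cite[Section 5.4.7]{Lurie:HTT}, and a small intersection of such is again accessible (cf.\ \cite[Proposition 5.4.7.10]{Lurie:HTT}). Concretely one can also argue by hand: choose a regular cardinal $\kappa$ large enough that $\Arr\cE$ and every $\underline{\cA_i}$ are $\kappa$-presentable and that each inclusion preserves $\kappa$-compact objects (possible since $I$ is a set), and then verify that the $\kappa$-compact objects of $\Arr\cE$ lying in $\underline{\cA}$ form a set of generators of $\underline{\cA}$. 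Since $\underline{\cA}$ is moreover closed under small colimits in the cocomplete category $\Arr\cE$, accessibility upgrades to presentability, and a final application of \cref{smallgenpres} yields that $\cA$ is of small generation.

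The main obstacle is a matter of care rather than conceptual depth: the smallness of the indexing set $I$ is essential, since the argument requires selecting a single regular cardinal $\kappa$ witnessing the $\kappa$-accessibility of every $\underline{\cA_i}$ at once. Without this, one could not combine the generators across $i\in I$ into a single set, and the presentability of the intersection would fail in general.
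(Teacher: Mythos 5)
Your argument is correct and follows essentially the same route as the paper: reduce via \cref{smallgenpres} to presentability of $\underline{\cA}=\bigcap_i\underline{\cA_i}$, invoke \cite[Proposition 5.4.7.10]{Lurie:HTT} for accessibility of the small intersection, and conclude presentability from cocompleteness (closure under colimits in $\Arr\cE$) before applying \cref{smallgenpres} again. The extra by-hand cardinal argument is not needed, but the proof stands as written.
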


\begin{proof} 
Let $\{\cA(i) \ |\ i\in I\}$ be a small family of acyclic classes of small generation in a logos. 
For each $i\in I$ the category $\underline{\cA(i)}$ is presentable by \Cref{smallgenpres}.
By~\cite[Definition 5.5.0.1]{Lurie:HTT} a category is presentable if and only if it is cocomplete and accessible.
According to~\cite[Proposition 5.4.7.10]{Lurie:HTT} the category $\underline{\cA}=\bigcap_{i\in I} \underline{\cA(i)}$ is accessible.
It is also cocomplete, hence presentable. So \Cref{smallgenpres} yields the result.
\end{proof}

We denote by $\Acyclic\cE$ the poset of acyclic classes in $\cE$.
It is a (large) suplattice and the inclusion $\Cong\cE\to \Acyclic\cE$ is a morphism of suplattices.

\begin{remark}
\label{rem:nontrivial-Acy(S)}
The poset $\Cong\cS$ has only two elements, but the poset $\Acyclic\cS$ has much more.
For example, all classes $\Conn n$ ($-1\leq n<\infty$) are acyclic, as well as the class of acyclic maps \cref{examplemodality:acyclic} and the ``Sullivan acyclic class" of \cref{examplefperp:Sullivan}.
More examples are considered in the work of 
Bousfield \cite{bousfield1994localization}, 
Farjoun \cite{farjoun1995cellular}, or
Chach\'olski \cite{Chach:inequ}.
See also \cref{ex:tower:suspension}
\end{remark}

\begin{proposition}[Transport of acyclic classes]
\label{prop:transport-acyclic}
Let $\phi:\cE\to \cF$ be a morphism of logoi, with associated congruence $\cK_\phi := \phi^{-1}(\Iso)$.
\begin{propenum}
\item\label{prop:transport-acyclic:0}
For any acyclic class $\cB$ in $\cF$ its inverse image $\phi^{-1}(\cB)$ is an acyclic class in $\cE$ containing $\cK_\phi$.

\item\label{prop:transport-acyclic:image-ac}
There exists an adjunction 
\begin{align*}
\phi(-)\ac:\Acyclic\cE	&\adjunction \Acyclic\cF : \phi^{-1}\\
\cA &\mto\phi(\cA)\ac\\
\phi^{-1}(\cA) &\mot \cA
\end{align*}
where the left adjoint is a morphism of suplattices.
\item\label{prop:transport-acyclic:image-ac-ac} 
For any class of maps $\Sigma$ in $\cE$, we have
\[
\phi(\Sigma\ac)\ac=\phi(\Sigma)\ac\,.
\]
And, if $\psi:\cF\to \cG$ is another morphism of logoi, then $(\psi\phi)(-)\ac = \psi(\phi(-)\ac)\ac$.

\item\label{prop:transport-acyclic:quotient}
When $\phi$ is a quotient, $\phi^{-1}$ induces an isomorphism between the poset $\Acyclic\cF$ of acyclic classes in $\cF$ and the poset $\Acyclic\cE\upslice {\cK_\phi}$ of acyclic classes in $\cE$ containing $\cK_\phi$.
\end{propenum}
\end{proposition}
\begin{proof}
\noindent(1) 
We already saw in \cref{ex:acyclic:4} that the inverse image of an acyclic class is acyclic.
The inclusion $\cK_\phi\subseteq\phi^{-1}(\cB)$ comes from $\Iso\subseteq \cB$.

\smallskip
\noindent(2) 
If $\cA\in \Acyclic\cE$ and $\cB\in \Acyclic\cF$, we have equivalences
\[
\phi(\cA)\ac\,\subseteq\,\cB
\quad\iff\quad
\phi(\cA)\,\subseteq\,\cB
\quad\iff\quad
\cA\,\subseteq\,\phi^{-1}(\cB)\,.
\]
This shows that $\phi(-)\ac$ is left adjoint to the map $\phi^{-1}$.	
Thus, it preserves arbitrary suprema and is a morphism of suplattices.

\smallskip
\noindent(3) 
Let us see that $\phi(\Sigma\ac)\ac=\phi(\Sigma)\ac$ for any class of maps $\Sigma$ in $\cE$.
The inclusion $\phi(\Sigma)\ac\subseteq \phi(\Sigma\ac)\ac$ is clear, since $\Sigma\subseteq \Sigma\ac$.
Conversely, we have $\Sigma \subseteq \phi^{-1}(\phi(\Sigma)\ac)$, since $\phi(\Sigma)\subseteq \phi(\Sigma)\ac$.
The class $\phi^{-1}(\phi(\Sigma)\ac)$ is acyclic, 
thus $\Sigma\ac \subseteq \phi^{-1}(\phi(\Sigma)\ac)$, 
and hence $\phi(\Sigma\ac)\subseteq \phi(\Sigma)\ac$.
This proves $\phi(\Sigma)\ac=\phi(\Sigma\ac)\ac$.
The last statement is a direct consequence, since
$
\psi(\phi(\cA)\ac)\ac
=
\psi(\phi(\cA))\ac
=
(\psi\phi)(\cA)\ac
$.

\smallskip
\noindent(3) 
is proven in \cite[Proposition 2.2.32]{ABFJ:GT}.
\end{proof}

\begin{corollary}
\label{cor:facto}
The factorization $\cE\to \cE\quotient \cK_\phi\to \cF$ of $\phi:\cE\to \cF$ induces morphisms of suplattices
\[
\Acyclic\cE
\stto
\Acyclic\cE\upslice{\cK_\phi}
\stto
\Acyclic\cF
\,,
\]
where the first morphism is surjective with an injective right adjoint,
and the second morphism has a right adjoint preserving the minimal element.
\end{corollary}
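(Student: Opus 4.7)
The plan is to use the quotient--conservative factorization $\phi = \phi\cons \circ \phi\quot$ from \eqref{eq:triple-facto}, where $\phi\quot:\cE\to\cE\quotient\cK_\phi$ is a quotient and $\phi\cons:\cE\quotient\cK_\phi\to\cF$ is conservative (so $\cK_{\phi\cons}=\Iso$), and to assemble both arrows of the corollary from the adjunctions furnished by \cref{prop:transport-acyclic:image-ac} and the isomorphism of \cref{prop:transport-acyclic:quotient}.

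For the first arrow, I would apply \cref{prop:transport-acyclic:quotient} to $\phi\quot$ to identify $\Acyclic(\cE\quotient\cK_\phi)$ with $\Acyclic\cE\upslice{\cK_\phi}$ via $\phi\quot{}^{-1}$, and then compose with the left adjoint $\phi\quot(-)\ac$ of \cref{prop:transport-acyclic:image-ac} to obtain a map $\Acyclic\cE\to\Acyclic\cE\upslice{\cK_\phi}$, explicitly $\cA\mapsto \phi\quot{}^{-1}(\phi\quot(\cA)\ac)$. Being a left adjoint, it is a morphism of suplattices. Unwinding the adjunction through the isomorphism shows that its right adjoint is simply the inclusion $\Acyclic\cE\upslice{\cK_\phi}\hookrightarrow \Acyclic\cE$, which is injective; the counit $L\circ i = \id$ (valid since the inclusion is fully faithful) then immediately gives surjectivity of the left adjoint.

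For the second arrow, I would apply \cref{prop:transport-acyclic:image-ac} to $\phi\cons$ to obtain the adjunction $\phi\cons(-)\ac \dashv \phi\cons{}^{-1}$ between $\Acyclic(\cE\quotient\cK_\phi)$ and $\Acyclic\cF$, and transport the left adjoint along the same isomorphism to get a morphism of suplattices $\Acyclic\cE\upslice{\cK_\phi}\to \Acyclic\cF$. Its right adjoint, under these identifications, is $\cB \mapsto \phi\quot{}^{-1}(\phi\cons{}^{-1}(\cB)) = \phi^{-1}(\cB)$. The minimum of $\Acyclic\cF$ is $\Iso_\cF$ and $\phi^{-1}(\Iso_\cF)=\cK_\phi$ is by definition the minimum of $\Acyclic\cE\upslice{\cK_\phi}$, so the right adjoint preserves the bottom.

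There is no real obstacle: the corollary is a formal recombination of data already in \cref{prop:transport-acyclic}. The only point worth a moment's care is matching the right adjoints through the isomorphism $\phi\quot{}^{-1}$ and observing that conservativity of $\phi\cons$, i.e.\ $\cK_{\phi\cons}=\Iso$, is exactly what ensures the image of $\Iso_\cF$ lands on the bottom element $\cK_\phi$ rather than somewhere strictly above it.
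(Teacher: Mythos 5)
Your proposal is correct and takes essentially the same route as the paper: the corollary is a direct application of \cref{prop:transport-acyclic} (the adjunction $\phi(-)\ac\dashv\phi^{-1}$ for each leg, plus the identification $\Acyclic{\cE\quotient\cK_\phi}\simeq\Acyclic\cE\upslice{\cK_\phi}$ for the quotient leg), the only point needing a check being that the right adjoint of the second morphism sends the bottom element of $\Acyclic\cF$ to $\cK_\phi$. Note only that this last point is immediate, since $\phi^{-1}(\Iso)=\cK_\phi$ by definition; your appeal to conservativity of $\phi\cons$ is an equivalent but superfluous detour.
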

\begin{proof}
This is a direct application of \cref{prop:transport-acyclic}. 
We need only to show the part about the minimal element, 
but this is just saying that $\phi^{-1}(\Iso)=\cK_\phi$ is minimal in $\Acyclic\cE\upslice{\cK_\phi}$.
\end{proof}

\subsubsection{Image--coimage decomposition of acyclic classes}
\label{sec:image}

Recall that every map $f:A\to B$ in a logos admits a unique factorization $f={\coim f}\circ {\im f}:A\to \Im(f)\to B$ where ${\coim f}:A\to \Im(f)$ is a surjection and ${\im f}:\Im(f)\to B$ a monomorphism (see \cite[6.2.3]{Lurie:HTT}, \cite[Lecture 4]{Rezk:Leeds}, \cite[2.2.3]{ABFJ:GT}).

\begin{lemma}[{\cite[2.2.36]{ABFJ:GT}}]
\label{lem:acyclic-image}
Let $\cA$ be an acyclic class in a logos $\cE$. 
Then a map $f:A\to B$ in $\cE$ belongs to $\cA$ if and only if both maps $\im f$ and $\coim f$ belong to $\cA$.
\end{lemma}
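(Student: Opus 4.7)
The backward direction is immediate from closure of $\cA$ under composition (\cref{def:acyclic:1}), applied to $f = \im f \circ \coim f$. For the forward direction, the plan is to exhibit $\coim f$ as a base change of $f$, deduce $\coim f \in \cA$ from stability of acyclic classes under base change (\cref{def:acyclic:3}), and finally apply right cancellation to recover $\im f \in \cA$.

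The key geometric input is the following: the commutative square
\[
\begin{tikzcd}
A \ar[r, equal] \ar[d, "\coim f"'] & A \ar[d, "f"] \\
\Im(f) \ar[r, "\im f"'] & B
\end{tikzcd}
\]
is a pullback. This is a direct consequence of the monicity of $\im f$: any cone $(g : T \to A,\, h : T \to \Im(f))$ with $f \circ g = \im f \circ h$ automatically satisfies $\im f \circ \coim f \circ g = f \circ g = \im f \circ h$, so monicity forces $h = \coim f \circ g$ and exhibits $g$ as the unique filler. Hence $\coim f$ is a base change of $f$ along $\im f$, and so $\coim f \in \cA$ whenever $f \in \cA$.

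To conclude, I invoke the right-cancellation property enjoyed by every acyclic class (stated immediately after \cref{def:acyclic}): applied to $f = \im f \circ \coim f$ with both $f$ and $\coim f$ in $\cA$, it yields $\im f \in \cA$. The only substantive step is the pullback verification, and I foresee no real obstacle; the argument uses only base-change closure and right cancellation, both properties of acyclic classes already available at this point in the paper.
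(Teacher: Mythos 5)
Your proof is correct: the backward direction is closure under composition, and for the forward direction the square with identity on top and $\coim f$, $f$, $\im f$ on the other sides is indeed cartesian because $\im f$ is a monomorphism, so $\coim f\in\cA$ by base change and $\im f\in\cA$ by the right cancellation property, exactly as you say. The paper states this lemma as a recollection without proof, and your argument is the standard one underlying it, so there is nothing to add beyond noting that the pullback verification should be read homotopy-invariantly (a monomorphism admitting a section is an equivalence), which causes no difficulty.
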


\medskip

For a class of maps $\Sigma$, we define 
\[
\im\Sigma
\ :=\ 
\big\{\im f \,|\, f\in \Sigma\big\}
\qquad\text{and}\qquad
\coim\Sigma
\ :=\ 
\big\{\coim f \,|\, f\in \Sigma\big\}\,.
\]
By \cite[Lemma 2.2.37]{ABFJ:GT}, for any acyclic class $\cA$, we have
\[
\im\cA = \cA\cap \Mono
\qquad\text{and}\qquad
\coim\cA = \cA\cap \Surj\,.
\]
The class $\coim \cA= \cA\cap \Surj$ is always acyclic since it is an intersection of two acyclic classes.
It is called the {\it epigenic part} of $\cA$.
An acyclic class is called {\it epigenic} if $\cA = \coim\cA$. 
Equivalently, $\cA$ is epigenic if and only if $\cA\subseteq \Surj$, if and only if there exists a class $\Sigma$ of surjections such that $\cA = \Sigma\ac$.
The class $\im \cA = \cA \cap \Mono$ however is not acyclic in general, 
it is an extended Grothendieck topology (\cref{def:GT}), see \cite[Lemma 3.1.8]{ABFJ:GT}.
Its acyclic completion $\cA\mono:=\im \cA\ac$ is called the {\it monogenic part} of $\cA$.
An acyclic class is called {\it monogenic} if $\cA = \cA\mono$ or, equivalently, if there exists a class $\Sigma$ of monomorphism such that $\cA = \Sigma\ac$.

\begin{proposition}[{\cite[2.2.40]{ABFJ:GT}}]
\label{prop:imcoimdecomp}
The following relation holds in the poset $\Acyclic\cE$:
\[
\cA
\ =\ 
\im \cA\ac
\,\vee\,
\coim \cA\,,
\]
where $\vee$ is the supremum in $\Acyclic\cE$.
\end{proposition}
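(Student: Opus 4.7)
The plan is to prove the two inclusions separately, using the image--coimage factorization of maps together with the closure properties of acyclic classes.

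For the inclusion $\im\cA\ac \vee \coim\cA \subseteq \cA$, I would argue as follows. By definition, $\im\cA = \cA\cap\Mono \subseteq \cA$ and $\coim\cA = \cA\cap\Surj \subseteq \cA$. Since $\cA$ is itself an acyclic class, taking the acyclic closure preserves the first inclusion, yielding $\im\cA\ac \subseteq \cA$. The class $\coim\cA$ is already acyclic (being the intersection of $\cA$ with the acyclic class $\Surj$), so it too lies in $\cA$. Taking the supremum in the poset $\Acyclic\cE$ therefore gives $\im\cA\ac \vee \coim\cA \subseteq \cA$.

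For the reverse inclusion, let $f \in \cA$ and consider its image--coimage factorization $f = (\im f) \circ (\coim f)$. By \cref{lem:acyclic-image}, both $\coim f$ and $\im f$ belong to $\cA$; hence $\coim f \in \cA\cap\Surj = \coim\cA$ and $\im f \in \cA\cap\Mono = \im\cA \subseteq \im\cA\ac$. The supremum $\im\cA\ac \vee \coim\cA$ is an acyclic class, in particular closed under composition by \cref{def:acyclic:1}, so the composite $f = (\im f) \circ (\coim f)$ lies in $\im\cA\ac \vee \coim\cA$, as desired.

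There is no substantial obstacle: once the image--coimage factorization exists and \cref{lem:acyclic-image} is available, the argument reduces to the acyclicity axioms. The only subtlety worth mentioning is that we cannot replace $\im\cA\ac$ by $\im\cA$ on the right-hand side in general, because $\im\cA = \cA\cap\Mono$ need not be acyclic (it is merely an extended Grothendieck topology in the sense of \cite{ABFJ:GT}); this is precisely why the acyclic closure appears, and why the decomposition is stated in the poset $\Acyclic\cE$ rather than as a set-theoretic union.
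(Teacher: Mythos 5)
Your proof is correct: both inclusions are justified exactly as they should be, using \cref{lem:acyclic-image} for the hard direction, closure of acyclic classes under composition for reassembling $f=\im f\circ\coim f$, and the fact that $\cA$ itself is an acyclic class containing $\im\cA$ and $\coim\cA$ for the easy direction, with the correct observation that the closure $\im\cA\ac$ is needed because $\cA\cap\Mono$ need not be acyclic. The paper states \cref{prop:imcoimdecomp} without proof (it is recalled from \cite{ABFJ:GT}), and your argument is precisely the intended one, so there is nothing to add.
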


\begin{remark}
The decomposition of \cref{prop:imcoimdecomp} can be used to describe $\Acyclic\cE$ as the the subposet of $\MAcyclic\cE \times \EAcyclic\cE$ made of pairs $(\cA',\cA'')$ where $\coim{\cA'}\subseteq \cA''$.
More generally, such decompositions can de done relatively to any modality $(\cL,\cR)$:
$\cA = (\cA\cap \cR)\ac\, \vee\,\cA\cap \cL$.
However, the acyclic class $(\cA\cap \cR)\ac$ is not a congruence in general.
\end{remark}

\begin{proposition}[{\cite[Proposition 4.1.14]{ABFJ:GT}}]
\label{prop:mono-part}
For any class of maps $\Sigma$ in a logos, we have
\[
\im{\Sigma\ac}\ac = {\im \Sigma}\ac\,.
\]
In particular, if $\cA$ is of small generation, then so is $\cA\mono=\im\cA\ac$.
\end{proposition}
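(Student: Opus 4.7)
The inclusion $(\im\Sigma)\ac \subseteq \im{\Sigma\ac}\ac$ is immediate since $\im\Sigma \subseteq \im{\Sigma\ac}$. For the reverse inclusion, the plan is to set $\cA_0 := (\im\Sigma)\ac$ and introduce
\[
\cC \,:=\, \{\,f \in \Arr\cE \mid \im f \in \cA_0\,\}.
\]
Since $\im\Sigma \subseteq \cA_0$, one has $\Sigma \subseteq \cC$; so if $\cC$ is an acyclic class, then $\Sigma\ac \subseteq \cC$, whence $\im{\Sigma\ac}\subseteq \cA_0$, and taking the acyclic closure yields $\im{\Sigma\ac}\ac \subseteq (\im\Sigma)\ac$. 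The ``in particular'' statement then follows at once, since $\im\Sigma$ is a set whenever $\Sigma$ is.

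Several of the acyclic axioms of \cref{def:acyclic} for $\cC$ are routine. Containment of isos is immediate. Stability under base change uses that $(\Surj,\Mono)$ is a modality (\cref{examplemodality:1}): if $f'\to f$ is cartesian in $\Arr\cE$, then $\im{f'}$ is a base change of $\im f$, hence still lies in $\cA_0$. For closure under colimits in $\Arr\cE$, note that $\im:\Arr\cE\to\Mono$ is left adjoint to the inclusion $\Mono\hookrightarrow\Arr\cE$---the unit is the image factorization, and the universal property follows from $\Surj\perp\Mono$. It follows that $\im{\colim_i f_i}\simeq \im{\colim_i \im{f_i}}$ as monos. For $f_i\in\cC$, the map $g:=\colim_i \im{f_i}$ lies in $\cA_0$ by closure under colimits, so $\im g\in\cA_0$ by \cref{lem:acyclic-image}, and hence $\colim_i f_i\in\cC$.

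The main obstacle is closure under composition, since $\im$ does not commute with composition on the nose. For $f,g\in\cC$, further factoring the intermediate map $\coim g \circ \im f:\Im(f)\to\Im(g)$ yields
\[
\im{gf} \;=\; \im g \,\circ\, \im{(\coim g \circ \im f)}.
\]
Since $\im g \in \cA_0$ and $\cA_0$ is closed under composition, it suffices to prove the following sublemma: \emph{for any acyclic class $\cA$, any monomorphism $m:A\hookrightarrow B$ in $\cA$, and any surjection $e:B\twoheadrightarrow C$, one has $\im{e\circ m}\in\cA$.} Applied with $\cA=\cA_0$, $m=\im f$, and $e=\coim g$, this completes the verification that $\cC$ is acyclic.

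To prove the sublemma, the plan is to realize $\im{e\circ m}$ as a colimit in $\Arr\cE$ of maps belonging to $\cA$. Since surjections in a logos are effective epimorphisms, the \v{C}ech nerve $B^\bullet_C$ with $B^n_C = B\times_C\cdots\times_C B$ realizes to $|B^\bullet_C|= C$; similarly the \v{C}ech nerve of $e\circ m$ gives $|A^\bullet_C|=\Im(e\circ m)$ with $A^n_C = A\times_C\cdots\times_C A$. Colimits in $\Arr\cE$ being pointwise, the map $\im{e\circ m}$ is the colimit over $\Delta\op$ of the induced morphisms $\alpha_n:A^n_C\to B^n_C$. One checks that each $\alpha_n$ factors as a composition of $n+1$ single-slot swaps, and a short pullback computation identifies each such swap as a base change of $m$ along a projection of the form $B\times_C\cdots\times_C B\times_C A\times_C\cdots\times_C A\to B$. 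Hence each $\alpha_n\in\cA$ by closure under base change and composition, and $\im{e\circ m}\in\cA$ by closure under $\Delta\op$-indexed colimits in $\Arr\cE$, completing the proof.
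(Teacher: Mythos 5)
Your proof is correct. Both inclusions are sound: the easy one from $\Sigma\subseteq\Sigma\ac$, and the hard one via showing that $\cC=\{f \mid \im f\in(\im\Sigma)\ac\}$ is acyclic. The verifications hold up: stability under base change from the modality $(\Surj,\Mono)$; closure under colimits from the reflection $\im:\Arr\cE\to\Mono$ (a left adjoint by $\Surj\perp\Mono$) together with \cref{lem:acyclic-image}; and the delicate point, closure under composition, is correctly reduced via $\im{gf}=\im g\circ\im{(\coim g\circ\im f)}$ to your sublemma, whose \v{C}ech-nerve proof is valid ($|B^\bullet_C|=C$ and $|A^\bullet_C|=\Im(e\circ m)$ since surjections are effective epimorphisms, colimits in $\Arr\cE$ are pointwise, and each $\alpha_n=m^{\times_C(n+1)}$ is a composite of base changes of $m$). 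Note that the paper itself gives no proof here, citing \cite[Proposition 4.1.14]{ABFJ:GT}; with the machinery of that reference the statement is nearly formal, because the class $\{f\mid \im f\in\cA\}$ is there identified with the covering topology $\cA\vee\Surj$ (recalled in this paper just before \cref{lem:power-image}), so that $\Sigma\ac\subseteq(\im\Sigma)\ac\vee\Surj$ and intersecting with $\Mono$ gives the result. Your argument in effect re-proves that covering-topology fact from scratch, with the \v{C}ech-nerve decomposition replacing the suplattice/adjunction machinery: a longer but self-contained route, and a genuinely different one from the cited proof.
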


\subsubsection{Decalage of acyclic classes}
\label{sec:decalage}

The decalage of acyclic classes has been introduced in \cite{ABFJ:HS} where, for a modality $(\cL,\cR)$, it was called the class of {\it fiberwise $\cR$\=/equivalences}.
It was further studied in \cite{ABFJ:GT}.

\begin{definition}[Decalage]
\label{def:decalage}
For a class of maps $\cA$, we define the {\it decalage} of $\cA$:
\[
\dec \cA
\ :=\ 
\{\, f\in \cE \ |\  f\in \cA,\, \Delta f \in \cA \,\}\,,
\]
and, by induction,
\[
\decn 0 \cA
\ :=\ 
\cA \,,
\qquad\text{and}\qquad
\decn {n+1} \cA
\ :=\ 
\dec {\decn n \cA}
\ =\ 
\{\, f\in \cE \ |\  \forall\,0\leq k\leq n+1,\, \Delta^k f \in \cA\,\}\,.
\]
We also put
\[
\decinfty \cA
\ :=\ 
\bigcap_n \decn n \cA
\ =\ 
\{\, f\in \cE\ |\  \forall\, n\geq 0\,, \Delta^n f \in \cA\,\}\,.
\]
\end{definition}

\begin{examples} 
\label{ex:decalage}
We have the following examples of the constructions $\decname$:
\begin{exmpenum}
\item\label{ex:decalage:1} $\dec \Surj = \Conn 0$;
\item\label{ex:decalage:2} $\dec {\Conn n} = \Conn {n+1}$ and $\decn {n+1} \Surj = \Conn n$;
\item\label{ex:decalage:3} $\decinfty \Surj = \decinfty {\Conn n} = \Conn\infty$.
\item\label{ex:decalage:4} The acyclic class $\Acmap\cE$ of \cref{examplemodality:acyclic} satisfies ${\dec{\Acmap\cE}=\Iso}$. This can be seen as a consequence of \cref{examplesusp:3} and \cref{corollarydescendantimcoimdecomp}. Moreover, $\Acmap\cE$ is the largest class of maps in $\cE$ with this property.

\end{exmpenum}
\end{examples}

The following result justifies the importance of the construction $\dec \cA$.
\begin{lemma}[{\cite[Lemma 2.2.46 and Proposition 2.2.53]{ABFJ:GT}}]
\label{lem:decalage}
\label{lem:racine-acyclic}
Let $\cA$ be an acyclic class.
\begin{lemenum}
\item\label{lem:decalage:1} The class $\dec \cA$ is acyclic.
\item\label{lem:decalage:0} The class $\cA$ is a congruence if and only if $\dec \cA = \cA$ if and only if $\cA \subseteq \dec \cA$.
\item\label{lem:decalage:2} The class $\decinfty \cA$ is a congruence. It is the largest congruence contained in $\cA$.
\item We have $\decinfty\cA\cap \Mono = \cA\cap \Mono$.
\end{lemenum}
\end{lemma}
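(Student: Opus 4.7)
The plan is to prove the four parts in order, with part (1) carrying the conceptual content and the rest being essentially bookkeeping.

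For part (1), the key fact I want to invoke is the characterization of congruences among acyclic classes: an acyclic class $\cA$ is a congruence if and only if it is closed under taking diagonals, i.e.\ $f\in\cA$ implies $\Delta f\in\cA$. One direction is immediate: if $\cA$ is a congruence, then the inclusion $\cA\hookrightarrow\Arr\cE$ preserves finite limits, so in particular preserves the pullback computing $\Delta f$ from $f$; hence $\cA\subseteq\dec\cA$, and combined with the tautological $\dec\cA\subseteq\cA$ this gives $\dec\cA=\cA$. Conversely, an acyclic class already contains isomorphisms, is closed under composition, and is closed under base change, so to be a congruence it suffices to verify closure under finite limits in $\Arr\cE$. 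By a standard computation (see \cite[Proposition 4.2.6]{ABFJ:HS}), for an acyclic class this reduces exactly to closure under diagonals, i.e.\ to $\cA\subseteq\dec\cA$. This gives the chain of equivalences.

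For part (2), I would simply invoke the earlier results of \cite{ABFJ:HS} -- specifically Proposition 3.3.5 and Theorem 3.3.9 there -- which establish that $\dec\cA$ is closed under colimits in $\Arr\cE$ and under base change whenever $\cA$ is. This is essentially the technical heart referenced in the introduction of the present paper, so the clean move is to cite rather than reprove.

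For part (3), I note that $\decinfty\cA=\bigcap_{n\ge 0}\decn n\cA$. Each $\decn n\cA$ is acyclic by iterating part (2), and acyclic classes are closed under arbitrary intersections, so $\decinfty\cA$ is acyclic. To apply part (1), I check $\decinfty\cA\subseteq\dec(\decinfty\cA)$ directly: if $\Delta^k f\in\cA$ for every $k\ge 0$, then $\Delta^k(\Delta f)=\Delta^{k+1}f\in\cA$ for every $k\ge 0$, so $\Delta f\in\decinfty\cA$. Part (1) then promotes $\decinfty\cA$ to a congruence. For maximality, if $\cK\subseteq\cA$ is a congruence, then by part (1) $\cK=\dec\cK$, so monotonicity of $\decname$ applied iteratively yields $\cK=\decn n\cK\subseteq\decn n\cA$ for every $n$, hence $\cK\subseteq\decinfty\cA$.

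For part (4), the inclusion $\decinfty\cA\cap\Mono\subseteq\cA\cap\Mono$ is trivial. For the converse, observe that if $m$ is a monomorphism then $\Delta m$ is an isomorphism; consequently $\Delta^k m\in\Iso\subseteq\cA$ for every $k\ge 1$, which together with $m\in\cA$ shows $m\in\decinfty\cA$. The main obstacle in the whole argument is really the characterization used in part (1), but since it is already proved in our previous work, the present lemma amounts to assembling it with a one-line inductive observation about $\decinfty$.
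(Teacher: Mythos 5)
Your proof is correct and takes essentially the route the paper intends: the paper states this lemma as a recollection, citing \cite{ABFJ:GT} (and, for the acyclicity of the decalage, \cite{ABFJ:HS}) rather than reproving it, and your assembly of the recognition criterion of \cref{lem:caraccong} (an acyclic class is a congruence iff it is closed under diagonals, equivalently $\cA\subseteq\dec\cA$) with the monotonicity and intersection arguments for $\decinftyname$ and the invertibility of diagonals of monomorphisms is exactly the intended argument. The only nit is the reference in part (1): the fact you need is \cref{lem:caraccong}, i.e.\ Theorem~4.1.8(3) of \cite{ABFJ:HS}, rather than Proposition~4.2.6 of that paper, which concerns left-exact modalities.
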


The following theorem compares the notion of acyclic classes and congruences.

\begin{theorem}[{\cite[Theorem 2.2.51]{ABFJ:GT}}]
\label{thm:adjCongAcyclic}

The inclusion of congruences in acyclic classes admits both a left and a right adjoint
\[
\begin{tikzcd}
\Cong\cE \ar[rr, hook]
\ar[from=rr, shift left = 3,"{\decinfty-}"]
\ar[from=rr, shift right = 3,"{(-)\cong}"']
&& \Acyclic\cE\,.
\end{tikzcd}
\]
The left adjoint is given by the congruence completion $\cA\mapsto \cA\cong$.
The right adjoint is given by the map $\cA\mapsto \decinfty\cA$.
\end{theorem}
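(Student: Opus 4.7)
The plan is to recognize this as a routine Galois connection between posets: the claim reduces to two universal properties, one for $(-)\cong$ as the \emph{smallest} congruence containing an acyclic class, and one for $\decinfty-$ as the \emph{largest} congruence contained in an acyclic class. Both universal properties are either immediate or already packaged in \cref{lem:decalage}. The real work has been done there; here I only assemble the pieces.

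For the left adjoint, I would first observe that the construction $\Sigma\mapsto \Sigma\cong$, defined for any class of maps $\Sigma$ as the intersection of all congruences containing $\Sigma$, restricts to a monotone map $(-)\cong: \Acyclic\cE\to \Cong\cE$ (since an acyclic class is in particular a class of maps). The universal property of $\cA\cong$ then reads, for any congruence $\cK$,
\[
\cA\cong\subseteq \cK \quad\iff\quad \cA \subseteq \cK,
\]
which is precisely the statement that $(-)\cong$ is left adjoint to the inclusion $\Cong\cE\hookrightarrow \Acyclic\cE$. The forward implication is unit-like ($\cA\subseteq \cA\cong$ combined with transitivity), and the reverse is by minimality of $\cA\cong$.

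For the right adjoint, I would invoke \cref{lem:decalage:2}, which asserts that $\decinfty\cA$ is a congruence and is the largest congruence contained in $\cA$. This immediately gives a monotone map $\decinfty-: \Acyclic\cE\to \Cong\cE$ (monotonicity is clear from the definition $\decinfty\cA=\bigcap_n\decn n \cA$ together with the fact that $\cA\subseteq \cB$ implies $\decn n \cA\subseteq \decn n \cB$ by induction on $n$). The adjunction then reads, for any congruence $\cK$ and acyclic class $\cA$,
\[
\cK \subseteq \cA \quad\iff\quad \cK \subseteq \decinfty\cA,
\]
where the forward direction is exactly the maximality in \cref{lem:decalage:2}, and the reverse follows from $\decinfty\cA\subseteq \decn 0\cA = \cA$.

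The only genuine obstacle is the content of \cref{lem:decalage:2} itself, namely that $\decinfty\cA$ is actually a congruence when $\cA$ is merely acyclic. This is not a tautology because congruences must be closed under finite limits in $\Arr\cE$, not only under base change, and one needs the iterated decalage to kill off all the diagonals at once. However, since this is cited from \cite{ABFJ:GT} (Proposition 2.2.53 there), we may take it as given, and the present theorem then follows purely formally from the two universal properties above.
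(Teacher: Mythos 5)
Your proposal is correct: both adjunctions follow purely formally from the two universal properties you isolate — $(-)\cong$ is the smallest congruence containing a class (so $\cA\cong\subseteq\cK\iff\cA\subseteq\cK$), and $\decinfty\cA$ is the largest congruence contained in $\cA$ by \cref{lem:decalage}, which gives $\cK\subseteq\decinfty\cA\iff\cK\subseteq\cA$. The paper offers no independent proof here, citing \cite[Theorem 2.2.51]{ABFJ:GT}, and your assembly matches that argument, with the genuine content correctly located in the cited fact that $\decinfty\cA$ is itself a congruence.
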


We shall see in \cref{lem:sg-dec} that $\dec \cA$ and $\decinfty\cA$ are of small generation when $\cA$ is.

\begin{remark}
\label{rem:triple-adjunction}
The inclusion of $\MCong\cE\to \Acyclic\cE$ has also two right adjoints \cite[Theorem 3.3.11]{ABFJ:GT}
\[
\begin{tikzcd}
\Acyclic\cE
\ar[rrr,"{(-)\mono}" description]
\ar[from=rrr, shift left = 3,"{(-)\vee \Surj}",hook']
\ar[from=rrr, shift right = 3,"{\sf can.}"', hook']
&&& \MCong\cE\,,
\end{tikzcd}
\]
where the first right adjoint sends an acyclic class $\cA$ to its monogenic part $\cA\mono:= (\cA\cap \Mono)\ac$,
and where the image of the second right adjoint is the poset of {\it covering topologies}, which are the acyclic classes containing the class $\Surj$ \cite[definition 3.3.1]{ABFJ:GT}.
When this triple adjunction is composed with the one of \cref{thm:adjCongAcyclic}
\[
\begin{tikzcd}
\Cong\cE
\ar[rr, hook]
\ar[from=rr, shift right = 3,"(-)\cong"']
\ar[from=rr, shift left = 3,"\decinfty-"]
&& \Acyclic\cE
\ar[rrr,"{(-\cap \Mono)\ac}" description]
\ar[from=rrr, shift left = 3,"{(-)\vee \Surj}",hook']
\ar[from=rrr, shift right = 3,"{\sf can.}"', hook']
&&& \MCong\cE
\end{tikzcd}\,,
\]
we recover the triple adjunction of \cref{thm:mono-cong}.
\end{remark}

\subsubsection{Suspension of an acyclic class}
\label{sec:suspension}

In this section, we study the {\it suspension} of an acyclic class $\cA$, which is the smallest acyclic class containing the codiagonals of all the maps in $\cA$.
The suspension of the class $\All$ is the class $\Surj$ of surjections, and its iterated suspensions define the classes $\Conn n$ of $n$\=/connected maps.
Once defined the monoidal structure on acyclic classes in \cref{subsec:quant-locclass}, we shall see that the suspension of a modality is simply the product with the class $\Surj$.
The main results of the section are comparisons between the suspension of an acyclic class $\cA$ and its {\it decalage} introduced in \cref{sec:decalage} (see \cref{thm:suspension-acyclic,thm:explicit-decalage}).

\medskip
Recall from \cref{sec:codiagonal}, that, for a map $u:A\to B$, its codiagonal is the map $\nabla(u) = B\cup_AB\to B$.
We have also $\nabla(u)= s^0\pp u$, where $s^0$ is the map $S^0\to 1$ and $\pp$ is the pushout product.

\begin{definition}[Suspension]
\label{suspdefinition} 
The {\it suspension} of an acyclic class $\cA$ in a logos is defined to be the class 
\[
\susp\cA
\ :=\ 
\nabla(\cA)\ac
\ =\ 
(s^0\pp \cA)\ac
\,.
\]
We always have $\susp \cA \subseteq \cA$ and $\susp {\cA \vee \cB} = \susp \cA \vee \susp \cB$ in the poset $\Acyclic\cE$.
\end{definition}

The purpose of the section is to prove the following theorem, which provides a explicit description of the suspension of an acyclic class and its right orthogonal.
Recall the decalage $\dec\cA$ of an acyclic class $\cA$ from \cref{sec:decalage}.

\begin{theorem}[Suspension of acyclic classes]
\label{thm:suspension-acyclic}
If $\cA$ is an acyclic class in a logos, then 
\begin{align*}
\susp \cA & \ =\ \Delta^{-1}(\cA)\cap \Surj \ = \ \dec\cA\cap \Surj\,,\\
\susp \cA^\fperp & \ =\ \Delta^{-1}(\cA^\fperp)\,.
\end{align*}
\end{theorem}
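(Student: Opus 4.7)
The plan is to address the two equations separately, with the orthogonal description serving also as a tool for the first.

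For the second equation $\susp\cA^\fperp = \Delta^{-1}(\cA^\fperp)$, the strategy is a direct unpacking of the codiagonal--diagonal adjunction. By \cref{lem:acyclicsat}, $\susp\cA^\fperp = \rforth{\big(\nabla(\cA)\ac\big)} = \rforth{\nabla(\cA)}$, so a map $f$ lies in $\susp\cA^\fperp$ iff $\nabla u \fperp f$ for every $u\in\cA$. By \cref{cor:codiag-diag-ortho} this is equivalent to $u\fperp \Delta f$ for every $u\in\cA$, that is, to $\Delta f \in \rforth{\cA} = \cA^\fperp$, i.e.\ $f\in \Delta^{-1}(\cA^\fperp)$.

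For the first equation, I plan to establish the chain
\[
\susp\cA \,\subseteq\, \dec\cA\cap\Surj \,\subseteq\, \Delta^{-1}(\cA)\cap\Surj \,\subseteq\, \susp\cA,
\]
from which the desired equalities drop out. The middle inclusion is immediate from $\dec\cA\subseteq\Delta^{-1}(\cA)$. For the leftmost inclusion, $\dec\cA\cap\Surj$ is acyclic (intersection of two acyclic classes, using \cref{lem:decalage:1}), so since $\susp\cA=\nabla(\cA)\ac$ it suffices to show $\nabla u\in \dec\cA\cap\Surj$ for each $u\colon A\to B$ in $\cA$. This splits into three verifications: (a)~$\nabla u=s^0\pp u$ is surjective by \cref{lem:box-surj}, since $s^0\in\Surj$; (b)~$\nabla u\in\cA$, because the cobase inclusion $i_1\colon B\to B\cup_A B$ is a cobase change of $u$ and hence lies in $\cA$, and $\nabla u\circ i_1=\id_B$, so the $2$-for-$3$ property of the saturated class $\cA$ forces $\nabla u\in\cA$; and (c)~$\Delta(\nabla u)\in\cA$, the main technical point, which I would prove by computing $(B\cup_A B)\times_B (B\cup_A B)$ via universality of colimits as a pushout of two copies of $B\cup_A B$ along $A\cup_{A\times_B A}A$, then identifying $\Delta(\nabla u)$ within this decomposition as assembled from base changes and cobase changes of $u$, all of which preserve membership in $\cA$.

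For the rightmost inclusion $\Delta^{-1}(\cA)\cap\Surj\subseteq \susp\cA$, I would take $f\colon A\to B$ surjective with $\Delta f\in\cA$ and realize $f$ inside $\nabla(\cA)\ac$. The key observation is that $\nabla(\Delta f)=s^0\pp \Delta f$ already lies in $\nabla(\cA)\subseteq\susp\cA$, so the task is to exhibit $f$ as a colimit of cobase and base changes of $\nabla(\Delta f)$. The plan is to use the \v{C}ech nerve of $f$: its face maps are pullbacks of $f$ and its degeneracies are iterated instances of $\Delta f$, and since $f$ is surjective it is the geometric realization of this simplicial object. The simplicial combinatorics together with universality of colimits in the logos then allow $f$ to be assembled from $\nabla(\Delta f)$ and its base changes via the closure properties of an acyclic class. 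This final inclusion, together with step (c), is the main obstacle; both reduce to careful pushout--pullback computations exploiting universality of colimits and descent, a theme that will reappear and be refined in \cref{thm:explicit-decalage}.
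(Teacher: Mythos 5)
Your treatment of the orthogonal identity $\susp \cA^\fperp = \Delta^{-1}(\cA^\fperp)$ is correct and is the paper's own argument (\cref{lem:acyclicsat} plus \cref{cor:codiag-diag-ortho}, i.e.\ \cref{prop:right-class-susp}), and the squeeze $\susp\cA\subseteq\dec\cA\cap\Surj\subseteq\Delta^{-1}(\cA)\cap\Surj\subseteq\susp\cA$ is a sound skeleton; items (a) and (b) are fine. But in (c) the stated mechanism does not work as described: in the decomposition $(B\cup_AB)\times_B(B\cup_AB)\simeq (B\cup_AB)\sqcup_{A\cup_{A\times_BA}A}(B\cup_AB)$ the two structural inclusions classify pairs $(i_k\nabla(y),y)$, not $(y,y)$, so $\Delta(\nabla u)$ is \emph{not} obtained from base and cobase changes of $u$ by colimits alone --- any naive assembly produces middle comparison maps (essentially graphs/diagonals of $A$) that are not visibly in $\cA$. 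What saves the argument is the \emph{locality} of acyclic classes: the two structural inclusions are collectively surjective, and the base change of $\Delta(\nabla u)$ along either of them is equivalent to $i_k:B\to B\cup_AB$, a cobase change of $u$; this is exactly the shape of the paper's proof of \cref{Deltanabla} (there phrased with the collectively surjective family of cone points). You mention descent only in passing, but it is the essential ingredient, not closure under (co)base change and colimits.

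The genuine gap is the last inclusion $\Delta^{-1}(\cA)\cap\Surj\subseteq\susp\cA$, which is the heart of the theorem. Your plan --- realize $B$ as $|C(f)_\bullet|$ and ``assemble'' $f$ from $\nabla(\Delta f)$ and its base changes --- asserts the crux rather than proving it. Concretely, a skeletal induction on the realization of the \v{C}ech nerve requires controlling the latching maps $L_nC(f)\to C(f)_n$; already for $n=2$ this is the cocartesian gap map $(A\times_BA)\sqcup_A(A\times_BA)\to A\times_BA\times_BA$ of a \emph{cartesian} square whose legs are base changes of $\Delta f$, and membership of such gap maps in $\cA$ is not a closure property of acyclic classes (it is Blakers--Massey-type territory); nothing in your sketch addresses this, nor the realization/skeleton technology it presupposes. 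The paper proves the inclusion without any cellular construction: having identified $\susp\cA^\fperp=\Delta^{-1}(\cA^\fperp)$, it shows directly that $(\Delta^{-1}(\cA)\cap\Surj)\perp\Delta^{-1}(\cA^\fperp)$ (\cref{lem:diag-surj-ortho}), by checking $\delta_n(A)\perp\delta_n(X)$ levelwise between the constant simplicial object and the \v{C}ech nerve, invoking orthogonality in simplicial objects, and using $|C(A)|=1$ for a surjection; the inclusion then follows from $\susp\cA=\lorth{\big(\Delta^{-1}(\cA^\fperp)\big)}$. So the \v{C}ech nerve does enter, but through orthogonality against the right class, not through exhibiting $f$ as built from codiagonals; as written, your final step is a materially stronger claim left unproved.
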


We note in passing that the equality
$\susp \cA =\ \Delta^{-1}(\cA)\cap \Surj$ can be seen as
generalization of \cite[Theorem 4.3(1)]{Chach:inequ}. The proof will
need a number of preliminary results.

\begin{proposition}[Right orthogonal to suspension]
\label{prop:right-class-susp}
\label{lemma137} 
If $\Sigma$ is a class of maps in a logos, then 
\[
\nabla(\Sigma)^{\fperp}=\Delta^{-1}(\Sigma^{\fperp})\,.
\]
\end{proposition}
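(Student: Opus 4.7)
The plan is to reduce this to the codiagonal--diagonal orthogonality equivalence already established in \cref{cor:codiag-diag-ortho}, applied uniformly over the class $\Sigma$. The two sides of the claimed equality are both defined by a universal property ranging over $u \in \Sigma$, so it suffices to check the pointwise equivalence $\nabla(u) \fperp f \iff u \fperp \Delta(f)$ and then take the conjunction over all $u \in \Sigma$.

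In more detail, I would unfold the definitions on each side. By definition of fiberwise right orthogonality to a class, a map $f$ belongs to $\nabla(\Sigma)^{\fperp}$ if and only if, for every $u \in \Sigma$, the codiagonal $\nabla(u)$ is fiberwise left orthogonal to $f$, i.e.\ $\nabla(u) \fperp f$. On the other side, $f \in \Delta^{-1}(\Sigma^{\fperp})$ means $\Delta(f) \in \Sigma^{\fperp}$, i.e.\ for every $u \in \Sigma$ we have $u \fperp \Delta(f)$. So the statement reduces to the biconditional
\[
\forall u \in \Sigma,\qquad \nabla(u) \fperp f \ \IFF\ u \fperp \Delta(f),
\]
which is exactly \cref{cor:codiag-diag-ortho}. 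That corollary itself was derived from the identity $\magic{s^0 \pp u}{f} = \magic{u}{\pbh{s^0}{f}}$ of \cref{mathieuslemma:2}, combined with the identifications $\nabla(u) = s^0 \pp u$ and $\Delta(f) = \pbh{s^0}{f}$ from \cref{sec:codiagonal}, so no further unpacking is needed.

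There is essentially no obstacle: the proof is a direct application of \cref{cor:codiag-diag-ortho}, universally quantified over $u \in \Sigma$. The only thing to note for tidiness is that the statement is formulated for an arbitrary class $\Sigma$ (not merely for acyclic classes), which is fine since the equivalence from \cref{cor:codiag-diag-ortho} is a pointwise statement about pairs of maps and does not require any closure properties on $\Sigma$. Thus the argument is a single line once the relevant definitions are unfolded.
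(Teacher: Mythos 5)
Your proposal is correct and coincides with the paper's own proof: both sides are unfolded pointwise over $u\in\Sigma$ and the equality reduces to the equivalence $\nabla(u)\fperp f \iff u\fperp \Delta(f)$ of \cref{cor:codiag-diag-ortho}. Nothing further is needed.
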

\begin{proof} 
Let $f:X\to Y$ be a map in $\cE$.
By \cref{cor:codiag-diag-ortho}, the condition $\nabla(\Sigma)\fperp f$ is equivalent to $\Sigma\fperp \Delta(f)$.
\end{proof}

\begin{lemma}[Generators for suspension]
\label{lem:suspensiongenerator}
For any class of maps $\Sigma$, we have 
\[
\susp{\Sigma\ac} = \nabla(\Sigma)\ac
\]
where $\nabla(\Sigma):=s^0\pp \Sigma$.
In particular, the suspension of an acyclic class of small generation is of small generation.
\end{lemma}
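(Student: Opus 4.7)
The plan is to prove the two inclusions $\nabla(\Sigma)\ac \subseteq \susp{\Sigma\ac}$ and $\susp{\Sigma\ac} \subseteq \nabla(\Sigma)\ac$. The first is trivial: from $\Sigma \subseteq \Sigma\ac$ we get $\nabla(\Sigma) \subseteq \nabla(\Sigma\ac)$, hence $\nabla(\Sigma)\ac \subseteq \nabla(\Sigma\ac)\ac = \susp{\Sigma\ac}$. So the real content is the reverse inclusion, which amounts to showing $\nabla(f) \in \nabla(\Sigma)\ac$ whenever $f \in \Sigma\ac$.

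For this I would work via fiberwise orthogonality. By \cref{lem:acyclicsat}, $\nabla(\Sigma)\ac = \lforth(\rforth{\nabla(\Sigma)})$, so it suffices to show that $\nabla(f) \fperp g$ for every $g \in \rforth{\nabla(\Sigma)}$. Now \cref{prop:right-class-susp} identifies this orthogonal as
\[
\rforth{\nabla(\Sigma)} \ =\ \Delta^{-1}(\rforth\Sigma)\,,
\]
so any such $g$ satisfies $\Delta(g) \in \rforth\Sigma$. Since $f \in \Sigma\ac = \lforth(\rforth\Sigma)$, this gives $f \fperp \Delta(g)$, and then the codiagonal--diagonal adjunction of \cref{cor:codiag-diag-ortho} converts this into $\nabla(f) \fperp g$, as required.

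The final sentence about small generation is then immediate: if $\Sigma$ is a set, then $\nabla(\Sigma) = s^0 \pp \Sigma$ is also a set (indexed by $\Sigma$), so $\susp{\Sigma\ac} = \nabla(\Sigma)\ac$ is an acyclic class of small generation by definition.

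I do not anticipate a real obstacle here; the key maneuver is simply the double use of \cref{cor:codiag-diag-ortho} (packaged once as \cref{prop:right-class-susp} and once directly) to transport the fiberwise orthogonality across $\nabla$ and $\Delta$. The only point to be careful about is to apply the characterization $\Sigma\ac = \lforth(\rforth\Sigma)$ from \cref{lem:acyclicsat} rather than trying to close $\nabla(\Sigma)$ by hand under the saturation operations, which would be substantially messier.
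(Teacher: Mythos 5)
Your argument is correct and is essentially the paper's own proof, merely reorganized: the paper shows $\rforth{(s^0\pp\Sigma)} = \rforth{\big(s^0\pp(\Sigma\ac)\big)}$ by applying \cref{prop:right-class-susp} twice together with $\rforth\Sigma=\rforth{(\Sigma\ac)}$, and then concludes via the saturation formula of \cref{lem:acyclicsat}, which is the same transport of fiberwise orthogonality across $\nabla$ and $\Delta$ that you perform on generators. The small-generation remark is likewise the intended one.
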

\begin{proof}
The statement is equivalent to the equality $\big(s^0\pp(\Sigma\ac)\big)\ac = (s^0\pp \Sigma)\ac$.
Using the saturation property of acyclic classes $(s^0\pp \Sigma)\ac = {^\fperp}\big((s^0\pp \Sigma)^\fperp\big)$ (\cref{lem:acyclicsat}), this is equivalent to show the equality of right orthogonals $\big(s^0\pp(\Sigma\ac)\big)^\fperp = (s^0\pp \Sigma)^\fperp$:
\begin{align*}
(s^0\pp \Sigma)^\fperp
&= \Delta^{-1}(\Sigma^\fperp) &&\text{by \cref{prop:right-class-susp}}\\
&= \Delta^{-1}\big((\Sigma\ac)^\fperp\big) &&\text{because $\Sigma^\fperp = (\Sigma\ac)^\fperp$, by \cref{lem:acyclicsat}}\\
&= (s^0\pp(\Sigma\ac))^\fperp &&\text{by \cref{prop:right-class-susp} again}\,.\qedhere
\end{align*}
\end{proof}

\begin{remark}
\label{rem:ortho-susp}
\Cref{prop:right-class-susp,lem:suspensiongenerator} applied to $\Sigma = \cA$ give the equality $\susp\cA^\fperp = \Delta^{-1}(\cA^\fperp)$ of \cref{thm:suspension-acyclic}.
Notice that since both $\cA$ and $\susp\cA$ are closed under base change, this equation is equivalent to the simpler statement $\susp\cA^\perp = \Delta^{-1}(\cA^\perp)$.	
\end{remark}

\begin{examples}
\begin{exmpenum}
\item \label{examplesusp:1}
Recall from \cref{examplefperp:n} that the acyclic class $\Conn n$ of $n$\=/connected maps is generated by the map $s^{n+1}:S^{n+1}\to 1$.
Then $\susp{\Conn n} = (s^0\pp s^{n+1})\ac =(s^{n+2})\ac = \Conn {n+1}$.
In particular, $\susp \All = \Surj$ and $\susp \Surj = \Conn 0$.

\item \label{examplesusp:3}
By definition of the acyclic class $\Acmap\cE$ of \cref{examplemodality:acyclic}, we have ${\susp{\Acmap\cE}=\Iso}$.
Moreover, this is the largest class of maps in $\cE$ satisfying this property.

\item \label{examplesusp:2}\label{rem:CORS}
Let $\cK$ be a congruence in a presheaf logos $\PSh C$.
The objects in $\cK^\fperp$ (the $F$ such that $F\to 1\in \cK^\fperp$) are the {\it sheaves} for $\cK$, and arbitrary maps in $\cK^\fperp$ are sometimes called the {\it relative sheaves} for $\cK$.
Recall that a presheaf $X$ is called {\it separated} if its diagonal is a relative sheaf.
By \cref{rem:ortho-susp}, the objects in $\susp\cK^\fperp$ are exactly the separated presheaves.
This justifies the name `separation' given to the suspension in \cite{CORS}.
\end{exmpenum}	
\end{examples}

\medskip
The following two lemmas are the keys of the proof of \cref{thm:suspension-acyclic}.
The first one is a slight generaliztion of \cite[Corollary 3.5]{Chach:inequ}.

\begin{lemma}
\label{Deltanabla}
If $\cA$ is an acyclic class in a logos, then $\Delta \nabla(\cA)\subseteq \cA$.
\end{lemma}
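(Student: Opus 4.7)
The plan is to show, for a given $u: A \to B$ in $\cA$, that $\Delta(\nabla(u)) \in \cA$ by exhibiting this map as a colimit in $\Arr\cE$ of a span of maps already in $\cA$. Write $P := B \sqcup_A B$ with pushout inclusions $i_1, i_2: B \to P$; then $i_1, i_2 \in \cA$ as cobase changes of $u$. The first move is to compute $P \times_B P$ by universality of colimits in $\cE$: pulling back the defining pushout of $P$ along $\nabla(u): P \to B$ and using $\nabla(u) \circ i_k = \mathrm{id}_B$ identifies the outer pieces as $B \times_B P \simeq P$ and the middle piece as $A \times_B P$ (via $u: A \to B$), yielding
\[
P \times_B P \ \simeq\  P \sqcup_{A \times_B P} P,
\]
where both structure maps of this pushout are the projection $g: A \times_B P \to P$, $(a,p) \mapsto p$.

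The next step is to identify $\Delta(\nabla(u))$ as the map of pushouts induced by the morphism of spans
\[
\begin{tikzcd}
B \ar[d, "i_1"'] & A \ar[l, "u"'] \ar[r, "u"] \ar[d, "\tilde u"] & B \ar[d, "i_2"] \\
P & A \times_B P \ar[l, "g"'] \ar[r, "g"] & P
\end{tikzcd}
\]
where $\tilde u: A \to A \times_B P$ is the map $a \mapsto (a, i_1 u(a))$. One verifies this identification from the universal property of $P = B \sqcup_A B$ together with $\pi_1 \circ \Delta(\nabla(u)) = \pi_2 \circ \Delta(\nabla(u)) = \mathrm{id}_P$, by matching the restriction of $\Delta(\nabla(u))$ along each $i_k$ with the $k$\=/th pushout inclusion $P \to P \sqcup_{A \times_B P} P$ precomposed with $i_k$. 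To see that $\tilde u \in \cA$, observe that the square
\[
\begin{tikzcd}
A \ar[r, "\tilde u"] \ar[d, "u"'] \pbmark & A \times_B P \ar[d, "g"] \\
B \ar[r, "i_1"'] & P
\end{tikzcd}
\]
is cartesian (a direct unwinding of $B \times_P (A \times_B P) \simeq A$), exhibiting $\tilde u$ as a base change of $i_1 \in \cA$.

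Since colimits in $\Arr\cE$ are computed pointwise on source and target, the map of pushouts $\Delta(\nabla(u))$ is the pushout in $\Arr\cE$ of the span $i_1 \leftarrow \tilde u \to i_2$ of arrows, all three of which lie in $\cA$. Closure of $\cA$ under colimits in $\Arr\cE$ (part of \cref{def:acyclic}) then yields $\Delta(\nabla(u)) \in \cA$, as desired. The main technical hurdle will be the identification in the second paragraph: checking that the displayed morphism of spans really induces $\Delta(\nabla(u))$ on pushouts rather than some unrelated map between the same two pushouts. This demands careful bookkeeping with the two sections $i_1, i_2$ of $\nabla(u)$ and the exact form of the middle arrow $\tilde u$.
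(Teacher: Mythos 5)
Your preparatory steps are all correct: the decomposition $P\times_B P\simeq P\sqcup_{A\times_B P}P$ by universality of colimits, the membership $i_1,i_2\in\cA$ by cobase change, the cartesian square exhibiting $\tilde u$ as a base change of $i_1$ (so $\tilde u\in\cA$), and the fact that colimits in $\Arr\cE$ are computed on domains and codomains separately. The gap is exactly at the step you flag as the "main technical hurdle": the identification of the map induced on pushouts by your span morphism with $\Delta(\nabla(u))$ is not established, and the verification you sketch cannot establish it. In a logos (an \ooo category), a map out of the pushout $P=B\sqcup_A B$ is \emph{not} determined by its restrictions along $i_1$ and $i_2$; the gluing homotopy over $A$ is part of the data. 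Dually, a map into $P\times_B P$ is not determined by its two composites with the projections. So "matching the restriction along each $i_k$" together with $\pi_1\circ\Delta(\nabla(u))=\pi_2\circ\Delta(\nabla(u))=\id_P$ does not single out the diagonal among maps $P\to P\times_B P$. Moreover the two gluing homotopies over $A$ genuinely differ a priori: writing $\alpha\colon i_1u\simeq i_2u$ for the canonical homotopy of the pushout square, the diagonal is glued along the "simultaneous" homotopy $a\mapsto(\alpha_a,\alpha_a)$, whereas the map induced by your span morphism (whose right-hand square commutes via $\alpha$) is glued along the concatenation "move the first coordinate along $\alpha_a$ keeping the second at $i_1u(a)$, then move the second along $\alpha_a$ keeping the first at $i_2u(a)$". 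These are the diagonal and the boundary composite of the square $(s,t)\mapsto(\alpha_a(s),\alpha_a(t))$, so they are homotopic, coherently in $a$ — hence your identification is true — but producing that coherence is the actual mathematical content of the step, and it is missing from the proposal.

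For comparison, the paper's proof avoids this bookkeeping altogether. It first reduces to the case $B=1$ (the general case then follows by working with $\cA\slice B$ in $\cE\slice B$, as in \cref{inducedacyclic}, since the forgetful functor preserves $\Delta$ and $\nabla$ — a reduction you perform implicitly by carrying $B$ along). For $u=p_A\colon A\to 1$, the two cone points $u_1,u_2\colon 1\to\Sigma A$ lie in $\cA$ by cobase change, the four points $(u_i,u_j)\colon 1\to\Sigma A\times\Sigma A$ are collectively surjective, and the base change of $\Delta(\Sigma A)$ along $(u_i,u_j)$ is a map $P(i,j)\to 1$ which is itself a base change of $u_j\in\cA$; since acyclic classes are local classes, $\Delta(\Sigma A)\in\cA$. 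If you want to keep your colimit-in-$\Arr\cE$ strategy, you must either supply the coherence argument identifying the induced map with $\Delta(\nabla(u))$ (i.e. match the full pushout data — both legs \emph{and} the homotopy over $A$ — not just the legs), or switch to an argument, like the paper's descent argument, that never needs to recognize the diagonal as a map built by hand out of a pushout.
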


\begin{proof}
If $f\in \cA$, let us show that $\Delta \nabla f\in \cA$.
We shall first consider the case where $f$ is a map $p_A:A\to 1$. 
By construction, $\nabla(p_A)$ is the map $\Sigma A\to 1$, where $\Sigma A$ is the unreduced suspension of $A$.
Let us verify that the map $\Delta(\Sigma A): \Sigma A\to \Sigma A\times\Sigma A$ belongs to $\cA$.
By definition of $\Sigma A$, we have a pushout square
\begin{equation}
\label{pushoutsups}
\begin{tikzcd}
A\ar[d,"{p_A}"'] \ar[r,"{p_A}"] & 1  \ar[d,"{u_2}"] \\
1 \ar[r,"{u_1}"] & \pomark \Sigma A
\end{tikzcd}
\end{equation}
We have $u_i\in \cA$, since $p_A\in \cA$ and the class $\cA$ is closed under cobase changes since it is saturated.
Let us say that a family of maps $\{v_i : B_i \to B \}_{i\in I}$ in a logos is {\it collectively surjective} if the induced map $\coprod_iB_i\to B$ is surjective.
The pair of maps $u_i:1\to \Sigma A$ for $i=1,2$ is collectively surjective since the square \eqref{pushoutsups} is a pushout.
For each pair $(i,j)\in \{1,2\}$, consider the object $P(i,j)$ defined by following pullback square 
\begin{equation}
\label{sqpartij}
\begin{tikzcd}
P(i,j)\ar[d] \ar[r] & 1  \ar[d,"{u_j}"] \\
1 \ar[r,"{u_i}"] & \Sigma A
\end{tikzcd}
\end{equation}
The map $P(i,j)\to 1$ belongs to $\cA$, since the map $u_j$ belongs to $\cA$ and the class $\cA$ is closed under base changes.
The following square is cartesian, since the square \eqref{sqpartij} is cartesian.
\[
\begin{tikzcd}
P(i,j)\ar[d]\ar[rr,"{}"] && \Sigma A  \ar[d,"{\Delta(\Sigma A)}"] \\
1 \ar[rr,"{(u_i,u_j)}"] &&  \Sigma A \times \Sigma A
\end{tikzcd}
\]
But the quadruple of maps $(u_i,u_j):1\to \Sigma A\times \Sigma A$ for $i,j \in \{1,2\}$ is collectively surjective,
since the pair of maps $u_i:1\to \Sigma A$ for $i=1,2$ is collectively surjective.
Thus, $\Delta(\Sigma A)\in\cA$, since an acyclic class is always local and the map $P(i,j)\to 1$ belongs to $\cA$ for every $i,j \in \{1,2\}$.
Let us now consider the general case of a map $f:A\to B$.
We denote by $\cA\slice B$ the class of maps in $\cE\slice B$ obtained by pulling back the class $\cA$ along the forgetful functor $\cE\slice B\to \cE$.
The class $\cA\slice B$ is acyclic by \cref{inducedacyclic}.
If the map $f:A\to B$ belongs to $\cA$, then the map $(A,f)\to (B,1_B)$ in $\cE\slice B$ belongs to $\cA\slice B$.
Hence the map $\Delta (\Sigma(A,f))$ belongs to $\cA\slice B$ by the first part of the proof. 
The forgetful functor $U:\cE\slice B\to \cE$ preserves the operations $\Delta$ and $\nabla$, since it preserves pullbacks and pushouts.
Thus, $U\Delta(\Sigma(A,f))=\Delta(\nabla f)$.
It follows that the map $\Delta(\nabla f)$ belongs to the class $\cA$, since the map $\Delta(\Sigma(A,f))$ belongs to the class $\cA\slice B$.
 \end{proof}

In the next lemma we shall need various functors existing between a logos $\cE$ and its category of simplicial objects $\cE\splx$:
\[
\begin{tikzcd}
\cE\splx
\ar[rrr,"\colim = |-|" description, shift left=6]
\ar[from=rrr,"c" description, hook', shift right=2]
\ar[rrr,"\lim = (-)_0" description, shift right=2]
\ar[from=rrr,"C = \mathrm{cosk_0}" description, shift left=6, hook']
&&& \cE \,,
\end{tikzcd}
\]
where $c$ is the constant diagram functor, 
whose adjoints are
the limit functor (evaluation at the terminal object $[0]\in \Delta\op$)
and the colimit functor $|-|$.
The limit functor has a further right adjoint which is the 0-coskeleton.
Explicitly, this functor takes an object $A\in \cE$ to its {\it \v Cech nerve} $C(A)$ defined by putting $C(A)_n:= A^{n+1}$.
By \cite[6.2.3]{Lurie:HTT} or \cite[Lecture 4]{Rezk:Leeds} we have $|C(A)|=1$ if and only if the map $A\to 1$ is surjective.
Since both functors $c$ and $C$ are fully faithful, for every $A,B\in \cE$, we have canonical isomorphisms:
\begin{equation}
\label{fullfaith}
\Map{cA}{cB}
\ =\ 
\Map A B
\ =\ 
\Map {C(A)}{C(B)}\,.
\end{equation}
The various adjunctions provide further canonical isomorphisms
\begin{equation}
\label{fullfaithadj}
\Map A B
\ =\ 
\Map{(cA)_0} B
\ =\ 
\Map A{C(B)_0}
\ =\ 
\Map {cA}{C(B)}\,.
\end{equation}
The isomorphism \eqref{fullfaithadj} takes the identity map $1_A:A\to A$ to a map $\delta(A):cA\to C(A)$.
By construction, the map $\delta_n(A):A\to C(A)_n$ is the diagonal map $A\to A^{n+1}$ for every $n\geq 0$. Notice that $\delta_0(A)=1_A$ and that $\delta_1(A)=\Delta(A):A\to  A\times A$.

\begin{lemma}
\label{lem:diag-surj-ortho}
If $\cA$ is an acyclic class in a logos, then 
\[
(\Delta^{-1}(\cA)\cap\Surj )\fperp \Delta^{-1}(\cA^\fperp)
\]
and $(\Delta^{-1}(\cA)\cap\Surj )\subseteq \susp \cA$.
\end{lemma}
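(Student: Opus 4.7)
The plan is to prove both assertions simultaneously by reducing them to a single orthogonality statement. Combining \cref{prop:right-class-susp} with \cref{lem:suspensiongenerator} yields $(\susp\cA)^\fperp = \nabla(\cA)^\fperp = \Delta^{-1}(\cA^\fperp)$, and then \cref{lem:acyclicsat} gives $\susp\cA = \lforth{\Delta^{-1}(\cA^\fperp)}$. Consequently the inclusion $(\Delta^{-1}(\cA)\cap\Surj)\subseteq\susp\cA$ is merely a reformulation of the orthogonality $(\Delta^{-1}(\cA)\cap\Surj)\fperp\Delta^{-1}(\cA^\fperp)$, so it suffices to prove the latter.

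Because $\cA$, $\Surj$ and $\cA^\fperp$ are all closed under base change, and the diagonal commutes with base change, the class $\Delta^{-1}(\cA)\cap\Surj$ is stable under base change; hence fiberwise orthogonality against any class reduces to ordinary orthogonality. Let $u:A\to B$ be surjective with $\Delta u\in\cA$ and let $f:X\to Y$ satisfy $\Delta f\in\cA^\fperp$; for any square $(a,b):u\to f$ we must show the space of fillers is contractible. Pulling $f$ back along $b$ and then working in $\cE\slice B$ via \cref{inducedacyclic}—noting that $\Delta u$ in $\cE\slice B$ agrees with $\Delta u$ in $\cE$, and similarly for the pulled\=/back $f$—we may reduce to the case $Y=B=1$. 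The problem then becomes: if $A\to 1$ is a surjection with $\Delta A\in\cA$ and $X$ satisfies $\Delta X\in\cA^\fperp$, show that the canonical map $X\to X^A$ is an equivalence.

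To prove this reduced statement, we use descent along the effective epimorphism $A\to 1$: one has $1\simeq|A^{\bullet+1}|$ as the geometric realization of the Cech nerve, whence $X\simeq\lim_{\Delta}X^{A^{\bullet+1}}$, and the map $X\to X^A$ is precisely the projection of this totalization onto its $[0]$\=/level. The face maps of the Cech nerve are base changes of the surjection $A\to 1$, while the degeneracies are base changes of $\Delta A$ and therefore lie in $\cA$ by base\=/change closure. Using the pushout\=/product identity $\magic{u\pp v}g=\magic{u}{\magic{v}{g}}$ of \cref{mathieuslemma:2}, iterated pushout products of these degeneracies fiberwise\=/annihilate $\Delta X$; here we also use that $\cA^\fperp$ is closed under diagonals, a consequence of $\nabla(\cA)\subseteq\cA$ (itself obtained by right cancellation applied to the cobase change $i_1:B\to B\sqcup_A B$, which lies in $\cA$ whenever $u$ does).

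The expected main obstacle is showing rigorously that these cosimplicial orthogonalities collapse the totalization $\lim_{\Delta}X^{A^{\bullet+1}}$ onto its $[0]$\=/level. The argument is in the spirit of \cite[Theorem 4.3]{Chach:inequ}: one proceeds by induction on the skeletal filtration of $\Delta$, showing that at each stage the comparison between partial totalizations is an equivalence, starting from the base\=/case orthogonality $\Delta u\fperp\Delta f$ (which holds since $\Delta u\in\cA$ and $\Delta f\in\cA^\fperp$) and propagating upwards via the pushout\=/product identity combined with the closure of $\cA^\fperp$ under diagonals.
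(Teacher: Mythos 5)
Your reductions agree with the paper's: you deduce the inclusion $(\Delta^{-1}(\cA)\cap\Surj)\subseteq \susp\cA$ from the orthogonality statement via \cref{prop:right-class-susp} and \cref{lem:acyclicsat}, you replace fiberwise orthogonality by plain orthogonality because the left\=/hand class is stable under base change, and you slice over $B$ after pulling $f$ back, so that everything comes down to the core case: if $A\to 1$ is surjective with $\Delta(A)\in\cA$ and $\Delta(X)\in\cA^\fperp$, then $X\to X^A$ is invertible. Up to this point your argument and the paper's coincide.

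The genuine gap is in that core case, which you yourself flag as ``the expected main obstacle'': the collapse of the totalization $\lim_{\Delta}X^{A^{\bullet+1}}$ onto its $0$\=/th level is exactly what has to be proved, and your sketch does not supply it. Moreover the inductive mechanism you gesture at (iterated pushout products of the degeneracies ``fiberwise\=/annihilating'' $\Delta X$, together with closure of $\cA^\fperp$ under diagonals) is not the right levelwise input: what is actually needed is that the maps $\delta_n(A):A\to A^{n+1}$ lie in $\cA$ and the maps $\delta_n(X):X\to X^{n+1}$ lie in $\cA^\fperp$ --- both of which follow from $\Delta(A)\in\cA$ and $\Delta(X)\in\cA^\fperp$ by closure under composition and base change, not from closure under diagonals --- and then a result converting this levelwise data into orthogonality of the comparison maps $\delta(A):cA\to C(A)$ and $\delta(X):cX\to C(X)$ of simplicial objects. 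The paper obtains the latter from \cite[5.2.8.18]{Lurie:HTT} and then concludes from the resulting cartesian square of mapping spaces, using $|C(A)|=1$ (surjectivity of $A\to 1$) together with the full faithfulness of $c$ and $C$. A Reedy/skeletal induction could in principle replace that citation, but it would have to be carried out against the relative latching maps of the Cech nerve (not bare pushout products of degeneracies), and that is precisely the content your proposal leaves unproved.
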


\begin{proof}

The class $\susp \cA$ is acyclic, thus $\susp \cA = {}^\fperp\big(\susp \cA^\fperp\big) = {}^\fperp\big(\Delta^{-1}(\cA^\fperp)\big)$ using \cref{lem:acyclicsat,rem:ortho-susp}.
This shows that the second statement followed from the first one.
By \cref{lem:acyclicsat} again, the relation $(\Delta^{-1}(\cA)\cap\Surj )\fperp \Delta^{-1}(\cA^\fperp)$ is equivalent to the simpler condition $(\Delta^{-1}(\cA)\cap\Surj)\perp \Delta^{-1}(\cA^\fperp)$, which is the one we are going to prove.

We fix two maps $u\in \Delta^{-1}(\cA)\cap \Surj$ and $f\in \Delta^{-1}(\cA^\fperp)$. 
We want to show that $u \upvdash f$.
We shall first consider the case where $u$ is a map $p_A:A\to 1$ and $f$ is a map $p_X:X\to 1$.
The assumption $p_A\in \Delta^{-1}(\cA)\cap \Surj$ means the the map $A\to 1$ is surjective and that the diagonal map $ \Delta(A):A\to A\times A$ belongs to $\cA$; the assumption $p_X\in \Delta^{-1}(\cA^\fperp)$ means that the diagonal map $ \Delta(X):X\to X\times X$  belongs to $\cA^\fperp$. 
We wish to show that  $p_A \upvdash p_X$.
We shall first prove that we have $\delta(A) \upvdash \delta(X)$ in the category of simplicial object $s\cE$.
For this, let us show that by induction on $n\geq 0$ that the map $\delta_n(A):A\to A^{n+1}$ belongs to $\cA$. 
Obviously, $\delta_0(A)=1_A\in \cA$. 
It is easy to see that $\delta_{n+1}(A)=(A\times \delta_{n}(A))\circ \Delta(A)$.
The map $A\times \delta_{n}(A)$ belongs to $\cA$, since the map $\delta_{n}(A)$ belongs to $ \cA$ by the induction hypothesis and the class $\cA$ is closed under base change.
Hence the map $\delta_{n+1}(A)$ belongs to $\cA$, since $\Delta(A)\in \cA$ by hypothesis and $\cA$ is closed under composition.
Similarly, the map $\delta_n(X):X\to X^{n+1}$ belongs to $\cA^\fperp$ for every $n\geq 0$, since the map $\Delta(X):X\to X\times X$ belongs to $\cA^\fperp$ and the class $\cA^\fperp$ is closed under composition and base change.
Hence we have $\delta_n(A) \perp \delta_n(X)$ for every $n\geq 0$, since $\cA \perp \cA^\fperp$.
It then follows  by  \cite[5.2.8.18]{Lurie:HTT} that we have $\delta(A) \perp \delta(X)$ in the category of simplicial objects $s\cE$.
Hence the following square is cartesian,
\begin{equation}
\label{orthogonalityforsusp}
\begin{tikzcd}
\Map{C(A)}{cX}
\ar[rrr,"\Map{\delta(A)}{cX}"]
\ar[d,"\Map{C(A)}{\delta(X)}"']
&&& 
\Map{cA}{cX}
\ar[d,"\Map{cA}{\delta(X)}"] \\
\Map{C(A)}{C(X)}
\ar[rrr,"\Map{\delta(A)}{C(X)}"]
&&&
\Map{cA}{C(X)}
\end{tikzcd} \,
\end{equation}
We saw in \eqref{fullfaith} and \eqref{fullfaithadj} that 
\[
\Map{C(A)}{C(X)}
\ =\ 
\Map{cA}{cX}
\ =\ 
\Map{cA}{C(X)}
\ =\ 
\Map A X
\,.
\]
Moreover, we have $\Map{C(A)}{cX} = \Map{|C(A)|}{X}$ by the adjunction $|-|\dashv c$.
But $|C(A)|=1$ since the map $A\to 1$ is surjective by assumption.
Thus, $\Map{C(A)}{cX} = \Map 1 X$. 
It follows that the square \eqref{orthogonalityforsusp} is isomorphic to the following square 
\begin{equation}
\label{orthogonalityforsusp2}
\begin{tikzcd}
\Map 1 X \ar[rrr,"\Map{p_A}{1_X}"] \ar[d,"\Map{p_A}{1_X}"'] &&& 
\Map A X \ar[d,"\Map{1_A}{1_X}", equal] \\
\Map A X \ar[rrr,"\Map{1_A}{1_X}", equal]  &&&
\Map A X
\end{tikzcd} \,
\end{equation}
The square \eqref{orthogonalityforsusp2} is cartesian, since the square \eqref{orthogonalityforsusp} is cartesian.
Hence the map $\Map{p_A}{1_X}$ is invertible, and $p_A\upvdash p_X$.

In the general case, we wish to show that we have $u\upvdash f$ for every map $u:A\to B$ in $\Delta^{-1}(\cA)\cap \Surj$ and every map $f:X\to Y$ in $\Delta^{-1}(\cA^\fperp)$.
For this, it suffices to show that every commutative square 
\begin{equation}
\label{sqfordfiller}
\begin{tikzcd}
A\ar[d, "u"']\ar[rr,"x"] && X
\ar[d,"f"] \\
B \ar[rr,"y"]&& Y
\end{tikzcd}
\end{equation}
has a unique diagonal filler $B\to X$.
Factoring the square \eqref{sqfordfiller} through a pullback
\begin{equation}
\label{decompos}
\begin{tikzcd}
A\ar[d,"u"']\ar[rr,"{(u,x)}"]  && B\times_Y X \ar[d,"{p_1}"] \ar[rr,"{p_2}"] \pbmarkk && X
\ar[d,"{f}"] \\
B \ar[rr, "1_B"] && B \ar[rr,"{y}"]&& Y
\end{tikzcd}
\end{equation}
we get that the square \eqref{sqfordfiller} has a unique diagonal filler if and only if the left hand square of \eqref{decompos} has a unique diagonal filler.
The class $ \Delta^{-1}(\cA^\fperp)$ is closed under base change, since the class $\cA^\fperp$ is closed under base change. 
Thus, $p_1\in \Delta^{-1}(\cA^\fperp)$ since $f\in  \Delta^{-1}(\cA^\fperp)$ by assumption.
For simplicity, let us put $Z:=B\times_Y X$ and $g=p_1$. 
We wish to show that the following square has a diagonal filler.
\begin{equation}
\label{sqfordfiller2}
\begin{tikzcd}
A\ar[d, "u"']\ar[rr,"x"] && Z \ar[d,"g"] \\
B \ar[rr,"{1_B}"]&& B
\end{tikzcd}
\end{equation}
By hypothesis, $g\in \Delta^{-1}(\cA^\fperp)$.  
The square \eqref{sqfordfiller2} can be viewed as a map $z:(A,u)\to (Z,g)$ between two objects of the category $\cE\slice{B}$.
The saturated orthogonal pair $\cA \perp \cA^\fperp$ induces a saturated orthogonal pair $\cA\slice B \perp \cA^\fperp\slice B$ in $\cE\slice B$ by \cref{inducedmodality}.
The diagonal $\Delta(X,g):(X,g)\to (X,g)\times (X,g)$ of the object $(X,g)$ of $\cE\slice{B}$  is defined by the diagonal $\Delta(g):X\to X\times_B X$ of the map $g:X\to B$. 
Hence the map $\Delta(X,g)$ belongs to $\cA^\fperp\slice B$, since the map $\Delta(g)$ belongs to $\cA^\fperp$.
 Similarly, the map $\Delta(A,u)$ belongs to $\cA\slice B$, since the map $\Delta(u)$ belongs to $\cA$.
Moreover, the map $(A,u)\to (B,1_B)$ in $\cE\slice{B}$ is surjective, since the map $u:A\to B$ is surjective.
Thus, $(A,u)\upvdash (Z,g)$ by the first part of the proof.
Hence the square \eqref{sqfordfiller2} has a unique diagonal filler.
It follows that the square \eqref{sqfordfiller} has a unique diagonal filler.
Since this square was arbitrary, this proves the orthogonality $u\upvdash f$.
\end{proof}

\begin{proof}[Proof of \cref{thm:suspension-acyclic}]
The second equality is \cref{rem:ortho-susp}. 
Let us prove that $\susp \cA = \Delta^{-1}(\cA)\cap\Surj = \dec\cA \cap \Surj$.
By \cref{lem:diag-surj-ortho}, we know that $\Delta^{-1}(\cA)\cap\ \Surj\subseteq \susp \cA$.
Since $\susp\cA \subseteq \cA$, it follows that
\begin{equation}
\label{suspensionthm}
\Delta^{-1}(\cA)\cap \Surj= \cA\cap (\Delta^{-1}(\cA)\cap \Surj)=
(\cA\cap \Delta^{-1}(\cA))\cap \Surj=\dec\cA\cap \Surj\,.
\end{equation} 
This shows that $\Delta^{-1}(\cA)\cap \Surj$ is an acyclic class, since both $\Surj$ and $\dec\cA$ are (\cref{lem:decalage}).
To show the inclusion $\susp \cA = \nabla(\cA)\ac\subseteq \Delta^{-1}(\cA)\cap \Surj$, it is thus sufficient to show that $\nabla(\cA) \subseteq \Delta^{-1}(\cA)\cap \Surj$.
But the codiagonal of any map is surjective, since it has a section.
And, by \cref{Deltanabla}, we have $\Delta \nabla(\cA)\subseteq \cA$ and thus $\nabla(\cA)\subseteq  \Delta^{-1}(\cA)$.
\end{proof}

Recall that an acyclic class $\cA$ is {\it epigenic} if $\cA\subseteq \Surj$.

\begin{corollary}[Suspension=decalage]
\label{corollarydescendantimcoimdecomp}
If an acyclic class $\cA$ is epigenic then $\susp \cA=\dec\cA$.
\end{corollary}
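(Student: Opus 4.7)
The plan is to combine the already established identity $\susp\cA = \dec\cA\cap\Surj$ from \cref{thm:suspension-acyclic} with the trivial observation that when $\cA$ is epigenic, $\dec\cA$ is automatically contained in $\Surj$.

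More precisely, I would first recall from \cref{def:decalage} that $\dec\cA = \{f\in \cA\,|\,\Delta f \in \cA\}$, so in particular $\dec\cA \subseteq \cA$. Under the epigenic hypothesis $\cA\subseteq \Surj$, this yields $\dec\cA \subseteq \Surj$, hence
\[
\dec\cA \cap \Surj \ =\ \dec\cA\,.
\]
Applying \cref{thm:suspension-acyclic}, which asserts that $\susp\cA = \dec\cA \cap \Surj$ for any acyclic class $\cA$, immediately gives $\susp\cA = \dec\cA$, as desired.

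There is no real obstacle here: the work has all been done in the proof of \cref{thm:suspension-acyclic}, and the corollary is only an extraction of what happens when the $\Surj$ intersection is vacuous on $\dec\cA$. The only thing to verify is the containment $\dec\cA\subseteq \cA$, which is built into the definition of decalage.
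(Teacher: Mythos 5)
Your argument is correct and is exactly the paper's proof: both deduce $\dec\cA\subseteq\cA\subseteq\Surj$ from the epigenic hypothesis and then apply the identity $\susp\cA=\dec\cA\cap\Surj$ of \cref{thm:suspension-acyclic}. Nothing further is needed.
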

 
\begin{proof}
If $\cA \subseteq \Surj$, then $\dec\cA \subseteq \cA \subseteq \Surj$.
Hence $\susp \cA = \dec\cA \cap \Surj = \dec\cA$ by \cref{thm:suspension-acyclic}.
\end{proof}

A congruence $\cK$ is epigenic as an acyclic class if and only if $\cK\subseteq \decinfty\Surj= \Conn\infty$.
We have called such congruences {\it epic}.
We have seen in \cref{lem:decalage} that the fixed points of the decalage are congruences.
The following result characterizes the fixed points of the suspension as epic congruences.

\begin{lemma}
\label{coTCongrchar}
The following conditions on an acyclic class $\cA$ are equivalent:
\begin{lemenum}
\item\label{coTCongrchar:1} $\cA$ is epigenic and a congruence (\ie an epic congruence),
\item\label{coTCongrchar:3} $\cA = \susp\cA$.
\end{lemenum}
\end{lemma}

\begin{proof}
By \cref{lem:decalage}, an acyclic class $\cA$ is a congruence if and only if $\cA\subseteq\dec\cA$.
Thus, $\cA$ is epigenic and a congruence if and only if $\cA\subseteq \Surj\cap \dec\cA$.
But $\Surj\cap \dec\cA=\susp\cA$ by \Cref{thm:suspension-acyclic}.
Since the inclusion $\susp\cA\subseteq\cA$ is always true, this proves the equivalence.
\end{proof}

\begin{remark}
\label{rem:epic-coreflection}
In the same way that the iterated decalage provides a coreflection of acyclic classes into congruences (\cref{thm:adjCongAcyclic}), the iterated suspension provides a coreflection of acyclic classes into epic congruences.
\end{remark}

\medskip
We will now use \cref{thm:suspension-acyclic} to provide a new description of the decalage which makes clear how it compares to the suspension.

\begin{lemma}[Decalage from generators]
\label{lem:explicit-decalage}
For any class of maps $\Sigma$ in a logos, we have
\[
\dec{\Sigma\ac}
\ =\ 
\big(\nabla(\Sigma) \,\cup\,  \im \Sigma\big)\ac
\ =\ 
\big(\nabla\coim\Sigma \,\cup\, \im\Sigma\big)\ac\,.
\]
\end{lemma}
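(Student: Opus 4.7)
The proof splits along the two equalities. For the first, I apply the image--coimage decomposition (\cref{prop:imcoimdecomp}) to $\cA = \dec{\Sigma\ac}$, yielding $\cA = \im\cA\ac\vee\coim\cA$. Since any monomorphism has an isomorphism for diagonal, $\dec{\cB}\cap\Mono = \cB\cap\Mono$ for every acyclic class $\cB$; applied to $\cB=\Sigma\ac$, this gives $\im{\dec{\Sigma\ac}} = \im{\Sigma\ac}$, and \cref{prop:mono-part} collapses the $\ac$-closure to $\im\Sigma\ac$. For the epigenic part, \cref{thm:suspension-acyclic} identifies $\coim{\dec\cA} = \dec\cA\cap\Surj = \susp\cA$, and \cref{lem:suspensiongenerator} gives $\susp{\Sigma\ac} = \nabla(\Sigma)\ac$. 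Joining yields
\[
\dec{\Sigma\ac} \;=\; \im\Sigma\ac\vee\nabla(\Sigma)\ac \;=\; (\nabla(\Sigma)\cup\im\Sigma)\ac.
\]

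For the second equality, I factor each $f\in\Sigma$ as $f = m\circ p$ with $m=\im f$ a monomorphism and $p=\coim f$ a surjection. Applying \cref{box-compo} with $u = s^0$ supplies the decomposition $\nabla f = \nabla m\circ(\nabla p)'$, where $(\nabla p)'$ is a cobase change of $\nabla p$. A key auxiliary fact is that, for any monomorphism $m:A\to B$, one has $\nabla m\in(m)\ac$: the coprojection $i_1:B\to B\sqcup_A B$ is a cobase change of $m$ so lies in $(m)\ac$, and $\nabla m\circ i_1 = 1_B$, whence the right cancellation property of the acyclic class $(m)\ac$ forces $\nabla m\in(m)\ac$. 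Granted this, the inclusion $\subseteq$ is immediate: $\nabla m\in(m)\ac\subseteq(\im\Sigma)\ac$, and $(\nabla p)'$ is a cobase change of $\nabla p\in\nabla\coim\Sigma$, so their composition $\nabla f$ lies in $(\nabla\coim\Sigma\cup\im\Sigma)\ac$.

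For the reverse inclusion, I identify $\nabla p$ as the base change of $\nabla f$ along $m$: writing $X\xto{p}Y\xto{m}Z$, universality of colimits in a logos gives
\[
Y\times_Z(Z\sqcup_X Z) \;=\; (Y\times_Z Z)\sqcup_{Y\times_Z X}(Y\times_Z Z) \;=\; Y\sqcup_X Y,
\]
using $Y\times_Z Z = Y$ and $Y\times_Z X = X$ (the latter because $m$ is a mono so $Y\times_Z Y = Y$, together with the factorization $X\to Y\to Z$); the pulled-back map $Y\sqcup_X Y\to Y$ is visibly $\nabla p$. Hence $\nabla p\in(\nabla f)\ac\subseteq(\nabla(\Sigma)\cup\im\Sigma)\ac$, closing the proof. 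The main obstacle is the second equality, specifically the two structural observations about codiagonals of monomorphisms---the cobase-change-plus-right-cancellation trick giving $\nabla m\in(m)\ac$, and the base-change identification $m^*\nabla f = \nabla p$; once these are in hand, the rest is bookkeeping in the lattice of acyclic classes.
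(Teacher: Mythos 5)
Your proof of the first equality is the paper's own argument almost verbatim: image--coimage decomposition of $\dec{\Sigma\ac}$, the observation that monomorphisms have invertible diagonals so the image part collapses to $\im\Sigma\ac$ via \cref{prop:mono-part}, and \cref{thm:suspension-acyclic} plus \cref{lem:suspensiongenerator} for the epigenic part. For the second equality you take a genuinely different route. The paper stays entirely in the lattice $\Acyclic\cE$: it decomposes $\Sigma\ac = \coim{\Sigma\ac}\vee\im\Sigma\ac$, distributes the suspension over the join, and absorbs $\susp{\im\Sigma\ac}$ into $\im\Sigma\ac$, arriving at $\dec{\Sigma\ac}=\susp{\coim{\Sigma\ac}}\vee\im\Sigma\ac$ (the form reused in \cref{thm:explicit-decalage}). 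You instead argue map by map: factoring $\nabla f=\nabla(\im f)\circ(\nabla\coim f)'$ via \cref{box-compo}, proving $\nabla m\in\{m\}\ac$ for a monomorphism $m$ by the cobase-change-plus-right-cancellation trick (which is correct: $i_1$ is a cobase change of $m$ and $\nabla m\circ i_1=1$), and identifying $\nabla(\coim f)$ as the base change of $\nabla f$ along $\im f$ by a descent computation (also correct, the key point being $Y\times_Z X\simeq X$ because $\im f$ is a mono). Each approach has its merits: the paper's is shorter given the established lattice identities and directly produces the expression $\susp{\coim\cA}\vee\im\cA\ac$ needed later; yours is more elementary and proves the displayed formula literally, with generating set $\nabla\coim\Sigma$ rather than the codiagonals of all surjections in $\Sigma\ac$, by an explicit two-sided comparison of generators---which is arguably the more careful treatment of that particular formula.
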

\begin{proof}
By \cref{prop:imcoimdecomp}, for any acyclic class we have $\cA = \coim \cA \vee \im\cA\ac$ in the poset of acyclic classes.
Applied to $\cA=\dec{\Sigma\ac}$, we get 
$\dec{\Sigma\ac} = \coim{\dec{\Sigma\ac}} \vee \im{\dec{\Sigma\ac}}\ac$.
We compute the coimage using \cref{thm:suspension-acyclic,lem:suspensiongenerator}:
\[
\coim{\dec{\Sigma\ac}}
\ =\ 
\dec{\Sigma\ac} \cap \Surj
\ =\ 
\susp{\Sigma\ac}
\ =\ 
\big(\nabla(\Sigma)\big)\ac\,.
\]
We compute the image using \cref{def:decalage}
\[
\im{\dec{\Sigma\ac}}
\ =\ 
\dec{\Sigma\ac}\cap \Mono
\ =\ 
\Sigma\ac \cap \Delta^{-1}(\Sigma\ac)\cap \Mono\,.
\]
The diagonal of any monomorphism is invertible, thus in $\Delta^{-1}(\Sigma\ac)$, hence $\Delta^{-1}(\Sigma\ac)\cap \Mono = \Mono$.
Then using \cref{prop:mono-part}, we have
\[
\im{\dec{\Sigma\ac}}\ac
\ =\ 
(\Sigma\ac \cap \Mono)\ac
\ =\ 
\im\Sigma\ac
\]
Putting everything together, we get the first equality:
\[
\dec{\Sigma\ac}
\ =\ 
\nabla(\Sigma)\ac \vee \im\Sigma\ac
\ =\ 
\big(\nabla(\Sigma) \cup \im\Sigma\big)\ac\,.
\]
For the second, we use $\Sigma\ac = \coim{\Sigma\ac} \vee \im\Sigma\ac$ (\cref{prop:imcoimdecomp})
and $\susp{\cA\vee \cB} = \susp\cA\vee \susp\cB$
to get 
\[
\nabla(\Sigma)\ac
\ =\ 
\susp{\Sigma\ac}
\ =\ 
\susp{\coim{\Sigma\ac} \vee \im\Sigma\ac}
\ =\ 
\susp{\coim{\Sigma\ac}} \vee \susp{\im\Sigma\ac}\,.
\]
Then we remark that $\susp{\im\Sigma\ac}\subseteq \im\Sigma\ac$ to get
\[
\dec{\Sigma\ac}
\ =\ 
\nabla(\Sigma)\ac \vee \im\Sigma\ac
\ =\ 
\susp{\coim{\Sigma\ac}} \vee \susp{\im\Sigma\ac} \vee \im\Sigma\ac
\ =\ 
\susp{\coim{\Sigma\ac}} \vee \im\Sigma\ac\,.\qedhere
\]
\end{proof}

\begin{lemma}
\label{lem:sg-dec}
For an acyclic class $\cA$ of small generation, 
the acyclic class $\dec \cA$ and
the congruence $\decinfty \cA$ are of small generation.
\end{lemma}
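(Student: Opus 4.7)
The plan is to derive both statements from the explicit generating formula for $\dec{\Sigma\ac}$ just established in \cref{lem:explicit-decalage}, together with the intersection property of \cref{smallintersmallgen}.

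First I would handle $\dec\cA$. Write $\cA = \Sigma\ac$ for some \emph{set} of maps $\Sigma$. By \cref{lem:explicit-decalage} we have
\[
\dec\cA \ =\ \big(\nabla(\Sigma)\,\cup\,\im\Sigma\big)\ac.
\]
Since $\Sigma$ is a set, the classes $\nabla(\Sigma)=\{s^0\pp u \mid u\in \Sigma\}$ and $\im\Sigma=\{\im f \mid f\in \Sigma\}$ are also sets, and so is their union. This exhibits $\dec\cA$ as the acyclic class generated by a set, hence of small generation.

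Next I would treat $\decinfty\cA$. By induction on $n$, applying the previous step iteratively, each $\decn n\cA$ is of small generation: $\decn{n+1}\cA = \dec{\decn n\cA}$, and the induction hypothesis gives a set of generators $\Sigma_n$ with $\decn n\cA = \Sigma_n\ac$, whence $\decn{n+1}\cA = (\nabla(\Sigma_n)\cup\im\Sigma_n)\ac$ is again generated by a set. Then, by definition,
\[
\decinfty\cA \ =\ \bigcap_{n\geq 0}\decn n \cA,
\]
which is a \emph{countable} intersection of acyclic classes of small generation. Invoking \cref{smallintersmallgen}, this intersection is itself of small generation.

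There is no real obstacle here: once \cref{lem:explicit-decalage} provides explicit generators for $\dec\cA$ in terms of generators for $\cA$, the first claim is immediate, and the second reduces to a countable intersection to which \cref{smallintersmallgen} applies directly. The only point requiring a small amount of care is to verify that the inductive step preserves smallness of the generating set, which is evident since both $\nabla(-)$ and $\im(-)$ are set-indexed operations.
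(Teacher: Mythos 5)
Your proof is correct and follows essentially the same route as the paper: it invokes \cref{lem:explicit-decalage} to produce a set of generators for $\dec\cA$, iterates to handle $\decn n\cA$, and applies \cref{smallintersmallgen} to the countable intersection $\decinfty\cA$. The only point you leave implicit is the passage from small generation of $\decinfty\cA$ \emph{as an acyclic class} to small generation \emph{as a congruence}, which the paper settles by citing \cref{smallintersmallgencong} (the needed direction being immediate, since $\Sigma\ac\subseteq\Sigma\cong\subseteq\decinfty\cA=\Sigma\ac$ when $\decinfty\cA=\Sigma\ac$ is a congruence).
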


\begin{proof}
If $\cA=\Sigma\ac$, then $\susp\cA =(\im \Sigma \cup \nabla(\Sigma))\ac$ by \cref{lem:explicit-decalage}. 
Iterating this, we get that all acyclic classes $\decn n \cA$ are of small generation.
It follows by \cref{smallintersmallgen} that the acyclic class $\decinfty \cA$ is of small generation.
Then so is it as a congruence by \cref{smallintersmallgencong}.
\end{proof}

\medskip

\begin{theorem}[Decalage]
\label{thm:explicit-decalage}
If $\cA$ is an acyclic class in a logos, then 
\begin{align*}	
\dec\cA
&\ =\ 
\susp{\coim \cA}\vee \im \cA\ac
\ =\ 
\susp\cA\vee \cA\mono \,,\\
\dec\cA ^\fperp &\ =\  \Delta^{-1}(\cA^\perp) \cap \im\cA ^\perp
\end{align*}
(where the suprema are taken in the poset of acyclic classes).
\end{theorem}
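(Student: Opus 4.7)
The plan is to derive both equalities from \cref{lem:explicit-decalage} together with the image--coimage decomposition (\cref{prop:imcoimdecomp}), and then obtain the orthogonality statement by taking fiberwise orthogonals and applying \cref{thm:suspension-acyclic}.

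First, I would apply \cref{lem:explicit-decalage} with $\Sigma = \cA$ (so that $\Sigma\ac = \cA$). The second form given there reads
\[
\dec\cA \ =\ \big(\nabla\coim\cA \,\cup\, \im\cA\big)\ac \ =\ \nabla(\coim\cA)\ac \,\vee\, \im\cA\ac\,.
\]
Since $\susp\cB := \nabla(\cB)\ac$ for any acyclic class $\cB$, and since $\coim\cA = \cA\cap\Surj$ is acyclic, the first term equals $\susp{\coim\cA}$. This establishes the first equality $\dec\cA = \susp{\coim\cA}\vee \im\cA\ac$.

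Next, I would upgrade this to the second equality $\dec\cA = \susp\cA \vee \cA\mono$. By \cref{prop:imcoimdecomp}, $\cA = \coim\cA \vee \im\cA\ac$; applying the suspension (which preserves joins, as noted in \cref{suspdefinition}) yields $\susp\cA = \susp{\coim\cA} \vee \susp{\im\cA\ac}$. Since $\susp{\cB}\subseteq \cB$ always, the summand $\susp{\im\cA\ac}$ is absorbed by $\im\cA\ac = \cA\mono$, giving
\[
\susp\cA \vee \cA\mono \ =\ \susp{\coim\cA}\vee \susp{\im\cA\ac}\vee \im\cA\ac \ =\ \susp{\coim\cA}\vee \im\cA\ac\ =\ \dec\cA\,.
\]

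Finally, for the orthogonality statement, I would take the fiberwise right orthogonal of $\dec\cA = \susp\cA \vee \cA\mono$. Since fiberwise orthogonality converts joins into intersections, $\dec\cA^\fperp = \susp\cA^\fperp \cap \cA\mono^\fperp$. By \cref{thm:suspension-acyclic}, $\susp\cA^\fperp = \Delta^{-1}(\cA^\fperp) = \Delta^{-1}(\cA^\perp)$, the last equality holding because $\cA$ is closed under base change (see the remark after \cref{lem:acyclicsat}). For the second factor, $\cA\mono^\fperp = (\im\cA\ac)^\fperp = \im\cA^\fperp$ by \cref{lem:acyclicsat}, and $\im\cA = \cA\cap\Mono$ is itself closed under base change (intersection of two such classes), so $\im\cA^\fperp = \im\cA^\perp$. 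Combining gives $\dec\cA^\fperp = \Delta^{-1}(\cA^\perp)\cap \im\cA^\perp$. The only subtle point is tracking the $\fperp$ versus $\perp$ distinction, but this is handled cleanly because every class in sight is closed under base change.
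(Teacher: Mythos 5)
Your proof is correct and follows essentially the same route as the paper: the first equality by applying \cref{lem:explicit-decalage} with $\Sigma=\cA$, the second via $\susp\cA = \nabla(\cA)\ac$ and $\cA\mono=\im\cA\ac$ (your re-derivation through \cref{prop:imcoimdecomp} just replays the computation already inside that lemma), and the orthogonality statement by taking $\fperp$ of the join, invoking \cref{thm:suspension-acyclic}, and converting $\fperp$ to $\perp$ via base-change closure exactly as the paper does.
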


\begin{proof}
The first equality is \cref{lem:explicit-decalage} applied to $\Sigma=\cA$.
We get also that 
$\dec\cA = (\nabla(\cA) \cup \im\cA)\ac = \nabla(\cA)\ac \vee \im\cA\ac$,
and the second equality follows by using $\susp\cA = \nabla(\cA)\ac$ and $\cA\mono = \im\cA\ac$.
For the last equality, we use the second one to get $\dec\cA^\fperp = \susp\cA^\fperp \cap (\im\cA\ac)^\fperp$, and \cref{thm:suspension-acyclic}
to get $\dec\cA ^\fperp =\Delta^{-1}(\cA^\fperp) \cap \im\cA ^\fperp$.
Then we remark that, since $\cA$, $\dec\cA$, and $\im\cA$ are all closed under base change, 
we have an equality 
$\Delta^{-1}(\cA^\fperp) \cap \im\cA ^\fperp=\Delta^{-1}(\cA^\perp) \cap \im\cA ^\perp$.
\end{proof}

\begin{remark}[Comparison decalage and suspension]
\label{rem:decalage-v-suspension}
Decalage and suspension are two way to ``shift'' an acyclic class.
The formula $\dec\cA = \susp{\coim \cA}\vee \im \cA\ac$ of \cref{thm:explicit-decalage} shows how they differ:
whereas the suspension $\susp\cA$ is generated by all suspensions of maps in $\cA$, 
the decalage $\dec\cA$ is generated by suspending only the epigenic part of $\cA$.
This has the consequence that the decalage preserves the monogenic part of $\cA$:
$\dec \cA\mono = \cA\mono$, whereas $\susp \cA\mono = \Iso$, since $\susp\cA\subseteq\Surj$.
\end{remark}
 
\begin{remark}[Higher decalage and suspension]
\label{rem:higher-dec-susp}
From \cref{thm:suspension-acyclic,thm:explicit-decalage}, one can see that
\begin{align*}	
\suspn n\cA &\ =\ \decn n \cA\cap \Conn {n-2} &&\hspace{-2cm}\text{for $0\leq n\leq \infty$, }\\
\decn n\cA &\ =\ \suspn n\cA\vee \cA\mono  &&\hspace{-2cm}\text{for $0\leq n< \infty$,}\\
	&\ =\ \suspn n \cA \vee \decn\infty\cA  &&\hspace{-2cm}\text{for $0\leq n\leq \infty$.}
\end{align*}
The last formula follows from the second one and $\cA\mono\subseteq\decn\infty\cA\subseteq \decn n\cA$.
Notice that the formula $\suspn\infty\cA = \bigcap_n \suspn n\cA = \decn\infty\cA\cap \Conn\infty$ holds, 
but that the formula $\decn\infty\cA = \suspn\infty\cA\vee\cA\mono$ is false: if $\cA$ is a congruence in a presheaf category, we have $\decn\infty\cA=\cA$ and $\suspn\infty \cA\subseteq\Conn\infty=\Iso$, so ${\decn\infty\cA = \suspn\infty\cA\vee\cA\mono }\iff {\cA=\cA\mono}$ which is false since not every congruence is monogenic (\ie a Grothendieck topology) in a presheaf category \cite[Example 4.1.19~(a)]{ABFJ:GT}.
\end{remark}

\subsection{Lex generators of congruences}

In this section, we recall the main theorem of \cite{ABFJ:HS} giving a necessary and sufficient condition for an acyclic class to be a congruence, and we introduce the notion of lex generator of a congruence.
This notion will be our main tool to compute acyclic products of congruences in practice (see \cref{boxprodlexgen}).
The last part recalls results about transport of congruences along morphisms of logoi.

\subsubsection{Diagonal criteria}
\label{sec:recognition}

Acyclic classes and congruences are intimately related.
We recall some results from \cite{ABFJ:HS}.
For a class of maps $\Sigma$ in a logos, we put
$\Delta(\Sigma) :=\{ \Delta u \, | \, u \in \Sigma \}$
and we define the {\it diagonal completion} of $\Sigma$ by
\[
\Sigma\diag
\ :=\ 
\big\{ \Delta^k u \, | \, u \in \Sigma,\, k\geq 0 \big\}
\,.
\]

\begin{proposition}[Recognition of congruences {\cite[Theorem~4.1.8(3)]{ABFJ:HS}}]
\label{lem:caraccong}
An acyclic class $\cA$ is a congruence if and only if $\Delta(\cA)\subseteq \cA$ if and only if $\cA\diag = \cA$. 
\end{proposition}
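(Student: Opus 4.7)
The strategy is to deduce both equivalences from the characterization of congruences via the decalage (\cref{lem:decalage:0}), which states that an acyclic class $\cA$ is a congruence if and only if $\cA \subseteq \dec\cA$.

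For the first equivalence, I would unpack the definition of the decalage: by \cref{def:decalage}, $\dec\cA = \{f \in \cA \mid \Delta f \in \cA\}$. Since $\cA \subseteq \cA$ is automatic, the inclusion $\cA \subseteq \dec\cA$ reduces to the condition that $\Delta f \in \cA$ for every $f \in \cA$, i.e.\ $\Delta(\cA) \subseteq \cA$. Combined with \cref{lem:decalage:0}, this yields the first equivalence.

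For the second equivalence, note that the inclusion $\cA \subseteq \cA\diag$ is automatic (taking $k=0$), so $\cA\diag = \cA$ is equivalent to $\cA\diag \subseteq \cA$, which unpacks to $\Delta^k u \in \cA$ for all $u \in \cA$ and all $k \geq 0$. Clearly $\cA\diag \subseteq \cA$ implies $\Delta(\cA) \subseteq \cA$ by specializing to $k=1$. For the converse, I would argue by induction on $k$: the base case $k=0$ is trivial, and for the inductive step, if $\Delta^k u \in \cA$ then $\Delta^{k+1} u = \Delta(\Delta^k u) \in \Delta(\cA) \subseteq \cA$ by hypothesis.

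There is no real obstacle here — the proposition is essentially a restatement of \cref{lem:decalage:0} in terms of the operator $\Delta$ and its iterates. The only thing to verify carefully is the unpacking of $\cA \subseteq \dec\cA$ and the straightforward induction for the diagonal completion; both are routine once the definitions are expanded.
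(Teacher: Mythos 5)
Your argument is correct as a formal derivation, and the inductive unpacking of both $\cA\subseteq\dec\cA$ and $\cA\diag=\cA$ is accurate; but note that the paper itself offers no proof of \cref{lem:caraccong} — it is recalled verbatim from \cite[Theorem~4.1.8(3)]{ABFJ:HS} — so there is no argument to compare yours against, only a citation. What your route really does is translate between two recalled statements: for an acyclic class, the condition $\cA\subseteq\dec\cA$ in \cref{lem:decalage:0} is \emph{by definition} the condition $\Delta(\cA)\subseteq\cA$, so invoking \cref{lem:decalage} is not an independent proof but a relabelling; the substantive content — that an acyclic class closed under diagonals is automatically closed under finite limits in $\Arr\cE$, hence a congruence — is exactly what the cited theorem of \cite{ABFJ:HS} establishes, and in the source literature the decalage characterization of \cite{ABFJ:GT} rests on that theorem rather than the other way around. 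So viewed as a blind, self-contained proof your plan is circular in substance; viewed as a cross-reference inside this paper, where \cref{lem:decalage} is stated earlier in the preliminaries, the deduction is legitimate and your handling of the second equivalence (the trivial inclusion $\cA\subseteq\cA\diag$ plus induction on $k$ via $\Delta^{k+1}u=\Delta(\Delta^k u)$) is exactly right.
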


\begin{theorem}[Generation of congruences {\cite[Theorem 4.2.12 and Proposition 4.3.6]{ABFJ:HS}}]
\label{sigmaac=sigmacong}
If $\Sigma$ is a class of maps in a logos, then $\Sigma\cong=(\Sigma\diag)\ac$.
\end{theorem}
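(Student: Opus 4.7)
The plan is to establish the two inclusions separately, using the decalage operator as the main tool for the harder direction.

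For the easy inclusion $(\Sigma\diag)\ac \subseteq \Sigma\cong$: The congruence $\Sigma\cong$ is, being a congruence, closed under taking diagonals by \cref{lem:caraccong}. Since it contains $\Sigma$, it therefore contains every iterated diagonal $\Delta^k u$ for $u \in \Sigma$, i.e.\ it contains $\Sigma\diag$. Being also acyclic (every congruence is), it contains the acyclic class generated by $\Sigma\diag$.

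For the reverse inclusion $\Sigma\cong \subseteq (\Sigma\diag)\ac$, it suffices to prove that $\cA := (\Sigma\diag)\ac$ is itself a congruence, since it visibly contains $\Sigma$. By \cref{lem:decalage:0}, an acyclic class is a congruence as soon as it is contained in its own decalage, so the goal becomes $\cA \subseteq \dec\cA$. By \cref{lem:decalage:1} the class $\dec\cA$ is itself acyclic, so it is enough to check the generators: $\Sigma\diag \subseteq \dec\cA$. But this is immediate — for $f = \Delta^k u$ with $u \in \Sigma$ and $k \geq 0$, both $f \in \Sigma\diag \subseteq \cA$ and $\Delta f = \Delta^{k+1} u \in \Sigma\diag \subseteq \cA$, which is exactly the definition of belonging to $\dec\cA$.

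The only conceptually nontrivial ingredient is the fact that $\dec\cA$ is acyclic (\cref{lem:decalage:1}); once that is granted, the argument is essentially formal. The reason no further obstacle arises is that $\Sigma\diag$ is by construction stable under a further diagonal (shifting the index $k \mapsto k+1$), so the diagonal-closure condition is trivial on generators and propagates to the acyclic closure via the acyclicity of $\dec\cA$. Thus the tricky issue one might fear — that $\Delta$ does not preserve colimits, so that $\Delta^{-1}(\cA)$ is not obviously acyclic — is sidestepped entirely by working with $\dec\cA$ rather than with $\Delta^{-1}(\cA)$.
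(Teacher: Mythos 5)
Your argument is correct, and it is not circular relative to the way this paper is organized. The inclusion $(\Sigma\diag)\ac\subseteq\Sigma\cong$ is exactly as you say (a congruence is an acyclic class stable under diagonals), and for the converse your reduction works: $\Sigma\diag\subseteq\dec{(\Sigma\diag)\ac}$ holds on the nose because $\Sigma\diag$ is stable under one more diagonal, acyclicity of the decalage (\cref{lem:decalage}) then promotes this to $(\Sigma\diag)\ac\subseteq\dec{(\Sigma\diag)\ac}$, and the fixed-point criterion (\cref{lem:decalage}, equivalently \cref{lem:caraccong}) shows $(\Sigma\diag)\ac$ is a congruence containing $\Sigma$, hence contains nothing less than $\Sigma\cong$. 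Note, however, that the present paper does not prove this statement: it is imported from \cite[Theorem~4.2.12 and Proposition~4.3.6]{ABFJ:HS}, so there is no in-text proof to match, and your derivation is in effect a reconstruction of the original one from the recalled ingredients --- the acyclicity of the decalage (\cite[Proposition~3.3.5 and Theorem~3.3.9]{ABFJ:HS}, where it appears as the class of fiberwise equivalences) and the recognition criterion (\cite[Theorem~4.1.8]{ABFJ:HS} $=$ \cref{lem:caraccong}), both of which precede the theorem in that reference, so the logical order is respected. Two small caveats: you were right not to invoke \cref{caractlexgenerator}, since in this paper that lemma is deduced \emph{from} \cref{sigmaac=sigmacong} (your argument in fact reproves it independently); and your closing remark slightly overstates what is ``sidestepped'' --- since $\dec\cA=\cA\cap\Delta^{-1}(\cA)$, the delicate interaction of $\Delta$ with colimits is not avoided but outsourced to \cref{lem:decalage}, which, as you correctly flag, is the only genuinely nontrivial input.
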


\begin{lemma}
\label{smallintersmallgencong} 
A congruence $\cK$ in a logos is of small generation if and only if it is of small generation as an acyclic class.
The intersection of a set of congruences of small generation is of small generation.
\end{lemma}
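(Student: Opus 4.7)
The plan is to deduce both parts from the two formulas $\Sigma\cong = (\Sigma\diag)\ac$ of \cref{sigmaac=sigmacong} and the generic statement \cref{smallintersmallgen} on intersections of acyclic classes of small generation.

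First I would handle the equivalence between small generation as a congruence and small generation as an acyclic class. If $\cK = \Sigma\ac$ for a set $\Sigma$, then since $\cK$ is in particular a congruence containing $\Sigma$ we have $\Sigma\cong \subseteq \cK = \Sigma\ac \subseteq \Sigma\cong$, so $\cK = \Sigma\cong$ and $\cK$ is of small generation as a congruence. Conversely, if $\cK = \Sigma\cong$ for a set $\Sigma$, \cref{sigmaac=sigmacong} gives $\cK = (\Sigma\diag)\ac$; since $\Sigma\diag = \{\Delta^k u \mid u\in \Sigma,\, k\geq 0\}$ is indexed by $\Sigma\times\NN$, it is still a set, so $\cK$ is of small generation as an acyclic class.

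For the second statement, let $\{\cK_i\}_{i\in I}$ be a small family of congruences of small generation, and put $\cK := \bigcap_i \cK_i$. An intersection of congruences is a congruence, so $\cK$ is a congruence. By the first part, each $\cK_i$ is of small generation as an acyclic class, so by \cref{smallintersmallgen} the acyclic class $\cK$ is of small generation. Applying the first part once more (in the other direction) yields that $\cK$ is of small generation as a congruence.

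The only potential subtlety is checking that $\Sigma\diag$ is genuinely small when $\Sigma$ is, but this is immediate from its explicit description as a union indexed by $\Sigma \times \NN$. No genuine obstacle is expected; the statement is essentially a bookkeeping consequence of the already-established \cref{sigmaac=sigmacong,smallintersmallgen}.
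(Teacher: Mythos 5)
Your proof is correct and follows essentially the same route as the paper: both directions of the equivalence use $\Sigma\cong=(\Sigma\diag)\ac$ from \cref{sigmaac=sigmacong} together with the smallness of $\Sigma\diag$, and the intersection statement is then an immediate application of \cref{smallintersmallgen} combined with the equivalence. No gaps.
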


\begin{proof} 
If a congruence $\cK$ is of small generation as an acyclic class, then $\cK=\Sigma\ac$ for a set of maps $\Sigma$ and hence $\cK=(\Sigma\ac)\cong=\Sigma\cong$.
This shows that $\cK$ is also of small generation as a congruence.
Conversely, if $\cK=\Sigma\cong$ for a set of maps $\Sigma$,
then $\Sigma\cong =(\Sigma\diag)\ac$ by \cref{sigmaac=sigmacong}.
Thus, $\cK$ is of small generation as an acyclic class, since the set $\Sigma\diag$ is small.
So a congruence is of small generation if and only if it is of small generation as an acyclic class.
It then follows by  \Cref{smallintersmallgen}
that the intersection
of a set of congruences of small generation
is of small generation.
\end{proof}

Let $\Class\cE$ be the poset of classes of maps in a logos $\cE$ (\ie the poset of full subcategories of the arrow category $\cE\arr$).
Let $\Acyclic\cE$ be the subposet of acyclic classes and $\Cong\cE$ the subposet of congruences.
All the inclusions have left adjoints.
\[
\begin{tikzcd}
\Class\cE
\ar[rr,shift left = 1.6, "(-)\ac"]
\ar[from=rr,shift left = 1.6, hook']
\ar[rrrr,bend left = 30, "(-)\cong = \left((-)\diag\right)\ac"]
&
&
\Acyclic\cE
\ar[rr,shift left = 1.6, "(-)\cong"]
\ar[from=rr,shift left = 1.6, hook']
&
&
\Cong\cE
\end{tikzcd}
\]   
We have seen in \cref{thm:adjCongAcyclic} that the inclusion $\Cong\cE\to \Acyclic\cE$ has also a right adjoint $\decinfty-$.

\subsubsection{Lex generators}
\label{sec:lex-gen}

\begin{definition}
\label{def:lex-generator}
We will say that a class of maps $\Sigma$ in a logos is a {\it lex generator} if the class $\Sigma\ac$ is a congruence (\ie if $\Sigma\ac=\Sigma\cong$).
\end{definition}

\begin{lemma}
\label{caractlexgenerator}
A class of maps $\Sigma$ in a logos is a lex generator if and only if $\Delta(\Sigma)\subseteq \Sigma\ac$.
\end{lemma}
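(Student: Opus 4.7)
The plan is to reduce the statement, via the characterization in \cref{lem:caraccong}, to showing that $\Delta(\Sigma\ac)\subseteq\Sigma\ac$ is equivalent to the a priori weaker condition $\Delta(\Sigma)\subseteq\Sigma\ac$, and then to extract the non-trivial direction from the decalage machinery of \cref{sec:decalage}.

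The forward direction is immediate: if $\Sigma$ is a lex generator, then $\Sigma\ac=\Sigma\cong$ is a congruence, so by \cref{lem:caraccong} we have $\Delta(\Sigma\ac)\subseteq\Sigma\ac$; since $\Sigma\subseteq\Sigma\ac$, we get $\Delta(\Sigma)\subseteq\Delta(\Sigma\ac)\subseteq\Sigma\ac$.

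For the reverse direction, I would assume $\Delta(\Sigma)\subseteq\Sigma\ac$ and consider the decalage $\dec{\Sigma\ac}=\Sigma\ac\cap\Delta^{-1}(\Sigma\ac)$ of \cref{def:decalage}. By \cref{lem:decalage:1}, this class is acyclic. Our hypothesis says exactly that $\Sigma\subseteq\Sigma\ac$ and $\Delta(\Sigma)\subseteq\Sigma\ac$, so $\Sigma\subseteq\dec{\Sigma\ac}$. Since $\dec{\Sigma\ac}$ is acyclic and contains $\Sigma$, the minimality of the acyclic completion gives $\Sigma\ac\subseteq\dec{\Sigma\ac}$. By \cref{lem:decalage:0}, any acyclic class $\cA$ with $\cA\subseteq\dec\cA$ is a congruence. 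Therefore $\Sigma\ac$ is a congruence, i.e.\ $\Sigma$ is a lex generator.

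The whole argument is short; the only conceptual step is recognizing that the hypothesis $\Delta(\Sigma)\subseteq\Sigma\ac$ is precisely the statement that $\Sigma$ sits inside the decalage of $\Sigma\ac$, after which the conclusion follows formally from \cref{lem:decalage}. There is no real obstacle here, provided one has the decalage lemmas already at hand.
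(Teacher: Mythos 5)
Your argument is correct, but it takes a genuinely different route from the paper. The paper proves the nontrivial direction by observing that $\Delta(\Sigma)\subseteq\Sigma\ac$ yields $\Sigma\diag\subseteq\Sigma\ac$, hence $(\Sigma\diag)\ac\subseteq\Sigma\ac$, and then invoking the generation theorem $\Sigma\cong=(\Sigma\diag)\ac$ (\cref{sigmaac=sigmacong}) to conclude $\Sigma\ac=\Sigma\cong$; you instead bypass \cref{sigmaac=sigmacong} entirely and argue through the decalage: the hypothesis says precisely that $\Sigma\subseteq\dec{\Sigma\ac}$, so minimality of the acyclic completion together with \cref{lem:decalage:1} gives $\Sigma\ac\subseteq\dec{\Sigma\ac}$, and \cref{lem:decalage:0} then identifies $\Sigma\ac$ as a congruence. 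One thing your route buys is that it makes fully explicit a point the paper's proof passes over quickly: getting from the one-step hypothesis $\Delta(\Sigma)\subseteq\Sigma\ac$ to control of \emph{all} iterated diagonals $\Delta^k(\Sigma)$ is not a naive induction (one has no a priori control over diagonals of maps in $\Sigma\ac$), whereas in your argument the inclusion $\Sigma\diag\subseteq\Sigma\ac$ comes out afterwards as a consequence of $\Sigma\ac$ being a congruence rather than being needed as an intermediate step. On the other hand, neither route is more elementary in an absolute sense: the paper leans on the main generation theorem of the first paper in the series, while you lean on the acyclicity of the decalage and the fixed-point characterization of congruences (\cref{lem:decalage}), which are themselves substantial imported results. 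Your treatment of the forward direction (via \cref{lem:caraccong} and $\Sigma\subseteq\Sigma\ac$) is the standard one and is fine; the paper simply leaves it unwritten.
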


\begin{proof}
We put $\cA=\Sigma\ac$ and consider the decalage $\dec\cA = \cA\cap \Delta^{-1}(\cA) \subseteq \cA$.
The condition $\Delta(\Sigma)\subseteq \Sigma\ac$ is equivalent to $\Sigma \subseteq \dec\cA$.
Since $\dec\cA$ is acyclic by \cref{lem:decalage:1}, this is also equivalent to $\cA\subseteq \dec\cA$ and to $\cA=\dec\cA$, which is equivalent to $\cA$ being a congruence by \cref{lem:decalage:0}.
\end{proof}

\begin{examples}
\label{ex:lexgen}
\begin{exmpenum}

\item\label{ex:lexgen:diag}
Any class of maps $\Sigma$ such that $\Delta(\Sigma)\subset\Sigma$ is a lex generator.
In particular, any congruence is a lex generator since $\Delta(\cK)\subseteq\cK$.

\item\label{ex:lexgen:diag-comp}
For any class of maps $\Sigma$, the class $\Sigma\diag$ is a lex generator, 
since by construction $\Delta(\Sigma\diag) \subseteq \Sigma\diag$.
This can also be seen using the formula $(\Sigma\diag)\ac = \Sigma\cong$ of \cref{sigmaac=sigmacong}.

\item\label{ex:lexgen:mono}
Any class $\cM$ of monomorphisms is a lex generator, since $\Delta(\cM)\subseteq \Iso\subseteq \cM\ac$.

\item\label{ex:lexgen:superlexgen}
Since we have always $\Sigma\sat \subseteq \Sigma\sat\subseteq \Sigma\cong$,
any class of maps $\Sigma$ such that $\Sigma\sat = \Sigma\cong$ is a lex generator.
\end{exmpenum}
\end{examples}

The next lemma provides more examples of lex generators.
It can be applied in particular to $\Sigma=\cK$ a congruence.

\begin{lemma}
\label{lem:lex-gen}
If $\phi:\cE\to \cF$ is a morphism of logoi and $\Sigma$ a lex generator in $\cE$, then $\phi(\Sigma)$ is a lex generator in $\cF$.
\end{lemma}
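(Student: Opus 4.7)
The plan is to use the diagonal criterion for lex generators given in \cref{caractlexgenerator}: a class $\Sigma$ is a lex generator if and only if $\Delta(\Sigma)\subseteq \Sigma\ac$. So I would reduce the statement to showing that $\Delta(\phi(\Sigma))\subseteq \phi(\Sigma)\ac$.

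First, since $\Sigma$ is a lex generator in $\cE$, we have $\Delta(\Sigma)\subseteq \Sigma\ac$ by \cref{caractlexgenerator}. Applying $\phi$ on both sides gives $\phi(\Delta(\Sigma))\subseteq \phi(\Sigma\ac)$. Next, since $\phi$ is a morphism of logoi and therefore preserves finite limits, it commutes with the formation of diagonals: $\phi(\Delta f) = \Delta(\phi f)$ for every map $f$ of $\cE$. Consequently $\phi(\Delta(\Sigma)) = \Delta(\phi(\Sigma))$, so the previous inclusion reads $\Delta(\phi(\Sigma))\subseteq \phi(\Sigma\ac)$.

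Now I would invoke the formula $\phi(\Sigma\ac)\ac = \phi(\Sigma)\ac$ from \cref{prop:transport-acyclic:image-ac}, which gives in particular $\phi(\Sigma\ac)\subseteq \phi(\Sigma)\ac$. Combining this with the inclusion above yields $\Delta(\phi(\Sigma))\subseteq \phi(\Sigma)\ac$, and a final application of \cref{caractlexgenerator} concludes that $\phi(\Sigma)$ is a lex generator in $\cF$.

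There is really no obstacle here: everything follows formally from the compatibility of $\phi$ with finite limits (for diagonals) and the monoidality-like identity $\phi(\Sigma\ac)\ac = \phi(\Sigma)\ac$ established in the preliminaries. The only point that deserves a line of emphasis in the written proof is the identity $\phi\circ \Delta = \Delta\circ \phi$, since it is the place where the left-exactness of $\phi$ is genuinely used.
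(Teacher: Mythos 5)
Your proof is correct and follows the same route as the paper: both use left-exactness of $\phi$ to get $\phi(\Delta(\Sigma))=\Delta(\phi(\Sigma))$, the identity $\phi(\Sigma\ac)\ac=\phi(\Sigma)\ac$ from \cref{prop:transport-acyclic:image-ac}, and the diagonal criterion of \cref{caractlexgenerator} to conclude. No gaps; this is essentially the paper's argument.
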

\begin{proof}
The logos morphism $\phi$ preserves diagonals, since it is left-exact.
Thus $\phi(\Delta(\Sigma)) = \Delta(\phi(\Sigma))$ and
\[
\Delta(\phi(\Sigma))
\ =\ 
\phi(\Delta(\Sigma))
\ \subseteq\ 
\phi(\Sigma\ac)
\ \subseteq\ 
\phi(\Sigma\ac)\ac
\ =\ 
\phi(\Sigma)\ac
\]
where the last equality is from \cref{prop:transport-acyclic:image-ac}.
\end{proof}

\subsubsection{Transport of congruences}

Let $\phi:\cE\to \cF$ be a morphism of logoi.
Recall the morphism of suplattices $\phi(-)\ac:\Acyclic\cE \to \Acyclic\cF$ of \cref{prop:transport-acyclic}.

\begin{proposition}[Acyclic transport of congruences]
\label{prop:transport-cong}
Let $\phi:\cE\to \cF$ be a morphism of logoi, with associated congruence $\cK_\phi := \phi^{-1}(\Iso)$.
\begin{propenum}
\item\label{prop:transport-cong:inv-image}
For any congruence $\cJ$ in $\cF$ its inverse image $\phi^{-1}(\cJ)$ is a congruence in $\cE$ containing $\cK_\phi$.
Moreover, if $\cJ$ is of small generation, then so is $\phi^{-1}(\cJ)$.

\item\label{prop:transport-cong:image-cong}
For any congruence $\cI$ in $\cE$ its acyclic image $\phi(\cI)\ac$ is a congruence,
and $\phi(-)\ac$ restrict into a morphism of suplattices $\Cong\cE \to \Cong\cF$ which is left adjoint to $\phi^{-1}$.
Moreover, if $\cI$ is of small generation, then so is $\phi(\cI)\ac$.

\item\label{prop:transport-cong:image-mcong}
For any monogenic congruence $\cI$ in $\cE$ its acyclic image $\phi(\cI)\ac$ is a monogenic congruence,
and $\phi(-)\ac$ restrict into a morphism of suplattices $\Cong\cE \to \Cong\cF$ whose right adjoint is $\phi^{-1}(-)\mono$.

\item\label{prop:transport-cong:quotient-cong}
When $\phi$ is a quotient of logoi, the adjunction $\phi(-)\ac\dashv \phi^{-1}$ restricts into an isomorphism between $\Cong\cF$ and the poset $\Cong\cE\upslice {\cK_\phi}$ of congruences in $\cE$ containing $\cK_\phi$.

\item\label{prop:transport-cong:quotient-gtop}
When $\phi$ is a quotient of logoi, the adjunction $\phi(-)\ac\dashv \phi^{-1}(-)\mono$ restricts into an isomorphism between $\MCong\cF$ and the poset $\MCong\cE\upslice {\cK_\phi}$ of congruences in $\cE$ containing $\cK_\phi\mono$.

\end{propenum}
\end{proposition}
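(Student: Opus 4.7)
The plan is to dispatch the five parts in order, using Proposition~\ref{prop:transport-acyclic} on the acyclic level as the backbone and promoting everything to congruences/monogenic congruences at each stage.

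For part~(1), the preimage $\phi^{-1}(\cJ)$ is an acyclic class containing $\cK_\phi$ by \cref{prop:transport-acyclic:0}. To see it is a congruence I would apply the diagonal criterion (\cref{lem:caraccong}): for $f\in\phi^{-1}(\cJ)$, left-exactness of $\phi$ gives $\phi(\Delta f)=\Delta(\phi f)\in\cJ$ since $\cJ$ is closed under diagonals, so $\Delta f\in\phi^{-1}(\cJ)$. For the small-generation clause, I would observe that if $\cJ$ is of small generation then $\psi:\cF\to\cF\quotient\cJ$ is an accessible morphism of logoi by \cref{thm:bij-congruence-lexloc}, so $\phi^{-1}(\cJ)=\cK_{\psi\phi}$ is the congruence of the accessible composite $\psi\phi$, which is of small generation by \cref{Luriethm2}. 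For part~(2), the essential input is that every congruence is a lex generator (\cref{ex:lexgen:diag}): \cref{lem:lex-gen} then yields that $\phi(\cI)$ is a lex generator in $\cF$, so $\phi(\cI)\ac$ is a congruence. The adjunction $\phi(-)\ac\dashv\phi^{-1}$ of \cref{prop:transport-acyclic:image-ac} restricts to congruences since both functors land in $\Cong$. If $\cI=\Sigma\cong$ with $\Sigma$ small, then $\cI=(\Sigma\diag)\ac$ by \cref{sigmaac=sigmacong}, and \cref{prop:transport-acyclic:image-ac} gives $\phi(\cI)\ac=\phi(\Sigma\diag)\ac$, of small generation as an acyclic class, hence as a congruence by \cref{smallintersmallgencong}.

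For part~(3), write a monogenic $\cI$ as $\cI=\cM\ac$ with $\cM\subseteq\Mono(\cE)$; since $\phi$ is left-exact it preserves monomorphisms, so $\phi(\cM)\subseteq\Mono(\cF)$, and $\phi(\cI)\ac=\phi(\cM)\ac$ is a monogenic congruence by the identity of \cref{prop:transport-acyclic:image-ac} applied to $\cM$. To obtain the adjunction $\phi(-)\ac\dashv\phi^{-1}(-)\mono:\MCong\cE\rightleftarrows\MCong\cF$, I would compose the adjunction of (2) with the adjunction $\mathrm{incl}\dashv(-)\mono$ from \cref{thm:mono-cong}: for $\cI\in\MCong\cE$ and $\cJ\in\MCong\cF$,
\[
\phi(\cI)\ac\subseteq\cJ
\ \iff\
\cI\subseteq\phi^{-1}(\cJ)
\ \iff\
\cI\subseteq\phi^{-1}(\cJ)\mono,
\]
the last equivalence using that $\cI$ is monogenic.

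For parts~(4) and~(5), the hypothesis that $\phi$ is a quotient makes the counit $\phi(\phi^{-1}(-))\to\mathrm{id}$ an isomorphism on acyclic classes (since $\phi_*$ is fully faithful), yielding the acyclic equivalence of \cref{prop:transport-acyclic:quotient}. For (4), combine this with (1)--(2): for any congruence $\cK\supseteq\cK_\phi$, \cref{exmp:congruence3} gives that $\phi(\cK)$ is already a congruence with $\phi^{-1}(\phi(\cK))=\cK$ and $\phi(\cK)\ac=\phi(\cK)$; the adjunction of (2) therefore restricts to mutually inverse bijections between $\Cong\cF$ and $\Cong\cE\upslice{\cK_\phi}$. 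Part~(5) is the most delicate and is where I would expect the main obstacle: the issue is that the acyclic equivalence is indexed over $\cK_\phi$ while the monogenic adjunction is indexed over $\cK_\phi\mono$. The key is that $(-)\mono:\Cong\cE\to\MCong\cE$ is simultaneously a right adjoint to the inclusion and a left adjoint to the hyper-radical map (\cref{thm:mono-cong}), hence preserves joins. So for $\cI\in\MCong\cE$ with $\cI\supseteq\cK_\phi\mono$, $\phi^{-1}(\phi(\cI)\ac)=\cI\vee\cK_\phi$ by (4), and
\[
\phi^{-1}(\phi(\cI)\ac)\mono
\ =\
(\cI\vee\cK_\phi)\mono
\ =\
\cI\mono\vee\cK_\phi\mono
\ =\
\cI.
\]
Conversely, for $\cJ\in\MCong\cF$, writing $\cJ=(\cJ\cap\Mono)\ac$ and lifting each generator $m\in\cJ\cap\Mono$ back through the quotient $\phi$ (using that $\phi_*$ is fully faithful and that $\phi$ sends every mono of $\phi^{-1}(\cJ)\cap\Mono$ surjectively onto $\cJ\cap\Mono$ by the acyclic equivalence applied to the Grothendieck topology $\cJ\cap\Mono$, cf.\ Theorem~3.0.1 and Proposition~3.1.6 of \cite{ABFJ:GT}) gives the counit equality $\phi(\phi^{-1}(\cJ)\mono)\ac=\cJ$. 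The two equalities together produce the claimed isomorphism in~(5).
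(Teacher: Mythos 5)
Your handling of parts (1)--(3) is essentially the paper's own argument: inverse images of congruences are congruences (the paper cites \cref{exmp:congruence2}, you verify the diagonal criterion directly, which amounts to the same thing), small generation via the composite $\cE\to\cF\to\cF\quotient\cJ$, the lex-generator argument via \cref{lem:lex-gen} for (2), and preservation of monomorphisms plus $\phi(\Sigma\ac)\ac=\phi(\Sigma)\ac$ for (3). Where you genuinely diverge is (4) and (5): the paper disposes of these by citing \cite[Proposition 2.2.32]{ABFJ:GT} and saying the second is ``proved similarly'', whereas you supply self-contained proofs, and they are correct. For (4), the acyclic-level isomorphism of \cref{prop:transport-acyclic:quotient} together with the fact that both functors preserve congruences does the job (the appeal to \cref{exmp:congruence3} is redundant but harmless). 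For (5), your unit computation is sound: it uses that $\phi(\cI)\ac=\phi(\cI\vee\cK_\phi)\ac$ (a one-line consequence of the adjunction, worth stating) and that $(-)\mono$ preserves joins because it is left adjoint to $\hrad{-}$ by \cref{thm:mono-cong}. For the counit $\phi\bigl(\phi^{-1}(\cJ)\mono\bigr)\ac=\cJ$, the mechanism you gesture at is the right one, but the justification should be made explicit and does not really rest on Theorem~3.0.1 or Proposition~3.1.6 of \cite{ABFJ:GT}: since $\phi_*$ is fully faithful and right adjoints preserve monomorphisms, every $m\in\cJ\cap\Mono$ satisfies $\phi_*(m)\in\phi^{-1}(\cJ)\cap\Mono$ and $\phi(\phi_*(m))\simeq m$, so $\cJ\cap\Mono\subseteq\phi\bigl(\phi^{-1}(\cJ)\mono\bigr)$, giving one inclusion, while the other follows from $\phi^{-1}(\cJ)\mono\subseteq\phi^{-1}(\cJ)$ and part (4). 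With that line spelled out, your version of (4)--(5) has the merit of keeping the proposition self-contained within this paper, at the cost of a slightly longer proof than the paper's citation.
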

\begin{proof}
\noindent(1) 
We already saw in \cref{exmp:congruence2} that the inverse image of an acyclic class is acyclic.
The inclusion $\cK_\phi\subseteq\phi^{-1}(\cJ)$ comes from $\Iso\subseteq \cJ$.
If $\cJ$ is of small generation the quotient $\cF\quotient \cJ$ exists and $\phi^{-1}(\cJ)$ is then the congruence of the morphism $\cE\to \cF\to \cF\quotient\cJ$.
It is of small generation by \cref{exmp:congruence2}.

\smallskip
\noindent(2) 
Let $\cI$ be a congruence in $\cE$. 
The class $\phi(\cI)$ is a lex generator by \cref{lem:lex-gen}, thus $\phi(\cI)\ac$ is a congruence.
This proves that $\phi(-)\ac:\Cong\cE\to \Cong\cF$ is well defined.
Both functors of the adjunction $\phi(-)\ac\dashv \phi^{-1}$ restricts to congruences, thus their restrictions are still adjoint.
If $\cI=\Sigma\cong = (\Sigma\diag)\ac$ then $\phi(\cI)\ac = \phi(\Sigma\diag)\ac$ by \cref{prop:transport-acyclic:image-ac}.
If $\Sigma$ is a set, then so is $\Sigma\diag$. 
This shows that $\phi(\cI)\ac$ is of small generation if $\cI$ is.

\smallskip
\noindent(3) 
The morphism $\phi$ preserves monomorphisms since it is left-exact.
If $\Sigma$ is a class of monomorphisms, then $\phi(\Sigma)$ is a class of monomorphism.
Then the formula $\phi(\Sigma\ac)\ac = \phi(\Sigma)\ac$ of \cref{prop:transport-acyclic:image-ac} shows that $\phi(-)\ac$ preserve monogenic acyclic classes.
The verification that the right adjoint of $\phi^{-1}(-)\mono$ is straightforward.

\smallskip
\noindent(4) 
is proved in \cite[Proposition 2.2.32]{ABFJ:GT}.
(5) 
is proved similarly.
\end{proof}

\begin{lemma}
\label{lem:pushout-cong}
Let $\phi:\cE\to \cF$ be a morphism of logoi and $\cK$ a congruence of small generation in $\cE$. 
Then the square
\[
\begin{tikzcd}
\cE\ar[r,"\psi"]\ar[d,"\phi"'] & \cE\quotient \cK\ar[d,"\phi'"]\\
\cF \ar[r,"\psi'"]& \cF\quotient\phi(\cK)\ac    
\end{tikzcd}
\]
is a pushout in $\Logos$.
\end{lemma}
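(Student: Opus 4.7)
The plan is to verify the universal property of the pushout directly. Fix a logos $\cG$ and consider the groupoid of cocones under the span $\cE\quotient \cK \xot{\psi} \cE \xto{\phi} \cF$ with vertex $\cG$. Such a cocone is a pair of logos morphisms $\alpha:\cE\quotient\cK\to\cG$ and $\beta:\cF\to\cG$ together with an identification $\alpha\psi\simeq\beta\phi$. By the universal property of the quotient $\psi:\cE\to\cE\quotient\cK$ (\cref{thm:bij-congruence-lexloc}), giving $\alpha$ together with a factorization $\alpha\psi\simeq\beta\phi$ is equivalent to giving the morphism $\beta\phi:\cE\to\cG$ and checking that it inverts $\cK$. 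Hence the space of cocones at $\cG$ is equivalent to the space of logos morphisms $\beta:\cF\to\cG$ such that $\beta\phi$ inverts $\cK$.

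Next I would translate the condition ``$\beta\phi$ inverts $\cK$'' into a condition purely on $\beta$. The congruence $\cK_{\beta\phi}$ equals $\phi^{-1}(\cK_\beta)$, so $\cK\subseteq\cK_{\beta\phi}$ is equivalent to $\phi(\cK)\subseteq\cK_\beta$. Since $\cK_\beta$ is a congruence, hence an acyclic class, $\phi(\cK)\subseteq\cK_\beta$ is equivalent to $\phi(\cK)\ac\subseteq\cK_\beta$, i.e. $\beta$ inverts $\phi(\cK)\ac$. By \cref{prop:transport-cong:image-cong}, $\phi(\cK)\ac$ is a congruence, and it is of small generation because $\cK$ is. Therefore the quotient $\psi':\cF\to\cF\quotient\phi(\cK)\ac$ exists, and by its universal property logos morphisms $\beta:\cF\to\cG$ inverting $\phi(\cK)\ac$ are equivalent to logos morphisms $\cF\quotient\phi(\cK)\ac\to\cG$.

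Combining the two equivalences, the space of cocones under the span with vertex $\cG$ is naturally equivalent to $\Map{\cF\quotient\phi(\cK)\ac}{\cG}$, and by inspection this equivalence is induced by postcomposition with the canonical cocone $(\phi',\psi')$. Since this holds for every logos $\cG$, the square is a pushout in $\Logos$.

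There is no real obstacle here; the proof is essentially a bookkeeping exercise combining the universal property of the quotient $\psi$ with the identity $\cK_{\beta\phi}=\phi^{-1}(\cK_\beta)$ and the fact that $\phi(\cK)\ac$ is already a congruence of small generation, so the quotient $\psi'$ in the statement is genuinely the quotient by the congruence generated by $\phi(\cK)$.
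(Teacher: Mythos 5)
Your proof is correct and takes essentially the same route as the paper: both verify the pushout's universal property by observing that a cocone amounts to a morphism $\beta:\cF\to\cG$ with $\beta\phi$ inverting $\cK$, translating this (via $\cK_{\beta\phi}=\phi^{-1}(\cK_\beta)$ and acyclicity of $\cK_\beta$) into $\beta$ inverting $\phi(\cK)\ac$, and then invoking \cref{prop:transport-cong:image-cong} for small generation together with the universal property of the quotient $\psi'$; the paper just phrases this by constructing the unique factorization $g''$ and checking the remaining triangle by uniqueness, while you package it as a mapping-space computation. The only quibble is the citation: the universal property of $\psi$ you use is the standard fully faithful factorization property of quotients (recalled in Section 2.1.2), not literally \cref{thm:bij-congruence-lexloc}.
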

\begin{proof}
We have seen in \cref{prop:transport-cong:image-cong} that $\phi(\cK)\ac$ is of small generation if $\cK$ is. 
Thus the quotient $\cF\quotient\phi(\cK)\ac$ exists.
We need to show that the following diagram of mapping spaces in $\Logos$ is cartesian for every logos $\cG$:
\[
\begin{tikzcd}
\Map{\cF\quotient \phi(\cK)\ac}\cG\ar[r,"-\circ\psi'", hook]\ar[d,"-\circ\phi'"'] & \Map\cF\cG\ar[d,"-\circ\phi"]\\
\Map{\cE\quotient \cK}\cG \ar[r,"-\circ\psi", hook]& \Map\cE\cG\,.
\end{tikzcd}
\]
By definition of localizations, the horizontal morphism are inclusion of full subgroupoids, with respective images the subgroupoids 
of functors $g:\cE\to\cG$ sending $\cK$ to $\Iso$
and
of functors $g':\cF\to\cG$ sending $\phi(\cK)\ac$ to $\Iso$, or equivalently $\phi(\cK)$ to $\Iso$.
With this identification, the square is cartesian if and only if 
${g'(\phi(\cK))\subseteq \Iso} \iff {(g'\phi)(\cK)\subseteq\Iso}$, which is obvious.
\end{proof}

\section{The acyclic product}
\label{sec:product}

The goal of this section is to define monoidal structures on several types of classes of maps in a logos $\cE$:
\begin{enumerate}
\item on the poset $\Class\cE$ of all classes of maps (\cref{prop:mon-all}),
\item on the poset $\Acyclic\cE$ of acyclic classes (\cref{thm:aprod}),
\item on the poset $\Cong\cE$ of congruences (\cref{thm:cprod}), and
\item on the poset $\MCong\cE$ of monogenic congruences/Grothendieck topologies (\cref{thm:gtop}).
\end{enumerate}
This will turn these posets into $\otimes$\=/frames (\cref{def:tensor-frame}), and the latter will even be a frame (\cref{framearequantal}).
We then prove that several of the canonical functors between these posets are morphisms of $\otimes$\=/frames (see diagram~\eqref{monoidal-morphisms}).
In particular, we shall see that all the inclusions
\[
\MCong\cE
\subtto
\Cong\cE
\subtto
\Acyclic\cE\,,
\]
as well as the functor $(-)\mono:\Acyclic\cE\to \MCong\cE$ extracting the monogenic part of an acyclic class, are monoidal.

\subsection{Product and division of classes of maps}
\label{sec:mon-all}

Recall from \cref{classical:boxmonoidal} that the pushout product functor in a logos $\cE$
\[
-\pp - : \Arr\cE\times  \Arr\cE \tto  \Arr\cE
\]
is the tensor product of a symmetric monoidal closed structure on the arrow category $\cE\arr$. 
The unit object is the map $i:0\to 1$.

We denote $\Class\cE$ the (large) poset of (replete) classes of maps in $\cE$, it is a (large) complete lattice.
If $\cA$ and $\cB$ are two classes of maps, we define
\[
\cA\pp\cB := \{u\pp v\ | \ u\in \cA \ {\rm and} \ v\in \cB\}
\qquad \mathrm{and} \qquad 
\cA\div\cB :=\{f\in \cE \ | \ \cA \pp f \subseteq \cB\}\,.
\]
(In the equalities above, it is implicit that we are taking the replete closure of the right hand side.)
We shall say that $\cA\pp\cB$ is the {\it  product} of $\cA$ and $\cB$ 
and say that $\cA\div\cB$ is the {\it division} of $\cB$ by $\cA$.
Obviously,
\begin{equation}
\label{adjboxclasses}
\cA\pp \cB\subseteq \cC
\quad\iff\quad 
\cB\subseteq \cA \div \cC
\end{equation}

We shall call a {\it $\otimes$\=/lattice} a suplattice equipped with a symmetric monoidal structure (see \cref{sec:mframe}).

\begin{proposition}
\label{prop:mon-all}
The product $\pp$ defines a $\otimes$\=/lattice structure on $\Class\cE$, whose unit is the class $\{i:0\to 1\}$ consisting of a single map.
\end{proposition}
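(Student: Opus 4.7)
The plan is to lift the symmetric monoidal closed structure on $(\Arr\cE, \pp, i)$ to a monoidal structure on replete classes of maps in a pointwise fashion. Concretely, I would define $\cA \pp \cB$ as in the statement (understood up to replete closure), and then derive associativity, symmetry, and the unit law from the corresponding coherent isomorphisms $u \pp (v \pp w) \simeq (u \pp v) \pp w$, $u \pp v \simeq v \pp u$, and $i \pp u \simeq u$ which hold at the level of individual arrows in $\Arr\cE$. Since classes of maps are by convention replete, any isomorphism of arrows preserves membership in a class, so the coherence data on $\Arr\cE$ descends directly to the poset $\Class\cE$. For the unit law in particular, $\{i\} \pp \cA = \{i \pp u : u \in \cA\}$ coincides with $\cA$ because $i \pp u \simeq u$ for every $u$.

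Next I would note that $\Class\cE$ is a (large) complete lattice, where suprema are given by unions of classes; in particular it is a suplattice in the sense of \cref{sec:tensor-frames}. To upgrade the monoidal structure to a $\otimes$-lattice structure, it remains to verify that $\pp$ distributes over arbitrary suprema in each variable. The cleanest way to do this is to invoke the adjunction~\eqref{adjboxclasses}: the equivalence $\cA \pp \cB \subseteq \cC \iff \cB \subseteq \cA \div \cC$ exhibits $\cA \pp (-)$ as a left adjoint in the poset $\Class\cE$, and by symmetry of $\pp$ so is $(-) \pp \cB$. Left adjoints preserve suprema, which yields the required distributivity $\cA \pp \bigcup_j \cB_j = \bigcup_j (\cA \pp \cB_j)$ for free.

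Should a direct check be preferred, this same distributivity is equally immediate from unwinding the definition: a map $u \pp v$ with $u \in \cA$ and $v \in \bigcup_j \cB_j$ has $v \in \cB_{j_0}$ for some index $j_0$, hence lies in $\cA \pp \cB_{j_0} \subseteq \bigcup_j (\cA \pp \cB_j)$, and the reverse inclusion is obvious. Either way, there is no serious obstacle: the entire proposition is a formal consequence of the symmetric monoidal closed structure on $(\Arr\cE, \pp, i)$ together with the repleteness of classes, and I would expect the whole proof to fit in a few lines.
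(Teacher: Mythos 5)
Your proof is correct, and it amounts to the same formal verification as the paper's, just packaged differently. The paper's own proof is a one-line reduction: it observes that the isomorphism classes of objects of $\Arr\cE$ form a commutative monoid $\cM$ under $\pp$, that repleteness identifies $\Class\cE$ with the powerset $\powerset{\cM}$, and then cites the general fact (\cref{quantfrommonoid}) that the powerset of a commutative monoid is a $\otimes$\=/lattice with the pointwise product. You instead verify the axioms directly on $\Class\cE$: associativity, symmetry and unitality descend from the (thin, so coherence-free) monoidal structure on $\Arr\cE$ via repleteness, and distributivity over suprema comes either from the adjunction~\eqref{adjboxclasses} exhibiting $\cA\pp(-)$ as a left adjoint, or from the obvious elementwise check. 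This is the same mathematics; what the paper's reduction buys is that the suplattice structure and distributivity are inherited wholesale from the powerset example without comment, while your version makes explicit where repleteness and the closed structure enter (note that the adjunction~\eqref{adjboxclasses} is itself just an unwinding of the definitions of $\pp$ and $\div$ on classes, so there is no circularity in invoking it). Either argument is acceptable.
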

\begin{proof}
This is essentially \cref{quantfrommonoid}.
The collection of isomorphism classes of objects of $\Arr\cE$ has the structure of a commutative monoid $\cM$ with the product $\pp:\cM\times \cM\to \cM$ induced by the pushout product on $\Arr\cE$.
The poset $\Class\cE$ is then isomorphic to the powerset $\powerset \cM$, since every class $\cA\in \Class\cE$ is a disjoint union of isomorphism classes.
\end{proof}

Since the pushout product of any map with an isomorphism is an isomorphism, the class $\Iso$ of all isomorphisms in $\cE$ is an absorbing element in $(\Class\cE,\pp,i)$.
We denote $\All$ the class of all maps in $\cE$. 
It is the maximal element of $\Class\cE$.

\bigskip
The following lemma lists a number of compatibilities of the division operation with various operations on classes of maps.
It will be used in \cref{thm:aprod} to construct the monoidal structure on $\Acyclic\cE$.

\begin{lemma}
\label{lem:closure-pp}
We fix two arbitrary classes of maps $\cA$ and $\cB$ in a logos.
\begin{enumerate}
\item \label{lem:closure-pp:iso}
If $\Iso \subseteq \cA$, then 
$\Iso \subseteq \cB\div \cA$.

\item \label{lem:closure-pp:bc}
If $\cA$ is closed under base change, then so is $\cB\div\cA$.

\item \label{lem:closure-pp:cbc}
If $\cA$ is closed under cobase change, then so is $\cB\div\cA$.

\item \label{lem:closure-pp:compo}
If $\cA$ is closed under cobase change and composition, then $\cB\div\cA$ is closed under composition.

\item \label{lem:closure-pp:colim}
If $\cA$ is closed under colimits, then so is $\cB\div\cA$.

\end{enumerate}

In particular, if the class $\cA$ is acyclic, then so is $\cB\div\cA$ for any class $\cB$.

\end{lemma}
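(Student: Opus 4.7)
\medskip
\noindent\textbf{Proof proposal.} All five assertions follow from matching the relevant closure property of $\cA$ with the corresponding compatibility of the pushout product. Throughout, unwinding the definition, $f\in\cB\div\cA$ means that $u\pp f\in\cA$ for every $u\in\cB$, so each verification reduces to showing that $u\pp f$ lies in $\cA$ under the appropriate hypothesis on $f$.

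For (1), one simply observes that the pushout product of any map with an isomorphism is again an isomorphism, so if $f$ is invertible and $u\in\cB$, then $u\pp f\in\Iso\subseteq\cA$. For (2), given $f\in\cB\div\cA$ and a base change $f'$ of $f$, \cref{basechangebox} asserts that $u\pp f'$ is a base change of $u\pp f$ for any $u\in\cB$; since $u\pp f\in\cA$ and $\cA$ is closed under base change, $u\pp f'\in\cA$. Part (3) is proved identically, replacing \cref{basechangebox} by \cref{cobasechangebox}.

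For (4), let $f,g\in\cB\div\cA$ be composable, and let $u\in\cB$. By \cref{box-compo}, one has $u\pp gf=(u\pp g)\circ(u\pp f)'$, where $(u\pp f)'$ is a cobase change of $u\pp f$. Since $u\pp f\in\cA$ and $\cA$ is closed under cobase change, $(u\pp f)'\in\cA$; combined with $u\pp g\in\cA$ and closure under composition, this gives $u\pp gf\in\cA$, so $gf\in\cB\div\cA$. For (5), given a diagram $(f_i)$ in $\cB\div\cA$ with colimit $f$ in $\Arr\cE$, \cref{boxadjunction} guarantees that the functor $u\pp-:\Arr\cE\to\Arr\cE$ preserves colimits for any fixed $u\in\cB$; hence $u\pp f=\colim_i(u\pp f_i)$, which lies in $\cA$ by closure under colimits.

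None of the steps carries a real obstacle: each is a one-line application of a previously established compatibility of $\pp$ (pointwise in one variable) with the matching closure operation. The only care needed is to keep track of which variable the closure acts on—since we are dividing \emph{by} $\cB$, every argument is by varying $f$ while $u\in\cB$ is held arbitrary, and the compatibility lemmas \ref{basechangebox}, \ref{cobasechangebox}, \ref{box-compo}, and \ref{boxadjunction} all apply in exactly that form.
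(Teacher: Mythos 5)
Your proof is correct and follows essentially the same route as the paper: each part is verified by combining the relevant compatibility of the pushout product (\cref{basechangebox}, \cref{cobasechangebox}, \cref{box-compo}, cocontinuity of $u\pp-$) with the corresponding closure hypothesis on $\cA$. The only cosmetic difference is that the paper first reduces to the case where $\cB$ is a single map via $\cB\div\cA=\bigcap_{u\in\cB}u\div\cA$, whereas you keep $u\in\cB$ arbitrary throughout — which amounts to the same argument.
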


\begin{proof}
All the properties of the lemma are stable by intersection.
Thus, the formula 
\[
\cB\div\cA=\bigcap_{u\in \cB} u\div \cC
\]
allows to reduce the proofs to the case where $\cB$ is a single map $u$.

\noindent(1) 
For any $u$, we have $\Iso \pp u \subseteq \Iso$. 
The inclusion $\Iso \subseteq \cA$ implies $\Iso \pp u \subset\cA\pp u$.
The inclusion $\Iso \pp u \subseteq \Iso \subseteq \cA$ implies $\Iso\subseteq u\div \cA$.

\smallskip
\noindent(2) 
Let $f$ be a map in $\cB\div\cA$ and let $f'\to f$ be a cartesian morphism.
By \cref{basechangebox}, the morphism $u\pp f'\to u\pp f$ is cartesian.
By hypothesis, $u\pp f$ is in $\cA$ and $\cA$ is closed under base change, thus the map $u\pp f'$ is in $\cA$ and $f$ is in $u\div \cA$.

\smallskip
\noindent(3) 
Same as (2) 
but using \cref{cobasechangebox}.

\smallskip
\noindent(4) 
Let $f$ and $g$ be two maps in $\cB\div\cA$ such that the composite $fg$ exists.
\Cref{box-compo} says that $u\pp (fg) = (u\pp f)(u\pp g)'$ where $(u\pp g)'$ is a cobase change of $a\pp g$.
The hypothesis on $\cA$ imply that $u\pp (fg)$ in $\cA$ and that $fg$ is in $u\div \cA$.

\smallskip
\noindent(5) 
Let $f_i$ be a diagram in $\cB\div\cA$.
The functor $u\pp -$ preserves colimits, hence $u\pp\colim f_i = \colim u\pp f_i$.
Since $\cA$ is assumed closed under colimits, the map $u\pp\colim f_i$ is in $\cA$,
and $\colim f_i$ is in $u\div \cA$.

\smallskip
\noindent
The last assertion follows from the above properties and \cref{def:acyclic}.
\end{proof}

\subsection{The algebra of acyclic classes}
\label{subsec:quant-locclass}

Recall that every class of maps $\Sigma$ in a logos is contained in a smallest acyclic class $\Sigma\ac$, called the {\it acyclic closure} of $\Sigma$. 
For example, if $i$ is the map $0\to 1$, then $\{i\}\ac$ is the class of all maps $\All$ by \cref{ex:acyclicgene:-1}.

\begin{definition}[Acyclic product]
\label{deflocalbox}
If $\cA$ and $\cB$ are two acyclic classes of maps in a logos we shall say that the class
\[
\cA\aprod \cB:= (\cA\pp\cB)\ac
\]
is the {\it acyclic product} of $\cA$ and $\cB$.
\end{definition}

\begin{theorem}[Acyclic product]
\label{thm:aprod}
The acyclic product $\cA\aprod\cB$ is a $\otimes$\=/frame structure on the poset $\Acyclic\cE$ of acyclic classes of a logos $\cE$.
The reflection $(-)\ac:\Class\cE\to \Acyclic\cE$ is monoidal:
\begin{equation}
\label{eq:ac=monoidal}
(\Sigma \pp \Tau)\ac = \Sigma\ac \aprod \Tau\ac \,.	
\end{equation}
The subposet of acyclic classes of small generation is closed under products and small suprema, turning it into a sub-$\otimes$\=/frame. 
Moreover, the division of acyclic classes of small generation is of small generation.
\end{theorem}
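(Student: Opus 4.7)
The plan is to deduce all four assertions from one key compatibility property combined with the general machinery that the reflection of a $\otimes$-lattice inherits a $\otimes$-frame structure when the reflection is monoidal (the referenced \cref{localizationquantale}). The input $(\Class\cE, \pp, \{i\})$ is the $\otimes$-lattice of \cref{prop:mon-all}; the reflection is $(-)\ac : \Class\cE \to \Acyclic\cE$.

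The central step is to establish the monoidal equality $(\Sigma \pp \Tau)\ac = (\Sigma\ac \pp \Tau)\ac$ for arbitrary classes $\Sigma, \Tau$. The inclusion $\subseteq$ is immediate from $\Sigma \subseteq \Sigma\ac$. For the reverse, I would fix $v \in \Tau$ and invoke \cref{lem:closure-pp} (parts \ref{lem:closure-pp:iso}, \ref{lem:closure-pp:bc}, \ref{lem:closure-pp:compo}, \ref{lem:closure-pp:colim}) to conclude that $v \div (\Sigma \pp \Tau)\ac$ is itself acyclic, since $(\Sigma \pp \Tau)\ac$ satisfies all the closure conditions of \cref{def:acyclic}. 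Because $\Sigma \pp v \subseteq \Sigma \pp \Tau \subseteq (\Sigma \pp \Tau)\ac$, the inclusion $\Sigma \subseteq v \div (\Sigma \pp \Tau)\ac$ holds trivially, and acyclicity of the target gives $\Sigma\ac \subseteq v \div (\Sigma \pp \Tau)\ac$, that is, $\Sigma\ac \pp v \subseteq (\Sigma \pp \Tau)\ac$. Taking the union over $v \in \Tau$ and then the acyclic closure yields $(\Sigma\ac \pp \Tau)\ac \subseteq (\Sigma \pp \Tau)\ac$. Iterating on the second variable gives the full monoidal formula \eqref{eq:ac=monoidal}.

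This compatibility is exactly the hypothesis of \cref{localizationquantale}, so $\cA \aprod \cB := (\cA \pp \cB)\ac$ defines a symmetric monoidal structure on $\Acyclic\cE$ with unit $\{i\}\ac = \All$ (using \cref{ex:acyclicgene:-1}), and the reflection becomes monoidal. To upgrade this to a $\otimes$-frame, I would check that $\aprod$ distributes over arbitrary suprema: suprema in $\Acyclic\cE$ are obtained by applying $(-)\ac$ to unions in $\Class\cE$, and since $(-)\ac$ is a left adjoint it preserves suprema while $\pp$ distributes over unions in $\Class\cE$. The adjunction $\cA \aprod \cB \subseteq \cC \iff \cB \subseteq \cA \div \cC$ in $\Acyclic\cE$ follows from the adjunction \eqref{adjboxclasses} in $\Class\cE$ combined with \cref{lem:closure-pp}, which ensures $\cA \div \cC$ is acyclic whenever $\cC$ is. Thus the division operation already produces acyclic classes, so it serves as the internal hom.

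For the small-generation statement, stability under $\aprod$ and small suprema is straightforward: if $\cA = \Sigma\ac$ and $\cB = \Tau\ac$ with $\Sigma, \Tau$ sets, then \eqref{eq:ac=monoidal} gives $\cA \aprod \cB = (\Sigma \pp \Tau)\ac$ with $\Sigma \pp \Tau$ a set, while a small union of sets is a set. The genuinely nontrivial point, and the one I expect to be the main obstacle, is the stability of small generation under division. For $\cA = \Sigma\ac$ with $\Sigma$ a set, we have
\[
\cA \div \cB \ =\ \bigcap_{u \in \Sigma}\, (u \div \cB)\,,
\]
so by \cref{smallintersmallgen} it suffices to treat a single map $u$. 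For this I plan to use the presentability criterion of \cref{smallgenpres}: since $\cB$ is of small generation the full subcategory $\underline{\cB} \subseteq \Arr\cE$ is presentable, and the functor $u \pp - : \Arr\cE \to \Arr\cE$ is accessible (it is the left adjoint $\pbh{u}{-}$ of an accessible functor by \cref{boxadjunction}), so the full subcategory of arrows $f$ with $u \pp f \in \underline{\cB}$ is accessible as a pullback of presentable categories along accessible functors; it is also cocomplete since division preserves colimits. An application of \cref{smallgenpres} in reverse then concludes.
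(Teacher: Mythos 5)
Your proposal is correct and follows essentially the same route as the paper: both verify, via \cref{lem:closure-pp}, that division into an acyclic class is again acyclic, feed this into \cref{localizationquantale} to obtain the monoidal structure and formula \eqref{eq:ac=monoidal} with unit $\{0\to 1\}\ac=\All$ (the top element), and reduce small generation of $\cA\div\cB$ to a single map $u$, concluding by presentability/accessibility of $\underline{u\div\cB}$ as the preimage of $\underline{\cB}$ under the cocontinuous functor $u\pp-$ together with \cref{smallgenpres} and \cref{smallintersmallgen}. The only cosmetic differences are that you re-derive the compatibility condition of \cref{localizationquantale} by hand (its condition (2)) instead of checking condition (1) directly, and you argue accessibility of the pullback rather than citing the presentability of a limit of presentable categories as the paper does.
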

\begin{proof}
We use \cref{localizationquantale} on the reflector $R=(-)\ac:\Class\cE\to \Class\cE$.
We need to show that for any class $\cB$ and any acyclic class $\cA$, the class $\cB\div\cA$ is acyclic.
A class is acyclic if it contains $\Iso$, is closed under base change and colimits.
The result follows from \cref{lem:closure-pp}~\eqref{lem:closure-pp:iso}, \eqref{lem:closure-pp:bc}, and \eqref{lem:closure-pp:colim}.
This provides a $\otimes$\=/frame structure on $\Acyclic\cE$.
The unit element is $\{0\to 1\}\ac = \All$, which is also the maximal element, hence the fact that it is a $\otimes$\=/frame.

The formula $(\Sigma \pp \Tau)\ac = \Sigma\ac \aprod \Tau\ac$ is a consequence of \cref{localizationquantale}.
We deduce from it that the product $\cA\aprod\cB$ of two acyclic classes of small generation is of small generation.
Let us see that the division $\cA\div \cB$ is also of small generation.
For $\cA=\Sigma\ac$, we have by \cref{localizationquantale}
\[
\cA\div \cB=\Sigma\ac \div \cB=\Sigma \div \cB=\bigcap_{u\in \Sigma} u\div \cB\,.
\]
The intersection of a set of acyclic classes of small generation is of small generation by \Cref{smallintersmallgen}.
So this reduces the problem to show that, for any map $u$, the acyclic class $u\div \cB$ is of small generation.
For simplicity, let us put $\cC:=u\div \cB$.
By \Cref{smallgenpres}, it suffices to show that the full subcategory $\underline{\cC} \subseteq \cE\arr$ spanned by the maps in $\cC$ is presentable.
By construction, $\underline{\cC}$ is the inverse image of the subcategory $\underline{\cB}$ by the functor $\phi:=u\pp -: \cE\arr\to \cE\arr$, which is cocontinuous.
The category $\underline{\cB}$ is presentable by \cref{smallgenpres}, since the acyclic class $\cB$ is of small generation.
It follows by \cite[Proposition 5.5.3.12]{Lurie:HTT} that the category $\underline{\cC}=\phi^{-1}(\underline{\cB})$ is presentable. 
Thus, the acyclic class $\cC:=u\div \cB$ is of small generation by \cref{smallgenpres}.
\end{proof}

\begin{remark}[Acyclic product in intersection]
\label{rem:prod-intersection}
The class $\All$ being the unit, we always have $\cA \aprod \All = \cA$ for every acyclic class $\cA$.
For any other acyclic class $\cB$ the inclusion $\cB\subseteq \All$ induces an inclusion $\cA\aprod \cB \subseteq \cA$.
By symmetry, this shows that we always have $\cA\aprod \cB \subseteq \cA\cap \cB$.
\end{remark}


\begin{examples}
\label{ex:aprod}
The formula $\Sigma\ac\aprod \Tau\ac = (\Sigma \pp \Tau)\ac$ lets us compute a number of examples.
\begin{exmpenum}

\item\label{ex:aprod:unit}\label{ex:aprod:power}
We saw already that the class $\All$ is the unit, hence $\All \aprod \cA = \cA$. 
Since $\All$ is also the top element, this implies the relation $\cA\aprod\cB\subseteq \cA \aprod \All = \cA$.
As a consequence, for any acyclic class $\cA$, the powers of $\cA$ define a decreasing sequence
\[
\dots
\ \subseteq\ 
\cA ^{3}
\ \subseteq\ 
\cA \aprod \cA
\ \subseteq\ 
\cA\,.
\]

\item\label{ex:aprod:iso}
The class $\Iso$ is an absorbing element: $\Iso \aprod \cA = (\Iso \pp \cA)\ac = \Iso$.

\item\label{ex:aprod:susp} 
Recall from \cref{examplefperp:0} that the acyclic class $\Surj$ of surjections is generated by the map $s^0:S^0\to 1$. Then by definition of the suspension of an acyclic class (\cref{suspdefinition}), we have 
\[
\susp \cA
\ =\ 
(s^0\pp\cA)\ac
\ =\ 
\Surj \aprod \cA
\,.
\]

\item\label{ex:aprod:n-conn}
Recall from \cref{examplefperp:n} that the acyclic class $\Conn n$ of $n$\=/connected maps is generated by ${s^{n+1}:S^{n+1}\to 1}$. 
Recall also that the join of two spheres is a sphere $S^m \join S^n = S^{m+n}$, that is $s^{m}\pp s^{n} = s^{m+n}$.
Thus, we can generalize \cref{examplesusp:1} into
\[
\Conn m \aprod \Conn n = (s^{m+1}\pp s^{n+1})\ac = (s^{m+n+3})\ac = \Conn {m+n+2}\,.
\]

\item\label{ex:aprod:acyclic-maps} 
Recall from \cref{examplemodality:acyclic}, the acyclic class $\Acmap\cE$ of acyclic maps in $\cE$, that is of map $f$ such that $\nabla f$ is invertible.
Using the acyclic product and division operations, we get:
\[
\text{$f$ is acyclic}
\ \iff \ 
s^0 \pp f \in \Iso
\ \iff \ 
f\in \{s^0\}\div \Iso
\ \iff \ 
f\in \Surj\div \Iso
\]
(where the last equivalence is \cref{localizationquantale}). 
Hence $\Acmap\cE = \Surj\div \Iso$, and this recovers that it is an acyclic class.
In fact, the more general formula $\Acmap\cE = \Conn n\div \Iso$ is also true (for every $n\geq -1$).
The next result shows that acyclic maps are nilpotent for the pushout product.

\end{exmpenum}	
\end{examples}

\begin{proposition}
\label{prop:acyclic-map-nilp}
If $\cE$ is hyper-reduced, we have $\Acmap\cE\aprod \Acmap \cE = \Iso$.
\end{proposition}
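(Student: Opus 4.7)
The plan is to prove the two inclusions $\Iso \subseteq \Acmap\cE \aprod \Acmap\cE \subseteq \Iso$ separately. The first is immediate: since $\Iso \subseteq \Acmap\cE$ and $\Iso$ is absorbing for $\aprod$ by \cref{ex:aprod:iso}, we have $\Iso = \Iso \aprod \Iso \subseteq \Acmap\cE \aprod \Acmap\cE$. For the reverse inclusion, the monoidality of $(-)\ac$ from \cref{thm:aprod} reduces the task to showing that $f \pp g \in \Iso$ for every pair $f, g \in \Acmap\cE$.

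The central step is the inclusion
\[
\Acmap\cE \,\subseteq\, \Surj,
\]
that is, every acyclic map is a surjection. Once this is in hand, the conclusion is formal: by \cref{ex:aprod:acyclic-maps}, $\Acmap\cE = \Surj \div \Iso$, which via the adjunction \eqref{adjboxclasses} is equivalent to $\Surj \pp \Acmap\cE \subseteq \Iso$. Combining,
\[
\Acmap\cE \pp \Acmap\cE \,\subseteq\, \Surj \pp \Acmap\cE \,\subseteq\, \Iso,
\]
so that $\Acmap\cE \aprod \Acmap\cE = (\Acmap\cE \pp \Acmap\cE)\ac \subseteq \Iso\ac = \Iso$, as desired.

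To prove $\Acmap\cE \subseteq \Surj$, I would pass to the $0$-truncation $\tau_0 : \cE \to \cE\truncated{0}$, which is a left-exact localization onto the underlying $1$-topos and therefore preserves both the codiagonal $\nabla$ and the surj--mono factorization. If $f:A\to B$ is acyclic, then $\nabla(\tau_0 f) = \tau_0(\nabla f)$ is an isomorphism in the $1$-topos $\cE\truncated{0}$, so $\tau_0 f$ is an epimorphism there; but in a $1$-topos every epimorphism is a surjection. Finally, $\tau_0$ reflects surjectivity of maps in $\cE$: $f$ is surjective iff the mono $\im f \hookrightarrow B$ is an isomorphism, iff its $\tau_0$-image $\im(\tau_0 f) \hookrightarrow \tau_0 B$ is an isomorphism (a mono in a logos is iso iff its $0$-truncation is, since both conditions express surjectivity of the mono), iff $\tau_0 f$ is surjective. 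Hence $f \in \Surj$.

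The main obstacle is the verification of $\Acmap\cE \subseteq \Surj$, since the naive ``the fibers of $f$ are acyclic hence nonempty, so $f$ is surjective'' argument requires enough points and is unavailable in a general $\infty$-topos; the passage through $\tau_0$ provides a clean substitute. I note that the hyper-reducedness hypothesis on $\cE$ does not appear to play a role in this argument, so the statement may in fact hold in any logos.
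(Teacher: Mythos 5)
Your route is correct in outline and genuinely different from the paper's. The paper's proof bounds the product from above twice: it quotes \cite[Lemma 3]{Hoyois:acyclic} to get $\Acmap\cE\subseteq\Conn 0$, deduces $\Acmap\cE\aprod\Acmap\cE\subseteq\Acmap\cE\cap\Conn 2\subseteq\Acmap\cE\cap\Conn 1$, and then uses the second half of the same lemma (a simply connected acyclic map is \oo connected) \emph{together with hyper-reducedness} to conclude that this intersection is $\Iso$. You instead exploit the identity $\Acmap\cE=\Surj\div\Iso$ of \cref{ex:aprod:acyclic-maps}, which by the adjunction \eqref{adjboxclasses} gives $\Surj\pp\Acmap\cE\subseteq\Iso$, so that the single inclusion $\Acmap\cE\subseteq\Surj$ yields $\Acmap\cE\pp\Acmap\cE\subseteq\Iso$ on the nose. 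Granting that inclusion, your closing observation is justified: since $\Acmap\cE\subseteq\Conn 0\subseteq\Surj$ holds in an arbitrary logos (this is exactly the first half of \cite[Lemma 3]{Hoyois:acyclic}, which the paper's own proof already quotes without any hypothesis on $\cE$), your computation proves the equality without hyper-reducedness; that hypothesis enters the paper's argument only at the very last step, where ``\oo connected'' is upgraded to ``isomorphism''. So your approach buys a hypothesis-free and in fact stronger statement ($\Surj\pp\Acmap\cE\subseteq\Iso$ map by map), at the cost of leaning on the identity $\Acmap\cE=\Surj\div\Iso$, i.e.\ on the fact that $\{f\}\div\Iso$ is an acyclic class containing $s^0$ and hence $\Surj=\{s^0\}\ac$ (\cref{lem:closure-pp}, \cref{examplefperp:0}).

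The one genuine flaw is your justification of $\Acmap\cE\subseteq\Surj$. The truncation $\tau_0:\cE\to\cE\truncated 0$ is \emph{not} a left-exact localization: it fails to preserve pullbacks (already for loop spaces in $\cS$), and the paper notes in \cref{ex:tower:Postnikov} that $\cE\quotient\Conn 0=\cE\truncated 0$ is not a logos. What you actually use is true, but must be invoked on other grounds: $\tau_0$, as the reflector inverting the acyclic class $\Conn 0$, preserves pushouts and hence codiagonals; epimorphisms in the $1$-topos $\cE\truncated 0$ are effective; and effective epimorphisms in $\cE$ are detected on $0$-truncations (\cite[Proposition 7.2.1.14]{Lurie:HTT}). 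This last fact is what your parenthetical ``both conditions express surjectivity of the mono'' is silently relying on, and it also covers the claim that $\tau_0$ preserves the surjection--mono factorization, which cannot be deduced from a (nonexistent) left-exactness and needs its own argument that $\tau_0$ preserves monomorphisms and surjections. The quickest repair is to drop the truncation argument altogether and quote $\Acmap\cE\subseteq\Conn 0\subseteq\Surj$ from \cite[Lemma 3]{Hoyois:acyclic}, as the paper does; with that substitution your proof is complete.
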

\begin{proof}
For simplicity we put $\cA:=\Acmap\cE$.
Any acyclic map $f$ is always 0-connected by \cite[Lemma 3]{Hoyois:acyclic}, 
so $\cA\aprod\cA\subseteq \Conn 0\aprod\Conn 0 = \Conn 2$.
Moreover, we always have $\cA\aprod \cA\subseteq \cA$ since $\cA$ is acyclic.
We deduce that $\cA\aprod \cA\subseteq \cA\cap \Conn 2 \subseteq \cA\cap \Conn 1$.
By \cite[Lemma 3]{Hoyois:acyclic}, any simply connected acyclic map is \oo connected, thus an isomorphism.
This shows that $\cA\cap \Conn 1=\Iso$ and implies that $\cA\aprod \cA = \Iso$.
\end{proof}

\subsection{The algebra of congruences}
\label{sec:push-prod-cong}

In this section, we build a monoidal structure on congruences.
In view of the analogy between the completion $\Sigma\ac$ and $\Sigma\cong$,
it seems natural to define the product of two congruences $\cK$ and $\cL$ as $\cK\cprod\cL :=(\cK\pp\cL)\cong$, the congruence generated by all pushout products of maps in $\cK$ and $\cL$.
This formula will end up being the correct one (see \cref{rem:cprod}) but somehow by accident, because the functor $(-)\cong:\Class\cE\to \Cong\cE$ will not be monoidal
(nor will be its restriction $(-)\cong:\Acyclic\cE\to \Cong\cE$, see \cref{rem:not-monoidal}).
The proof of \cref{thm:aprod} is actually impossible to repeat for congruences because of the insufficient compatibility of the pushout product with finite limits.
Precisely, to apply \cref{localizationquantale} to congruences, we would need a missing relation between $\Delta (u\pp f)$ and $u \pp \Delta f$.

We will thus construct the product of congruences with a very different idea.
We will actually prove in \cref{mainthmCongruence} that the acyclic product of two congruences is again a congruence.
In other terms, we will show that the subposet $\Cong\cE\subseteq \Acyclic\cE$ is closed under the monoidal structure of \cref{thm:aprod}.
The proof uses the decalage $\decname$ of acyclic classes (\cref{def:decalage}). 
The main step is to show that $\decname$ is a lax monoidal endofunctor on $\Acyclic\cE$ (\cref{boxofsatellites}), which we deduce easily from the description of the decalage obtained in \cref{thm:explicit-decalage}.
From there the result follows from the fact that congruences are the fixed points of $\decname$ (\cref{lem:decalage}).

\bigskip

Recall from \cref{sec:image} the image $\im\cA = \cA\cap \Mono$ of an acyclic class and the {\it monogenic part} $\cA\mono=(\cA\cap \Mono)\ac =\im \cA\ac$ of an acyclic class $\cA$.
The class $\cA\cap \Mono$ is an extended Grothendieck topology (\cref{def:GT}) and satisfies the hypothesis of \cref{lem:square-box-mono}.

\begin{proposition}[Monogenic idempotence]
\label{prop:idem-mono-cong}
If $\cA$ and $\cB$ are acyclic classes in a logos (\eg congruences), then 
$\cA\mono\aprod \cB\mono = (\cA\aprod \cB)\mono$.
Moreover, $\cA\mono\aprod \cA\mono = \cA\mono$.
\end{proposition}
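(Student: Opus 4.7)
The plan is to reduce both statements to the combination of three facts already available: the acyclic reflection is monoidal, i.e.\ $(\Sigma\pp\Tau)\ac=\Sigma\ac\aprod\Tau\ac$ by \cref{thm:aprod}; the image operation commutes with the pushout product at the level of a single pair, $\im{f\pp g}=\im f\pp\im g$ by \cref{lem:image-box}; and the monogenic part is insensitive to acyclic closure of generators, $\im{\Sigma\ac}\ac=\im\Sigma\ac$ by \cref{prop:mono-part}. Together these say that $(-)\mono$, regarded as $\cA\mapsto\im\cA\ac$, interacts nicely with $\aprod$.

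For the first identity, I will simply chase these formulas. On the one hand,
\[
\cA\mono\aprod\cB\mono \;=\; \im\cA\ac\aprod\im\cB\ac \;=\; (\im\cA\pp\im\cB)\ac.
\]
On the other hand, using $\cA\aprod\cB=(\cA\pp\cB)\ac$ and \cref{prop:mono-part},
\[
(\cA\aprod\cB)\mono \;=\; \im{(\cA\pp\cB)\ac}\ac \;=\; \im{\cA\pp\cB}\ac.
\]
By \cref{lem:image-box} applied elementwise, the class $\im{\cA\pp\cB}=\{\im{u\pp v}\mid u\in\cA,v\in\cB\}$ coincides with $\im\cA\pp\im\cB$, so the two right-hand sides agree. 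This part is purely bookkeeping and should go through cleanly.

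For the second identity (idempotence), the key input is \cref{lem:square-box-mono}, which applies to any class $\cM$ of monomorphisms that is closed under base change and satisfies the cancellation Axiom~\ref{GT:axiom3}. The excerpt already notes that $\im\cA=\cA\cap\Mono$ satisfies these hypotheses: closure under base change is inherited from $\cA$ and $\Mono$, and for the cancellation, if $u,v$ are monos with $vu\in\cA$, then the pullback of $vu$ along $v$ is $u$ itself (since $v$ is mono), so $u\in\cA$, and then the right-cancellation property of the acyclic class $\cA$ gives $v\in\cA$. Hence \cref{lem:square-box-mono} yields $\im\cA=(\im\cA\pp\im\cA)\bc$. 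Taking the acyclic closure absorbs $\bc$, so
\[
\cA\mono \;=\; \im\cA\ac \;=\; (\im\cA\pp\im\cA)\ac \;=\; \im\cA\ac\aprod\im\cA\ac \;=\; \cA\mono\aprod\cA\mono,
\]
using the monoidality of $(-)\ac$ in the third equality.

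I expect no real obstacle: the proof is essentially a triangulation of \cref{lem:image-box}, \cref{prop:mono-part}, the monoidality of $(-)\ac$, and \cref{lem:square-box-mono}. The only subtle point worth being explicit about is that $\im{\cA\pp\cB}$, which a priori is only a class of morphisms defined by a formula on representatives, equals $\im\cA\pp\im\cB$ on the nose; this is immediate from \cref{lem:image-box} but is the one step that connects the ``image'' and ``pushout product'' functors and deserves a one-line justification.
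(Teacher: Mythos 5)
Your proof is correct, and for the first identity it takes a genuinely different route from the paper's. The paper proves $\cA\mono\aprod\cB\mono=(\cA\aprod\cB)\mono$ by a double inclusion: the easy direction uses \cref{boxprodmono}, and the hard direction $(\cA\aprod\cB)\mono\subseteq\cA\mono\aprod\cB\mono$ is obtained by applying \cref{lem:square-box-mono} to the class $\im{\cA\aprod\cB}$ and then using $\cA\aprod\cB\subseteq\cA$, $\cA\aprod\cB\subseteq\cB$. You instead get the equality in one pass from the monoidality of $(-)\ac$ (\cref{thm:aprod}), the identity $\im{\Sigma\ac}\ac=\im\Sigma\ac$ (\cref{prop:mono-part}) applied to $\Sigma=\cA\pp\cB$, and the elementwise identification $\im{\cA\pp\cB}=\im\cA\pp\im\cB$ from \cref{lem:image-box}; this is conceptually cleaner (it exhibits $(-)\mono$ as compatible with $\aprod$ because both the acyclic reflection and the image are monoidal for $\pp$) and avoids the two-inclusion bookkeeping, at the cost of invoking \cref{prop:mono-part} and \cref{lem:image-box}, which the paper's argument does not need for this step. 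For the idempotence statement your argument coincides with the paper's (apply \cref{lem:square-box-mono} to $\im\cA=\cA\cap\Mono$ and take acyclic closures), and you even add value by explicitly checking the cancellation hypothesis of that lemma via base change along the second monomorphism plus right cancellation of acyclic classes, a point the paper leaves implicit.
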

\begin{proof}
We will use the formulas $\cA\mono\aprod\cB\mono = \big(\im\cA\pp\im\cB\big)\ac$  and $(\cA\aprod \cB)\mono = \big((\cA\aprod \cB) \cap \Mono\big)\ac$.

Let us first prove $\cA\mono\aprod \cB\mono \subseteq (\cA\aprod \cB)\mono$.
We always have $\im\cA\pp \im\cB \subseteq \cA\pp \cB$.
By \cref{boxprodmono}, we have also $\im\cA\pp \im\cB \subseteq \Mono$.
Hence
\[
\im\cA\pp \im\cB
\ \subseteq\ 
\big(\cA\pp \cB\big)\cap \Mono
\ =\ 
\im {\cA\aprod \cB}\,,
\]
and $\cA\mono\aprod \cB\mono \subseteq (\cA\aprod \cB)\mono$.

We prove now $(\cA\aprod \cB)\mono \subseteq \cA\mono\aprod \cB\mono$.
The class $\im{\cA\aprod \cB}$ is closed under base change.
Applying \cref{lem:square-box-mono} gives
\[
\im{\cA\aprod \cB}
\ =\ 
\big(\im{\cA\aprod \cB}\ \pp\ \im{\cA\aprod \cB}\big)\bc
\ \subseteq\ 
\big(\im{\cA\aprod \cB}\ \pp\ \im{\cA\aprod \cB}\big)\ac\,,
\]
since, for any class $\Sigma$, we always have $\Sigma\bc\subseteq \Sigma\ac$.
The inclusion $\cA\aprod \cB\subseteq \cA$ implies $\im{\cA\aprod \cB}\subseteq \im\cA$.
We use this and the same formula with $\cA$ and $\cB$ exchanged to get
\[
\im{\cA\aprod \cB}\ \pp\ \im{\cA\aprod \cB}
\quad \subseteq\quad 
\im\cA\ \pp\ \im\cB\,.
\]
Finally, 
\[
\im{\cA\aprod \cB}
\ \subseteq\ 
\big(\im\cA \pp \im\cB\big)\ac
\ =\ 
\cA\mono\aprod \cB\mono\,,
\]
and $(\cA\aprod \cB)\mono \subseteq \cA\mono\aprod \cB\mono$.

Let us see now that $\cA\mono\aprod \cA\mono = \cA\mono$.
The class $\im\cA = \cA\cap \Mono$ is always closed under base change.
Using \cref{lem:square-box-mono} again, we have $(\im\cA\pp \im\cA)\bc = \im\cA$.
Therefore $(\im\cA\pp \im\cA)\ac = \im\cA\ac$, 
since for any class $\Sigma$ we always have $(\Sigma\bc)\ac= \Sigma\ac$.
This shows that $\cA\mono\aprod \cA\mono = \cA\mono$.
\end{proof}

\begin{lemma}[Lax monoidal $\decname$]
\label{boxofsatellites} 
If $\cA$ and $\cB$ are acyclic classes in a logos, then 
\[
\dec\cA\aprod \dec\cB
\ \subseteq\ 
\dec{\cA\aprod \cB}\,.
\]
\end{lemma}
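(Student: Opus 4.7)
The strategy is to use the decomposition formula of \cref{thm:explicit-decalage}, namely
\[
\dec\cA \;=\; \susp\cA \,\vee\, \cA\mono \;=\; (\Surj\aprod \cA)\,\vee\,\cA\mono,
\]
together with the distributivity of $\aprod$ over the suprema $\vee$ in the $\otimes$\=/frame $\Acyclic\cE$ furnished by \cref{thm:aprod}. Expanding the product $\dec\cA\aprod\dec\cB$ by this distributivity yields four summands:
\[
\dec\cA \aprod \dec\cB \;=\; (\susp\cA \aprod \susp\cB) \,\vee\, (\susp\cA \aprod \cB\mono) \,\vee\, (\cA\mono \aprod \susp\cB) \,\vee\, (\cA\mono \aprod \cB\mono).
\]
It therefore suffices to prove that each of these four terms is contained in $\dec{\cA\aprod\cB} = \susp(\cA\aprod\cB)\,\vee\,(\cA\aprod\cB)\mono$.

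The monogenic term is handled directly by \cref{prop:idem-mono-cong}, which gives $\cA\mono\aprod \cB\mono = (\cA\aprod\cB)\mono$. For the three remaining terms, I would exploit associativity and commutativity of $\aprod$, together with the computation $\Surj\aprod\Surj = \Conn 0 \subseteq \Surj$ from \cref{ex:aprod:n-conn}. Explicitly:
\[
\susp\cA\aprod\susp\cB \;=\; \Surj\aprod\Surj\aprod\cA\aprod\cB \;=\; \Conn 0\aprod(\cA\aprod\cB) \;\subseteq\; \Surj\aprod(\cA\aprod\cB) \;=\; \susp(\cA\aprod\cB),
\]
using monotonicity of $\aprod$ and $\Conn 0\subseteq\Surj$. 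For the cross terms, since $\cB\mono\subseteq\cB$ and $\aprod$ is monotone, we get
\[
\susp\cA\aprod\cB\mono \;=\; \Surj\aprod(\cA\aprod\cB\mono) \;\subseteq\; \Surj\aprod(\cA\aprod\cB) \;=\; \susp(\cA\aprod\cB),
\]
and symmetrically for $\cA\mono\aprod\susp\cB$.

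Combining, each of the four summands is contained in $\susp(\cA\aprod\cB)\vee(\cA\aprod\cB)\mono = \dec{\cA\aprod\cB}$, whence
\[
\dec\cA\aprod\dec\cB \;\subseteq\; \dec{\cA\aprod\cB}.
\]
The main ingredients to cite are \cref{thm:explicit-decalage} (for the decomposition of the decalage into its suspension and monogenic parts), \cref{thm:aprod} (for distributivity in the $\otimes$\=/frame of acyclic classes), \cref{prop:idem-mono-cong} (for monogenic idempotence), and \cref{ex:aprod:n-conn} (for $\Surj\aprod\Surj = \Conn 0$). There is no real obstacle to overcome: once the decomposition of the decalage is in hand, the argument reduces to a four-line calculation in the $\otimes$\=/frame.
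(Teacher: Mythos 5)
Your proposal is correct and follows essentially the same route as the paper's proof: decompose the decalage via \cref{thm:explicit-decalage}, distribute the acyclic product over the suprema, bound the three suspension-type summands using $\Surj\aprod\Surj\subseteq\Surj$ and $\cA\mono\subseteq\cA$, and handle the monogenic summand with \cref{prop:idem-mono-cong}. No gaps to report.
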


\begin{proof}
By \Cref{thm:explicit-decalage}, we have
\begin{enumerate}[label=(\roman*)]
\item\label{pf:boxofsatellites:a} $\dec\cA= \susp\cA \vee \cA\mono$
\item\label{pf:boxofsatellites:b} $\dec\cB= \susp\cB\vee \cB\mono$
\item\label{pf:boxofsatellites:c} $\dec{\cA\aprod \cB}= \susp{\cA\aprod \cB}\vee (\cA\aprod \cB)\mono$
\end{enumerate}
where the suprema are taken in the lattice $\Acyclic\cE$.
The acyclic product $\Acyclic\cE\times \Acyclic\cE\to \Acyclic\cE$ preserves suprema in each variable, since it is the multiplication of a $\otimes$\=/frame.
It follows from \ref{pf:boxofsatellites:a} and \ref{pf:boxofsatellites:b} that 
\[
\dec\cA\aprod \dec\cB
\ \ =\ \ 
\susp\cA\aprod \susp\cB  \  \ \vee \  \
\cA\mono \aprod  \susp\cB\ \ \vee  \ \
\susp\cA \aprod \cB\mono \  \ \vee \  \
\cA\mono \aprod \cB\mono \,.
\]
Let us verify the relations
\begin{enumerate}[label=(\Alph*)]
\item\label{pf:boxofsatellites:1}
$\susp\cA\aprod\susp\cB\ \subseteq\ \susp{\cA\aprod\cB}$,
\item\label{pf:boxofsatellites:2}
$\cA\mono\aprod\susp\cB\ \subseteq\ \susp{\cA\aprod\cB}$,
\item\label{pf:boxofsatellites:3}
$\susp\cA\aprod\cB\mono\ \subseteq\ \susp{\cA\aprod\cB}$, 
\item\label{pf:boxofsatellites:4}
$\cA\mono\aprod\cB\mono\ \subseteq\ (\cA\aprod\cB)\mono$.
\end{enumerate}
The relation \ref{pf:boxofsatellites:1} follows from the inclusion $\Surj\aprod \Surj \subseteq \Surj$. 
We have
\[
\susp\cA\aprod \susp\cB
\ =\ 
\Surj\aprod\cA\aprod \Surj\aprod\cB
\ \subseteq\ 
\Surj\aprod\cA\aprod \cB
\ =\ 
\susp{\cA\aprod \cB}\,.
\]
The relation \ref{pf:boxofsatellites:2} follows from the inclusion  $\cA\mono\subseteq \cA$.
We have 
\[
\cA\mono \aprod \susp\cB
\ =\ 
\cA\mono \aprod \Surj\aprod\cB
\ \subseteq\ 
\cA\aprod\Surj\aprod\cB
\ =\ 
\susp{\cA \aprod \cB}\,.
\]
The relation \ref{pf:boxofsatellites:3} is proved similarly.
The relation \ref{pf:boxofsatellites:4} follows from \cref{prop:idem-mono-cong}.
Taken together, these relations imply that 
\[
\dec\cA\aprod\dec\cB
\ \subseteq\ 
\susp{\cA\aprod \cB}\vee (\cA\aprod \cB)\mono
\ =\ 
\dec{\cA\aprod \cB}\,.\qedhere
\]
\end{proof}

We are now ready to prove the main result of this paper.

\begin{theorem}[Product of congruences]
\label{mainthmCongruence}
\label{thm:cprod}
The acyclic product of two congruences on a logos is a congruence.
The poset of congruences $\Cong\cE$ is a sub-$\otimes$\=/frame of the $\otimes$\=/frame of acyclic classes $\Acyclic\cE$ of \cref{thm:aprod}.
\end{theorem}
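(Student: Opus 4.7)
The plan is to deduce this theorem almost immediately from two prior ingredients: the characterization of congruences as fixed points of the decalage operator $\decname$ (\cref{lem:decalage}) and the lax monoidality of $\decname$ with respect to the acyclic product (\cref{boxofsatellites}). The key insight is that once these two facts are in hand, the theorem becomes a one-line deduction.

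Concretely, given two congruences $\cK$ and $\cK'$ on $\cE$, \cref{lem:decalage:0} yields $\cK = \dec\cK$ and $\cK' = \dec{\cK'}$. Applying the lax monoidality of $\decname$ established in \cref{boxofsatellites}, we obtain the inclusion
\[
\cK \aprod \cK' \ =\ \dec\cK \aprod \dec{\cK'} \ \subseteq\ \dec{\cK \aprod \cK'}.
\]
Now \cref{lem:decalage:0} also tells us that any acyclic class $\cA$ satisfying $\cA \subseteq \dec\cA$ is automatically a congruence; applied to $\cA = \cK \aprod \cK'$, this shows $\cK \aprod \cK'$ is a congruence. This is the core content of the statement.

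For the sub-$\otimes$-frame part, I would assemble three observations. First, the unit $\All$ of the acyclic product (\cref{thm:aprod}) is itself a congruence, being the maximal element of $\Cong\cE$. Second, by \cref{thm:adjCongAcyclic} the inclusion $\Cong\cE \hookrightarrow \Acyclic\cE$ admits a right adjoint $\decinfty$, hence is a morphism of suplattices, so $\Cong\cE$ is a sub-suplattice of $\Acyclic\cE$. Combined with the closure under $\aprod$ just proved, $\Cong\cE$ is thus a sub-$\otimes$-lattice containing the unit. Third, the distributivity of $\aprod$ over suprema in each variable is inherited from $\Acyclic\cE$, and since the unit coincides with the top element, this upgrades $\Cong\cE$ to a $\otimes$-frame in the sense of \cref{sec:tensor-frames}; a right adjoint to $\cK \aprod -$ exists by the adjoint functor theorem for posets, giving the closed structure.

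The main obstacle in proving this theorem was not in the final assembly sketched above but rather lies entirely upstream: it is the lax monoidality of $\decname$ (\cref{boxofsatellites}), which itself rests on the nontrivial decomposition $\dec\cA = \susp\cA \vee \cA\mono$ of \cref{thm:explicit-decalage} and on the monogenic idempotence $\cA\mono \aprod \cB\mono = (\cA \aprod \cB)\mono$ of \cref{prop:idem-mono-cong}. Once this machinery is in place, the theorem is essentially a formal corollary.
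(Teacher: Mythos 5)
Your proposal is correct and follows the paper's own argument essentially verbatim: fixed points of the decalage are congruences, lax monoidality of $\decname$ (\cref{boxofsatellites}) gives $\cK\aprod\cK'\subseteq\dec{\cK\aprod\cK'}$, and the sub-$\otimes$\=/frame claim follows since the inclusion preserves suprema (right adjoint $\decinfty$) and $\All$ is a congruence. You also correctly locate the real work upstream in \cref{thm:explicit-decalage} and \cref{prop:idem-mono-cong}.
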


\begin{proof} 
Let us show that the acyclic product $\cJ\aprod \cK$ of two congruences $\cJ$ and $\cK$ is a congruence.
By \cref{lem:decalage} we have $\cJ = \dec\cJ$ and $\cK = \dec\cK$.
It then follows by \cref{boxofsatellites} that
\[
\cJ\aprod  \cK
\ =\ 
\dec\cJ\aprod \dec\cK
\ \subseteq\ 
\dec{\cJ\aprod \cK}
\]
By \cref{lem:decalage}, this shows that the acyclic class $\cJ\aprod \cK$ is a congruence.
The inclusion $i:\Cong\cE\subseteq \Acyclic\cE$ preserves suprema, since it has a right adjoint $\decinfty-$ by \cref{thm:adjCongAcyclic}.
The monoidal unit $\All$ belongs to $\Cong\cE$, since it is a congruence.
We have proved that the poset $\Cong\cE$ is a sub-$\otimes$\=/frame of $\Acyclic\cE$.
\end{proof}

We shall see in \cref{acyclicproductofsmallcong} that the congruences of small generation also define a $\otimes$\=/frame.

\begin{remark}[Non-monoidal adjoints]
\label{rem:not-monoidal}
Neither adjoint to the monoidal morphism $i:\Cong\cE\subseteq \Acyclic\cE$ of \cref{thm:cprod} is monoidal.
The left adjoint $(-)\cong$ is always oplax monoidal: for any two acyclic classes $\cA$ and $\cB$, we have $(\cA\aprod\cB)\cong \subseteq \cA\cong \aprod \cB\cong$.
A counter-example to the equality is given in $\cS$ using the class $\Acmap\cS$ of acyclic maps of \cref{prop:acyclic-map-nilp}.
We have seen there that $\Acmap\cS\aprod \Acmap\cS = \Iso$, 
thus $(\Acmap\cS\aprod \Acmap\cS)\cong = \Iso$.
On the other hand, $\Acmap\cS\cong = \All$, since the only two congruences of $\cS$ are $\Iso$ and $\All$ and $\Acmap\cS\subsetneq \Iso$.
This shows that the inclusion $(\cA\aprod\cB)\cong  \subseteq \cA\cong \aprod \cB\cong$ can be strict.
(The same example can be used to show that $(-)\cong:\Class\cE\to \Cong\cE$ is not be monoidal.)

The right adjoint $\decinftyname$ is always lax monoidal:
we have $\decinfty\cA \aprod \decinfty\cB \subseteq \decinfty{\cA \aprod \cB}$ for any two acyclic classes $\cA$ and $\cB$.
A counter-example to the equality is given by the class $\Surj$ in the logos of parametrized spectra $\PSp$ (the category of spectra bundles over varying spaces).
There we have $\decinfty\Surj \aprod \decinfty\Surj = \Conn\infty \aprod \Conn\infty = \Iso$
(use \cref{rem:nilpotent-object:PSp,prop:nilrad-in-hrad} below).
On the other hand, we have $\decinfty {\Surj\aprod \Surj} = \decinfty {\Conn 0} = \Conn\infty \not=\Iso$.
This proves that the inclusion $\decinfty \cA\aprod\decinfty\cB \subseteq \decinfty{\cA \aprod \cB}$ can be strict.
\end{remark}

\begin{remark}
\label{rem:cprod}
A consequence of \cref{thm:cprod} is the equality $(\cJ\pp\cK)\ac=(\cJ\pp\cK)\cong$ for any two congruences, showing that the product of congruences can be defined by the naive formula (even though, as we have just seen in \cref{rem:not-monoidal}, the functors $(-)\cong:\Class\cE\to \Cong\cE$ and $(-)\cong:\Acyclic\cE\to \Cong\cE$ are not monoidal).
The inclusion $(\cJ\pp\cK)\ac\subseteq(\cJ\pp\cK)\cong$ is always true and the converse is given by the fact that $(\cJ\pp\cK)\ac$ is a congruence containing $\cJ\pp\cK$.
\end{remark}

\begin{remark}
\label{remarkdivisioncongruence}
Beware that the inclusion $\Cong\cE \subseteq \Acyclic\cE$ does not preserve the division operation.
More precisely, if $\cJ$ and $\cK$ are congruences  on $\cE$, then the {\it acyclic division} $\cJ\div \cK$ may not be a congruence.
The {\it congruence division} of $\cK$ by $\cJ$ is the congruence $\decinfty{\cJ\div \cK}$.
This is a general formula for monoidal functors, but let us verify it in our case:
if $\cI$ is another congruence, the condition $\cI\subseteq \decinfty{\cJ\div \cK}$ is equivalent to the condition $\cI\subseteq \cJ\div \cK$ by \cref{thm:adjCongAcyclic}, which is equivalent to the condition $\cI\aprod \cJ\subseteq \cK$.
\end{remark}

\begin{corollary}[Computation formula]
\label{cor:computation-prod-cong}
For two classes of maps $\Sigma$ and $\Tau$, the following formula holds:
\[
\Sigma\cong\aprod \Tau\cong
\ =\ 
(\Sigma\diag\pp \Tau\diag)\ac\,.
\]
\end{corollary}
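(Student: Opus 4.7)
The proof is a direct computation chaining two results already in hand, so I would present it as a short two-step unwinding with no real obstacle.

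The plan is to rewrite each congruence as an acyclic class via the formula $\Sigma\cong = (\Sigma\diag)\ac$ (\cref{sigmaac=sigmacong}) and then apply the monoidality of the acyclic completion functor $(-)\ac : \Class\cE \to \Acyclic\cE$ established in \cref{thm:aprod} (equation~\eqref{eq:ac=monoidal}). Concretely, I would write
\[
\Sigma\cong \aprod \Tau\cong
\ =\
(\Sigma\diag)\ac \aprod (\Tau\diag)\ac
\ =\
(\Sigma\diag \pp \Tau\diag)\ac\,,
\]
where the first equality is \cref{sigmaac=sigmacong} applied to each factor, and the second is \eqref{eq:ac=monoidal} applied to the classes $\Sigma\diag$ and $\Tau\diag$. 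The acyclic product on the left is well-defined and lands in $\Cong\cE$ by \cref{thm:cprod}, which also guarantees that the right-hand side, computed a priori in $\Acyclic\cE$, is in fact a congruence.

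There is no real obstacle: the nontrivial content (that $\Sigma\cong = (\Sigma\diag)\ac$, the monoidality of $(-)\ac$, and the closure of congruences under $\aprod$) has already been established. The only thing worth flagging is that without \cref{thm:cprod} one would only know $\Sigma\cong \aprod \Tau\cong$ as an acyclic class; the theorem is what ensures the formula computes the product inside the sub-$\otimes$\=/frame $\Cong\cE$ rather than merely a generator of it in $\Acyclic\cE$.
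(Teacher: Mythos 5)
Your proof is correct and matches the paper's argument exactly: both apply $\Sigma\cong=(\Sigma\diag)\ac$ from \cref{sigmaac=sigmacong} to each factor and then the monoidality formula \eqref{eq:ac=monoidal} of \cref{thm:aprod}. The remark about \cref{thm:cprod} is a reasonable clarification but not needed for the stated equality, since the acyclic product of the two congruences is already defined at the level of acyclic classes.
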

\begin{proof}
Direct from $\Sigma\cong =(\Sigma\diag)\ac$ (\cref{sigmaac=sigmacong}) and formula \eqref{eq:ac=monoidal}.
\end{proof}

Recall that a class of maps $\Sigma$ in a logos is said to be a lex generator (\cref{def:lex-generator}) if the class $\Sigma\ac$ is a congruence if and only if $\Delta(\Sigma)\subseteq \Sigma\ac$ (\cref{caractlexgenerator}).
\Cref{cor:computation-prod-cong} shows that any class $\Sigma\diag$ is a lex generator.

\begin{corollary}[Lex generators]
\label{boxprodlexgen}
The product $\Sigma\pp \Tau$ of two lex generators $\Sigma$ and $\Tau$ in a logos is again a lex generator.
\end{corollary}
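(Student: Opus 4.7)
The plan is to chain two of the main theorems of the section, so that the corollary follows essentially by formal manipulation. Unfolding definitions, $\Sigma \pp \Tau$ is a lex generator precisely when $(\Sigma \pp \Tau)\ac$ is a congruence, so this is what I would aim to establish.

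First I would apply the monoidality of the acyclic completion functor $(-)\ac:\Class\cE \to \Acyclic\cE$ from \cref{thm:aprod}, namely the identity \eqref{eq:ac=monoidal}:
\[
(\Sigma \pp \Tau)\ac \ =\ \Sigma\ac \aprod \Tau\ac\,.
\]
Next, since $\Sigma$ and $\Tau$ are assumed to be lex generators, by \cref{def:lex-generator} the acyclic classes $\Sigma\ac$ and $\Tau\ac$ are in fact congruences. Then \cref{thm:cprod} asserts that $\Cong\cE$ is a sub-$\otimes$\=/frame of $\Acyclic\cE$, so the acyclic product $\Sigma\ac \aprod \Tau\ac$ is itself a congruence. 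Combining these two facts yields that $(\Sigma \pp \Tau)\ac$ is a congruence, which is exactly the lex generator condition for $\Sigma \pp \Tau$.

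Since all the technical work has been carried out in \cref{thm:aprod} (for the monoidality of acyclic completion) and \cref{thm:cprod} (for the closure of congruences under the acyclic product), there is no genuine obstacle here; the corollary is a two-line formal consequence. The only subtlety worth flagging is that one must invoke both theorems together: neither the monoidality of $(-)\ac$ alone nor the closure of $\Cong\cE$ alone suffices, because $(-)\cong$ is not monoidal (as noted in \cref{rem:not-monoidal}). It is precisely the fact that one can compute the product of congruences through the acyclic completion of the pushout product, rather than the congruence completion, that makes the argument go through.
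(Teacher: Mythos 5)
Your argument is correct and coincides with the paper's own proof: both use formula \eqref{eq:ac=monoidal} to identify $(\Sigma\pp\Tau)\ac$ with $\Sigma\ac\aprod\Tau\ac$ and then invoke \cref{thm:cprod} to conclude that this acyclic product of congruences is a congruence. Your closing remark about why the route through the acyclic (rather than congruence) completion is essential is accurate but not needed for the proof itself.
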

\begin{proof} 
By formula \eqref{eq:ac=monoidal}, we have 
$(\Sigma\pp\Tau)\ac =\Sigma\ac\aprod\Tau\ac$.
The acyclic class $\Sigma\ac\aprod \Tau\ac$ is a congruence by \cref{mainthmCongruence}, since the classes $\Sigma\ac$ and $\Tau\ac$ are congruences by hypothesis.
\end{proof}

The acyclic product and acyclic division restrict to congruences of small generation.
\begin{proposition}
\label{acyclicproductofsmallcong}
If $\cJ$ and $\cK$ are congruences of small generation on a logos, then so are the congruences $\cJ\aprod \cK$ and $\decinfty{\cJ\div \cK}$.
In particular, the subposet ${\sf Cong_{sg}}(\cE)\subseteq \Cong\cE$ is a sub-$\otimes$\=/frame.
\end{proposition}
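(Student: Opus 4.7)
The plan is to reduce everything to results already established: \cref{thm:aprod} for the acyclic side, \cref{smallintersmallgencong} and \cref{lem:sg-dec} for passing between ``small generation as an acyclic class'' and ``small generation as a congruence'', and \cref{cor:computation-prod-cong} to get an explicit small set of generators in the product case.

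First, for the product: if $\cJ$ and $\cK$ are congruences of small generation, then by \cref{smallintersmallgencong} they are of small generation as acyclic classes, say $\cJ = \Sigma\ac$ and $\cK = \Tau\ac$ for small sets $\Sigma$ and $\Tau$. By the monoidality \eqref{eq:ac=monoidal} of the acyclic completion, $\cJ\aprod\cK = (\Sigma\pp\Tau)\ac$, and $\Sigma\pp\Tau$ is a small set. So $\cJ\aprod\cK$ is of small generation as an acyclic class; since it is a congruence by \cref{mainthmCongruence}, \cref{smallintersmallgencong} gives it is of small generation as a congruence. (Alternatively, using \cref{cor:computation-prod-cong}, if $\cJ = \Sigma\cong$ and $\cK = \Tau\cong$ with $\Sigma,\Tau$ small, then $\cJ\aprod\cK = (\Sigma\diag\pp\Tau\diag)\ac$, where $\Sigma\diag\pp\Tau\diag$ is again a small set.)

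Second, for the division: \cref{thm:aprod} already asserts that the acyclic division of acyclic classes of small generation is of small generation. Applying this to $\cJ$ and $\cK$ (viewed as acyclic classes of small generation via \cref{smallintersmallgencong}), the acyclic class $\cJ\div\cK$ is of small generation. By \cref{lem:sg-dec}, $\decinfty{\cJ\div\cK}$ is then of small generation as an acyclic class, and since it is a congruence (the largest one contained in $\cJ\div\cK$, by \cref{lem:decalage:2}), \cref{smallintersmallgencong} yields small generation as a congruence. As noted in \cref{remarkdivisioncongruence}, $\decinfty{\cJ\div\cK}$ is the correct congruence-level division.

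Finally, for the sub-$\otimes$-frame claim: the unit $\All$ is of small generation, since $\All = \{0\to 1\}\cong$ by \cref{exmp:congruence1}. Small suprema of congruences of small generation are of small generation: if $\{\cK_i = \Sigma_i\cong\}_{i\in I}$ is a small family with each $\Sigma_i$ small, then $\bigvee_i\cK_i = \bigl(\bigcup_i\Sigma_i\bigr)\cong$, with generating set again small. Together with the closure under $\aprod$ proved above, this exhibits $\mathsf{Cong_{sg}}(\cE)$ as a sub-$\otimes$-frame of $\Cong\cE$ (with the understanding, already flagged after \cref{thm:bij-congruence-lexloc}, that only small suprema are required on this subposet). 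No step is especially delicate; the only subtlety is being careful to translate repeatedly between small generation for acyclic classes and for congruences via \cref{smallintersmallgencong}.
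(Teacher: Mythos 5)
Your proof is correct and follows essentially the same route as the paper: reduce to small generation as acyclic classes via \cref{smallintersmallgencong}, handle the product by monoidality of $(-)\ac$ (equivalently \cref{cor:computation-prod-cong}, which is the paper's citation), and handle the division by combining the small-generation statement for acyclic division in \cref{thm:aprod} with \cref{lem:sg-dec}. The only difference is that you make explicit two points the paper leaves implicit — that $\cJ\div\cK$ is of small generation before applying \cref{lem:sg-dec}, and the routine check that the unit $\All$ and small suprema stay within $\mathsf{Cong_{sg}}(\cE)$ — which is fine.
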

\begin{proof}
By \Cref{smallintersmallgencong}, it is enough to show that $\cJ\aprod \cK$ and $\decinfty{\cJ\div \cK}$ are of small generation as acyclic classes.
The congruence $\cJ\aprod \cK$ is of small generation by \cref{cor:computation-prod-cong}.
The acyclic division $\cJ\div \cK$ is of small generation by \cref{thm:aprod}, and the congruence division $\decinfty{\cJ\div \cK}$ is thus of small generation by \cref{lem:sg-dec}.
\end{proof}

\begin{examples}
\label{ex:cprod}
\begin{exmpenum}

\item We saw already that the class $\All$ is the unit, hence $\All \aprod \cK = \cK$, fro every congruence $\cK$.

\item The class $\Iso$ is an absorbing element in the monoid $\Cong\cE$: $\Iso \aprod \cK = (\Iso \pp \cK)\ac = \Iso$.

\item\label{ex:cprod:power}
Since $\All$ is the unit of $\Cong\cE$, the power of any congruence $\cK$ define a decreasing sequence
\[
\dots
\ \subseteq\ 
\cK ^{\aprod\, 3}
\ \subseteq\ 
\cK \aprod \cK
\ \subseteq\ 
\cK\,.
\]

\item\label{ex:cprod:power-object}
If $X$ is an object in a logos, \cref{cor:computation-prod-cong} implies that the $k$\=/th power of the congruence $\idl X := \{X\to 1\}\cong$ is generated by the family
\[
\big\{ \Delta^{n_1} X \pp \dots \pp \Delta^{n_k} X \ |\ (n_1,\dots,n_k)\in \NN^k
\big\}\,.
\]
This will be useful in our definition of nilpotent objects \cref{def:n-nilpotent}.

\item\label{ex:cprod:epic-cong}
If $\cK$ is an epic congruence on a logos, then so is the class $\cK\aprod \cA$ for every acyclic class $\cA$.
To see it, we use that an acyclic class $\cC$ is an epic congruence if and only if $\cC=\susp\cC = \Surj\aprod \cC$ (\cref{coTCongrchar}).
Then the condition $\cK = \Surj\aprod \cK$ implies
$\cK \aprod \cA =(\Surj\aprod \cK)\aprod \cA = \Surj \aprod (\cK \aprod \cA)$.

In particular, $\Conn\infty\aprod \cA$ is always an epic congruence.
This congruence is closely related to the infinite suspension of $\cA$, which is $\suspinfty\cA := \bigcap_n\suspn n \cA = \bigcap_n (\Conn n\aprod \cA)$.
Since $\suspn n \cA\subseteq\Conn n$, we have $\suspinfty\cA\subseteq \Conn\infty$ and $\suspinfty\cA$ is also a fixed point of the suspension.
We have an inclusion
\[
\Conn\infty\aprod \cA
\ =\ 
\big(\bigcap_n \Conn n\big)\aprod \cA)
\ \subseteq\ 
\bigcap_n (\Conn n\aprod \cA) 
\ =\ 
\suspinfty \cA\,.
\]
We do not know if this is an equality.

\end{exmpenum}	
\end{examples}

\subsection{The frame of Grothendieck topologies}

In this section, we construct a frame structure on the poset $\MCong\cE$ of monogenic congruences of a logos $\cE$.
In \cite{ABFJ:GT}, we have shown that the poset $\MCong\cE$ is equivalent to the poset of extended Grothendieck topologies on $\cE$ (\cref{def:GT}).
For the results of this section, it will be more convenient to work with monogenic congruences rather than Grothendieck topologies.
However, it can also be useful to think about the results in terms of extended Grothendieck topologies, particularly on a presheaf logos $\cE=\PSh C$ where they are in bijection with Grothendieck topologies on the category $C$

The frame structure was proved to exist in \cite[Theorem 3.0.1]{ABFJ:GT} using classical results of 1-topos theory.
We provide here an independent proof.
We are going to construct the frame structure explicitly by embedding $\MCong\cE$ into the $\otimes$\=/frame $\Acyclic\cE$ of acyclic classes constructed in \cref{thm:aprod}.
We will see then that the embedding $\MCong\cE\subseteq\Cong\cE$ is also a morphism of $\otimes$\=/frames.

Recall from \cref{rem:triple-adjunction}
the triple adjunction relating monogenic congruences and acyclic classes
\[
\begin{tikzcd}
\MCong\cE
\ar[from=rrr,"{(-)\mono}" description]
\ar[rrr, shift right = 3,"{(-)\vee \Surj}"',hook]
\ar[rrr, shift left = 3,"{\sf can.}", hook]
&&& \Acyclic\cE\,.
\end{tikzcd}
\]
The right adjoint to the inclusion extract the monogenic part of an acyclic class, which is defined by the same formula as the monogenic part of congruences: $\cA\mono = (\cA\cap \Mono)\ac$.
An acyclic class $\cA$ is a monogenic congruence if and only if $\cA=\cA\mono$.

\begin{theorem}
\label{thm:gtop} 
The poset of monogenic congruences $\MCong\cE$ is a frame. 
The inclusion $ \MCong \cE \subseteq  \Acyclic\cE$ and its right adjoint
$(-)\mono$ are morphisms of $\otimes$\=/frames.
\[
\begin{tikzcd}
\MCong\cE
\ar[from=rrr,"{(-\cap \Mono)\ac}", shift left = 1.5]
\ar[rrr, shift left = 1.5,"{\sf can.}", hook]
&&& \Acyclic\cE
\end{tikzcd}
\]
\end{theorem}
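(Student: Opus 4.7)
The plan is to deduce everything from the monogenic idempotence identity of \cref{prop:idem-mono-cong} together with the adjunctions of \cref{rem:triple-adjunction}, without redoing any $\otimes$\=/frame machinery. Observe first that the triple adjunction
\[
\begin{tikzcd}
\MCong\cE
\ar[from=rrr,"{(-)\mono}" description]
\ar[rrr, shift right = 3,"{(-)\vee \Surj}"',hook]
\ar[rrr, shift left = 3,"{\sf can.}", hook]
&&& \Acyclic\cE
\end{tikzcd}
\]
makes the canonical inclusion a left adjoint (so it preserves suprema) and makes $(-)\mono$ simultaneously a right adjoint (to the inclusion) and a left adjoint (to $(-)\vee\Surj$). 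Hence $(-)\mono$ preserves both suprema and infima; in particular both maps preserve suprema, which is half of being a morphism of $\otimes$\=/frames.

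Next I would check that $\MCong\cE$ is closed under the acyclic product in $\Acyclic\cE$. For $\cK,\cL \in \MCong\cE$ we have $\cK = \cK\mono$ and $\cL = \cL\mono$, so \cref{prop:idem-mono-cong} gives
\[
\cK \aprod \cL \ =\ \cK\mono \aprod \cL\mono \ =\ (\cK \aprod \cL)\mono,
\]
so $\cK \aprod \cL$ is monogenic. The unit $\All = \{0\to 1\}\cong$ is monogenic since $0\to 1$ is a monomorphism (\cref{exmp:mono-congruence1}). Together with the supremum preservation above, this shows the canonical inclusion $\MCong\cE \hookrightarrow \Acyclic\cE$ is a morphism of $\otimes$\=/frames; in particular $\MCong\cE$ is itself a $\otimes$\=/frame, with acyclic product and unit $\All$ inherited from $\Acyclic\cE$. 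For $(-)\mono$, the identity $(\cA \aprod \cB)\mono = \cA\mono \aprod \cB\mono$ of \cref{prop:idem-mono-cong} together with $\All\mono = \All$ and the supremum preservation coming from its left adjoint $(-)\vee\Surj$ shows it too is a morphism of $\otimes$\=/frames.

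Finally, I would use idempotence of the product in $\MCong\cE$ to upgrade ``$\otimes$\=/frame'' to ``frame''. By \cref{prop:idem-mono-cong}, $\cK \aprod \cK = \cK$ for every $\cK \in \MCong\cE$. Since the unit is the top element, $\cK \aprod \cL \leq \cK \wedge \cL$ holds automatically, and conversely
\[
\cK \wedge \cL \ =\ (\cK \wedge \cL) \aprod (\cK \wedge \cL) \ \leq\ \cK \aprod \cL,
\]
so $\aprod$ coincides with $\wedge$ on $\MCong\cE$. The distributivity of $\aprod$ over suprema, inherited as part of the $\otimes$\=/frame structure, therefore becomes the frame distributivity $\cK \wedge \bigvee_i \cL_i = \bigvee_i (\cK \wedge \cL_i)$, finishing the proof. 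No step here is particularly hard once \cref{prop:idem-mono-cong} is in place; the only genuinely substantive ingredient is that proposition, which has already been established.
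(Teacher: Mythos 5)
Your proposal is correct and follows essentially the same route as the paper: closure of $\MCong\cE$ under $\aprod$ and preservation of product and unit by both maps are deduced from \cref{prop:idem-mono-cong}, supremum preservation from the adjunctions of \cref{rem:triple-adjunction}, and the frame property from idempotence of the product. The only cosmetic difference is that you verify directly that idempotence forces $\aprod$ to coincide with the meet (hence frame distributivity), where the paper simply invokes \cref{cor:charac-frames}; the content is the same.
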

 
\begin{proof}
The canonical inclusion $\MCong\cE\subseteq  \Acyclic\cE$ preserves suprema, since it has a right adjoint $(-)\mono$.
Since an acyclic class $\cA$ is a monogenic congruence if $\cA=\cA\mono$, we can use \cref{prop:idem-mono-cong} to show that the inclusion preserves the acyclic product.
The class of all maps $\All$ is a monogenic congruence, since $\All =\{i:0\to 1\}\ac$.
Thus the poset $\MCong\cE$ is a sub-$\otimes$\=/frame of the $\otimes$\=/frame $\Acyclic\cE$.
By \cref{prop:idem-mono-cong}, we see also that $\cA\aprod \cA = \cA$ for every $\cA \in \MCong\cE$ (using that $\cA=\cA\mono$).
By \cref{cor:charac-frames}, this proves that the $\otimes$\=/frame $\MCong\cE$ is a frame. 
Let us see that the map $(-)\mono:\Acyclic\cE\to\MCong\cE$ is a morphism of $\otimes$\=/frames.
It preserves suprema, since it has a right adjoint.
It preserves the product by \cref{prop:idem-mono-cong}, and it preserves the unit $\All\mono=\All$, since the class $\All$ is generated by the monomorphism $0\to 1$.
\end{proof}

\begin{remark}
\label{rem:idempotent-cong}
Monogenic congruences are thus idempotent for the acyclic product.
The existence of idempotent ideals in commutative algebra that are not generated by idempotent elements suggest to us that there should be idempotent congruences that are not monogenic, 
but we do not have a counter-example.
\end{remark}

\begin{remark}
\label{rem:idem-ideal}
\Cref{thm:gtop} suggests to compare monogenic congruences to idempotent ideals in commutative algebra.
The simplest examples of such ideals are those generated by idempotent elements (corresponding geometrically to the inclusion of some connected components).
Their logos analogues seem to be the congruences generated by subterminal objects $U\to 1$ (\cref{exmp:mono-congruence2}) since such maps are idempotent for the pushout product.
There are more sophisticated examples of idempotent ideals though,
typically the maximal ideals in rings of continuous functions with real values on a manifold.
\end{remark}

The following diagram~\eqref{monoidal-morphisms} recalls the various adjoint functors between acyclic classes, congruences, and monogenic congruences (aka Grothendieck topologies).
The convention is the following.
The left adjoints are put above their right adjoint.
The top, middle and bottom triangles all commute.
The functors indicated in solid lines are the monoidal ones (the others can be proven not to be).
The top ${\sf can.}$ functor is monoidal by \cref{thm:cprod}, the one on the right side is by \cref{thm:gtop}.
The monoidal structures for the functors on the left side are the content of \cref{topcongquantale}.

\begin{equation}
\label{monoidal-morphisms}
\begin{tikzcd}
\Cong\cE 
\ar[from=rrrrrr, shift right = 3,"(-)\cong"', dashed]
\ar[rrrrrr, hook, "{\sf can.}" description]
\ar[from=rrrrrr, shift left = 3,"\decinfty-", dashed] 
&&&&&&
\Acyclic\cE
\\
\\
\\
&&&\MCong\cE
\ar[rrruuu, "{\sf can.}", hook, shift left = 3, bend right]
\ar[from=rrruuu, "(-)\mono" description, bend left] 
\ar[rrruuu, "\hcrad-\,=\,-\vee \Surj"', hook, shift right = 3, bend right, dashed]
\ar[llluuu, "{\sf can.}"', hook', shift right = 3, bend left]
\ar[from=llluuu, "(-)\mono" description, bend right] 
\ar[llluuu, "\hrad-\,=\,\decinfty{-\vee \Surj}", hook', shift left = 3, bend left, dashed]
\end{tikzcd}	
\end{equation}

\begin{corollary}
\label{topcongquantale}
Then the inclusion $\MCong\cE\to\Cong\cE$ and its right adjoint $(-)\mono$ are morphisms of $\otimes$\=/frames.
\end{corollary}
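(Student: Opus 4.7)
The plan is to deduce the corollary by recombining three ingredients already in place: the sub-$\otimes$-frame embeddings $\Cong\cE,\MCong\cE \hookrightarrow \Acyclic\cE$ of \cref{thm:cprod,thm:gtop}; the identity $(\cA \aprod \cB)\mono = \cA\mono \aprod \cB\mono$ of \cref{prop:idem-mono-cong}; and the triple adjunction of \cref{thm:mono-cong} which furnishes a \emph{second} right adjoint $\hrad{-}$ to the inclusion $\MCong\cE \hookrightarrow \Cong\cE$.

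For the inclusion $\MCong\cE \hookrightarrow \Cong\cE$, preservation of suprema, product, and unit is essentially tautological. Suprema are preserved because the inclusion is a left adjoint (its right adjoint being $(-)\mono$). Since both $\MCong\cE$ and $\Cong\cE$ embed as sub-$\otimes$-frames of the common ambient $\otimes$-frame $\Acyclic\cE$, the acyclic product of two monogenic congruences and the unit $\All$ are unambiguous — the same whether computed in $\MCong\cE$, in $\Cong\cE$, or in $\Acyclic\cE$ — so the inclusion respects them automatically.

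For the right adjoint $(-)\mono \colon \Cong\cE \to \MCong\cE$, I would verify the three axioms separately. Preservation of the monoidal product is a direct application of \cref{prop:idem-mono-cong}, which is stated for arbitrary acyclic classes and therefore specializes to congruences. Preservation of the unit reduces to $\All\mono = (\All \cap \Mono)\ac = \Mono\ac = \All$, using that the generator $0 \to 1$ of $\All$ is already a monomorphism. The one delicate point — and the place I would identify as the crux — is preservation of suprema, which a right adjoint need not satisfy in general. Here the saving observation is \cref{thm:mono-cong}: the inclusion $\MCong\cE \hookrightarrow \Cong\cE$ admits a further right adjoint $\hrad{-}$, so $(-)\mono$ is itself a \emph{left} adjoint, and therefore cocontinuous. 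All three properties assembled, $(-)\mono$ is a morphism of $\otimes$-frames, completing the proof.
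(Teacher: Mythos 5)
Your proof is correct, but it reaches the conclusion by a slightly different route than the paper. The paper deduces both claims purely by composing and restricting monoidal morphisms already constructed at the level of acyclic classes: the inclusion $\MCong\cE\to\Cong\cE$ and the map $(-)\mono:\Cong\cE\to\MCong\cE$ each fit into a commuting triangle over $\Acyclic\cE$ whose other two sides are morphisms of $\otimes$\=/frames by \cref{thm:gtop,thm:cprod}, so no new verification is needed. You instead check the three axioms for $(-)\mono$ directly at the congruence level: the product via \cref{prop:idem-mono-cong}, the unit via $\All\mono=\All$ (the generator $0\to 1$ being a monomorphism), and — the genuinely different ingredient — preservation of suprema from the triple adjunction of \cref{thm:mono-cong}, i.e. $(-)\mono\dashv\hrad{-}$, whereas the paper's suprema argument ultimately rests on the other right adjoint $(-)\vee\Surj$ at the acyclic level, inside the proof of \cref{thm:gtop}. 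Both arguments are sound and draw on the same structural inputs; yours is marginally more self-contained on the congruence side, while the paper's is shorter because \cref{thm:gtop} already packaged the work. One point both treatments use implicitly and that you should state in a line when invoking \cref{prop:idem-mono-cong}: for a congruence $\cK$ the two readings of $\cK\mono$, namely $(\cK\cap\Mono)\cong$ (the right adjoint in \cref{thm:mono-cong}) and $(\cK\cap\Mono)\ac$ (the operation in \cref{prop:idem-mono-cong}), coincide because any class of monomorphisms is a lex generator (\cref{ex:lexgen:mono}).
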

\begin{proof}
The morphism $\MCong\cE\to \Cong\cE$ is a morphism of $\otimes$\=/frames, since the following triangle commutes 
\[
\begin{tikzcd}
\MCong\cE \ar[rr, "{\sf can.}", hook] \ar[dr,"{\sf can.}"', hook]
&&\Cong\cE \ar[dl,"{\sf can.}", hook']  \\
& \Acyclic\cE
\end{tikzcd}
\]
and both the left and right maps are morphisms of $\otimes$\=/frames by 
\cref{thm:gtop,mainthmCongruence}. 
Similarly, the map $(-)\mono:\Cong\cE\to \MCong\cE$ is a morphism of $\otimes$\=/frames, since the following triangle commutes
\[
\begin{tikzcd}
\Cong\cE \ar[rr, "{\sf can.}", hook] \ar[dr,"(-)\mono"']
&&\Acyclic\cE \ar[dl,"(-)\mono"]  \\
&\MCong\cE
\end{tikzcd}
\]
and the map $(-)\mono:\Acyclic\cE\to \MCong\cE$ is a morphism of $\otimes$\=/frames by \cref{thm:gtop}.
\end{proof}

\begin{remark}
\label{rem:formula-gtop}
In \cite{ABFJ:GT} we established isomorphisms between 
the poset $\MCong\cE$ of monogenic congruences (or acyclic classes), 
the poset $\RCong\cE$ of hyper-radical congruences, 
the poset $\CTop(\cE)$ covering topologies,
and the poset $\GTop(\cE)$ of Grothendieck topologies.
Each of these inherits a frame structure from \cref{thm:gtop}.
We make the product structure explicit in each case.

\begin{remenum}

\item\label{rem:formula-gtop:hrad} 
In hyper-radical congruences,
the unit is the class $\All$,
and the product of two hyper-radical congruences $\cK$ and $\cK'$ is their intersection as classes of maps.
It satisfies the relations
\[
\cK\cap \cK'
\ =\ 
\hrad{(\cK\mono \aprod (\cK')\mono)}
\ =\ 
\hrad{(\im\cK \pp \im{\cK'})\ac}
\,.
\]
In particular, for two arbitrary congruences $\cJ$ and $\cJ'$, we can derive the classical formula
\[
\hrad\cJ \cap \hrad{\cJ'} = \hrad{\cJ \aprod \cJ'}\,.
\]

\item\label{rem:formula-gtop:gtop}
In (extended) Grothendieck topologies,
the unit is the class $\Mono$,
and the product of two topologies $\cG$ and $\cG'$ is their intersection as classes of maps.
It satisfies the relations
\[
\cG\cap \cG'
\ =\ 
(\cG\ac \aprod (\cG')\ac)\cap \Mono
\ =\ 
(\cG \pp \cG')\ac\cap \Mono
\ =\ 
(\cG \pp \cG')\gtop\,,
\]
where, for a class of monomorphisms $\Sigma$, $\Sigma\gtop=\Sigma\ac\cap \Mono$ is the smallest Grothendieck topology containing it \cite[Corollary 3.3.9]{ABFJ:GT}.
Moreover, for any two classes of monomorphisms $\Sigma$ and $\Tau$, we have
\[
\Sigma\gtop\cap \Tau\gtop
\ =\ 
(\Sigma \pp \Tau)\gtop\,.
\]

\item\label{rem:formula-gtop:covt} 
In covering topologies,
the unit is the class $\All$,
and the product of two covering topologies $\cC$ and $\cC'$ is their intersection as classes of maps.
It satisfies the relations
\[
\cC\cap \cC'
\ =\ 
(\cC\mono \aprod (\cC')\mono)\vee \Surj
\ =\ 
(\im\cC \pp \im{\cC'})\ac\vee \Surj
\,.
\]
\end{remenum}
\end{remark}

\begin{remark}
\label{rem:prod=inter}
In all examples of \cref{rem:formula-gtop}, the product structure is always the intersection of classes of maps.
This is not a priori true for monogenic congruences.
For two monogenic congruences $\cA$ and $\cB$, their infimum in $\MCong\cE$ can computed as $\cA\wedge\cB = (\cA\cap \cB)\mono$, where the intersection is taken in $\Acyclic\cE$ (or $\Cong\cE$).
We do not know if the intersection of two monogenic congruences is again monogenic (\ie whether the simpler formula $\cA\wedge\cB = \cA\cap \cB$ holds).
But this holds in some examples, see below.
\end{remark}

\begin{examples}
\label{ex:aprod-gtop}
Let $U$ and $V$ be subterminal objects of a logos $\cE$. 
We denote by $U\cup V$ and $U\cap V$ their union and intersection.
Let $\tX$ be the topos dual to $\cE$. 
\begin{exmpenum}
\item\label{ex:aprod-gtop:open}
The monogenic congruence $\{U\to 1\}\ac$ corresponds to the open quotient $\cE\to \cE\slice U$. 
We denote by $\tU\to \tX$ the corresponding open inclusion of topoi.

Using 
\cref{lem:cocartgapmapmono,rem:formula-gtop:gtop} we see that $\{U\to 1\}\ac \aprod \{V\to 1\}\ac = \{U\cup V \to 1\}\ac$ is the infimum of the monogenic quotients $\cE\to \cE\slice U$ and $\cE\to \cE\slice V$. 
Geometrically, this corresponds to the union $\tU \cup \tV$ of the two open subtopoi.
Using the fact that the quotient $\cE\to \cE\quotient\{U\to 1\}\ac$ has an explicit reflector $(-)^U:\cE\to \cE$, it is possible to verify that $\{U\to 1\}\ac \aprod \{V\to 1\}\ac = \{U\to 1\}\ac \cap \{V\to 1\}\ac$.

\item\label{ex:aprod-gtop:closed}
The monogenic congruence $\{0\to U\}\ac$ corresponds to the closed quotient complementary to the open quotient $\cE\to \cE\slice U$.
We denote by $\tX\!\setminus\!\tU\to \tX$ the corresponding closed inclusion of topoi.
The product $\{0\to U\}\ac \aprod \{0\to V\}\ac = \{0\to U\cap V\}\ac$ is the infimum of the monogenic quotients and, geometrically, the union $\tX\!\setminus\!\tU\,\cup\,\tX\!\setminus\!\tV = \tX\setminus (\tU\cap\tV)$ of the two closed subtopoi of $\tX$.
The quotient $\cE\to \cE\quotient\{0\to U\}\ac$ has an explicit reflector $(-)\star U :\cE\to \cE$, which can be used to show that $\{0\to U\}\ac \aprod \{0\to V\}\ac = {\{0\to U\}\ac \cap \{0\to V\}\ac}$.
\end{exmpenum}
\end{examples}

\subsection{Nilpotent objects, nilradical, frame reflection}
\label{sec:nilradical}

The purpose of this section is to introduce the notion of a nilpotent map in a logos.
It will be used to describe the Goodwillie logos of $n$\=/excisive functors as being freely generated by a nilpotent pointed object (\cref{thm:Goodwillie:tower}~\eqref{thm:Goodwillie:tower:nilpotent}).
As it turns out, the naive definition of a nilpotent map that one can derive from the pushout product (some finite power is invertible) is not the one needed for the universal property of $n$\=/excisive functors.
We call this first notion {\it weak nilpotence}.
The correct notion of a nilpotent map $f$ is obtained by demanding that the family of all diagonals of $f$ be a weakly nilpotent family.

We apply the notion of nilpotence to define a notion of nilradical for congruences  (\cref{def:nilradical}).
We prove the nilradical of a congruence sits inside its hyper-radical (\cref{prop:nilrad-in-hrad}) and that the poset of nilradical congruences is a frame which is the frame reflection of the $\otimes$\=/frame $\Cong\cE$.
We also define a notion of nilradical for acyclic classes and show some connections with the notion of acyclic maps in the sense of algebraic topology (see \cref{prop:acyclic-map-nilp}).

\medskip
We will define nilpotency for a class of maps $\Sigma$ (rather than a single map) and relatively to another class of map $\Theta$.
The absolute case will be when $\Theta=\Iso$. 
The relative notion will be useful to define the nilradical of a congruence.

\begin{definition}[Weak $n$\=/nilpotence]
\label{def:weak-n-nilpotent}
We fix class of maps $\Theta$.
A class of maps $\Sigma$ is said to be {\it weakly $\Theta$\=/nilpotent of order $n$}
if $\Sigma^{\pp n+1} \subseteq \Theta$.

When $\Theta=\Iso$, we shall simply say {\it weakly $n$\=/nilpotent}.
A map $f$ is {\it weakly $\Theta$\=/nilpotent of order $n$} if $\Sigma=\{f\}$ is a weakly $\Theta$\=/nilpotent class of order $n$.
An object $X$ is {\it weakly $\Theta$\=/nilpotent of order $n$} if $f=X\to 1$ is a weakly $\Theta$\=/nilpotent map of order $n$.
A class of maps, a map or an object is called {\it weakly $\Theta$\=/nilpotent of finite order} if it is weakly $\Theta$\=/nilpotent of order $n$ for some $n$.
\end{definition}

\begin{remark}
\label{rem:nil-via-aprod}
When $\cA$ and $\cB$ are acyclic classes (in particular congruences), the following equivalences hold by definition of the acyclic product (\cref{deflocalbox}):
\[
\cB \pp \dots \pp \cB
\ \subseteq\ 
\cA
\quad\Leftrightarrow\quad
(\cB \pp \dots \pp \cB)\ac
\ \subseteq\ 
\cA
\quad\Leftrightarrow\quad
\cB \aprod \dots \aprod \cB
\ \subseteq\ 
\cA\,.
\]
This shows that $\cB$ is weakly $\cA$\=/nilpotent of order $n$ if and only if $\cB^{\aprod n+1}\subseteq \cA$ (\ie the pushout product can be replaced by the acyclic product).
\end{remark}

\begin{examples}
\begin{exmpenum}

\item\label{rem:weak-nilpotent-object}
Explicitly, an object $X$ is weakly $n$\=/nilpotent if its $(n+1)$\=/st join power ${X\join \dots \join X}$ is contractible.

\item In the logos of spaces $\cS$, the classifying space of a perfect group is weakly 1\=/nilpotent. More generally, in a a hyper-reduced logos $\cE$ (\eg $\cS$), \cref{prop:acyclic-map-nilp} says that the class $\Acmap\cE$ of acyclic maps of \cref{examplemodality:acyclic} is weakly 1\=/nilpotent.

\item\label{ex:weakly-nilpotent-object:PSp} In the logos $\PSp$ of parametrized spectra (see \cref{sec:Goodwillie-tower}), all spectra are weakly 1\=/nilpotent objects.
In fact something stronger is true: for any two spectra $E$ and $F$, 
we have $E\join F = 1$ since the cartesian square
\[
\begin{tikzcd}
E\times F \ar[d]\ar[r] & E\ar[d] \\
F\ar[r] & 1
\end{tikzcd}
\]
is always a pushout in $\Sp$ and also in $\PSp$.
\end{exmpenum}	
\end{examples}

\begin{definition}[$n$\=/nilpotence]
\label{def:n-nilpotent}
We fix class of map $\Theta$.
A class of maps $\Sigma$ is {\it $\Theta$\=/nilpotent of order $n$}
(when $\Theta=\Iso$, we shall simply say {\it $n$\=/nilpotent}) 
if $\Sigma\diag$ is weakly $\Theta$\=/nilpotent of order $n$.
A map $f$ is {\it $\Theta$\=/nilpotent of order $n$} if $\{f\}\diag = \{\Delta^k f\,|\, k\in \NN\}$ is $\Theta$\=/nilpotent of order $n$.
An object $X$ is {\it $\Theta$\=/nilpotent of order $n$} if $f=X\to 1$ is a $\Theta$\=/nilpotent map of order $n$.
A class of maps, a map or an object is called {\it $\Theta$\=/nilpotent of finite order} if it is $n$\=/nilpotent for some $n$.
\end{definition}

\begin{examples}
\begin{exmpenum}

\item\label{rem:nilpotent-object}
Explicitly, a 1\=/nilpotent object is an object $X$ such that all pushout products ${\Delta^m X\pp \Delta^n X}$ are isomorphisms.
In informal terms, this means that binary join products of arbitrary path spaces of $X$ are contractible: $\Omega^m X\join \Omega^n X = 1$.
The definition of nilpotent objects of order $n$ is similar with $n+1$ pushout products involved (see \cref{ex:cprod:power-object}).

\item In the logos of spaces $\cS$, the only nilpotent object is the point.
In fact, we shall see in \cref{cor:nilrad-surj} that all nilpotent objects in a hyper-reduced logos must be contractible.

\item\label{rem:nilpotent-object:PSp} 
In the logos $\PSp$ of parametrized spectra, the nilpotent objects are exactly the spectra. They are all 1\=/nilpotent.
In fact, something stronger is true: the pushout product of any two maps in $\Sp\subseteq\PSp$ is always an isomorphism.
To see this, we use that the inclusion $\Sp \subseteq \PSp$ preserve limits and pushouts, hence  pushout products.
Then in the stable category, the pushout product of any two maps is always invertible, because it is the cocartesian gap map of a cartesian square.

\end{exmpenum}	
\end{examples}

If $\Sigma\diag\subseteq \Sigma$, then $\Sigma$ is $\Theta$\=/nilpotent of order $n$ if and only if it is weakly $\Theta$\=/nilpotent of order $n$.
Since any congruence $\cK$ satisfies $\cK\diag\subseteq\cK$, this show that the two notions of nilpotence coincide on congruences.

\begin{lemma}
\label{lem:wnil=nil}
For any class of maps $\Theta$,
a congruence $\cK$ is $\Theta$\=/nilpotent of order $n$ if and only if it is weakly $\Theta$\=/nilpotent of order $n$.
\end{lemma}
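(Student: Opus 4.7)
The plan is to unpack the two definitions and observe that they collapse onto one another as soon as $\Sigma\diag=\Sigma$, which holds for congruences by \cref{lem:caraccong}.

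First I would record the trivial observation that $\Sigma\subseteq \Sigma\diag$ for any class of maps $\Sigma$, since $\Delta^0 f = f$. Consequently $\Sigma^{\pp n+1}\subseteq (\Sigma\diag)^{\pp n+1}$, so any $\Theta$\=/nilpotent class is automatically weakly $\Theta$\=/nilpotent of the same order. This direction needs nothing about $\cK$ being a congruence.

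For the converse direction, I would invoke \cref{lem:caraccong}: a congruence $\cK$ is in particular an acyclic class satisfying $\Delta(\cK)\subseteq\cK$, hence $\cK\diag=\cK$. Therefore
\[
(\cK\diag)^{\pp n+1}\ =\ \cK^{\pp n+1}\,,
\]
and weak $\Theta$\=/nilpotence of order $n$ (the right-hand side being contained in $\Theta$) is identical to $\Theta$\=/nilpotence of order $n$ (the left-hand side being contained in $\Theta$).

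Combining the two inclusions gives the equivalence. There is no real obstacle here: the only nontrivial ingredient is the characterization of congruences as acyclic classes fixed by the diagonal, which is already available as \cref{lem:caraccong}. The proof should fit in a few lines.
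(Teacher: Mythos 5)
Your proof is correct and follows essentially the same route as the paper: the lemma is immediate once one knows $\cK\diag=\cK$ for a congruence (via \cref{lem:caraccong}, or directly from closure under diagonals), since then the defining conditions $(\cK\diag)^{\pp\, n+1}\subseteq\Theta$ and $\cK^{\pp\, n+1}\subseteq\Theta$ literally coincide. The preliminary observation that $\Sigma\subseteq\Sigma\diag$ handles one direction for arbitrary classes, but it is subsumed by the equality in the congruence case, exactly as in the paper's remark preceding the lemma.
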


\begin{remark}
When the class $\Sigma$ is acyclic or a congruence, the nilpotence conditions of \cref{def:weak-n-nilpotent,def:n-nilpotent} becomes a property of $\Sigma$ in the $\otimes$\=/frame of acyclic classes or congruences:
If $\Theta$ and $\cA$ are acyclic classes, $\cA$ is $\Theta$\=/nilpotent if and only if $\cA^{\pp\ac n+1}\subset\Theta$.
If $\Theta$ and $\cK$ are congruences, $\cK$ is $\Theta$\=/nilpotent if and only if $\cK^{\pp\ac n+1}\subset\Theta$.
\end{remark}

We now define the notions of nilradical of an acyclic class and of a congruence.
These are two different notions which do not coincide when applied to a congruence.
The definitions are a special case of \cref{def:radical-elt} applied to the $\otimes$\=/frames of acyclic classes and congruences.

\begin{definition}[Acyclic nilradical, weak nilpotence]
\label{def:acrad}
An acyclic class $\cA$ is said to be {\it nilradical} if, for every acyclic class $\cB$, it holds that $\cB^{\pp\ac n}\subseteq \acrad\cA \Ra \cB \subseteq \acrad\cA$
(or, equivalently, if, for every class of maps $\Sigma$, $\Sigma^{\pp n}\subseteq \cA \Ra \Sigma \subseteq \cA$).
The {\it acyclic nilradical} of an acyclic class $\cA$ is defined as the smallest nilradical acyclic class $\acrad \cA$ containing $\cA$.
A class, a map or an object is called {\it weakly $\cA$\=/nilpotent} if it belongs to $\acrad \cA$.
When $\cA=\Iso$, we shall simply say {\it weakly nilpotent} instead of weakly $\Iso$\=/nilpotent and call $\acrad\Iso$ the {\it acyclic radical of the logos} $\cE$ (we shall also use the notation $\acrad 1$ for the acyclic radical).
\end{definition}

\begin{definition}[Congruence nilradical, nilpotence]
\label{def:nrad}
\label{def:nilradical}
A congruence $\cK$ is said to be {\it nilradical} if, for every congruence $\cW$, it holds that 
$\cW^{\pp\ac n}\subseteq \nrad\cK \Ra \cW \subseteq \nrad\cK$ (or, equivalently, if, for every class of maps $\Sigma$, $(\Sigma\diag)^{\pp n}\subseteq \cK \Ra \Sigma \subseteq \cK$).
The {\it nilradical} of a congruence $\cK$ is defined as the smallest nilradical congruence  $\acrad \cK$ containing $\cK$.
A class, a map or an object is called {\it $\cK$\=/nilpotent} if it belong to $\nrad \cK$.
When $\cK=\Iso$, we shall simply say {\it nilpotent} instead of $\Iso$\=/nilpotent and call
$\nrad\Iso$ the {\it nilradical of the logos} $\cE$ (we shall also use the notation $\nrad 1$ for the nilradical).
\end{definition}

\begin{remark}
\label{rem:non-finite-weak-nilpotent}
Notice that not every weakly $\cA$\=/nilpotent map needs to be weakly $\cA$\=/nilpotent of finite order (infinite coproducts of weakly nilpotent maps of different order might not be weakly nilpotent of finite order).
The same remark is true for congruences.
This is a difference with rings where the nilradical is simply the subset of all nilpotent elements.
For acyclic classes and congruences, such a construction of the nilradical needs to be iterated transfinitely (see \cref{rem:radical-soa}).
This makes the nilradical difficult to describe concretely.
In particular, it is not immediately clear that the nilradical of a congruence (or an acyclic class) of small generation is again of small generation. 
\end{remark}

\begin{examples}
\begin{exmpenum}

\item The congruence nilradical of $\cS$ is the class $\nrad 1=\Iso$ (this will be a consequence of \cref{cor:nilrad-surj}).

\item\label{ex:nilradical:2} From \cref{prop:acyclic-map-nilp}, we know that the class of acyclic maps in $\cS$ sits inside the acyclic nilradical $\acrad 1$. (Could this inclusion be an equality?)
Since we just saw that $\nrad  1=\Iso$, this shows that the congruence nilradical can be strictly smaller than the acyclic radical, $\nrad  1\subsetneq \acrad 1$.
\end{exmpenum}	
\end{examples}

The inclusion of $\otimes$\=/frames $\Cong\cE\to \Acyclic\cE$ provides some relations between the two notions.
Recall that its right adjoint is $\decn\infty-:\Acyclic\cE\to \Cong\cE$.

\begin{proposition}
\label{prop:nilradical}
There exists a natural commutative square of $\otimes$\=/frame morphisms (in solid arrows, dashed arrows are the associated right adjoints):
\[
\begin{tikzcd}
\sqrt{\Cong\cE}
\ar[rrr, "{\sf can.}"', hook, shift right = 1.5, dashed]
\ar[from=rrr, "\nrad-"', shift right = 1.5] 
\ar[dd, "{\acrad-}"']
&&&
\Cong\cE
\ar[dd, hook', shift right=1.5]
\\
\\
\sqrt{\Acyclic\cE}
\ar[rrr, "{\sf can.}"', hook, shift right = 1.5, dashed]
\ar[from=rrr, "\acrad-"', shift right = 1.5] 
\ar[uu, shift right=4, "\decinfty-"', dashed]
&&&
\Acyclic\cE\,.
\ar[uu, shift right=1.5, "\decinfty-"', dashed]
\end{tikzcd}
\]
This diagram provides the following relations:
\begin{enumerate}
\item\label{prop:nilradical:commut-right} For every nilradical acyclic class $\cA$, the congruence $\decn\infty\cA$ is nilradical.
\item\label{prop:nilradical:commut-left} For every congruence $\cK$, $\acrad{\nrad\cK}=\acrad\cK$.
\item\label{prop:nilradical:mate-right} For every acyclic class $\cA$, there is a canonical inclusion $\nrad{\decn\infty\cA}\subseteq\decn\infty{\acrad\cA}$.
\item\label{prop:nilradical:inclusion} For every congruence $\cK$, there is a canonical inclusion $\nrad{\cK}\subseteq \acrad\cK$ (which can be strict by \cref{ex:nilradical:2}).
\end{enumerate}
\end{proposition}
\begin{proof}
The first statement and \eqref{prop:nilradical:commut-right} come from \cref{prop:rad:square-adjoints} applied to the morphism of $\otimes$\=/frames $\Cong\cE\to \Acyclic\cE$.
\eqref{prop:nilradical:commut-left} is the commutation of left adjoints in  \cref{prop:rad:square-adjoints}.
\eqref{prop:nilradical:mate-right} is the mate of that square.
And \eqref{prop:nilradical:inclusion} is deduced from \eqref{prop:nilradical:mate-right} by the  right adjoint property of $\decn\infty-$ and the relation $\decn\infty\cK=\cK$ for a congruence.
\end{proof}

\begin{remark}
\label{rem:formula-nil=dec-acnil}
The morphism $\acrad-:\sqrt{\Cong\cE}\to \sqrt{\Acyclic\cE}$ is induced by the inclusion $\Cong\cE\to \Acyclic\cE$, but it is not clear that it is itself an inclusion.
This is the case 
if and only if $\decn\infty{\acrad\cN}=\cN$ is true for every nilradical congruence $\cN$, 
if and only if $\decn\infty{\acrad\cK}=\nrad\cK$ for every congruence $\cK$.
A related problem is whether the inequality \eqref{prop:nilradical:mate-right} is in fact an 
equality $\nrad{\decn\infty\cA}=\decn\infty{\acrad\cA}$.
When $\cA=\cK$ is a congruence, this would give also the relation $\nrad{\cK}=\decn\infty{\acrad\cK}$.
\end{remark}

For an acyclic class $\cA$, let us say that a map $f$ is an {\it $\cA$\=/covering} if $\im f\in \cA$.
The class of $\cA$\=/covering maps is $\hcrad\cA:=\cA\vee \Surj$, it is the {\it covering topology} associated to $\cA$ in the sense of \cite[Theorem 3.3.11]{ABFJ:GT}.
For the purpose of this paper, we shall called $\hcrad\cA$ the {\it covering radical} of $\cA$.
Recall from \cref{def:hrad} that, for a congruence $\cK$, a map $f$ is a {\it $\cK$\=/hypercovering} if, for all $n$, $\im{\Delta^nf}\in \cK$.
The class of all $\cK$\=/hypercoverings is denoted $\hrad \cK$ and called the {\it hyper-radical} of $\cK$.

\begin{lemma}
\label{lem:power-image}
\begin{corenum}
\item Let $\cA$ be an acyclic class.
If $f^{\pp n} \in \cA$ for some $n$, then $f \in \hcrad\cA$.

\item Let $\cK$ be a congruence and $f$ a map.
If $\{\Delta^k f\,|\,k\in \NN\}^{\pp n} \in \cK$ for some $n$, then $f\in \hrad\cK$.    
\end{corenum}
\end{lemma}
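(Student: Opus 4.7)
The plan for (1) is to reduce to a statement about monomorphisms via the image factorization. Set $m := \im f$, a monomorphism. By \cref{lem:image-box}, $\im{f^{\pp n}} = m^{\pp n}$, and since $f^{\pp n}\in\cA$ forces its image to lie in $\im\cA = \cA\cap\Mono$, one obtains $m^{\pp n}\in\cA$. Proving (1) therefore reduces to showing that $m\in\cA$, which I will achieve by exhibiting $m$ as a base change of $m^{\pp n}$ and then invoking closure of $\cA$ under base change.

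I would prove this base change claim by induction on $n$. The case $n=2$ is exactly \cref{lem:im-box-im}. For the inductive step, assume $m$ is a base change of $m^{\pp n}$, i.e.\ that there is a cartesian morphism $m\to m^{\pp n}$ in $\Arr\cE$. Applying \cref{basechangebox} with the map $m$, the morphism $m\pp m \to m\pp m^{\pp n} = m^{\pp(n+1)}$ is cartesian, so $m^{\pp 2}$ is a base change of $m^{\pp(n+1)}$. Composing this with the base change $m\to m^{\pp 2}$ provided by \cref{lem:im-box-im} shows that $m$ is a base change of $m^{\pp(n+1)}$, completing the induction and hence part (1).

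Part (2) is then immediate from (1). Specializing the hypothesis to the constant family of indices $k_1=\dots=k_n=k$ gives $(\Delta^k f)^{\pp n}\in\cK$ for every $k\geq 0$. Since a congruence is in particular an acyclic class, part (1) yields $\im{\Delta^k f}\in\cK$ for every $k$, which by \cref{def:hrad} is exactly the definition of $f\in\hrad\cK$. The only delicate point is the inductive step in (1), which hinges on \cref{basechangebox} lifting the base change $m\to m^{\pp n}$ to its pushout product with $m$; everything else is bookkeeping.
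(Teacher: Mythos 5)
Your proof is correct and follows essentially the same route as the paper: reduce to the monomorphism $\im f$ via \cref{lem:acyclic-image} and \cref{lem:image-box}, realize $\im f$ as a base change of $(\im f)^{\pp n}$, and deduce (2) from (1) by specializing to the constant families $(\Delta^k f)^{\pp n}$. The only difference is that you spell out, by induction using \cref{basechangebox}, the step where the paper simply cites \cref{lem:im-box-im} (stated only for $m\pp m$) to get the $n$-fold case — a worthwhile clarification, and your inductive argument is valid.
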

\begin{proof}
We prove the first assertion.
Let $f$ be a map such that $f^{\pp n}\in \cA$.
Then $\im {f ^{\pp n}}\in \cA$ by \cref{lem:acyclic-image}.
From \cref{lem:image-box} we have $\im {f^{\pp n}} = \im f ^{\pp n}$, thus $\im f ^{\pp n}\in \cA$.
From \cref{lem:im-box-im} we have $\im f$ is a base change of $\im f ^{\pp n}$, thus $\im f\in \cA$ since $\cA$ is closed by base change.
This shows that $f\in \hcrad\cA$ by definition of $\hcrad\cA$.

We prove the second assertion.
Let $f$ be a map such that $(\{f\}\cong)^{\pp n}\subseteq \cK$.
The condition implies in particular that for every $k$, $(\Delta^k f)^{\pp n}$ is in $\cK$.
Applying the first part with $\cA=\cK$, we get that all images of diagonals $\im{\Delta^k f}$ are in $\cK$.
Thus $f\in \hrad\cK$ by definition of $\hrad\cK$.
\end{proof}

\begin{proposition}
\label{prop:nilrad-in-hrad} 
For every acyclic class $\cA$, the covering radical $\hcrad\cA=\Surj\vee\cA$ is a nilradical acyclic class.
In particular, we always have an inclusion
\[
\acrad\cA
\ \subseteq\,
\hcrad\cA\,.
\]
For every congruence $\cK$, the hyper-radical $\hrad\cK$ is a nilradical congruence.
In particular, we always have an inclusion
\[
\nrad \cK
\ \subseteq\,
\hrad \cK\,.
\]
\end{proposition}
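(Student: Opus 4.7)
The plan is to deduce both statements directly from \cref{lem:power-image}, using that $\hcrad\cA$ and $\hrad\cK$ are themselves fixed points of the two radicals involved. The key observation is that the inclusion $\acrad\cA \subseteq \hcrad\cA$ (resp.\ $\nrad\cK \subseteq \hrad\cK$) follows formally from the definition of $\acrad$ (resp.\ $\nrad$) as a smallest containing nilradical, once we know that $\hcrad\cA$ is itself nilradical as an acyclic class (resp.\ that $\hrad\cK$ is nilradical as a congruence).

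For the acyclic version, I would first record that $\hcrad\cA = \cA \vee \Surj$ is an acyclic class as a supremum in $\Acyclic\cE$, and that it is idempotent under the covering radical: $\hcrad{\hcrad\cA} = (\cA \vee \Surj) \vee \Surj = \hcrad\cA$. Now suppose $\Sigma$ is weakly $\hcrad\cA$\=/nilpotent of order $n$, i.e.\ $\Sigma^{\pp n+1} \subseteq \hcrad\cA$. For any $f \in \Sigma$, the map $f^{\pp n+1}$ belongs to $\Sigma^{\pp n+1}$ and hence to $\hcrad\cA$. Applying \cref{lem:power-image}~(1) to the acyclic class $\hcrad\cA$ in place of $\cA$ gives $f \in \hcrad{\hcrad\cA} = \hcrad\cA$. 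Thus $\Sigma \subseteq \hcrad\cA$, proving $\hcrad\cA$ is nilradical. Since $\hcrad\cA$ contains $\cA$ and is nilradical and acyclic, the minimality of $\acrad\cA$ yields $\acrad\cA \subseteq \hcrad\cA$.

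The congruence version is entirely parallel. First note that $\hrad\cK$ is a congruence (\cref{def:hrad}) satisfying $\hrad{\hrad\cK} = \hrad\cK$, a fact already recalled in \cref{def:hrad}. Suppose $\Sigma$ is $\hrad\cK$\=/nilpotent of order $n$, which by \cref{def:n-nilpotent} means $(\Sigma\diag)^{\pp n+1} \subseteq \hrad\cK$. For $f \in \Sigma$ we have $\{f\}\diag \subseteq \Sigma\diag$, so $(\{f\}\diag)^{\pp n+1} \subseteq \hrad\cK$. Applying \cref{lem:power-image}~(2) to the congruence $\hrad\cK$ in place of $\cK$ gives $f \in \hrad{\hrad\cK} = \hrad\cK$. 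Hence $\Sigma \subseteq \hrad\cK$, so $\hrad\cK$ is nilradical. The minimality of $\nrad\cK$ as the smallest nilradical congruence containing $\cK$ then gives $\nrad\cK \subseteq \hrad\cK$.

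I do not anticipate a significant obstacle here: the only point requiring some care is to ensure that \cref{lem:power-image} can be legitimately applied with $\cA$ replaced by $\hcrad\cA$ (resp.\ $\cK$ replaced by $\hrad\cK$), which is automatic because the lemma is stated for an arbitrary acyclic class (resp.\ arbitrary congruence). All the real work has already been done in the preceding lemmas (\cref{lem:acyclic-image}, \cref{lem:image-box}, \cref{lem:im-box-im}) that feed into \cref{lem:power-image}, and the idempotence statements $\hcrad{\hcrad\cA}=\hcrad\cA$ and $\hrad{\hrad\cK}=\hrad\cK$ are available from the earlier development.
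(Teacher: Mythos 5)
Your proof is correct and is essentially the paper's own argument: the paper's proof consists of the single line ``Direct from \cref{lem:power-image}'', and you supply exactly the intended details, namely applying the lemma with $\hcrad\cA$ (resp.\ $\hrad\cK$) in place of $\cA$ (resp.\ $\cK$) together with the idempotence identities $\hcrad{\hcrad\cA}=\hcrad\cA$ and $\hrad{\hrad\cK}=\hrad\cK$, and then invoking minimality of $\acrad\cA$ and $\nrad\cK$. The only point worth noting is that for the congruence half you test radicality against the diagonal-completed powers $(\Sigma\diag)^{\pp\, n+1}$ as in \cref{def:n-nilpotent} rather than against bare powers $\Sigma^{\pp\, n+1}$; this is the reading under which the statement (and the minimality step for $\nrad\cK$) is actually meant, so your choice matches the paper's intent.
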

\begin{proof}
\Cref{lem:power-image} shows that $\hcrad\cA$ is nilradical and contains $\cA$.
Then the result follows by definition of $\acrad\cA$.
The argument is the same for congruences.
\end{proof}

\begin{corollary}
\label{cor:nilrad-surj}
The acyclic class $\Surj$ is acyclic radical and the congruence $\Conn \infty$ is nilradical.
Every weakly nilpotent map is surjective and every nilpotent map is \oo connected.
\[
\acrad\Iso
\ \subseteq\ 
\Surj=\hcrad\Iso
\qquad\text{and}\qquad
\nrad\Iso
\ \subseteq\ 
\Conn\infty=\hrad\Iso
\]
\end{corollary}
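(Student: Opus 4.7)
The plan is to apply the preceding \cref{prop:nilrad-in-hrad} in the special case $\cA=\Iso$ (for acyclic classes) and $\cK=\Iso$ (for congruences). This will immediately yield both inclusions, from which the nilradicality statements and the consequences for individual maps follow by unwinding definitions.

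First, I would specialize \cref{prop:nilrad-in-hrad} to $\cA=\Iso$. Since $\hcrad\Iso = \Surj\vee\Iso = \Surj$, the proposition gives $\acrad\Iso \subseteq \Surj$, and moreover identifies $\Surj$ itself as a nilradical acyclic class (being the covering radical of $\Iso$). Because $\acrad\Iso$ is by definition the smallest nilradical acyclic class containing $\Iso$, the inclusion $\acrad\Iso \subseteq \Surj$ is automatic once we know $\Surj$ is nilradical. By definition, any weakly nilpotent map lies in $\acrad\Iso$ and thus in $\Surj$, so every weakly nilpotent map is surjective.

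Next, I would specialize \cref{prop:nilrad-in-hrad} to $\cK=\Iso$. We have $\hrad\Iso = \Conn\infty$ (this is the identity from \cref{def:radical-logos}, since $f$ is \oo connected iff $\im{\Delta^n f}\in\Iso$ for all $n$). The proposition then says that $\Conn\infty$ is a nilradical congruence, and provides the inclusion $\nrad\Iso \subseteq \Conn\infty$. Again, since $\nrad\Iso$ is the smallest nilradical congruence containing $\Iso$, the inclusion is immediate from the nilradicality of $\Conn\infty$. By definition, any nilpotent map lies in $\nrad\Iso$ and hence in $\Conn\infty$, so every nilpotent map is \oo connected.

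There is no real obstacle here: this is a direct specialization of \cref{prop:nilrad-in-hrad}. The only subtle point worth flagging in the write-up is the interaction between the two notions of radical for congruences vs.\ acyclic classes, together with \cref{lem:wnil=nil} which ensures that nilpotence and weak nilpotence coincide for congruences, so the statement ``every nilpotent map is \oo connected'' is indeed the correct congruence-level consequence of $\nrad\Iso\subseteq\Conn\infty$.
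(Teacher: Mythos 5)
Your proof is correct and follows exactly the paper's own argument: identify $\Surj=\hcrad\Iso$ and $\Conn\infty=\hrad\Iso$, then invoke \cref{prop:nilrad-in-hrad} to conclude that these are nilradical, so that the minimality of $\acrad\Iso$ and $\nrad\Iso$ gives the inclusions and the statements about individual maps. Your additional remarks on \cref{lem:wnil=nil} and the definition of $\hrad\Iso$ are just the same unwinding the paper leaves implicit.
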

\begin{proof}
The acyclic class $\Surj = \hcrad\Iso$ is the covering radical of the acyclic class $\Iso$.
The congruence $\Conn\infty = \hrad\Iso$ is the hyper-radical of the congruence $\Iso$.
Then the result follows \cref{prop:nilrad-in-hrad}.
\end{proof}

\begin{remark}
\label{rem:nilrad-in-hrad}
At the time of writing this, we do not have an example where the inclusions of \cref{cor:nilrad-surj} are strict.
In the congruence case, we believe that this should be the case in the logos $\S {X\connected\infty}$ classifying \oo connected objects of \cref{exmp:mono-congruence:oo-conn-obj}.
In \cref{exmp:rad-congruence2}, we have seen that $\hrad\Iso = \Conn \infty \not=\Iso$ (since $X\connected\infty$ is \oo connected) and we suspect that $\nrad\Iso = \Iso$ (there should be no non-trivial nilpotent map).
\end{remark}

The frame reflection of a $\otimes$\=/frame is defined in \cref{prop:frame-refl}.

\begin{theorem}
\label{thm:frame-reflection}
The poset of nilradical acyclic classes is the frame reflection $\sqrt{\Acyclic\cE}$ of $\Acyclic\cE$.
The poset of nilradical congruences is the frame reflection $\sqrt{\Cong\cE}$ of $\Cong\cE$.
\end{theorem}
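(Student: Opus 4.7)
The plan is to invoke \cref{prop:frame-refl}, which characterizes the frame reflection $\sqrt L$ of a $\otimes$-frame $L$ as the subposet of \emph{radical} elements---those $a\in L$ such that $b^{\otimes n}\leq a$ for some $n\geq 1$ implies $b\leq a$, for every $b\in L$---with left adjoint the radicalization $a\mapsto \sqrt{a}$. The proof then consists of applying this to the $\otimes$-frames $\Acyclic\cE$ and $\Cong\cE$ from \cref{thm:aprod,thm:cprod}, and identifying in each case the subposet of radical elements with the subposet of nilradical elements.

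For $\Acyclic\cE$, the identification is straightforward. The monoidality $(\Sigma\ac)^{\aprod n}=(\Sigma^{\pp n})\ac$ of \cref{thm:aprod}, combined with the fact that $\cA$ acyclic gives $\cB^{\pp n}\subseteq \cA \iff (\cB^{\pp n})\ac\subseteq\cA$, allows one to reformulate the radical condition $\cB^{\aprod n}\subseteq\cA \Rightarrow \cB\subseteq\cA$ (for $\cB\in \Acyclic\cE$) as $\Sigma^{\pp n}\subseteq\cA\Rightarrow \Sigma\subseteq\cA$ (for arbitrary classes $\Sigma$): the first direction takes $\Sigma=\cB$, the second takes $\cB=\Sigma\ac$. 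This last formulation is exactly the definition of a weakly nilradical acyclic class, hence of a nilradical acyclic class. Consequently $\sqrt{\Acyclic\cE}$ is the poset of nilradical acyclic classes and its reflection is $\cA\mapsto \acrad\cA$.

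For $\Cong\cE$, the inclusion $\Cong\cE\subseteq\Acyclic\cE$ is a sub-$\otimes$-frame by \cref{thm:cprod}, so the acyclic product restricts to congruences. The implication nilradical $\Rightarrow$ radical in $\Cong\cE$ is immediate, by restricting the weakly nilradical condition on classes to congruences and using $\cJ^{\aprod n}\subseteq\cK \iff \cJ^{\pp n}\subseteq\cK$ for $\cK$ acyclic. For the converse, assume $\cK$ is radical in $\Cong\cE$ and $\Sigma^{\pp n}\subseteq\cK$; setting $\cJ:=\Sigma\cong$, \cref{cor:computation-prod-cong} gives $\cJ^{\aprod n}=(\Sigma\diag^{\pp n})\ac$. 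The essential step is to verify $\Sigma\diag^{\pp n}\subseteq\cK$, which one establishes by induction on the total diagonal degree $k_1+\dots+k_n$, rewriting the iterated pushout product $\Delta^{k_1}u_1\pp\dots\pp\Delta^{k_n}u_n$ in terms of diagonals, base changes, cobase changes and composites of the products $u_1\pp\dots\pp u_n$ using \cref{basechangebox,cobasechangebox,box-compo}, in a manner analogous to the inductive proof of \cref{boxofsatellites}. Once this is shown, the radical condition in $\Cong\cE$ forces $\cJ\subseteq\cK$ and hence $\Sigma\subseteq\cK$. This identifies $\sqrt{\Cong\cE}$ with the poset of nilradical congruences, with reflection $\cK\mapsto \nrad\cK$.

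The main obstacle lies entirely in the congruence case, in the bootstrap promoting $\Sigma^{\pp n}\subseteq\cK$ to $\Sigma\diag^{\pp n}\subseteq\cK$. This is a diagonal-level refinement of the lax monoidality of the decalage $\decname$ (\cref{boxofsatellites}) and must be handled with care; the rest of the proof is formal, relying on the general frame reflection principle from \cref{prop:frame-refl} together with the monoidality identity $(\Sigma)\ac^{\aprod n}=(\Sigma^{\pp n})\ac$.
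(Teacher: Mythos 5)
Your treatment of the acyclic half is correct and is essentially the paper's own argument with the implicit step spelled out: the radical condition in the $\otimes$\=/frame $\Acyclic\cE$ (quantified over acyclic classes) agrees with the weakly nilradical condition (quantified over arbitrary classes) precisely because of the monoidality $(\Sigma^{\pp n})\ac=(\Sigma\ac)^{\aprod n}$ of \cref{thm:aprod}, which is what the paper's one\-/line proof summarizes as a coincidence ``by definition''.

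The congruence half, however, has a genuine gap: the bootstrap you defer to an induction ``analogous to \cref{boxofsatellites}'' --- namely $\Sigma^{\pp n}\subseteq\cK\Rightarrow(\Sigma\diag)^{\pp m}\subseteq\cK$ for some $m$ --- is not merely delicate, it is false. Take $\cE=\cS$, $\cK=\Iso$ and $\Sigma=\Acmap\cS$: then $\Sigma\pp\Sigma\subseteq\Iso$ by \cref{prop:acyclic-map-nilp}, but if some power of $\Sigma\diag$ were contained in $\Iso$, every acyclic map would be a nilpotent map, hence \oo connected by \cref{cor:nilrad-surj}, hence invertible in $\cS$, contradicting $\Acmap\cS\neq\Iso$. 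This is exactly the distinction between weak nilpotence and nilpotence that Section 3.5 is built around, and the same example shows that under your reading of ``nilradical congruence'' (closure under roots of \emph{arbitrary} classes, i.e.\ weakly nilradical) the statement itself would fail: $\Iso$ is a radical element of the two\-/element $\otimes$\=/frame $\Cong\cS$ (vacuously, since $\All\aprod\All=\All$), yet it is not weakly nilradical. The intended notion, as in the introduction and \cref{def:n-nilpotent}, is that a congruence $\cK$ is nilradical when $(\Sigma\cong)^{\aprod n}\subseteq\cK$ implies $\Sigma\subseteq\cK$, i.e.\ nilpotence of a class is tested on its diagonal completion $\Sigma\diag$, not on $\Sigma$ itself. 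With that definition the identification with the radical elements of $\Cong\cE$ is immediate from $(\Sigma\cong)^{\aprod n}=\bigl((\Sigma\diag)^{\pp n}\bigr)\ac$ (\cref{cor:computation-prod-cong}): take $\cJ=\Sigma\cong$ in one direction and $\Sigma=\cJ$ in the other, so no bootstrap is needed; this is what the paper's proof means by ``same for the nilradical of a congruence''. (Your reading is invited by the literal wording of \cref{def:nrad}, but it is contradicted by the surrounding statements, e.g.\ $\nrad 1=\Iso$ in $\cS$ while acyclic maps lie in $\acrad 1$; the congruence half of your argument collapses precisely this distinction.)
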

\begin{proof}
Both statements are derived from \cref{prop:frame-refl}.
By definition, the weak nilradical $\acrad \cA$ of an acyclic class coincides with its radical $\sqrt \cA$ in the sense of the reflection \cref{prop:frame-refl}.
Same for the nilradical of a congruence.
\end{proof}

\begin{remark}
\label{rem:radical-relations}
The commutative square of \cref{prop:nilradical} can be extended to include the previous comparisons with the covering radical and hyper-radical.
Since $\Cong\cE\to\RCong\cE$ is a $\otimes$\=/frame morphism into an actual frame, it factors through the frame reflection, which is $\sqrt{\Cong\cE}$ by \cref{thm:frame-reflection}.
For simplicity, we have named $\hrad-$ the frame morphism $\sqrt{\Cong\cE}\to \RCong\cE$.
It has a left adjoint given by the ``nilpotent core'' $(-)\nil := \nrad{(-)\mono}$.

We do not know whether the frame reflection $\Cong\cE\to \sqrt{\Cong\cE}$ has a left adjoint.
If it is the case, by \cref{prop:idem-coreflection}, its image must the frame of idempotent congruences.
In keeping with \cref{rem:nilrad-in-hrad}, we expect $\sqrt{\Cong\cE}$ to be strictly larger than $\RCong\cE$, since the contrary would imply the equalities $\nrad\cK =\hrad\cK$.
It the adjoint exist, this would means that there are strictly more idempotent congruences than the monogenic ones.
(The problem is also related to \cref{rem:prod=inter}.)
Similar considerations hold for the acyclic classes.
Altogether, we get a diagram:
\[
\begin{tikzcd}
\RCong\cE
\ar[rrr, "(-)\nil", hook, shift left = 3]
\ar[rrr, "{\sf can.}"', hook, shift right = 3]
\ar[from=rrr, "\hrad-" description] 
\ar[dd, equal]
&&&
\sqrt{\Cong\cE}
\ar[rrr, "(-)\idem\,?", hook, shift left = 3, dotted]
\ar[rrr, "{\sf can.}"', hook, shift right = 3]
\ar[from=rrr, "\nrad-" description] 
\ar[dd, "{\acrad-}" description]
&&&
\Cong\cE
\ar[dd, hook']
\\
\\
\CTop(\cE)
\ar[rrr, "(-)\wnil", hook, shift left = 3]
\ar[rrr, "{\sf can.}"', hook, shift right = 3]
\ar[from=rrr, "\hcrad-" description] 
&&&
\sqrt{\Acyclic\cE}
\ar[rrr, "(-)\idem\,?", hook, shift left = 3, dotted]
\ar[rrr, "{\sf can.}"', hook, shift right = 3]
\ar[from=rrr, "\acrad-" description] 
\ar[uu, shift left=4, "?", dotted]
\ar[uu, shift right=4, "\decinfty-"']
&&&
\Acyclic\cE
\ar[uu, shift left=3, "(-)\cong"]
\ar[uu, shift right=3, "\decinfty-"']
\end{tikzcd}
\]
where left adjoints are vertically on the left of their right adjoint and horizontally on top of them.
The diagram of the rightmost adjoint commutes.
So does the diagram of their left adjoints.
The middle horizontal functors are quotients of $\otimes$\=/frames.
The middle vertical functors are morphism of $\otimes$\=/frames (as mentioned in \cref{rem:formula-nil=dec-acnil}, we do not know if they are both inclusions).
The existence of the dotted arrows is open.
\end{remark}

\subsection{Functoriality}
\label{sec:naturality}

In this section, we prove that the $\otimes$\=/frames $\Acyclic\cE$, $\Cong\cE$, and $\MCong\cE$ of \cref{thm:aprod,thm:cprod,thm:gtop} are natural in $\cE$ and define functors on the category $\Logos$.

\begin{proposition}
\label{inducedmorphismofacyclicclasses}
\label{prop:naturality}
Let $\phi:\cE\to \cF$ be a morphism of logoi.
\begin{propenum}
\item\label{prop:naturality:ac}
The morphism 
\[
\phi(-)\ac:\Acyclic\cE\stto \Acyclic\cF
\]
is a morphism of $\otimes$\=/frames, 
with right adjoint the map $\phi^{-1}:\Acyclic\cF\to \Acyclic\cE$.

\item\label{prop:naturality:cong}
The restriction of $\phi(-)\ac$ along $\Cong\cE\to \Acyclic\cE$ takes values in $\Cong\cF$ and defines a morphism of $\otimes$\=/frames
\[
\phi(-)\ac:\Cong\cE\stto \Cong\cF
\]
with right adjoint given by the restriction of $\phi^{-1}$.

\item\label{prop:naturality:gtop}
The restriction of $\phi(-)\ac$ along $\MCong\cE\to \Acyclic\cE$ takes values in $\MCong\cF$ and defines a morphism of frames
\[
\phi(-)\ac:\MCong\cE\stto \MCong\cF
\]
with right adjoint given by the restriction of $\phi^{-1}(-)\mono$.
	
\end{propenum}
Moreover all the morphisms of the diagram \eqref{monoidal-morphisms} (monoidal or not) commute with $\phi(-)\ac$.
\end{proposition}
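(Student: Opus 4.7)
The plan is to combine \cref{prop:transport-acyclic,prop:transport-cong} on transport of acyclic classes and congruences with the $\otimes$-frame structures of \cref{thm:aprod,thm:cprod,thm:gtop}.

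For part (1), the adjunction $\phi(-)\ac \dashv \phi^{-1}$ on $\Acyclic\cE$ is exactly \cref{prop:transport-acyclic:image-ac}, so $\phi(-)\ac$ already preserves suprema. It remains to verify preservation of the unit and the product. Since $\phi$ is cocontinuous it preserves the initial object, and since it is left-exact it preserves the terminal object, so $\phi(0\to 1) = (0\to 1)$ in $\cF$; combining this with the identity $\phi(\Sigma\ac)\ac = \phi(\Sigma)\ac$ of \cref{prop:transport-acyclic:image-ac} yields $\phi(\All)\ac = \phi(\{0\to 1\})\ac = \{0\to 1\}\ac = \All$. For the product, the essential point is that $\phi$ preserves pushout products---since $u\pp v$ is built from a pushout and products, all preserved by $\phi$---so $\phi(u\pp v) = \phi(u)\pp \phi(v)$. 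Combining these facts:
\[
\phi(\cA\aprod\cB)\ac \,=\, \phi\!\left((\cA\pp\cB)\ac\right)\!\ac \,=\, \phi(\cA\pp\cB)\ac \,=\, (\phi(\cA)\pp \phi(\cB))\ac \,=\, \phi(\cA)\ac \aprod \phi(\cB)\ac,
\]
using \cref{prop:transport-acyclic:image-ac} and the monoidality formula of \cref{thm:aprod}.

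Parts (2) and (3) then follow formally. By \cref{prop:transport-cong:image-cong}, $\phi(-)\ac$ restricts to $\Cong\cE \to \Cong\cF$ with right adjoint $\phi^{-1}$; by \cref{prop:transport-cong:image-mcong} (or by noting that $\phi$ preserves monomorphisms as a left-exact functor), it restricts further to $\MCong\cE \to \MCong\cF$ with right adjoint $\phi^{-1}(-)\mono$. Since $\Cong\cE$ and $\MCong\cE$ embed in $\Acyclic\cE$ as sub-$\otimes$-frames (\cref{thm:cprod,thm:gtop}), the restriction of the $\otimes$-frame morphism of (1) is again a morphism of $\otimes$-frames; the codomain in (3) is in fact a frame by \cref{thm:gtop}.

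For the moreover statement, commutation of $\phi(-)\ac$ with the canonical inclusions in \eqref{monoidal-morphisms} is precisely the content of (2) and (3). Commutation with the left-adjoint reflectors $(-)\cong : \Acyclic \to \Cong$ and $(-)\mono : \Acyclic \to \MCong$ follows by uniqueness of adjoints: $\phi^{-1}$ commutes on the nose with these inclusions (both amount to taking the underlying preimage), so their left adjoints commute with $\phi(-)\ac$ as well. The subtle case---and where I expect the main technical work---is commutation with the further right adjoints $\decinfty-$ and $\hrad-$, which is not formal since $\phi(-)\ac$ is itself a left adjoint. I would handle these by unwinding the explicit formulas $\decinfty\cA = \bigcap_n \decn n \cA$ with $\dec\cA = \{f\in\cA : \Delta f\in \cA\}$, together with $\hrad\cA = \decinfty{\cA\vee \Surj}$, using that $\phi$ preserves diagonals, the class $\Surj$ of surjections, and the intersections involved (invoking small-generation arguments from \cref{lem:sg-dec,smallintersmallgencong} when needed).
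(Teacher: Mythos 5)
Parts (1)--(3) of your proposal are correct and essentially reproduce the paper's own argument: the adjunction and the identity $\phi(\Sigma\ac)\ac=\phi(\Sigma)\ac$ from \cref{prop:transport-acyclic}, preservation of pushout products and of the generator $0\to 1$ for the product and the unit, and then the transport statements of \cref{prop:transport-cong} combined with the sub-$\otimes$\=/frame structures of \cref{thm:cprod,thm:gtop} for the restrictions.

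The issues are in your treatment of the final clause. Your uniqueness-of-adjoints argument is fine for $(-)\cong$, but it does not apply to $(-)\mono$ as stated: $(-)\mono$ is the \emph{right} adjoint of the inclusion $\MCong\cE\subseteq\Acyclic\cE$ (it is a left adjoint only of $\hcrad-=-\vee\Surj$, see \cref{rem:triple-adjunction}), so the observation that $\phi^{-1}$ commutes with the inclusions gives nothing for it. The paper instead proves the commutation $\phi(\cA\mono)\ac=(\phi(\cA)\ac)\mono$ directly, using that $\phi$ preserves the surjection--monomorphism factorization, hence $\phi(\im\cA)=\im{\phi(\cA)}$, together with \cref{prop:mono-part} and the identity $\phi(\Sigma\ac)\ac=\phi(\Sigma)\ac$; alternatively, your adjoint trick can be repaired by checking that $\phi^{-1}$ commutes with $\hcrad-$, but that verification again goes through images and is not ``on the nose'' for free.

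The step you defer as ``the main technical work'' cannot be completed by the route you sketch. The map $\phi(-)\ac$ is a left adjoint and does not commute with the intersections $\decinfty\cA=\bigcap_n\decn n\cA$, and in fact strict commutation with $\decinfty-$ (hence with $\hrad-$) fails in general: for the unique logos morphism $\phi:\cS\to\PSp$ and $\cA=\Surj$, one has $\phi(\decinfty\Surj)\ac=\phi(\Iso)\ac=\Iso$ since $\cS$ is hyper-reduced, whereas $\decinfty{\phi(\Surj)\ac}=\decinfty\Surj=\Conn\infty\neq\Iso$ in $\PSp$ (here $\phi(\Surj)\ac=\Surj$ because $\Surj=\{s^0\}\ac$ and $\phi(s^0)=s^0$, and $\Conn\infty\neq\Iso$ in $\PSp$ as used in \cref{rem:not-monoidal}). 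Only the lax inclusion $\phi(\decinfty\cA)\ac\subseteq\decinfty{\phi(\cA)\ac}$ holds in general. Note that the paper's proof of the ``moreover'' clause only verifies the commutations with the three canonical inclusions and with $(-)\mono$ (those with $(-)\cong$ and $\hcrad-$ then follow formally, as you indicate); you should read the clause accordingly rather than attempt a proof of strict commutation with the further right adjoints $\decinfty-$ and $\hrad-$.
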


\begin{proof}
\noindent (1)
The adjunction $\phi(-)\ac\dashv \phi^{-1}$ is from \cref{prop:transport-acyclic}.
Let us show that $\phi(-)\ac$ is a morphism of $\otimes$\=/frames.
The map preserves suprema, since it is a left adjoint.
Let us show that it preserves the product of acyclic classes.
If $u$ and $v$ are two maps in $\cE$, then we have $\phi(u\pp v)\simeq \phi(u)\pp \phi(v)$ since the functor $\phi$ preserves cartesian products and pushouts.
Thus, if $\cA$ and $\cB$ are acyclic classes in $\cE$, 
the acyclic closure of the class $\phi(\cA\pp \cB)$ is equal to the acyclic closure of the class $\phi(\cA)\pp \phi(\cB)$.
Thus, $\phi(\cA\pp \cB)\ac=(\phi(\cA)\pp \phi(\cB))\ac$.
It follows that 
\[
\phi(\cM\aprod \cN)\ac
\ =\ 
\phi((\cM\pp\cN)\ac)\ac
\ =\ 
\phi(\cM\pp\cN)\ac
\ =\ 
(\phi(\cM)\pp \phi(\cN))\ac
\ =\ 
\phi(\cM)\ac\aprod \phi(\cN)\ac.
\]
It remains to verify that $\phi(-)\ac$ preserves the $\otimes$\=/frame unit $\All$.
By \cref{ex:acyclicgene:-1}, we have $\All=\{i\}\ac$, where $i$ is the map $0\to 1$. 
Thus,
\[
\phi(\All)\ac=\phi(\{i\}\ac)\ac=\phi(\{i\})\ac=\{i\}\ac =\All\,.
\]

\smallskip
\noindent (2)
Using \cref{lem:lex-gen} (or \cref{prop:transport-cong}), we need only to prove that $\phi(-)\ac$ preserves the product of congruences. 
This follows from (1).

\smallskip
\noindent (3)
It is sufficient to show that $\phi(-)\ac$ commutes with the inclusion $\MCong\cE\to \Acyclic\cE$ and its left adjoint $(-)\mono:\Acyclic\cE\to \MCong\cE$.
If $\cA = \Sigma\ac$, then $\phi(\cA)\ac = \phi(\Sigma)\ac$ by \cref{inducedmorphismofacyclicclasses}. 
If $\Sigma$ a class of monomorphisms, then so is $\phi(\Sigma)$ since logos morphisms preserve monomorphisms.
Thus the acyclic class $\phi(\cA)\ac$ is monogenic and this shows that $\phi(-)\ac$ commutes with the inclusion $\MCong\cE\to \Acyclic\cE$.

Let us see now that $\phi(-)\ac$ commutes with $(-)\mono$.
We need to show that $\phi(\cA\mono)\ac = (\phi(\cA)\ac)\mono$.
For any class of maps $\Sigma$, we have $\im{\phi(\Sigma)} = \phi(\im\Sigma)$ since
the logos morphism $\phi$ preserves image factorizations.
Thus $\im{\phi(\cA)}\ac = \phi(\im\cA)\ac$.
By \cref{prop:mono-part}, the left side is 
$\im{\phi(\cA)}\ac=\im{\phi(\cA)\ac}\ac=(\phi(\cA)\ac)\mono$.
By \cref{inducedmorphismofacyclicclasses}, the right side is 
$\phi(\im\cA)\ac = \phi(\im\cA\ac)\ac= \phi(\cA\mono)\ac$.
Hence the equality $\phi(\cA\mono)\ac = (\phi(\cA)\ac)\mono$.
\end{proof}

We shall say that a morphism of $\otimes$\=/frames $\phi:P\to Q$ is a {\it closed quotient} $\phi:P\to Q$ if its right adjoint $\phi_*:Q\to P$ is fully faithful (= if it is a monomorphism) and if the image of the map $\phi_*:Q\to P$ is the subset $P\upslice {\phi_*(0)}=\{x\in P \ | \ x\geq \phi_*(\bot)\}$.
In this case, the map $\phi:P\to Q$ is isomorphic to the map $\phi_*(\bot)\vee(-):P\to P\upslice {\phi_*(\bot)}$.
The terminology is suggested by the fact that, applied to a morphism of frames, the notion recovers the frame quotients dual to the inclusion of a closed sublocale.

\begin{proposition}
\label{prop:naturality-facto}
Let $\phi:\cE \to \cF$ be a quotient of logoi. 
\begin{propenum}
\item\label{prop:naturality-facto:ac}
The morphism of $\otimes$\=/frames $\phi(-)\ac:\Acyclic\cE\to \Acyclic\cF$ is a closed quotient and it is isomorphic to the morphism $\cK_\phi \vee(-): \Acyclic\cE \to \Acyclic\cE\upslice{\cK_\phi }$.

\item\label{prop:naturality-facto:cong}
The morphism of $\otimes$\=/frames $\phi(-)\ac:\Cong\cE\to \Cong\cF$ is a closed quotient and it is isomorphic to the morphism $\cK_\phi \vee(-): \Cong\cE \to \Cong\cE\upslice{\cK_\phi }$.

\item\label{prop:naturality-facto:gtop}
The morphism of frames $\phi(-)\ac:\MCong\cE\to \MCong\cF$ is a closed quotient and it is isomorphic to the morphism $\cK_\phi \vee(-): \MCong\cE \to  \MCong\cE\upslice{\cK_\phi\mono}$.
\end{propenum}

\end{proposition}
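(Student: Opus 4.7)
The plan is to observe that the statement unpacks into two pieces for each of (1)--(3): first, that $\phi(-)\ac$ is a closed quotient of $\otimes$\=/frames, and second, that a closed quotient $P \to Q$ with $\phi_*(\bot) = a$ is automatically isomorphic to the reflection $a \vee (-) : P \to P\upslice{a}$. The second piece is a general fact about reflective inclusions of posets: the left adjoint to the inclusion $P\upslice{a} \hookrightarrow P$ sends $x$ to the least element of $P\upslice a$ above $x$, namely $x \vee a$, so once the first piece is established the isomorphism is forced. The main content therefore lies in computing $\phi_*(\bot)$ and appealing to the isomorphism results of Section~\ref{sec:topos}.

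For (1), the morphism of $\otimes$\=/frames is $\phi(-)\ac : \Acyclic\cE \to \Acyclic\cF$ (by \cref{prop:naturality:ac}), with right adjoint $\phi^{-1}$. By \cref{prop:transport-acyclic:quotient}, since $\phi$ is a quotient, $\phi^{-1}$ is an isomorphism onto $\Acyclic\cE\upslice{\cK_\phi}$; in particular, $\phi^{-1}$ is fully faithful. The bottom element of $\Acyclic\cF$ is $\Iso_\cF$, and by the very definition of $\cK_\phi$ we have $\phi^{-1}(\Iso_\cF) = \cK_\phi$. Hence the essential image of $\phi^{-1}$ is $\Acyclic\cE\upslice{\phi^{-1}(\bot)}$, which is exactly the condition for $\phi(-)\ac$ to be a closed quotient with $\phi_*(\bot) = \cK_\phi$. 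Combining with the reflection formula above gives $\phi(-)\ac \simeq \cK_\phi \vee (-)$.

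For (2) and (3), the argument is formally identical, simply substituting the corresponding results on transport of congruences. For (2), $\phi(-)\ac : \Cong\cE \to \Cong\cF$ has right adjoint $\phi^{-1}$ (restricted to congruences), which is an isomorphism onto $\Cong\cE\upslice{\cK_\phi}$ by \cref{prop:transport-cong:quotient-cong}; again $\phi^{-1}(\Iso_\cF) = \cK_\phi$. For (3), the right adjoint of $\phi(-)\ac : \MCong\cE \to \MCong\cF$ is $\phi^{-1}(-)\mono$ and is an isomorphism onto $\MCong\cE\upslice{\cK_\phi\mono}$ by \cref{prop:transport-cong:quotient-gtop}; here $\phi_*(\bot) = \phi^{-1}(\Iso_\cF)\mono = \cK_\phi\mono$. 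In each case the closed-quotient conclusion and the reflection formula yield the stated isomorphism.

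There is no real obstacle: the proposition is essentially a repackaging of the isomorphisms already established for $\phi^{-1}$ (or $\phi^{-1}(-)\mono$) in terms of the more conceptual notion of closed quotient of $\otimes$\=/frames. The only point that requires a brief verification is the elementary lemma identifying the left adjoint of a reflective inclusion $P\upslice{a} \hookrightarrow P$ with the map $x \mapsto x \vee a$, which is immediate from the universal property.
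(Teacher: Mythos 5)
Your proof is correct and follows essentially the same route as the paper: it invokes \cref{prop:transport-acyclic} (resp.\ \cref{prop:transport-cong}) to identify the image of the right adjoint with the upslice at $\cK_\phi$ (resp.\ $\cK_\phi\mono$), which is exactly the closed-quotient condition, and then reads off the formula $\cK_\phi\vee(-)$. The only cosmetic difference is that you re-derive the reflection formula from the universal property of the inclusion $P\upslice{a}\hookrightarrow P$, whereas the paper simply points to \cref{overquantale} and the definition of closed quotient for that step.
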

\begin{proof}
We prove the first statement. 
The equivalence $\Acyclic\cF = \Acyclic\cE\upslice{\cK_\phi }$ is from \cref{prop:transport-acyclic}
and the description of the map follows from \cref{overquantale}.
The statements (2) and (3) are proved the same way using \cref{prop:transport-cong}.
\end{proof}

\begin{remark}
In \cref{prop:naturality-facto:gtop}, when $\cK_\phi$ is an epic congruence, we have $\cK_\phi\mono=\Iso$ and we get an isomorphism $\MCong\cE = \MCong\cF$.
This was already remarked in \cite[Proposition 4.3.4 and Remark 4.3.5]{ABFJ:GT}.
\end{remark}

\section{Application to Goodwillie Calculus}
\label{sec:Goodwillie}

In this section, we explain how to use the acyclic product of congruences to recover examples of Goodwillie towers \cite{G03}.
\Cref{sec:modality-tower} introduces the general setting of {\it towers of modalities} in a logos and lists some examples.
\Cref{sec:completion-tower} studies the {\it completion towers}, which are the towers of left-exact modalities generated by the powers of a given congruence for the acyclic product.
\Cref{sec:Goodwillie-tower} proves that Goodwillie towers are instances of completion towers.
Although we shall not prove this here, Weiss' orthogonal calculus \cite{Weiss:OC} also provides an example of a completion tower, see \cite{ABFJ:TM}.

\subsection{Modality towers}
\label{sec:modality-tower}

We fix a decreasing sequence of acyclic classes in a logos $\cE$: 
\[
\dots
\ \subseteq\ 
\cA_2
\ \subseteq\ 
\cA_1
\ \subseteq\ 
\cA_0
\]
We will complete this sequence on the left by considering the acyclic classes 
\[
\cA_{\infty} := \bigcap_n \cA_n\,,
\qquad\text{and}\qquad
\cA_{\infty}\mono := \Big(\bigcap_n \cA_n\Big)\mono\,.
\]
Altogether, we have a nested sequence of acyclic classes
\[
\cA_{\infty}\mono
\quad\subseteq\quad
\cA_{\infty}
\quad\subseteq\quad
\dots
\quad\subseteq\quad
\cA_2
\quad\subseteq\quad
\cA_1
\quad\subseteq\quad
\cA_0\,.
\]
If for all $n\in \NN$ the $\cA_n$ are of small generation, then so are $\cA_{\infty}$ (by \cref{smallintersmallgen}) and $\cA_{\infty}\mono$ (by \cref{prop:mono-part}).
Thus, by \cref{prop:acyclic2modality}, the pair $(\cA_n,\rforth \cA_n=\rorth \cA_n)$ is a modality on $\cE$ for every $n\in \NN\cup\{\infty\}$, as well as $\big(\cA_{\infty}\mono,\rforth {(\cA_{\infty}\mono)}=\rorth {(\cA_{\infty}\mono)}\big)$.
In this case, we denote by $\cE\quotient \cA_n$ the presentable category obtained by inverting all maps of $\cA_n$ in $\cE$ (see~\cref{def:quotient-presentable}).
This category is equivalent to the subcategory of $\cE$ spanned by the $\cA_n$\=/local objects $X$ ($X\to 1\in \rorth \cA_n$).
We obtain a tower of presentable categories
\begin{equation}
\label{eqn:modality-tower}
\cE
\stto
\cE\quotient\cA_{\infty}\mono
\stto
\cE\quotient\cA_{\infty}
\stto
\lim_n \cE\quotient\cA_n
\stto
\dots
\stto
\cE\quotient\cA_1
\stto
\cE\quotient\cA_0\, ,
\end{equation}
where we have added the limit of the finite stages. Except for the limit stage all stages of the tower are localizations of $\cE$. The functors $\cE\to\cE\quotient\cA_{\infty}\to\lim_n\cE\quotient\cA_n$ factorize the composite functor into a localization followed by a conservative functor.
If the $\cA_n$ are congruences, then the categories $\cE\quotient \cA_n$  are quotients of logoi, see~\cref{thm:bij-congruence-lexloc}.
This special case will be the focus of the following sections.

For a map $f:X\to Y$ in $\cE$ the inclusion relations between the $(\cA_n,\rorth \cA_n)$ induce a tower of factorizations:
\vspace{-1em}
\begin{equation}
\label{eq:tower-facto}
\begin{tikzcd}
X \ar[r]
	\ar[rrrrrrrr,bend left=40,"\in\cA_0"']
	\ar[rrrrrr,bend left=30,"\in\cA_1"]
	\ar[rrrr,bend left=20,"\in\cA_2"]
&{}
& \dots 
& \ar[r]
& P_2(f)
	\ar[rr,"\in\cA_1\quotient\cA_2"']
	\ar[rrrrrr, bend right=40,"\in \rorth \cA_2"]
&& P_1(f)
	\ar[rr,"\in\cA_0\quotient\cA_1"]
	\ar[rrrr, bend right=30,"\in \rorth \cA_1"]
&& P_0(f)
	\ar[rr,"\in\rorth \cA_0"]
&& Y\,.
\end{tikzcd}
\end{equation}
Moreover, such towers are stable under base change along a map $Y'\to Y$ since all $(\cA_n,\rorth \cA_n)$ are modalities.
The reflection $\cE \to \cE\quotient \cA_n$ is given by sending $X$ to the factorization $X\to P_nX\to 1$ with respect to the respective modality. 

\begin{definition}[Tower of modalities and layers]
\label{def:tower-modalities}
\label{def:layer}
We shall call a sequence $(\cA_n,\rorth \cA_n)_{n\ge 0}$ as above and by abuse the associated tower~\eqref{eqn:modality-tower}  a {\it tower of modalities} on the logos $\cE$. We put $\cA_{-1}:= \All$. For $n\geq 0$ we define $\cA_{n-1}\quotient\cA_n := \cA_{n-1}\cap \rorth \cA_n$, and call it the {\it $n$\=/th layer} of the tower. The 0-th layer, or {\it ground layer}, is $\cA_{-1}\quotient\cA_0 := \rorth \cA_0$. 
\end{definition}

Each layer $\cA_{n-1}\cap \rorth \cA_n$ is a local class of maps, since the classes $\cA_{n-1}$ and $\rorth \cA_n$ are local
\cite[Proposition 3.6.5]{ABFJ:GBM}.

\begin{examples}
Let us list here a few examples that are not given by congruences.
\begin{exmpenum}
\item\label{ex:tower:Postnikov}
(Postnikov tower) The tower of modalities associated to the sequence 
\[
\dots
\ \subseteq\ 
\Conn 1
\ \subseteq\ 
\Conn 0
\ \subseteq\ 
\Conn {-1}=\Surj
\]
is the Postnikov tower.
In that case, we have $\bigcap_n\Conn n = \Conn \infty$ (which is a congruence), and $(\bigcap_n\Conn n)\mono=\Iso$.
The quotient $\cE\quotient\Conn n$ is equivalent to the subcategory $\cE\truncated n$ of $n$\=/truncated objects.
The associated tower of quotients is the tower of ``Postnikov sections''  from~\cite[Section 6.4.5]{Lurie:HTT}.
None of the categories $\cE\quotient\Conn n$ are logoi, but their limit $\lim_n\cE\quotient\Conn n$ is a logos and the canonical morphism $\phi:\cE\to \lim_n\cE\quotient\Conn n$ is a logos morphism \cite[Proposition A.7.3.4]{Lurie:SAG}.
Moreover, the congruence $\bigcap_n\Conn n = \Conn \infty$ is the class of maps inverted by $\phi$, and the factorization
\[
\cE\stto \cE\quotient\Conn\infty \stto \lim_n\cE\quotient\Conn n
\]
is the quotient factorization of $\phi$ (\cref{def:quotient-facto}).
In other words, the hyper-reduction is the Postnikov separation.
We have also $\Conn\infty\mono = \Iso$ and the first morphism $\cE\to \cE\quotient\Conn\infty\mono$ is an equivalence.

\item\label{ex:tower:decalage}
(Decalage tower) 
The decalages of an acyclic class $\cA$~(\cref{def:decalage}) yields a sequence 
\[
\dots
\ \subseteq\ 
\decn 3 \cA
\ \subseteq\ 
\decn 2 \cA
\ \subseteq\ 
\dec \cA
\ \subseteq\ 
\cA\ 
\]
of acyclic classes.
In this case, $\bigcap_n \decn n\cA = \decinfty\cA $ is a congruence by \cref{lem:decalage}.
Moreover, because $(-)\mono$ is a right adjoint (\cref{thm:mono-cong}), we have  $\decinfty\cA\mono = \bigcap_n \decn n \cA\mono$. But $\decn n \cA\mono = \cA\mono$ by \cref{rem:decalage-v-suspension} and hence the monogenic part $\decinfty\cA\mono =\cA\mono$. It is possible to prove that the limit of the tower is always a logos.
The factorization 
\[
\cE
\stto
\cE\quotient\cA\mono
\stto
\cE\quotient\decinfty\cA
\stto
\lim_n\cE\quotient\decn n \cA
\]
is the image triple factorization of the canonical morphism $\cE\to \lim_n\cE\quotient\decn n \cA$ (\cref{def:quotient-facto}).

When $\cA=\Surj$ we recover the example of the Postnikov tower, since $\decn {n+1} \cA=\Conn n$, compare~\cref{ex:decalage:2}.
When $\cA=\cK$ is a congruence, the decalage tower is essentially constant since $\dec \cK=\cK$.
The only nontrivial stage is $\cE\quotient\cK\mono$ and all the other are equal to $\cE\quotient\cK$.

\item\label{ex:tower:suspension}
(Suspension tower) Any acyclic class $\cA$ has an associated tower of iterated suspensions 
\[
\suspn n \cA
\ =\ 
\Surj^{\pp n} \aprod \cA
\ =\ 
\Conn {n-2} \aprod \cA
\]
\[
\dots
\ \subseteq\ 
\suspn 3 \cA
\ \subseteq\ 
\suspn 2 \cA
\ \subseteq\ 
\susp \cA
\ \subseteq\ 
\cA\,.
\]
The intersection $\bigcap_n\suspn n \cA$ is hard to compute in general since $\aprod$ does not commute with intersections.
The monogenic part, however is $\bigl(\bigcap_n\suspn n \cA\bigr)\mono = \bigcap_n\suspn n \cA\mono =\Iso$, since $(-)\mono$ is a right adjoint and $\suspn n \cA\subseteq\Surj$. It is possible to prove that the limit of the tower is always a logos.

When $\cA\subseteq \Surj$ this tower coincides with the decalage tower by \cref{thm:suspension-acyclic}. For example, this happens in the logos $\cS$, since $\All$ is the only acyclic class which is not contained in $\Surj$.

The suspension tower is also considered in~\cite[Section 7]{bousfield1994localization} in the case $\cE=\cS$ and $\cA=\{W\to 1\}\ac$ for a space $W$.

\item\label{ex:tower:power}
(Power tower) We have seen in \cref{ex:aprod:power} that the acyclic powers of an acyclic class $\cA$ form a decreasing sequence $\dots \subseteq\cA^3\subseteq\cA^2\subseteq \cA$.
If $\cA$ is of small generation, so are all the $\cA^n$ (\cref{thm:aprod}), as well as the intersection $\bigcap_n\cA^n$  (\cref{smallintersmallgen}), and its monogenic part $(\bigcap_n\cA^n)\mono$ (\cref{prop:mono-part}).
Therefore we get a tower of modalities. 
We do not know if the limits of such towers are always logoi.
For $\cA=\Surj$, we recover the Postnikov tower of \cref{ex:tower:Postnikov} since $\Surj^{\pp n} =\Conn {n-2}$.
\end{exmpenum}
\end{examples}

\subsection{Completion towers}
\label{sec:completion-tower}

We now study the modality tower associated to the powers of a congruence $\cK$.
For a congruence $\cK$ we denote simply by $\cK^n$ its $n$\=/th power for the acyclic product.

\begin{proposition}
\label{prop:filt-mono}
For any congruence $\cK$ in a logos $\cE$ there is the sequence of nested congruences
\begin{equation}
\label{eqn:filtration}
\cK\mono
\quad\subseteq\quad
\bigcap_n\cK^n
\quad\subseteq\quad
\dots
\quad\subseteq\quad
\cK^3
\quad\subseteq\quad
\cK^2
\quad\subseteq\quad
\cK\, ,
\end{equation}
all sharing the same monogenic part.
Moreover, if $\cK$ is of small generation, then all these congruences are also of small generation.
\end{proposition}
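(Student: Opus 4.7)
The plan has essentially four ingredients, all of which are consequences of results already established in the paper. There is no serious obstacle; the main point is to combine the monogenic idempotence formula (\cref{prop:idem-mono-cong}) with the fact that $(-)\mono$ is a right adjoint (\cref{thm:mono-cong}).

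First, I will establish the decreasing chain $\dots\subseteq\cK^3\subseteq\cK^2\subseteq\cK$. This is immediate from \cref{ex:cprod:power}: since $\All$ is the unit of the $\otimes$-frame $\Cong\cE$ (hence also its top element), for every $n\geq 1$ we have
\[
\cK^{n+1}\ =\ \cK^n\aprod\cK\ \subseteq\ \cK^n\aprod\All\ =\ \cK^n\,.
\]
The intersection $\bigcap_n\cK^n$ is a congruence, since congruences are closed under arbitrary intersections.

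Next I will prove the inclusion $\cK\mono\subseteq\bigcap_n\cK^n$. By the monogenic idempotence formula of \cref{prop:idem-mono-cong}, $\cK\mono\aprod\cK\mono=\cK\mono$, and iterating gives $(\cK\mono)^n=\cK\mono$ for every $n\geq 1$. Since $\cK\mono\subseteq\cK$ and the acyclic product is monotone in each variable (it is the product of a $\otimes$-frame), we obtain $\cK\mono=(\cK\mono)^n\subseteq\cK^n$ for every $n$, hence $\cK\mono\subseteq\bigcap_n\cK^n$.

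To show that every term of the filtration has $\cK\mono$ as its monogenic part, I argue by induction on $n$ using \cref{prop:idem-mono-cong} once more:
\[
(\cK^{n+1})\mono\ =\ (\cK\aprod\cK^n)\mono\ =\ \cK\mono\aprod(\cK^n)\mono\ =\ \cK\mono\aprod\cK\mono\ =\ \cK\mono\,.
\]
For the intersection, the key is that $(-)\mono:\Cong\cE\to\MCong\cE$ is a right adjoint by \cref{thm:mono-cong}, hence preserves arbitrary meets (intersections). Therefore
\[
\Big(\bigcap_n\cK^n\Big)\mono\ =\ \bigcap_n(\cK^n)\mono\ =\ \bigcap_n\cK\mono\ =\ \cK\mono\,.
\]

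Finally, for the small generation statement: assume $\cK$ is of small generation. Then each finite power $\cK^n$ is of small generation by \cref{acyclicproductofsmallcong}. The countable intersection $\bigcap_n\cK^n$ is of small generation by \cref{smallintersmallgencong}. And $\cK\mono$ is of small generation by \cref{prop:mono-part}. This completes the proof.
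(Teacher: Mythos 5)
Your proof is correct and follows essentially the same route as the paper: monogenic idempotence (\cref{prop:idem-mono-cong}) for the powers, right-adjointness of $(-)\mono$ for the intersection, and \cref{acyclicproductofsmallcong} together with \cref{smallintersmallgencong} for small generation. You merely spell out a few steps the paper leaves implicit (the decreasing chain, the inclusion $\cK\mono\subseteq\bigcap_n\cK^n$ via $(\cK\mono)^n=\cK\mono$, and the citation of \cref{prop:mono-part} for small generation of $\cK\mono$), which is fine.
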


\begin{proof}
All the $\cK^n$ have the same monogenic part by~\cref{prop:idem-mono-cong}.
The equalities 
$(\bigcap_n\cK^n)\mono\ =\ \bigcap_n(\cK^n)\mono\ =\ \cK\mono$ 
follow since the functor $(-)\mono$ is right adjoint to the inclusion of monogenic congruences into congruences.
Small generation for the powers has been proved in \cref{acyclicproductofsmallcong}, and for $\bigcap_n\cK^n$ this is \cref{smallintersmallgencong}. 
\end{proof}

\begin{definition}
\label{def:tower}
We shall call the sequence \eqref{eqn:filtration} the {\it $\cK$\=/adic filtration} of $\cE$.
If the congruence $\cK$ is of small generation, we obtain an associated tower of logoi:
\begin{equation}
\label{eqn:tower}
\cE
\stto
\cE\quotient\cK\mono
\stto
\cE\quotient\bigcap_n\cK^n
\stto
\lim_n \cE\quotient\cK^n
\stto
\dots
\stto
\cE\quotient\cK^3
\stto
\cE\quotient\cK^2
\stto
\cE\quotient\cK\,,
\end{equation}
where we have also introduced the limit logos of the tower of $\cE\quotient\cK^n$ (which can be computed in $\CAT$).
Except for the limit logos, all stages of the tower are quotients of the logos $\cE$ (see~\cref{prop:sep-comp-facto} for a statement about the limit logos).

We shall call the tower \eqref{eqn:tower} the {\it completion tower} of the congruence $\cK$ (or of the quotient $\cE\to \cE\quotient \cK$) and use the vocabulary of \cref{table:tower} to talk about the various stages of the tower.
There, we have added two ``underground" stages to the tower to account for the hyper-radical and nilradical of $\cK$ (\cref{def:nilradical,def:radical-logos}).
\end{definition}

\begin{table}[htbp]
\caption{Completion tower}
\label{table:tower}

\begin{center}
\renewcommand{\arraystretch}{1.6}
\begin{tabular}{|cc|cc|cc|}
\hline
\multicolumn{2}{|c|}{Inclusion of topoi $\tZ\subseteq \tX$} 
& \multicolumn{2}{c|}{Quotient of logoi $\cE\to \cE\quotient\cK$} 
& \multicolumn{2}{c|}{Congruence $\cK$} \\
\hline
& $\tX$ 
& $\cE$ 
&
& $\Iso$
& \\
amplification of $\tZ$ 
& $\tZ\ampli$
& $\cE\quotient\cK\mono$ 
& hypo-separation stage 
& $\cK\mono$
& monogenic part
\\
flat neighborhood of $\tZ$
& $\tZ^\flat$
& $\cE\quotient\bigcap_n\cK^n$
& separation stage 
& $\bigcap_n\cK^n$
& separation part
\\
formal neighborhood of $\tZ$
& $\tZ^\wedge$ 
& $\lim_n\cE\quotient\cK^n$ 
& completion stage 
&
& --
\\
\vdots & \vdots & \vdots & \vdots & \vdots &\\
second neighborhood of $\tZ$ 
& $\tZ\exc 2$ 
& $\cE\quotient\cK^3$ 
& second stage 
& $\cK^3$
& finite
\\
first neighborhood of $\tZ$
& $\tZ\exc 1$ 
& $\cE\quotient\cK^2$ 
& first stage 
& $\cK^2$
& powers
\\
& $\tZ$ 
& $\cE\quotient\cK$ 
& ground stage 
& $\cK$
&
\\
nilreduction of $\tZ$ 
& $\tZ^\nred$ 
& $\cE\quotient\nrad\cK$ 
& nilreduction stage 
& $\nrad \cK$
& nilradical 
\\
hyper-reduction of $\tZ$ 
& $\tZ\hred$ 
& $\cE\quotient\hrad\cK$ 
& hyper-reduction stage 
& $\hrad \cK$
& hyper-radical 
\\
\hline
\end{tabular}
\end{center}
\end{table}

\begin{remark}
\label{rem:lim-nil}
In general, there is no relation between the tower associated to $\cK$ and that of $\hrad\cK$.
A counter-example is given by $\cE=\S {X\connected\infty}$ (see \cref{exmp:rad-congruence2}), for which the tower of $\cK=\Iso$ is trivial but for which we shall see in \cref{sec:Goodwillie-tower} that the tower of $\hrad\Iso=\Conn\infty$ is the Goodwillie tower.
We believe that the towers of $\cK$ and that of $\nrad\cK$ have the same limit, but the question is open.
\end{remark}

Beware that the logos morphisms $\cE\to \lim_n \cE\quotient\cK^n$ is not a quotient in general.
More precisely, we have the following result.
\begin{proposition}[Separation--completion factorization]
\label{prop:sep-comp-facto}
The factorization
\[
\cE\stto \cE\quotient\cK\mono \stto \cE\quotient\bigcap_n\cK^n \stto \lim_n \cE\quotient\cK^n
\]
is the quotient triple factorization (\cref{sec:triple-facto}) of the logos morphism $\cE\to \lim_n \cE\quotient\cK^n$.
\end{proposition}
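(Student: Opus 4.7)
The plan is to verify the three defining properties of the quotient triple factorization \eqref{eq:triple-facto} for the canonical map $\psi:\cE\to \lim_n \cE\quotient\cK^n$: namely, identify $\cK_\psi$, check that its monogenic part is $\cK\mono$, and prove that the final map is conservative. The first three logoi in the claimed factorization are already quotients of $\cE$ (by $\cK\mono$, by $\bigcap_n\cK^n$, and by $\cK\mono$ followed by the image of $\bigcap_n\cK^n$), so once these three points are established the rest is formal.

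First I would compute the congruence of $\psi$. Since limits of logoi are created in $\CAT$ (see the remark after \cref{def:logos}), a map $f$ in $\cE$ is inverted by $\psi$ if and only if it is inverted by each projection $\cE\to \cE\quotient\cK^n$, i.e.\ if and only if $f\in \bigcap_n\cK^n$. Hence $\cK_\psi=\bigcap_n\cK^n$, and the middle logos in the claimed factorization is precisely $\cE\quotient\cK_\psi$. By \cref{prop:filt-mono}, this congruence has monogenic part $\cK\mono$, so the first arrow $\cE\to\cE\quotient\cK\mono$ is the monogenic quotient $\psi\mono$ of the triple factorization, and the composite of the first two arrows $\cE\to\cE\quotient\bigcap_n\cK^n$ is the quotient $\psi\quot$ by $\cK_\psi$. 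By \cref{exmp:epic-congruence} applied to $\cK=\cK_\psi$, the middle arrow is then automatically the epic quotient $\psi\epi$.

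The main point to verify is therefore that the third arrow $\pi:\cE\quotient\bigcap_n\cK^n \to \lim_n \cE\quotient\cK^n$ is conservative. Let $\psi':\cE\to\cE\quotient\bigcap_n\cK^n$ be the quotient, so $\psi=\pi\circ\psi'$ and $\cK_{\psi'}=\bigcap_n\cK^n=\cK_\psi$. Given a map $f$ in $\cE\quotient\bigcap_n\cK^n$ with $\pi(f)$ invertible, I would use that the quotient $\psi'$ is essentially surjective and that, given any two objects in its image, a map between them is the $\psi'$-image of some map in $\cE$ (obtained by composing with the unit of the localization to the right adjoint on the target, as in \cref{def:quotient-presentable}). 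Write $f\simeq\psi'(\tilde f)$ for some $\tilde f$ in $\cE$; then $\psi(\tilde f)=\pi(\psi'(\tilde f))\simeq\pi(f)$ is invertible, so $\tilde f\in\cK_\psi=\cK_{\psi'}$, so $f\simeq\psi'(\tilde f)$ is invertible. This proves $\pi$ is conservative and completes the identification with the quotient triple factorization of \eqref{eq:triple-facto}.

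I expect the mildly delicate point to be the lifting of maps from $\cE\quotient\bigcap_n\cK^n$ to $\cE$ up to isomorphism, but this is a standard property of reflective localizations (the essential image of the right adjoint is the category of local objects and $\psi'$ is the identity on hom-spaces after applying $\psi'_*$ on the target), and nothing congruence-specific is needed beyond $\cK_{\psi'}=\cK_\psi$ and \cref{prop:filt-mono}.
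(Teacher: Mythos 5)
Your proof is correct and follows essentially the same route as the paper: identify the congruence of $\cE\to\lim_n \cE\quotient\cK^n$ as $\bigcap_n\cK^n$ using that limits of logoi are computed in $\CAT$, then invoke \cref{prop:filt-mono} to see that its monogenic part is $\cK\mono$. The conservativity of the last arrow, which you check by lifting maps along the reflective localization via the fully faithful right adjoint, is exactly the standard fact that the paper folds into the quotient factorization of \cref{def:quotient-facto}, so making it explicit is harmless but not a different argument.
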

\begin{proof}
Limits of logoi are computed in $\CAT$.
Thus, a map in $\cE$ is inverted in $\lim \cE\quotient\cK^n$ if and only if it is inverted in all the $\cE\quotient\cK^n$.
This shows that the  congruence of $\cE\to \lim \cE\quotient\cK^n$ is $\bigcap_n\cK^n$ and that the factorization 
$\cE \to \cE\quotient\bigcap_n\cK^n \to \lim \cE\quotient\cK^n$
is the quotient factorization of \cref{def:quotient-facto}.
We computed in \cref{prop:filt-mono} that $(\bigcap_n\cK^n)\mono = \cK\mono$.
This implies the rest of the statement.
\end{proof}

\begin{remark}[Analogy with differential calculus]
\label{rem:triple-facto-tower}
In keeping with \cref{rem:triple-facto}, the triple factorization of \cref{prop:sep-comp-facto} compares nicely with what happens in differential calculus.
Let $A$ be the ring of differentiable functions $\RR\to \RR$, and $\mathfrak m$ the maximal ideal of functions vanishing at $0\in \RR$.
The completion of $A$ at $\mathfrak m$ is the ring $A^\wedge_{\mathfrak m} := \lim_n A/{\mathfrak m}^n = \RR\lsem X\rsem$ of formal power series in one variable.
The kernel of the morphism $A\to \lim_n A/{\mathfrak m}^n$ is the intersection $\bigcap_n\mathfrak m ^n$.
This is the ideal of flat functions at 0 (functions with zero Taylor tower at 0).
Let $\mathfrak m\loc\subseteq \bigcap_n\mathfrak m ^n$ be the ideal of functions vanishing in a neighborhood of 0.
Then the quotient $A/\mathfrak m\loc$ is the local ring $A_{\mathfrak m}$ at 0.
The factorization 
\[
A
\stto
A/\mathfrak m\loc = A_{\mathfrak m}
\stto
A/\bigcap_n\mathfrak m ^n
\stto
\lim_n A/{\mathfrak m}^n\,.
\]
is analogous to that of \cref{prop:sep-comp-facto}.
The rest of the completion tower is translated easily.
\end{remark}

\begin{remark}
The inclusion $\cK\mono\subseteq \bigcap_n\cK^n$ is a priori strict,
and the quotient $\cE\quotient\cK\mono$ is a priori different from $\cE\quotient\bigcap_n\cK^n$.
Recall that $\cK\mono$ and $\hrad\cK$ are dual constructions in the sense of the triple adjunction of \cref{thm:mono-cong}; this motivated our choice of terminology for the {\it hypo}-separation stage.
\end{remark}

\begin{remark}[Monogenic collapse]
If the congruence $\cK$ is monogenic, we have $\cK^n=\cK$ by \cref{thm:gtop} and the whole tower collapse into the single quotient map $\cE\to \cE\quotient\cK$.
Geometrically, this means the ample subtopoi (dual to monogenic quotients) contain their infinitesimal neighborhoods.
Since every congruence is monogenic in $n$\=/topoi, for all $n<\infty$, this also explain why completion towers are invisible in these theories.
\end{remark}

\medskip
Given a quotient $\cE\to \cE\quotient\cK$, recall the notation $\phi\mono:\cE\to \cE\quotient\cK\mono$ and $\cK\epi:=\phi\mono(\cK)\ac$ from \cref{def:epic,sec:triple-facto}, and that $\cE\quotient\cK = (\cE\quotient\cK\mono)\quotient \cK\epi$.
The following result shows that the whole tower only depends on $\cE\quotient\cK\mono$ and $\cK\epi$.

\begin{theorem}[Structure of completion towers]
\label{thm:tower}
Let $\cK$ be a congruence of small generation in a logos $\cE$.
We put $\cK^\infty:=\bigcap_n\cK^n$.
\begin{enumerate}
\item\label{thm:tower:1} 
For any $1\leq n\leq \infty$, the monogenic-epic factorization of $\cE\to\cE\quotient \cK^n$ is
\[
\cE \stto \cE\quotient\cK\mono \stto \cE\quotient\cK^n\,.
\]

\item\label{thm:tower:2}
For any $1\leq m\leq n\leq \infty$, the quotient $\cE\quotient \cK^n\to\cE\quotient\cK^m$ is epic.

\item\label{thm:tower:3} 
The tower of $\cE\to \cE\quotient\cK$ coincides with the tower of $\cE\quotient\cK\mono\to\cE\quotient\cK$. 
In other words, $\phi\mono$ induces equivalences 
\begin{align*}
\cE\quotient \cK^n &=	(\cE\quotient\cK\mono)\quotient(\cK\epi)^n\,,\quad\text{for all $n\geq 0$,}\\
\cE\quotient\nrad\cK &= (\cE\quotient\cK\mono)\quotient \nrad{\cK\epi}\,,\quad\text{and}\\
\cE\quotient\hrad\cK &= (\cE\quotient\cK\mono)\quotient \hrad{\cK\epi}\,.
\end{align*}

\item\label{thm:tower:4} 
The hyper-radical of all the stages but the limit one is the image of $\hrad\cK$.
The hyper-reduction of all the logoi of the tower is $\cE\quotient\hrad\cK$.

\end{enumerate}
\end{theorem}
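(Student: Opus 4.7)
The strategy is to exploit three technical inputs: the coincidence of monogenic parts of the powers of $\cK$ (\cref{prop:filt-mono}), the monoidal isomorphism between $\Cong(\cE\quotient\cK\mono)$ and $\Cong\cE\upslice{\cK\mono}$ (\cref{prop:transport-cong:quotient-cong,prop:naturality:cong}), and the identity $\hrad\cK=\hrad{\cK\mono}$ (\cref{lem:hypercovering-mono}).

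Part (1) is immediate from \cref{prop:filt-mono}: since $(\cK^n)\mono=\cK\mono$ for every $1\leq n\leq\infty$, the monogenic-epic factorization of $\cE\to\cE\quotient\cK^n$ automatically passes through $\cE\quotient\cK\mono$. For Part (2), given $m\leq n$ I would observe that the quotient $\cE\quotient\cK^n\to\cE\quotient\cK^m$ fits into a triangle beneath $\cE\quotient\cK\mono$ whose two downward legs are epic by Part (1); \cref{lem:cancel-epic} will then force the horizontal map to be epic as well.

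For Part (3), I would invoke the isomorphism of $\otimes$-frames $\Cong(\cE\quotient\cK\mono)\simeq\Cong\cE\upslice{\cK\mono}$. Since $\cK^n$, $\nrad\cK$, and $\hrad\cK$ all contain $\cK\mono$, they lie in the target poset and correspond under $\phi\mono(-)\ac$ to congruences in $\cE\quotient\cK\mono$. Monoidality of $\phi\mono(-)\ac$ will directly give $\phi\mono(\cK^n)\ac=(\cK\epi)^n$. Because the nilradical is defined purely in terms of the $\otimes$-frame product, a monoidal isomorphism must preserve it, yielding $\phi\mono(\nrad\cK)\ac=\nrad{\cK\epi}$; this is the subtle step, since one must trace the transfinite iteration defining the radical through a bijection initially stated on congruences while the radical involves arbitrary classes of maps. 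The hyper-radical case can then be verified directly by computing $(\phi\mono)^{-1}(\hrad{\cK\epi})=\{f\in\cE:\phi_\cK(f)\in\Conn\infty\}=\hrad\cK$.

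Part (4) will rest on \cref{lem:hypercovering-mono}: for each $\cK'\in\{\cK\mono,\bigcap_n\cK^n,\cK^n\}$, one has $\hrad{\cK'}=\hrad{(\cK')\mono}=\hrad{\cK\mono}=\hrad\cK$. Consequently, the image of $\hrad\cK$ in $\cE\quotient\cK'$ will be exactly $\Conn\infty$, and the further quotient will yield the hyper-reduction $\cE\quotient\hrad\cK$. For the limit logos $\cL=\lim_n\cE\quotient\cK^n$, the compatible morphisms $\cL\to\cE\quotient\cK^n\to\cE\quotient\hrad\cK$ assemble into a map to a hyper-reduced logos inverting all of $\Conn\infty(\cL)$, which identifies the hyper-reduction of $\cL$ as $\cE\quotient\hrad\cK$.
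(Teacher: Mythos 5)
Your outline for (1), (2), the powers and hyper-radical cases of (3), and the non-limit stages of (4) is essentially the paper's own argument: $(\cK^n)\mono=\cK\mono$ from \cref{prop:filt-mono}, cancellation of epic quotients via \cref{lem:cancel-epic}, monoidality of $\phi\mono(-)\ac$ from \cref{prop:naturality:cong} together with the bijection of \cref{prop:transport-cong:quotient-cong}, and \cref{lem:hypercovering,lem:hypercovering-mono} for the hyper-radical computations. Two steps, however, are not yet proofs. For the nilradical identity in (3), "a monoidal isomorphism must preserve the radical" does not apply as stated: the radical $\nrad\cK$ is computed in $\Cong\cE$, while the isomorphism you invoke only identifies $\Cong(\cE\quotient\cK\mono)$ with the \emph{closed quotient} $\Cong\cE\upslice{\cK\mono}$, and you still have to compare radicals taken in $\Cong\cE$ with radicals taken in $\Cong\cE\upslice{\cK\mono}$. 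This is exactly what the paper supplies: \cref{prop:naturality-facto:cong} identifies $\phi\mono(-)\ac$ with $\cK\mono\vee(-)$, and \cref{lem:image-radical} shows that radical elements of $Q\upslice w$ are the radical elements of $Q$ above $w$, so that $\sqrt{-}$ is preserved by this closed quotient. (Your worry about "arbitrary classes of maps" in the definition of weak nilradicality is resolved upstream, where $\nrad\cK$ is identified with the $\otimes$\=/frame radical of $\cK$ in $\Cong\cE$; once that identification is in place, the whole step is a statement about $\otimes$\=/frames and \cref{lem:image-radical} closes it.)

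For (4), your treatment of the quotient stages via $\hrad{\cK'}=\hrad{(\cK')\mono}=\hrad\cK$ is fine and is a mild repackaging of the paper's use of \cref{lem:transport-hyper-radical,lem:hyper-reduction-epic}. But your argument for the limit stage $\cL=\lim_n\cE\quotient\cK^n$ has a gap: producing a cone $\cL\to\cE\quotient\hrad\cK$ that inverts $\Conn\infty(\cL)$ only yields a factorization $\cL\quotient\Conn\infty(\cL)\to\cE\quotient\hrad\cK$; nothing you have said shows this comparison map is an equivalence (the morphism $\cE\quotient\cK\mono\to\cL$ is not a quotient, only a quotient followed by a conservative morphism, so \cref{lem:hyper-reduction-epic} does not apply). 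Note that the paper's own proof only establishes the hyper-reduction statement for the epic-quotient stages and does not treat the limit stage, so either restrict the claim accordingly or supply a genuinely new argument for $\cL$.
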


\begin{proof}
\noindent\eqref{thm:tower:1}
By \cref{prop:filt-mono}, we have $(\cK^n)\mono = \cK\mono$.
The result follows by definition of the monogenic--epic factorization in \cref{sec:triple-facto}.

\smallskip
\noindent\eqref{thm:tower:2}
We apply \cref{lem:cancel-epic} to the quotients $\cE\quotient\cK\mono \to \cE\quotient \cK^n$ and $\cE\quotient \cK^n\to\cE\quotient\cK^m$.

\smallskip
\noindent\eqref{thm:tower:3}
To simplify notations, we put $\cE':=\cE\quotient\cK\mono$ and $\psi=\phi\mono:\cE\to \cE'$.
We have $\cK\epi=\psi(\cK)\ac$ and the statement will be proved if we show that
\begin{enumerate}[label=\roman*)]
\item $\psi(\cK^n)=(\psi(\cK)\ac)^n$	,
\item $\psi(\nrad\cK)=\nrad{(\psi(\cK)\ac)}$	, and
\item $\psi(\hrad\cK)=\hrad{\cK\epi}$	.
\end{enumerate}
The first equality is from \cref{prop:naturality:cong}.
This proves that the finite stages of the towers coincide, and therefore the limits as well.
The statement for the nilreduced parts is a direct consequence of \cref{prop:naturality-facto:cong,lem:image-radical}.
For the hyper-reduced parts we observe that in $\cE'$ we have $(\cK\epi)\mono = \Iso$.
Thus $\hrad{\cK\epi} = \hrad{\Iso}=\Conn\infty$ by \cref{lem:hypercovering-mono}.
By \cref{lem:hypercovering} we have $\hrad\cK = \psi^{-1}(\Conn\infty)$.
The bijection of \cref{prop:transport-cong:quotient-cong} implies $\psi(\hrad\cK)\ac = \Conn\infty$, which gives $\psi(\hrad\cK)\ac = \hrad{\cK\epi}$.

\smallskip
\noindent\eqref{thm:tower:4}
By definition, the hyper-reduction of $\cE\quotient\cK\mono$ is $\cE\quotient\hrad{\cK\mono}$.
Using \cref{lem:hypercovering-mono}, we get $\hrad{\cK\mono} = \hrad\cK$.
This shows that the hyper-radical of $\cE\quotient\cK\mono$ is the image of $\hrad\cK$
and that its hyper-reduction is $\cE\quotient\hrad\cK$.
Then the first statement follows from \eqref{thm:tower:2} and \cref{lem:transport-hyper-radical}, 
and the second statement from \cref{lem:hyper-reduction-epic}.
\end{proof}

\medskip

Our next result concerns the structure of layers of the completion tower.
Recall from \cref{def:layer} that we denote by 
$\cK^n\quotient\cK^{n+1}:=\cK^n\cap \rorth{(\cK^{n+1})}$ 
the $(n+1)$\=/st layer of the completion tower.
For any object $X\in \cE$, we shall denote by $(\cK^n\quotient\cK^{n+1})_X$ the full subcategory of $\cE\slice X$ spanned by maps $Y\to X$ in $\cK^n\quotient\cK^{n+1}$ and call it the {\it $(n+1)$\=/st layer at $X$}.
Notice that the $(n+1)$\=/st layer at the terminal object $1\in \cE$ can be defined as the fiber of the functor $\cE\quotient \cK^{n+1} \to \cE\quotient \cK^n$ at the terminal object:
\[
\begin{tikzcd}
(\cK^n\quotient \cK^{n+1})_1 \ar[r]\ar[d] \pbmark & \cE\quotient \cK^{n+1}\ar[d]\\
1\ar[r,"1"] & \cE\quotient \cK^n\,.
\end{tikzcd}
\]
In other words, the objects of $(\cK^n\quotient \cK^{n+1})_1$ are the objects $E\in \cE$ such that $E=P_{n+1}E$ and $P_nE = 1$.

The identity $1_X$ of $X$ is a terminal object in $(\cK^n\quotient\cK^{n+1})_X$, and an object of $(\cK^n\quotient\cK^{n+1})_X$ is said to be pointed if it equipped with a morphism from $1_X$.
A morphism of pointed object is a morphism preserving the morphism from $1_X$.
We denote by $(\cK^n\quotient\cK^{n+1})_X\pointed$ the category of pointed objects in $(\cK^n\quotient\cK^{n+1})_X$ and call it the {\it pointed $n$\=/th layer at $X$}.

For the ground layer $\cK^0\quotient\cK=\All\quotient\cK$, the category $(\All\quotient\cK)_X = (\cE\slice X)\quotient(\cK\slice X)$ is always a logos.
To describe the structure of the higher layers, we will need the following weakening of the notion of stable category \cite[Chapter 1]{Lurie:HA}.

\begin{definition}[Pre-stable category]
We shall say that a category is {\it pre-stable} if it has pullbacks and pushouts, and if a commutative square is cartesian if and only if it is cocartesian.
\end{definition}

\begin{examples}
\label{rem:unpointed-stable}
\begin{exmpenum}
\item Any stable category is pre-stable.
\item A pre-stable category is stable if and only if it is pointed.
If a pre-stable category has a terminal object, the associated category of pointed objects is a stable category.
\item Any slice or coslice of a stable category is a pre-stable category.
These examples have furthermore an initial and a terminal object, but they might not coincide.
\end{exmpenum}
\end{examples}

\begin{theorem}[Pre-stability of layers]
\label{thm:layer}
For every $n\geq 1$ and every $X\in \cE$, the $(n+1)$\=/st layer $(\cK^n\quotient\cK^{n+1})_X$ is a pre-stable category with a terminal object.
The pointed $(n+1)$\=/st layer $(\cK^n\quotient\cK^{n+1})_X\pointed$ is a stable category.
\end{theorem}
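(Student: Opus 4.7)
The strategy decomposes into three stages: existence of finite limits and colimits in the layer, the cartesian--cocartesian square comparison via Generalized Blakers--Massey, and the deduction of stability for the pointed version. The hypothesis $n\geq 1$ enters precisely in the inequality $2n\geq n+1$, giving the crucial inclusion $\cK^n\aprod\cK^n = \cK^{2n}\subseteq\cK^{n+1}$.

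First I would establish the basic categorical structure. The terminal object $1_X = \id_X$ lies in the layer since $\Iso\subseteq\cK^n\cap\rorth{\cK^{n+1}}$. Pullbacks in the layer are inherited from $\cE\slice X$: both $\cK^n$ (as a congruence) and $\rorth{\cK^{n+1}}$ (as the right class of a factorization system) are closed under base change and composition, so a pullback of layer objects is again a layer object. For pushouts, given a span of layer objects with colimit $P\to X$ in $\cE\slice X$, the map $P\to X$ lies in $\cK^n$ (since $\cK^n$ is a congruence, hence closed under colimits in $\Arr\cE$). Factoring $P\to X$ through the modality $(\cK^{n+1},\rorth{\cK^{n+1}})$ as $P\to P'\to X$ yields $P'\to X\in \cK^n$ by 3-for-2 (using $\cK^{n+1}\subseteq\cK^n$), so $P'$ provides the pushout in the layer.

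Second is the heart of pre-stability. Consider a pushout square in $\cE\slice X$
\[
\begin{tikzcd}
C \ar[r,"g"] \ar[d,"f"'] & B \ar[d] \\
A \ar[r] & P
\end{tikzcd}
\]
whose four vertices all lie in the layer. Applying the Generalized Blakers--Massey theorem of \cite{ABFJ:GC} with left class $\cK^n$, the cartesian gap map $C\to A\times_P B$ lies in the acyclic class generated by $f\pp g$, hence in $\cK^n\aprod\cK^n\subseteq \cK^{n+1}$. Since $C$ and $A\times_P B$ both lie in the layer, and in particular in $\rorth{\cK^{n+1}}$, this gap map belongs to $\cK^{n+1}\cap\rorth{\cK^{n+1}} = \Iso$ by 3-for-2 applied to the right class. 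Thus every cocartesian square in the layer is cartesian. Conversely, given a cartesian square with fourth vertex $D$, one forms the pushout $P$ in $\cE\slice X$ and compares the pullbacks over $P$ and over $D$: the natural map $A\times_P B\to A\times_D B$ fits into a pullback along the diagonal $\Delta_\pi\colon P\to P\times_D P$ of $\pi\colon P\to D$. The cartesian hypothesis forces $A\times_D B\simeq C$, so the Blakers--Massey gap map $C\to A\times_P B$ admits a retraction; invoking 3-for-2 in $\cK^{n+1}$ along this retraction, and using that $\cK^{n+1}$ is closed under diagonals (as a congruence), one shows $\pi\in \cK^{n+1}$ and hence cocartesianness in the layer.

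Third, the pre-stability just established, together with $1_X$ serving as a zero object in the pointed layer, yields the standard recognition criterion for a stable $\infty$-category: a pointed category with finite limits and colimits in which cartesian squares and cocartesian squares coincide. This gives stability of $(\cK^n\quotient\cK^{n+1})_X\pointed$. The main technical obstacle will be making the converse direction of the second stage precise: the Blakers--Massey statement is naturally oriented from pushouts to pullbacks, and transferring information to a statement about the cocartesian gap of a cartesian square requires the diagonal-closure of $\cK^{n+1}$ together with a careful unwinding of the comparison map $A\times_P B\to A\times_D B$ via $\Delta_\pi$.
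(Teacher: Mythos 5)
Your Stage 1 and Stage 3 are fine, but Stage 2 has two genuine gaps. The first is in the forward direction. By your own Stage 1, pushouts in the layer are obtained by reflecting the pushout $P$ of $\cE\slice X$ along the modality $(\cK^{n+1},\rorth{\cK^{n+1}})$; so a cocartesian square \emph{in the layer} has fourth vertex the reflection $Q$ of $P$ and is in general \emph{not} a pushout square in $\cE\slice X$ (in the pointed Goodwillie example the layer consists of spectra, whose pushouts are not computed levelwise). Your argument only treats squares that are honest pushouts in $\cE\slice X$ with all four vertices in the layer, and then concludes that every cocartesian square in the layer is cartesian; that is a non sequitur. It can be repaired: apply Blakers--Massey to the genuine pushout square $C,A,B,P$ (the correct hypothesis is $\Delta f\pp\Delta g\in\cK^{n}\pp\cK^{n}\subseteq\cK^{2n}\subseteq\cK^{n+1}$, not membership of the gap map in ``the acyclic class generated by $f\pp g$''), then observe that $A\times_P B\to A\times_Q B$ is a base change of $\Delta(P\to Q)\in\cK^{n+1}$, so the composite $C\to A\times_Q B$ lies in $\cK^{n+1}\cap\rorth{\cK^{n+1}}=\Iso$. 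This extra comparison across the reflection is exactly what your write-up omits.

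The second gap is the converse direction, which you flag yourself. Your retraction trick only shows, via 3-for-2, that the comparison map $A\times_P B\to A\times_D B$ lies in $\cK^{n+1}$. That map is a \emph{base change} of $\Delta\pi$, and membership of a base change in $\cK^{n+1}$ does not propagate back to $\Delta\pi$; moreover even $\Delta\pi\in\cK^{n+1}$ would not yield $\pi\in\cK^{n+1}$ (monomorphisms have invertible diagonals), so the step ``one shows $\pi\in\cK^{n+1}$'' does not follow. The correct tool is the \emph{dual} generalized Blakers--Massey theorem (Theorem 3.5.1 of \cite{ABFJ:GBM}): for a cartesian square of layer objects one has $f\pp g\in\cK^{n}\pp\cK^{n}\subseteq\cK^{n+1}$, hence the cogap map $\pi\colon A\sqcup_C B\to D$ lies in $\cK^{n+1}$, and since $D\to X\in\rorth{\cK^{n+1}}$ this identifies $D$ with the layer pushout. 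This is how the paper argues, after first passing to the quotient $\cF=\cE\quotient\cK^{n+1}$, where $\cJ=\psi(\cK^{n})\ac$ satisfies $\cJ\aprod\cJ=\Iso$ (this is where $n\geq 1$ enters), the layer is closed under pullbacks and pushouts computed in $\cF$, and both Blakers--Massey hypotheses become literal isomorphisms---a reduction that removes both of the above difficulties at once.
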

\begin{proof}
The result will be a consequence of the generalized Blakers--Massey theorem of \cite{ABFJ:GBM} in the line of its application to Goodwillie calculus in \cite{ABFJ:GC}.
By working in $\cE\slice X$, we can assume that $X=1$.
We put $\cF=\cE\quotient\cK^{n+1}$ and let $\psi:\cE\to \cF$ be the quotient map.
We consider the congruence $\cJ:=\psi(\cK^n)\ac$ in $\cF$.
Then $\cE\quotient\cK^n = \cF\quotient\cJ$.
Moreover, we have $\cJ^2 = \psi(\cK^{2n})\ac$, since $\psi(-)\ac$ is a morphism of $\otimes$\=/frames by \cref{prop:naturality:cong}.
By assumption $n\geq 1$, thus $2n\geq n+1$ and $\cK^{2n}\subseteq \cK^{n+1}$.
This shows that $\cJ^2=\Iso$ in $\cF=\cE\quotient\cK^{n+1}$.
Thus $\cF=\cF\quotient\cJ^2$ and the $(n+1)$\=/st layer 
$(\cK^n\quotient\cK^{n+1})_1$ 
is equivalent to 
$(\cJ\quotient\cJ^2)_1$.
This reduces to prove the result in the case $n=1$.

The inclusion $(\cJ\quotient \cJ^2)_1 = \{E\in \cF\,|\,E\to 1\in \cJ\}\subseteq \cF$ preserves pushouts and pullbacks, so all computations can be done in $\cF$.
The terminal object of $\cF$ is an object in $\cJ\quotient\cJ^2$, thus it has all finite limits.
To prove that $(\cJ\quotient\cJ^2)_1$ is a pre-stable category, we are left to show that a square is cartesian if and only if it is cocartesian.
We consider a commutative square in $(\cJ\quotient \cJ^2)_1$
\begin{equation}
\label[Square]{square:layer}
\begin{tikzcd}
F \ar[r,"g"]\ar[d,"f"'] & H \ar[d]\\
G\ar[r]& K\,.
\end{tikzcd}
\end{equation}
All the maps of the square are in $\cJ$ by the 3-for-2 property of congruences.
If $\cR:= \cJ^\perp$, then the pair $(\cJ,\cR)$ is a modality by \cref{examplemodality:3}.

We assume that the square \eqref{square:layer} is cocartesian.
We apply the generalized Blakers--Massey theorem \cite[Theorem 4.1.1]{ABFJ:GBM} in the logos $\cF$ with the modality $(\cJ,\cR)$.
Since congruences are closed under diagonal, we have $\Delta f\pp \Delta g\in \cJ\aprod \cJ = \Iso$.
Then the cartesian gap map of the square is also an isomorphism.
This proves that cocartesian square are cartesian.
The converse is proved the same way using the dual Blakers--Massey theorem \cite[Theorem 3.5.1]{ABFJ:GBM}.
This shows that the category $(\cJ\quotient \cJ^2)_1$ is pre-stable.
The second statement is deduced from the first one by \cref{rem:unpointed-stable}.
\end{proof}

\subsection{Goodwillie towers}
\label{sec:Goodwillie-tower}

We now apply our general formalism to show that Goodwillie towers are instances of completion towers.
We start with recollections on Goodwillie theory \cite{G03} necessary to state \cref{thm:Goodwillie:tower}.
The precise definitions and the proof of the theorem are postponed until \cref{sec:Goodwillie-proof}.

Let $\cS$ be the category of spaces and $\cS\pointed$ that of pointed spaces.
We consider the
{\it unpointed Goodwillie theory}
of finitary functors $\cS\to\cS$
(\ie functors $\Fin\to\cS$, where $\Fin$ is the category of finite spaces)
as well as the
{\it pointed Goodwillie theory}
of finitary functors $\cS\pointed\to\cS$
(\ie functors $\Fin\pointed\to\cS$, where $\Fin\pointed$ is the category of pointed finite spaces).
The category $\fun\Fin\cS$ is the free logos on one object $\S X$, and the category $\fun\Finp\cS$ is the free logos on a pointed object $\S {X\pointed}$ (see~\cref{def:free-logos}, \cref{exmp:mono-congruence:oo-conn-obj} and~\cite[Section 5.1]{ABFJ:HS}). 
This will be useful to provide classifying properties for the stages of the towers.

To simplify the exposition, we concentrate on the unpointed case. 
A functor $F:\Fin\to \cS$ is called {\it $n$\=/excisive} if it sends strongly cocartesian $(n+1)$\=/cubes to cartesian cubes 
(we shall recall definitions about cubes in the next section).
We denote by $\fun\Fin\cS\exc n\subseteq \fun\Fin\cS$ the full subcategory spanned by $n$\=/excisive functors.
Goodwillie proves that this subcategory has a reflection $P_n:\fun\Fin\cS\to \fun\Fin\cS\exc n$
and one can verify from his description that $P_n$ is a left-exact functor.
This describes $\fun\Fin\cS\exc n$ as a quotient of the logos $\fun\Fin\cS$.
We denote by $\cG_n=P_n^{-1}(\Iso)$ the  congruence of $P_n$.
When $n=0$, a functor is $0$\=/excisive if and only if it is constant, 
and the reflection $P_0:\fun\Fin\cS\to \cS$ is given by evaluating a functor $F:\Fin \to \cS$ at the contractible space, $P_0(F)=F(1)$.
We shall simply denote by $\cG$ the congruence $\cG_0$.

An $n$\=/excisive functor is always $(n+1)$\=/excisive.
The sequence of inclusions $\fun\Fin\cS\exc n\subseteq \fun\Fin\cS\exc {n+1}$ is dual to an decreasing sequence of congruences
\[
\dots
\ \subseteq\ 
\cG_2
\ \subseteq\ 
\cG_1
\ \subseteq\ 
\cG_0=\cG\,,
\]
and adjoint to a tower of quotients of logoi
\[
\fun\Fin\cS \stto \dots \stto \fun\Fin\cS\exc 2 \stto \fun\Fin\cS\exc 1\stto \fun\Fin\cS\exc 0=\cS\,,
\]
that we call the {\it Goodwillie tower} of the logos $\fun\Fin\cS$.
Internally to $\fun\Fin\cS$, this corresponds to a tower of left-exact idempotent monads (that we still denote $P_n$) providing, for each functor $F$,
a sequence of $n$\=/excisive approximations
\[
F \stto \dots \stto P_2(F) \stto P_1(F)\stto P_0(F)=F(1)\,,
\]
called the {\it Goodwillie tower} of the functor $F$ (which Goodwillie himself calls {\it Taylor tower} \cite{G03}).

\bigskip

We shall prove the following results (see the end of the next section).
We state them only in the unpointed case, but similar statements hold in the pointed case.
Recall that the logos $\fun\Fin\cS$ is the free logos $\S X$ on one generator, 
and that the generator $X$ is the canonical inclusion $\Fin\to \cS$ (\cref{def:free-logos}).

\begin{theorem}[Structure of Goodwillie tower]
\label{thm:Goodwillie:tower}
Recall that we put $\idl X = \{X\to 1\}\cong$.

\begin{enumerate}

\item\label{thm:Goodwillie:tower:power}
The Goodwillie tower of $\S X = \fun\Fin\cS$ is the completion tower of the congruence $\idl X$.
Precisely, $\cG=\idl X$ and $\cG_n=\cG^{n+1}$.

\item\label{thm:Goodwillie:tower:mono}
The hypo-separation stage of the Goodwillie tower is the logos $\S{X\connected\infty} = \S X \quotient \idl X \mono$ classifying \oo connected objects.

\item\label{thm:Goodwillie:tower:nilpotent}
The $n$\=/th stage of the Goodwillie tower is the logos $\S {X\exc n} := \S X \quotient \idl X ^{n+1} = \fun\Fin\cS\exc n$ classifying $n$\=/nilpotent objects.

\item\label{thm:Goodwillie:tower:hyper-reduction}
The hyper-radical of all the stages but the limit one is generated by the image of $X$.
The hyper-reduction of all the stages is the canonical morphism to $\cS$ induced by $P_0$.

\end{enumerate}
\end{theorem}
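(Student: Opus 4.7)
The plan is to establish part (1) first and then deduce (2), (3) and (4) by applying the structural machinery for completion towers already developed in \cref{sec:completion-tower}. For the base case $\cG_0 = \idl X$, observe that $P_0$ is evaluation at the terminal object $1\in \Fin$, and apply \cref{lem:SX}, which identifies this quotient as $\S X \to \S X\quotient \idl X = \cS$. For the general identification $\cG_n = \idl X^{n+1}$, I will proceed by comparing local objects. Using \cref{ex:cprod:power-object}, the power congruence admits the explicit acyclic presentation
\[
\idl X^{n+1}
\ =\
\big\{\Delta^{k_0}X \pp \dots \pp \Delta^{k_n}X \,\big|\, (k_0,\ldots,k_n) \in \NN^{n+1}\big\}\ac\,,
\]
so an object $F \in \S X$ is $\idl X^{n+1}$\=/local if and only if, for every tuple $(k_0,\ldots,k_n)$, the map $F\to 1$ is fiberwise right orthogonal to $\Delta^{k_0}X \pp \dots \pp \Delta^{k_n}X$. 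It then suffices to show that this condition is equivalent to Goodwillie's condition of sending strongly cocartesian $(n+1)$\=/cubes to cartesian cubes.

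The equivalence of these two characterizations of $n$\=/excisive functors is the heart of the proof. The strategy is to translate between strongly cocartesian $(n+1)$\=/cubes with initial vertex in $\Fin$ and pushout products of maps of the form $X^A \to 1$, using that $\Fin$ is generated under finite colimits by its terminal object and using representability in $\S X$. The iterated diagonals $\Delta^k X$ enter through the codiagonal--diagonal adjunction of \cref{cor:codiag-diag-ortho} and \cref{mathieuslemma}, which exchange pullback conditions on the target with pushout conditions on the source. The cube-calculus form of this equivalence is the content of \cite{ABFJ:GC}; the task here is to repackage it in the language of acyclic products so as to produce the identity $\cG_n = \idl X^{n+1}$.

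Part (2) follows by combining \cref{exmp:mono-congruence:oo-conn-obj}, which identifies $\S X \quotient \idl X\mono$ with $\S {X\connected \infty}$, with \cref{prop:filt-mono}, which asserts that all powers of $\idl X$ share the same monogenic part. For part (3), I use that a logos morphism $\S X \to \cE$ is the same data as an object $E \in \cE$, and that such a morphism factors through $\S X \quotient \idl X^{n+1}$ precisely when the image of $\idl X^{n+1}$ is trivial in $\cE$. By the naturality of the acyclic product (\cref{prop:naturality:cong}), this image is $(\{E\to 1\}\cong)^{n+1}$, whose triviality is the definition of $E$ being $n$\=/nilpotent (\cref{def:n-nilpotent}, combined with \cref{lem:wnil=nil} to see that weak and strong nilpotence coincide for congruences). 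Part (4) is an immediate specialization of \cref{thm:tower}\eqref{thm:tower:4}, together with the fact from \cref{exmp:rad-congruence1} that $\hrad{\idl X} = \idl X$ in $\S X$: this identifies the image of the hyper-radical in each finite stage with the congruence generated by the image of $X$, and the hyper-reduction of each stage with $\S X \quotient \idl X = \cS$, which is precisely the canonical morphism induced by $P_0$.

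The main obstacle is the identification of $n$\=/excisive functors with the objects of $\S X$ fiberwise right orthogonal to the pushout products $\Delta^{k_0}X \pp \dots \pp \Delta^{k_n}X$. Everything else is either a direct appeal to the completion-tower machinery of \cref{sec:completion-tower} or an unpacking of a universal property. The excisiveness identification ultimately repackages the cube calculus of \cite{ABFJ:GC}; the conceptual gain of the present formulation is that this repackaging exhibits the Goodwillie tower as a specific instance of the generic completion-tower construction, thereby justifying the analogy with the Taylor tower in commutative algebra.
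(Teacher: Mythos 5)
Your handling of parts (2)--(4) coincides with the paper's: (2) is \cref{exmp:mono-congruence:oo-conn-obj}, (3) is the unpacking of the universal property of $\S X\quotient\idl X^{n+1}$ via naturality of the acyclic product, and (4) follows from \cref{thm:tower}~(4) together with the fact that $\hrad{\idl X}=\idl X$ (\cref{exmp:rad-congruence1,exmp:rad-congruence2}). The problem is part (1), where essentially all of the content lives, namely the identity $\cG_n=\idl X^{n+1}$ (\cref{thm:Goodwillie-main} in the paper). You correctly reduce it, via \cref{ex:cprod:power-object}, to the claim that the $\idl X^{n+1}$\=/local objects, i.e.\ the $F$ with $F\to 1$ fiberwise right orthogonal to all $\Delta^{k_0}X\pp\dots\pp\Delta^{k_n}X$, are exactly the $n$\=/excisive functors --- and then you defer this claim to a ``repackaging of the cube calculus of \cite{ABFJ:GC}''. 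That deferral is a genuine gap: \cite{ABFJ:GC} supplies only \cref{lem:cocart,lem:n-exc}; the equivalence you need is precisely what this paper has to prove, and its proof requires two ingredients your sketch does not supply.

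First, the generators $\Delta^{k}X=R^{s^{k-1}}$ are corepresentables of sphere maps only (they are maps $X\to X^{S^{k-1}}$, not maps ``$X^A\to 1$'' as you write), so their pushout products are the gap maps of free cocartesian cubes built from sphere maps, and fiberwise orthogonality to them a priori only forces $F$ to send cobase changes of \emph{sphere} cubes to cartesian cubes --- not arbitrary strongly cocartesian cubes. The paper bridges this by working from the start with the larger lex generator $\Sigma=\{R^f\mid f\in\Fin\}$, which is stable under diagonals because $\Delta(R^f)=R^{\nabla f}$, so that by \cref{boxprodlexgen} the set $\Sigma^{\pp(n+1)}$ is a lex generator of $\cG^{n+1}$ consisting of the gap maps of \emph{all} free cocartesian $(n+1)$\=/cubes. (Your set-up can be repaired by observing that every $R^f$ lies in $\cG=\idl X$, since $P_0$ inverts it, so all these gap maps lie in $\idl X^{n+1}$; but this step has to be said.) Second, one needs \cref{lem:fiberwise-right-orthogonal} --- proved in this paper, not in \cite{ABFJ:GC}, using universality of colimits and generation by representables --- which translates fiberwise orthogonality to a gap map $\gamma(\chi)$ into ``$F$ sends every cobase change of $\chi$ to a cartesian cube'', so that \cref{lem:cocart} then yields excisiveness. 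The tools you invoke instead, \cref{cor:codiag-diag-ortho} and \cref{mathieuslemma}, exchange codiagonals on the left with diagonals on the right of the orthogonality relation; here the diagonals sit on the left, so that adjunction does not produce the required cube statement.
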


\begin{remark}[Limit of the tower]
\label{rem:pro-lim}
The limit $\S X^\wedge_{\idl X}$ of the Goodwillie tower is the category of countable sequences $\dots \to F_1\to F_0$ where each $F_n$ is an $n$\=/excisive functor $\Fin \to \cS$, and such that $F_{n+1}\to F_n$ is the reflection of $F_{n+1}$ in $n$\=/excisive functors.
We do not know what the limit of the tower classifies as a logos.
If the limit is considered as a pro-object in logoi, then it represents the functor $\mathsf{Nil}:\Logos\to\CAT$ sending a logos to its full subcategory of nilpotent objects of finite order.
\end{remark}

\begin{remark}[Separation stage]
The separation part of the tower is the quotient of $\S X$ by the congruence $\bigcap_n \cG^n$.
A morphism $F\to G\in \S X$ is on $\bigcap_n \cG^n$ if and only if it induces an equivalence of Goodwillie towers.
We do not know whether the inclusion $\cG\mono \subseteq \bigcap_n \cG^n$ is strict.
The logos $\S X\quotient\bigcap_n \cG^n$ classifies \oo connected objects with an unidentified extra-property. 
\end{remark}

Recall that a functor $F:\Fin\to \cS$ is reduced if $F(1)=1$
\begin{corollary}[Hyper-radical]
\label{cor:Goodwillie:tower}
The hyper-radical (the \oo connected maps) of $\fun\Fin\cS\exc n$ is the class of $P_0$\=/equivalences, that is the maps $F\to G$ inducing an isomorphism $F(1)=G(1)$.
In particular, the \oo connected objects of $\fun\Fin\cS\exc n$ are the reduced $n$\=/excisive functors.
Moreover, the hyper-radical is the congruence $\idl {X\exc n}:= \{X\exc n\to 1\}\cong$ generated by the image of $X$, and all \oo connected maps are $n$\=/nilpotent.
\end{corollary}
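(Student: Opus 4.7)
The plan is to derive the corollary directly from \cref{thm:Goodwillie:tower}, using the naturality of the monoidal structure on congruences (\cref{prop:naturality:cong}) to transport identities from $\S X$ down to $\S{X\exc n}$. Throughout the proof I write $\cG=\idl X$, and I let $\phi_n:\S X\to \S{X\exc n}=\S X\quotient \cG^{n+1}$ denote the quotient, so that $X\exc n:=\phi_n(X)$ and $\idl{X\exc n}=\phi_n(\cG)\ac$.

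First I would identify the hyper-radical of $\S{X\exc n}$ with the class of $P_0$\=/equivalences. By \cref{thm:Goodwillie:tower}~(\ref{thm:Goodwillie:tower:hyper-reduction}), the hyper-reduction of $\S{X\exc n}$ is the morphism induced by $P_0$, which is the evaluation $\cS{X\exc n}\to \cS$, $F\mapsto F(1)$ (using \cref{lem:SX} and \cref{thm:tower:4}). Thus a map $f\in \S{X\exc n}$ is \oo connected if and only if its image in $\cS$ is invertible, that is, if and only if $f(1)$ is an isomorphism. Taking $f:F\to 1$ gives the description of \oo connected objects as reduced $n$\=/excisive functors.

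Next I would identify the hyper-radical with $\idl{X\exc n}$. Since $\S X\quotient \cG=\cS$ (\cref{lem:SX}) and $P_0$ factors through $\S{X\exc n}$, the congruence of the morphism $\S{X\exc n}\to \cS$ is $\phi_n(\cG)\ac=\idl{X\exc n}$ by \cref{prop:transport-cong:quotient-cong}. Combined with the previous step, this yields
\[
\Conn\infty(\S{X\exc n})\ =\ \idl{X\exc n}\,.
\]

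Finally I would show the $n$\=/nilpotence of all \oo connected maps. Applying \cref{prop:naturality:cong} to $\phi_n$ and using that $\phi_n(-)\ac$ is a morphism of $\otimes$\=/frames, we get
\[
\idl{X\exc n}^{\aprod\, n+1}\ =\ \phi_n(\cG)\ac^{\,\aprod\, n+1}\ =\ \phi_n(\cG^{n+1})\ac\ =\ \Iso
\]
in $\S{X\exc n}$, since $\cG^{n+1}\subseteq \ker(\phi_n)$. Now if $f$ is any \oo connected map in $\S{X\exc n}$, then by the preceding step $f\in \idl{X\exc n}$, so $\{f\}\cong\,\subseteq\,\idl{X\exc n}$ and therefore $\{f\}\cong^{\,\aprod\, n+1}\subseteq \Iso$, which is exactly the definition that $f$ is $n$\=/nilpotent. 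The only subtle point is unpacking that \cref{thm:Goodwillie:tower}~(\ref{thm:Goodwillie:tower:hyper-reduction}) really does identify the hyper-reduction of $\S{X\exc n}$ with $P_0$ and not merely with some quotient factoring through it; but this is precisely what \cref{thm:tower:4} establishes in the general setting, so no additional work is required.
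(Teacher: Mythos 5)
Your proof is correct and takes essentially the same route as the paper: both read the first two statements off \cref{thm:Goodwillie:tower} (the identification of the stages with $\S X\quotient\idl X^{n+1}$ and of their hyper-reduction with $P_0$), and both obtain the last statement from $\idl{X\exc n}^{\aprod\, n+1}=\Iso$ together with the coincidence $\Conn\infty=\idl{X\exc n}$. The only cosmetic difference is that you re-derive the identification of the hyper-radical with $\phi_n(\idl X)\ac$ via the quotient correspondence, where the paper simply quotes item 4 of the theorem; this is fine, provided one notes in passing that $\phi_n(\idl X)\ac=\{X\exc n\to 1\}\cong$ because $\phi_n$ is left-exact and hence preserves the diagonals used in the generation of the congruence.
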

\begin{proof}
We use the equivalence $\fun\Fin\cS\exc n = \S X\quotient \idl X^{n+1}$ from \cref{thm:Goodwillie:tower}~\eqref{thm:Goodwillie:tower:nilpotent}.
The hyper-reduction of $\S X\quotient \idl X^{n+1}$ is $P_0:\S X\quotient \idl X^{n+1}\to \cS$
by \cref{thm:Goodwillie:tower}~\eqref{thm:Goodwillie:tower:hyper-reduction}.
This show the first statement.
The second one follows since $F$ is reduced if and only if $F\to 1$ is a $P_0$\=/equivalence.
The hyper-radical is $\idl {X\exc n}:= \{X\exc n\to 1\}\cong$ 
by \cref{thm:Goodwillie:tower}~\eqref{thm:Goodwillie:tower:hyper-reduction} also.
By construction, we have $(\{X\exc n\to 1\}\cong)^{n+1}=\Iso$.
Then the coincidence $\Conn\infty = \idl {X\exc n}$ proves the last statement.
\end{proof}

The following corollary is a direct application of \cref{thm:layer}.
A functor $F:\Fin\to \cS$ is $n$\=/homogeneous if it is $n$\=/excisive and $(n-1)$\=/reduced, that is if $F=P_nF$ and $P_{n-1}F=1$.
We denote by $\cS\pointed$ the category of pointed spaces.

\begin{corollary}[Layers]
\label{cor:Goodwillie:layer}
The layer $(\cG^{n-1}\quotient\cG^n)_1$ of the Goodwillie tower is the category of $n$\=/homogeneous functors $\Fin\to \cS$.
This is a pre-stable category.
The pointed layer $(\cG^{n-1}\quotient\cG^n)\pointed_1$ is the category of $n$\=/homogeneous functors $\Fin\to \cS\pointed$.
This is a stable category.
\end{corollary}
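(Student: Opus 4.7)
The plan is to deduce this corollary as a direct application of \cref{thm:layer} to the completion tower of the congruence $\cG = \idl X$ on $\S X = \fun\Fin\cS$, using the identification of the Goodwillie tower with this completion tower from \cref{thm:Goodwillie:tower}.

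The first step is to identify the fiber $(\cG^{n-1}\quotient\cG^n)_1$ with the category of $n$-homogeneous functors $\Fin\to\cS$. By the definition of a layer and of the fiber at $1$, an object $F\in \fun\Fin\cS$ lies in this fiber iff $F\to 1 \in \cG^{n-1}\cap\rorth{\cG^n}$. Using the equivalence $\fun\Fin\cS\exc k = \S X\quotient\cG^{k+1}$ from \cref{thm:Goodwillie:tower}, the condition $F\to 1 \in \rorth{\cG^n}$ states that $F$ is $\cG^n$-local, i.e.\ $(n-1)$-excisive, while $F\to 1\in \cG^{n-1}$ states that the quotient to the lower excisive stage sends $F$ to $1$, i.e.\ $P_{n-2}F = 1$. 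Combined, these recover precisely the homogeneity condition of the statement.

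Since $\cG = \{X\to 1\}\cong$ is of small generation, \cref{thm:layer} applies with $\cK = \cG$ and $X = 1$, and yields directly that $(\cG^{n-1}\quotient\cG^n)_1$ is pre-stable with a terminal object, and that its category of pointed objects is stable. It then remains only to identify the pointed layer $(\cG^{n-1}\quotient\cG^n)\pointed_1$ with $n$-homogeneous functors $\Fin\to\cS\pointed$: a pointed object there is an $n$-homogeneous functor $F:\Fin\to\cS$ equipped with a natural transformation $1\to F$, which is the same data as an $n$-homogeneous functor valued in pointed spaces (equivalently, a reduced $n$-homogeneous functor $\Finp\to\cS$, by the standard adjunction between pointing and forgetting basepoints).

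No substantive obstacle is expected: the corollary is essentially a translation of \cref{thm:layer} under the completion-tower description of the Goodwillie tower, so the only real content is the layer identification spelled out above.
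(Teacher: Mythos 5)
Your route is the same as the paper's: the paper proves this corollary by nothing more than a direct application of \cref{thm:layer} to the completion tower of $\cG=\idl X$ via \cref{thm:Goodwillie:tower}, and your unwinding of the fiber (and of the pointed layer as functors landing in $\cS\pointed$) supplies exactly the details left implicit there. The pre-stability and stability assertions are obtained the same way in both cases.

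There is, however, one point you should not gloss over, and your own computation exposes it. You correctly find that $F\to 1\in\rorth{(\cG^n)}$ means $F$ is $(n-1)$\=/excisive (since $\cG^n=\cG_{n-1}$ by \cref{thm:Goodwillie-main}) and that $F\to 1\in\cG^{n-1}=\cG_{n-2}$ means $P_{n-2}F=1$. But by the definition given immediately before the corollary ($F$ is $n$\=/homogeneous iff $F=P_nF$ and $P_{n-1}F=1$), these two conditions characterize the \emph{$(n-1)$\=/homogeneous} functors, not the $n$\=/homogeneous ones; the $n$\=/homogeneous functors form the fiber $(\cG^{n}\quotient\cG^{n+1})_1$ of $\fun\Fin\cS\exc n\to\fun\Fin\cS\exc{n-1}$. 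Concretely, for $n=1$ your conditions single out the constant functors (so the layer as literally indexed is $\cS$, which is not pre-stable and for which \cref{thm:layer} does not even apply, since it needs the exponent to be at least $1$), whereas the $1$\=/homogeneous functors are the reduced $1$\=/excisive ones, i.e.\ spectra. So your closing claim that the conditions ``recover precisely the homogeneity condition of the statement'' is not correct as written. The mismatch originates in the indexing of the statement itself (an off-by-one coming from $\cG_n=\cG^{n+1}$), but a complete argument should either re-index the layer as $(\cG^{n}\quotient\cG^{n+1})_1$ or record that the displayed layer consists of the $(n-1)$\=/homogeneous functors, rather than asserting exact agreement.
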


\begin{remark}[Pointed case]
\label{rem:pointed}
To adapt \cref{thm:Goodwillie:tower} in the pointed cased, the object $X$ has to be replaced by the universal pointed object $X\pointed\in \S {X\pointed}=\fun\Finp\cS$, that is the forgetful functor $\Finp\to \cS$ (it is also the functor represented by $S^0\in \Fin\pointed$).
In this case, the congruence $\idl{X\pointed}:=\{X\pointed\to 1\}\cong$ is also generated by the canonical map $1\to X\pointed$.
This new generator has some advantages: 
since the $n$\=/th diagonal of the map $1\to X\pointed$ is $1\to \Omega^{n+1}X\pointed$, 
we have $\idl{X\pointed} = \{1\to \Omega^n X\pointed\,|\,n\geq 0\}\ac$ by \cref{ex:lexgen:diag-comp}.
\end{remark}

The following lemma compares the Goodwillie towers of $\fun\Fin\cS$ and $\fun\Finp\cS$.

\begin{lemma}
\label{lem:pushout-tower}
The Goodwillie tower of $\fun\Finp\cS=\S {X\pointed}$ is the pushout of that of $\fun{\Fin}\cS=\S X$ along the logos morphism $\phi:\S X \to \S {X\pointed}$ sending $X$ to $X\pointed$:
\[
\begin{tikzcd}
\fun\Fin\cS \ar[r]\ar[d,"\phi"]& \dots \ar[r]& \fun\Fin\cS\exc 2 \ar[r]\ar[d]& \fun\Fin\cS\exc 1\ar[r]\ar[d]& \fun\Fin\cS\exc 0=\cS\ar[d,equal]\\
\fun\Finp\cS \ar[r]& \dots \ar[r]& \fun\Finp\cS\exc 2 \pomark \ar[r]& \fun\Finp\cS\exc 1 \pomark \ar[r]& \fun\Finp\cS\exc 0=\cS \pomark \,.
\end{tikzcd}
\]
\end{lemma}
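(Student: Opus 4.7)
The plan is to apply \cref{lem:pushout-cong} stage by stage to the congruences in the $\idl X$\=/adic filtration along the logos morphism $\phi:\S X\to\S{X\pointed}$ sending $X$ to $X\pointed$, and then to verify that the resulting pushout squares assemble into a pushout of towers.

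For each $n\geq 0$, I apply \cref{lem:pushout-cong} to $\phi$ and the congruence $\idl X^{n+1}$, which is of small generation by \cref{acyclicproductofsmallcong}. By \cref{thm:Goodwillie:tower}\eqref{thm:Goodwillie:tower:nilpotent}, the top right corner of the resulting pushout square is $\S X\quotient \idl X^{n+1} = \fun\Fin\cS\exc n$. The task then reduces to identifying the bottom right corner $\S{X\pointed}\quotient\phi(\idl X^{n+1})\ac$ as $\fun\Finp\cS\exc n$. This hinges on the formula $\phi(\idl X^{n+1})\ac = \idl{X\pointed}^{n+1}$, which decomposes into two steps. First, by \cref{prop:naturality:cong}, the map $\phi(-)\ac:\Cong{\S X}\to \Cong{\S{X\pointed}}$ is a morphism of $\otimes$\=/frames, so it preserves powers of the acyclic product, giving $\phi(\idl X^{n+1})\ac = (\phi(\idl X)\ac)^{n+1}$. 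Second, using $\Sigma\cong = (\Sigma\diag)\ac$ from \cref{sigmaac=sigmacong}, together with the fact that the left-exact functor $\phi$ commutes with iterated diagonals (as exploited in \cref{lem:lex-gen}), I compute
\[
\phi(\idl X)\ac \,=\, \phi(\{X\to 1\}\diag)\ac \,=\, \{\phi(X)\to 1\}\diag\ac \,=\, \{X\pointed\to 1\}\cong \,=\, \idl{X\pointed},
\]
where the third equality uses $\phi(X) = X\pointed$ by definition of $\phi$. The pointed analogue of \cref{thm:Goodwillie:tower}\eqref{thm:Goodwillie:tower:nilpotent} then identifies the bottom right corner with $\fun\Finp\cS\exc n$.

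To upgrade these levelwise pushout squares into a pushout of towers, I note that for each $n$ the inclusion $\idl X^{n+2}\subseteq \idl X^{n+1}$ induces a morphism of pushout diagrams, whence by the universal property a compatible map between the pushout objects that recovers the projection $\fun\Finp\cS\exc{n+1}\to \fun\Finp\cS\exc n$. For the extreme right square ($n=0$) the formula specializes to $\S{X\pointed}\quotient\idl{X\pointed} = \cS$, which matches $\fun\Finp\cS\exc 0 = \cS$ by the pointed analogue of \cref{lem:SX} (evaluation at the terminal object of $\Finp$). I do not anticipate a substantive obstacle: the argument is essentially formal once the monoidality of $\phi(-)\ac$ is invoked, the only mild subtlety being the identification $\phi(\idl X)\ac = \idl{X\pointed}$, for which the preservation of diagonals by $\phi$ is the critical input.
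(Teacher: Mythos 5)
Your proof is correct and follows essentially the same route as the paper: the key inputs are the monoidality of $\phi(-)\ac$ from \cref{prop:naturality:cong}, giving $\phi(\idl X^{n+1})\ac = (\phi(\idl X)\ac)^{n+1} = \idl{X\pointed}^{n+1}$, combined with \cref{lem:pushout-cong} applied levelwise. The paper's proof is just a terser version of yours; your explicit verification that $\phi(\idl X)\ac = \idl{X\pointed}$ via diagonals, and the compatibility of the squares along the tower, are details the paper leaves implicit.
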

\begin{proof}
By \cref{prop:naturality:cong}, $\phi(\idl X ^n)\ac = \big(\phi(\idl X)\ac\big)^n = \idl {X\pointed}^n$.
Then the result about pushouts is an application of \cref{lem:pushout-cong}.
\end{proof}

\smallskip
By a theorem of Goodwillie \cite{G03}, we have an equivalence of categories between $\fun\Finp\cS\exc 1$ and the category $\PSp$ of parametrized spectra.
By the pointed version of \cref{thm:Goodwillie:tower} (or by  \cref{lem:pushout-tower}), 
we get
\[
\PSp
\ =\ 
\fun\Finp\cS\exc 1
\ =\ 
\S {X\pointed}\quotient \idl {X\pointed}^2\,,
\]
where $\idl {X\pointed}=\{X\pointed\to 1\}\cong$.
In the description of parametrized spectra as 1-excisive functors $F:\Finp\to \cS$, the base space is the value $F(1)$, and spectra $\Sp\subseteq \PSp$ correspond to the 1-excisive functors that are also {\it reduced} ($F(1)=1$). 
We then deduce the following classifying property of $\PSp$.

\begin{proposition}
\label{prop:parametrized-spectra}
The logos $\PSp$ of parametrized spectra classifies pointed 1\=/nilpotent objects, 
that is pointed objects $1\to E$ such that the maps $\Delta^m E \pp \Delta^n E$ are invertible for every $m,n\in \NN$, 
or, equivalently, such that the maps
$\Omega^m E\vee \Omega^n E \to \Omega^m E\times \Omega^n E$ are invertible for every $m,n\in \NN$.
\end{proposition}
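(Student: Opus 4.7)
The plan is to combine Goodwillie's classical identification $\PSp = \fun\Finp\cS\exc 1$ with the pointed analogue of \cref{thm:Goodwillie:tower}~\eqref{thm:Goodwillie:tower:nilpotent} at $n=1$, which yields $\PSp = \S{X\pointed}\quotient\idl{X\pointed}^2$. By the universal property of free logoi and of quotients, a logos morphism $\PSp\to \cE$ is then the same data as a pointed object $(1\to E)$ of $\cE$ (equivalently, a logos morphism $\S{X\pointed}\to \cE$) whose classifying map inverts the congruence $\idl{X\pointed}^2$. The proposition amounts to translating this inversion condition into the two concrete descriptions given in the statement.

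For the first description, \cref{ex:cprod:power-object} identifies $\idl{X\pointed}^2$ as the acyclic class generated by the family $\{\Delta^m X\pointed\pp \Delta^n X\pointed\,|\,m,n\in \NN\}$, where $\Delta^m X\pointed$ denotes the $m$-th iterated diagonal of $X\pointed\to 1$. Since logos morphisms preserve pushout products and iterated diagonals, inverting this family is equivalent to inverting all $\Delta^m E\pp \Delta^n E$, which is the definition of $E$ being $1$-nilpotent.

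For the equivalent pointed description, I would exploit the canonical pointing $1\to X\pointed$. By 3-for-2 applied to the triangle $1\to X\pointed \to 1 = \id_1$, the maps $X\pointed\to 1$ and $1\to X\pointed$ generate the same congruence, so $\idl{X\pointed} = \{1\to X\pointed\}\cong$. As noted in \cref{rem:pointed}, the $n$-th iterated diagonal of $1\to X\pointed$ is the canonical map $1\to \Omega^n X\pointed$, and by \cref{ex:lexgen:diag-comp} the set $\{1\to \Omega^n X\pointed\,|\,n\geq 0\}$ is then a lex generator for $\idl{X\pointed}$. Applying \cref{boxprodlexgen} yields
\[
\idl{X\pointed}^2 = \big\{(1\to \Omega^m X\pointed)\pp(1\to \Omega^n X\pointed)\,\big|\,m,n\in \NN\big\}\ac.
\]
A direct pushout computation identifies $(1\to A)\pp(1\to B)$ with the wedge-to-product map $A\vee B\to A\times B$, so the inversion condition becomes the invertibility of $\Omega^m E\vee \Omega^n E\to \Omega^m E\times \Omega^n E$ for all $m,n\in \NN$.

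The argument is essentially an application of the lex-generator calculus developed in \cref{sec:lex-gen,sec:push-prod-cong}; the only (modest) obstacle is producing the two distinct lex generators for $\idl{X\pointed}^2$ and recognizing the pushout product $(1\to A)\pp(1\to B)$ as the wedge-to-product map.
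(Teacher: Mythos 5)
Your proposal is correct and follows essentially the same route as the paper: the identification $\PSp=\S{X\pointed}\quotient\idl{X\pointed}^2$ via pointed Goodwillie theory, and then the two descriptions of $1$-nilpotence obtained from the two presentations of $\idl{X\pointed}$ (the diagonal completion of $X\pointed\to 1$, and the family $\{1\to\Omega^n X\pointed\}$ from \cref{rem:pointed}) combined with the monoidality of $(-)\ac$ (\cref{cor:computation-prod-cong}, \cref{boxprodlexgen}) and the identification of $(1\to A)\pp(1\to B)$ with $A\vee B\to A\times B$. The only cosmetic difference is that the paper packages the classifying property through the pushout square of \cref{lem:pushout-tower} ("pointed'' $\times$ "$1$-nilpotent''), whereas you unwind the universal property of the quotient directly; these amount to the same argument.
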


\begin{proof}
\Cref{lem:pushout-tower}
shows that the localization $\S {X\pointed}\quotient \idl {X\pointed}^2$ is the pushout of logoi
\[
\begin{tikzcd}
\S X \ar[r]\ar[d] & \S {X}\quotient \idl {X}^2 \ar[d]\\
\S {X\pointed} \ar[r] & \S {X\pointed}\quotient \idl {X\pointed}^2 \pomark
\end{tikzcd}
\]
This shows that $\PSp$ classifies objects that are pointed and 1\=/nilpotent.
The descriptions of pointed 1\=/nilpotent objects follow from \cref{cor:computation-prod-cong}.
The first one used the presentation $\idl {X\pointed}= (\{X\to 1\}\diag)\ac$.
The second description uses the alternative presentation $\idl {X\pointed}= \{1\to \Omega^n X\pointed\,|\,n\geq 0\}\ac$ of \cref{rem:pointed}.
\end{proof}

\begin{examples}
\begin{exmpenum}

\item In the logos $\PSp$, spectra $\Sp \subseteq \PSp$ are 1\=/nilpotent objects, in particular, they are \oo connected.

\item More generally, in the logos $\S X\quotient \idl X ^{n+1}=\fun\Fin\cS\exc n$, the nilpotent objects are the reduced $n$\=/excisive functors.
They are all $n$\=/nilpotent.

\item Recall that a functor $F:\Fin\to \cS$ is $n$\=/homogenous functors if $F=P_nF$ and $P_{n-1}F=1$. These functors define 1\=/nilpotent elements in $\S X\quotient \idl X ^{n+1}$.

\item As mentioned in \cref{rem:nilrad-in-hrad}, we suspect that the only nilpotent object in $\S {X\connected\infty}$ is the terminal object.

\end{exmpenum}	
\end{examples}

\subsection{Proofs}
\label{sec:Goodwillie-proof}

This section proves \cref{thm:Goodwillie:tower}.
We will deduce them from \cref{thm:Goodwillie-main}.
A number of results of the section were already shown in \cite[Section 3]{ABFJ:GC}.

\bigskip
In this section we fix a category $\cC$ with finite colimits and we consider the logos $\cS^\cC=\fun \cC \cS$.
This includes categories like $\cC=\Fin$ or $\cC=\Fin\pointed$.
For such a $\cC$, the inclusion of constant functors $\cS\to \fun \cC \cS$ is fully faithful with a left adjoint given by the colimit functor $P_0=\colim_{\cC}:\fun \cC \cS \to \cS$.
The functor $P_0$ is left-exact since $\cC$ is filtered.
If $\cC$ has a terminal object, $P_0$ is the evaluation at this object, but we shall not need to assume this.
The functor $P_0$ is thus a left-exact localization, that is a quotient of the logos $\fun \cC \cS$.
For such a category $\cC$ also, the notions of strongly cocartesian cube and excisive functor make sense.

\begin{definition}
\label{def:n-exc}
\label{def:reduced}
Let $[1]=\{0<1\}$ be the one arrow category.
We shall denote by $\delta_i=(0,\dots,1,\dots,0)$ the object of $[1]^n$ with the only non-zero coordinate in dimension $i$. We denote by $[1]^n_*$ the cube without its initial object.
An {\it $n$\=/cube} in a category $\cC$ is a diagram $\chi:[1]^n\to \cC$.
An $n$\=/cube is {\it cartesian} if the cartesian gap map $\chi(0,\dots,0)\to\lim_{[1]^n_*}\chi(i_1,\hdots,i_n)$  is an isomorphism. An $n$\=/cube is {\it strongly cocartesian} if every face of dimension 2 is a cocartesian square.
If $\cC$ has finite sums $\cC^n\xto + \cC$,
any set of $n$ maps $f_i:[1]\to \cC$ defines a cube 
\begin{align*}
f_1\boxplus \dots \boxplus f_n : [1]^n &\nntto{(f_1\dots,f_n)} \cC^n\ntto + \cC
\,.
\end{align*}
which is always strongly cocartesian.
We shall say that such a cube is {\it free cocartesian}.

If $\cC$ is a category with finite colimits, a functor $F:\cC\to \cS$ is {\it $n$\=/excisive} if, 
for any strongly cocartesian $(n+1)$\=/cube $\chi$, the image $F(\chi)$ is a cartesian cube.
A functor $F:\cC\to \cS$ is {\it reduced} if $\colim_{\cC}F = 1$.
\end{definition}

\begin{lemma}[{\cite[Lemma 3.2.9]{ABFJ:GC}}]
\label{lem:cocart}
If $\cC$ has finite colimits, a cube in $\cC$ is strongly cocartesian cube if and only if it is a cobase change of a free cocartesian cube.
\end{lemma}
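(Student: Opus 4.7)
\medskip

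\noindent\textbf{Proof plan.}

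The statement is an equivalence, so I will argue both directions separately. The easier direction is that cobase changes of free cocartesian cubes are strongly cocartesian. First, every free cocartesian cube $f_1 \boxplus \dots \boxplus f_n$ is strongly cocartesian: any 2-face of it (obtained by fixing $n-2$ coordinates and letting two, say $i$ and $j$, vary) is the pushout product $f_i \pp f_j$ translated by the sum of the fixed values on the remaining coordinates, and such pushout products are pushout squares by definition. Second, cobase changes preserve the strongly cocartesian property: if $\tilde\chi \to \chi$ is a cobase change along a map $\tilde\chi(0,\dots,0) \to B$, with $\chi(I) = \tilde\chi(I) \sqcup_{\tilde\chi(0)} B$ at every vertex $1_I$, then each 2-face of $\chi$ is the cobase change of the corresponding 2-face of $\tilde\chi$ along the same map, and pushout pasting shows that a cobase change of a pushout square is again a pushout square.

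For the converse direction, given a strongly cocartesian cube $\chi : [1]^n \to \cC$, set $A := \chi(0,\dots,0)$ and, for each $i \in \{1,\dots,n\}$, let $a_i : A \to \chi(\delta_i)$ be the edge of $\chi$ in the $i$-th direction. I will write $\chi$ as a cobase change of the free cocartesian cube $\tilde\chi := a_1 \boxplus \cdots \boxplus a_n$. Its initial vertex is the $n$-fold coproduct $A^{+n}$, and more generally $\tilde\chi(1_I) = \sum_{i\in I}\chi(\delta_i) + \sum_{j\notin I} A$ for each $I \subseteq \{1,\dots,n\}$. The cobase change is then taken along the fold map $\nabla : A^{+n} \to A$: at each vertex, we form the pushout $\chi'(1_I) := \tilde\chi(1_I) \sqcup_{A^{+n}} A$, and these assemble (functorially in $[1]^n$) into a cube $\chi'$ equipped with a cube-morphism $\tilde\chi \to \chi'$ that is a cobase change in the above sense.

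The claim is that $\chi' \simeq \chi$ canonically, under the identification $\chi'(0,\dots,0) = A = \chi(0,\dots,0)$. To see this at a vertex $1_I$, a direct universal-property argument shows that pushing out $\sum_{i\in I}\chi(\delta_i) + \sum_{j\notin I} A$ along $\nabla$ identifies all copies of $A$ with the central $A$ in a way compatible with the $a_i$, yielding the wide pushout $\bigsqcup_{i\in I}^{A}\chi(\delta_i)$ of the $a_i$'s for $i\in I$ over $A$. On the other hand, strong cocartesianness of $\chi$ means every 2-face is a pushout, and by iterated pushout-pasting along the coordinate hyperplanes this implies that $\chi(1_I)$ is precisely this same wide pushout. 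Matching universal properties gives $\chi'(1_I) \simeq \chi(1_I)$ naturally in $I$.

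The main subtlety, and where I expect to spend the most care, is the bookkeeping for the iterated pushout-pasting: making precise that ``all 2-faces are pushouts'' entails ``each $k$-face is an iterated (equivalently wide) pushout built from the edges $a_i$'', and verifying that the identifications $\chi'(1_I) \simeq \chi(1_I)$ are compatible with the cube structure maps on both sides. This amounts to checking naturality of the constructions in $I$, which should follow from the universal properties but requires one to be systematic. Once this is done, both directions combine to give the desired equivalence.
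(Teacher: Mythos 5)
Your proposal is correct and follows essentially the same route as the paper: the easy direction uses that free cocartesian cubes are strongly cocartesian and that cobase change preserves this property, and the converse takes the edges $a_i$ out of the initial vertex, forms the free cube $a_1\boxplus\cdots\boxplus a_n$, and pushes out along the codiagonal $\nabla$. The only difference is that you spell out (via wide pushouts and pushout pasting) the verification that this cobase change recovers $\chi$, which the paper dismisses as "easy to see".
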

\begin{proof}
A cobase change of a strongly cocartesian cube is always strongly cocartesian.
Since the free cocartesian cubes are strongly cocartesian, this shows that the condition is sufficient.
Let $\chi$ be a strongly cocartesian cube.
We put $f_i:\chi(0,\dots,0) \to \chi(\delta_i)$ and consider the cube ${\chi':f_1\boxplus \dots \boxplus f_n}$.
Then $\chi'(0,\dots,0) = \coprod_n\chi(0,\dots,0)$ and we let $\nabla:\chi'(0,\dots,0)\to \chi(0,\dots,0)$ be the codiagonal map.
It is easy to see that $\chi$ is the cobase change of $\chi'$ along $\nabla$.
This shows that the condition is necessary.
\end{proof}

Let $R^{(-)}:\cC\op\to \fun \cC \cS$ be the Yoneda embedding.
For a cube $\chi$ in $\cC$, we denote by 
\[
\Gamma(\chi) := \colim_{U\in ([1]^n_*)\op} R^{\chi(U)}\,\ntto{\gamma(\chi)} R^{\chi(0,\dots,0)}.
\]
the cocartesian gap map of the cube $R^\chi$, the Yoneda image of $\chi$.

\begin{lemma}[{\cite[Theorem 3.3.1]{ABFJ:GC}}]
\label{lem:n-exc}
A functor $F:\cC\to \cS$ is $\gamma(\chi)$\=/local, $\gamma(\chi)\perp F$, if and only if $F(\chi)$ is cartesian.
In particular, $F$ is $(n-1)$\=/excisive if and only if, for any strongly cocartesian $n$\=/cube $\chi$ in $\cC$, we have $\gamma(\chi)\perp F$.
\end{lemma}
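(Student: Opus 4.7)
The plan is to prove the first claim by a direct Yoneda computation and then to deduce the second claim from the definition of $(n-1)$\=/excisive.

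First, since $\Gamma(\chi) = \colim_{U \in ([1]^n_*)\op} R^{\chi(U)}$, the mapping space functor $\mathsf{Map}(-,F)$ turns this colimit into a limit, giving
\[
\mathsf{Map}(\Gamma(\chi), F) \ =\ \lim_{U \in [1]^n_*} \mathsf{Map}(R^{\chi(U)}, F) \ =\ \lim_{U \in [1]^n_*} F(\chi(U))\,,
\]
where the last equality is the Yoneda lemma. Similarly, $\mathsf{Map}(R^{\chi(0,\dots,0)}, F) = F(\chi(0,\dots,0))$. I would then identify the map
\[
\mathsf{Map}(\gamma(\chi), F)\ :\ F(\chi(0,\dots,0)) \tto \lim_{U \in [1]^n_*} F(\chi(U))
\]
with the cartesian gap map of the cube $F(\chi)$. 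This identification is routine but is the one thing that needs to be checked carefully: I would argue it by tracing through the construction of $\gamma(\chi)$ as the cocartesian gap map of $R^\chi$ and using naturality of Yoneda, so that applying $\mathsf{Map}(-,F)$ to a cocartesian gap map yields the cartesian gap map of the image cube.

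Given this identification, $\gamma(\chi) \perp F$ means precisely that $\mathsf{Map}(\gamma(\chi), F)$ is invertible, which is exactly the condition that $F(\chi)$ is a cartesian cube. This proves the first assertion. The second assertion follows immediately: by definition, $F$ is $(n-1)$\=/excisive if and only if $F(\chi)$ is cartesian for every strongly cocartesian $n$\=/cube $\chi$ in $\cC$, which by the first assertion is equivalent to $\gamma(\chi) \perp F$ for all such $\chi$.

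The main (minor) obstacle is being precise about the identification of $\mathsf{Map}(\gamma(\chi), F)$ with the cartesian gap map of $F(\chi)$; everything else is formal. Note that no appeal to \cref{lem:cocart} (which rewrites strongly cocartesian cubes as cobase changes of free ones) is needed here—that lemma will presumably be used later to convert the orthogonality condition into a more manageable generating set, but for this lemma the direct Yoneda computation suffices.
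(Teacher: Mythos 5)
Your proof is correct and follows essentially the same route as the paper: both identify $\Map{\gamma(\chi)}{F}$ via Yoneda with the cartesian gap map of $F(\chi)$, so that $\gamma(\chi)\perp F$ is precisely cartesianness of $F(\chi)$, and the excisive statement is then immediate from the definition. Your remark that \cref{lem:cocart} is not needed here is also accurate; the paper invokes it only later, in the proof of \cref{thm:Goodwillie-main}.
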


\begin{proof}
By definition, the relation $\gamma(\chi)\perp F$ holds if the map $\Map{\gamma(\chi)} F$ is invertible.
This map is 
\begin{align*}
\Map{R^{\chi(0,\dots,0)}} F &\stto \Map{\Gamma(\chi)} F	 \\
=\qquad\qquad F(\chi(0,\dots,0)) &\stto \Map{\colim_{U\in ([1]^n_*)\op} R^{\chi(U)}} F	 \\
=\qquad\qquad F(\chi(0,\dots,0)) &\stto \lim_{U\in [1]^n_*} F(\chi(U))\,,
\end{align*}
which is exactly the cartesian gap map of the cube $F(\chi)$.
This shows the equivalences of the statements.
\end{proof}

Recall from \cref{fperp}, that two maps $u$ and $f$ are fiberwise orthogonal $u\fperp f$ if $u'\perp f$ for every base change $u'$ of $u$.

\begin{lemma}\label{lem:fiberwise-right-orthogonal}
Let $\cC$ be a small category with pushouts,
$F:\cC\to\cS$ be a functor, and $\chi$ be a cube in $\cC$. 
Then $\gamma(\chi)\fperp F$ if and only if $F$ sends any cobase change of $\chi$ to a cartesian cube.
\end{lemma}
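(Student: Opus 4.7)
\medskip

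The plan is to reduce the claim to \cref{lem:n-exc} via two observations: first, a Yoneda-style correspondence between cobase changes of $\chi$ in $\cC$ and representable base changes of $\gamma(\chi)$ in $\fun\cC\cS$; second, the fact that every base change of $\gamma(\chi)$ is a colimit of representable ones, using universality of colimits in the presheaf logos $\fun\cC\cS$.

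The key computation is as follows. The functor $R^{(-)}:\cC\op\to \fun\cC\cS$ is the Yoneda embedding of $\cC\op$, hence preserves limits; in particular it sends pushouts in $\cC$ to pullbacks in $\fun\cC\cS$. Given a map $\chi(0,\dots,0)\to c$ in $\cC$, let $\chi'$ denote the cobase change of $\chi$ along it, so $\chi'(U) = \chi(U)\sqcup_{\chi(0,\dots,0)} c$ for $U\neq (0,\dots,0)$ and $\chi'(0,\dots,0)=c$. Then $R^{\chi'(U)} = R^{\chi(U)}\times_{R^{\chi(0,\dots,0)}} R^c$, and universality of colimits in $\fun\cC\cS$ gives $\Gamma(\chi') = \Gamma(\chi)\times_{R^{\chi(0,\dots,0)}} R^c$. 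Consequently $\gamma(\chi')$ is precisely the base change of $\gamma(\chi)$ along the Yoneda map $R^c\to R^{\chi(0,\dots,0)}$ corresponding to $\chi(0,\dots,0)\to c$.

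For the forward direction, if $\gamma(\chi)\fperp F$ then every cobase change $\chi'$ of $\chi$ produces, via the correspondence above, a base change $\gamma(\chi')$ of $\gamma(\chi)$ satisfying $\gamma(\chi')\perp F$; by \cref{lem:n-exc} this means $F(\chi')$ is cartesian. Conversely, suppose $F$ sends cobase changes of $\chi$ to cartesian cubes. Any map $G\to R^{\chi(0,\dots,0)}$ in $\fun\cC\cS$ can be written as a colimit $G = \colim_i R^{c_i}$ of representables over $R^{\chi(0,\dots,0)}$ (every presheaf is a colimit of representables). By universality of colimits, the base change of $\gamma(\chi)$ along $G\to R^{\chi(0,\dots,0)}$ is $\colim_i \gamma(\chi'_i)$, where $\chi'_i$ is the cobase change of $\chi$ along $\chi(0,\dots,0)\to c_i$. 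Each $\gamma(\chi'_i)\perp F$ by hypothesis and \cref{lem:n-exc}. Since the class of maps orthogonal to $F$ (equivalently, to $F\to 1$) is closed under colimits in $\Arr{\fun\cC\cS}$---this is a standard property of left classes of orthogonality relations---we conclude $\colim_i \gamma(\chi'_i)\perp F$, hence every base change of $\gamma(\chi)$ is orthogonal to $F$, which is the fiberwise orthogonality $\gamma(\chi)\fperp F$.

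No step presents a serious obstacle: the content is entirely in identifying, via Yoneda, the base changes of $\gamma(\chi)$ with the cobase-change cubes in $\cC$, and then leveraging universality of colimits to pass from representable test objects to arbitrary ones.
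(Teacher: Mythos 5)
Your proof is correct and follows essentially the same route as the paper: both arguments identify cobase changes of $\chi$ (via Yoneda and universality of colimits) with base changes of $\gamma(\chi)$ along representable maps and then apply \cref{lem:n-exc} in each direction. The only difference is in the reverse direction, where the paper cites \cref{lem:generation-acyclic-class} (representables generate, so it suffices to test base changes over them), while you justify the same reduction directly by writing an arbitrary base object as a colimit of representables and using descent together with closure of the left orthogonality class under colimits in the arrow category---an equally valid, slightly more self-contained argument.
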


\begin{proof}
Let us say that a morphism of cubes $\alpha:\chi\to\chi'$ is cartesian (cocartesian) if for any map $A\to B\in [1]^n$, the square $\chi(A)\to \chi'(A)$
\[
\begin{tikzcd}
\chi(A)\ar[r]\ar[d]&\chi'(A)\ar[d]\\
\chi(B)\ar[r]&\chi'(B)
\end{tikzcd}
\]
is cartesian (cocartesian).
Let $F:\cC\to\cS$ be a functor and let $\chi\to\chi'$ be a cocartesian morphism of cubes. 
Then the induced morphism $R^{\chi'}\to R^{\chi}$ is a cartesian morphism of cubes.
By universality of colimits in logoi the cocartesian gap map $\gamma(\chi')$ is a base change of $\gamma(\chi)$.

If $\gamma(\chi)\fperp F$ then $F$ is local with respect to every base change of $\gamma(\chi)$. 
In particular, it is $\gamma(\chi')$\=/local.
By \cref{lem:n-exc} it follows that $F(\chi')$ is cartesian.
For the reverse implication, we use \cref{lem:generation-acyclic-class} to observe first that it is enough to consider only base changes of $\gamma(\chi)$ over a representable functor $R^A$ for some $A$ in $\cC$ since representable functors form a set of generators for $\fun \cC \cS$.
Because $\cC$ has pushouts, these base changes over $R^A$ become the Yoneda image of cubes that are cobase changes of $\chi$ with initial object $A$, and any such cobase change is of this form. If we collect them for all $A$ in $\cC$ we end up with (the Yoneda image of) all cobase changes $\chi'$ of $\chi$. Since by assumption $F$ sends all $\chi'$ to cartesian cubes, by Lemma~\ref{lem:n-exc} we see that $F$ is $\gamma(\chi')$\=/local for all $\chi'$. This is equivalent to stating $\gamma(\chi)\fperp F$.
\end{proof}

Recall that $P_0=\colim_{\cC}:\fun \cC \cS\to \cS$ is a quotient of logoi when $\cC$ has finite colimits.

\begin{theorem}
\label{thm:Goodwillie-main}
For $\cC$ a small category with finite colimits, let $\cG$ be the congruence of the logos morphism $P_0:\fun \cC\cS\to \cS$.
Then the right adjoint to the quotient $\fun \cC\cS\to \fun \cC\cS\quotient\cG^{n+1}$ identifies $\fun\cC\cS\quotient\cG^{n+1}$ with the subcategory $\fun \cC\cS\exc n$ of $n$\=/excisive functors $\cC \to \cS$.
In other terms, we have equalities of congruences $\cG_n=\cG^{n+1}$ for all $n$.
\end{theorem}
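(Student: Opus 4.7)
The strategy is to furnish matching concrete generators for $\cG_n$ and $\cG^{n+1}$, namely the class of iterated pushout products $R^{f_1}\pp\cdots\pp R^{f_{n+1}}$ with $f_i\in\Arr\cC$. The side $\cG^{n+1}$ is reached through the lex-generator calculus developed in Section~2.4, while the side $\cG_n$ is obtained by recognizing these pushout products as the cocartesian gap maps $\gamma(\chi)$ of free cocartesian cubes.

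For $\cG^{n+1}$, I would first generalize \cref{lem:SX}. Since $\cC$ has finite colimits it has an initial object $\emptyset$, whence $1 = R^{\emptyset}$ and every map $R^A\to 1$ is of the form $R^{\emptyset\to A}$. The representables generate $\fun\cC\cS$ under colimits and $P_0(R^A) = \colim_\cC\cC(A,-) = 1$, so the same argument as in \cref{lem:SX} gives $\cG = \{R^f : f\in\Arr\cC\}\cong$. The key extra observation is that $\Sigma := \{R^f : f\in\Arr\cC\}$ is a \emph{lex generator} in the sense of \cref{def:lex-generator}: Yoneda $R^{-}:\cC\op\to\fun\cC\cS$ sends finite colimits in $\cC$ to finite limits in $\fun\cC\cS$, so for $f:A\to B$ the diagonal $\Delta(R^f)$ is again of the form $R^g$ (namely for $g$ the codiagonal of $f$), hence $\Delta(\Sigma)\subseteq\Sigma$. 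By \cref{caractlexgenerator}, $\Sigma$ is a lex generator and $\cG = \Sigma\ac$. \cref{boxprodlexgen} then shows that the class $\Sigma\pp\cdots\pp\Sigma$ (with $n+1$ factors) is again a lex generator, and by the monoidality of the acyclic closure \cref{thm:aprod},
\[
\cG^{n+1} \,=\, (\Sigma\ac)^{\aprod\,(n+1)} \,=\, (\Sigma\pp\cdots\pp\Sigma)\ac \,=\, \{R^{f_1}\pp\cdots\pp R^{f_{n+1}} : f_i\in\Arr\cC\}\cong.
\]

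For $\cG_n$, I would exploit the complementary Yoneda fact that $R^{-}$ turns sums in $\cC$ into products in $\fun\cC\cS$. Thus the Yoneda image of the free cocartesian cube $f_1\boxplus\cdots\boxplus f_{n+1}$ is the product cube $R^{f_1}\times\cdots\times R^{f_{n+1}}$, whose cocartesian gap map is, by the very definition of $\pp$, the iterated pushout product. Hence $\gamma(f_1\boxplus\cdots\boxplus f_{n+1}) = R^{f_1}\pp\cdots\pp R^{f_{n+1}}$. By \cref{lem:n-exc} the $n$-excisive functors are the $\{\gamma(\chi)\}$-local objects for $\chi$ ranging over strongly cocartesian $(n+1)$-cubes, and since $P_n$ is the corresponding left-exact localization, $\cG_n$ is the congruence they generate. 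By \cref{lem:cocart} every strongly cocartesian cube is a cobase change of a free one, and the cocartesian gap map of a cobase change is a base change of the original gap map (by universality of colimits); since congruences are base-change closed, the generating set can be restricted to free cocartesian cubes. Therefore $\cG_n = \{R^{f_1}\pp\cdots\pp R^{f_{n+1}} : f_i\in\Arr\cC\}\cong$, matching the formula above for $\cG^{n+1}$.

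The main obstacle is the dual Yoneda identification underpinning Step~2 and the key computation in Step~3: showing that $\{R^f\}$ is a lex generator (relying on $R^{-}$ turning colimits into limits) and that $\gamma(f_1\boxplus\cdots\boxplus f_{n+1}) = R^{f_1}\pp\cdots\pp R^{f_{n+1}}$ (relying on $R^{-}$ turning sums into products). These are the two sides of the same coin and both are essential for making the congruences $\cG_n$ and $\cG^{n+1}$ converge to the same concrete description. Once they are in place, the lex-generator calculus of \cref{boxprodlexgen}, the monoidality of \cref{thm:aprod}, and the reduction to free cubes via \cref{lem:cocart,lem:fiberwise-right-orthogonal} do the remaining formal work.
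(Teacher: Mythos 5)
Your proposal is correct and takes essentially the same route as the paper's proof: the same lex generator $\Sigma=\{R^f\}$ with the key observation $\Delta(R^f)=R^{\nabla f}$, the same use of \cref{boxprodlexgen} and the monoidality of $(-)\ac$ to describe the powers, the same identification of $R^{f_1}\pp\cdots\pp R^{f_{n+1}}$ with the cocartesian gap map of a free cocartesian cube, and the same reduction to free cubes via \cref{lem:cocart,lem:fiberwise-right-orthogonal}. The only difference is presentational: the paper concludes by matching local objects (a functor is local for the power congruence iff it is excisive, via fiberwise orthogonality), whereas you match generating sets of the two congruences directly, which requires the standard (and correct, if slightly glossed) fact that $\cG_n$ is the congruence generated by the gap maps $\gamma(\chi)$.
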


\begin{proof}
In this proof we will shift the statement from $n$ to $n-1$. Then we need to show that the local objects for the $n$\=/fold acyclic power $\cG^n := \cG^{\aprod n}$ of the congruence $\cG=P_0^{-1}(\Iso)$ are exactly the $(n-1)$\=/excisive functors.

Let $\Sigma$ be the set of maps $R^f$ for a map $f$ in $\cC$ (it is a set since $\cC$ is small).
The localization $P_0:\fun \cC\cS\to \cS$ is generated by $\Sigma$, hence $\Sigma\ssat=\cG$.
Since $\cG$ is a congruence, this proves also that $\Sigma\cong=\cG$.

Let $\nabla(f)=s_0\pp f$ denote the codiagonal of $f$ which exists in $\cC$ because $\cC$ is finitely cocomplete (see \cref{sec:codiagonal}).
Then the diagonal of $R^f$ is $\Delta(R^f)=R^{\nabla f}$.
This implies $\Delta(\Sigma) \subseteq \Sigma$ and $\Sigma\diag=\Sigma$.
Using \cref{sigmaac=sigmacong}, we get $\cG= \Sigma\cong = (\Sigma\diag)\ac = \Sigma\ac$.
In other terms, $\Sigma$ is a lex generator for the congruence $\cG$.
By~\cref{boxprodlexgen} this gives us a lex generator for the acyclic powers: $(\Sigma^{\pp n})\ac =(\Sigma^{\pp n})\cong = \cG^n$.
An element of $\Sigma^{\pp n}$ is of the form $R^{f_1}\pp\hdots\pp R^{f_n}$ for some maps $f_i$ in $\cC$.
Recall from~\cref{def:n-exc} that $f_1\boxplus \dots \boxplus f_n$ is a free cocartesian cube.
By definition of the pushout product, the map $R^{f_1}\pp\hdots\pp R^{f_n}$ is the cocartesian gap map of the cube $R^{f_1\boxplus \dots \boxplus f_n}$.
In other words, the set of free cocartesian cubes in $\cC$ provides a lex generator for the congruence $\cG^n$.

Now, let $F:\cC\to\cS$ be a functor which is local for $\cG^n$.
Since $\cG^n = (\Sigma^{\pp n})\ac$, this is equivalent to ${\gamma(f_1\boxplus \dots \boxplus f_n)\fperp F}$ for all free cocartesian $n$\=/cubes in $\cC$.
Using \cref{lem:fiberwise-right-orthogonal,lem:cocart} this is equivalent to $F$ being $(n-1)$\=/excisive.
The result $\cG_{n-1}=\cG^n$ follows.
\end{proof}

\begin{proof}[Proof of \cref{thm:Goodwillie:tower}]

We are going to apply \cref{thm:Goodwillie-main} to $\cC=\Fin$.
Here $P_0=\colim_{\cC}$ becomes the evaluation at the terminal object $1$ of $\Fin$.
This identifies the general case above with the situation in~\cref{thm:Goodwillie:tower}. 

\smallskip
\noindent\eqref{thm:Goodwillie:tower:power}
The proof of $\cG=\idl X$ is \cref{lem:SX}.
The proof that $\cG_n=\cG^{n+1}$ is \cref{thm:Goodwillie-main}.

\smallskip
\noindent\eqref{thm:Goodwillie:tower:mono}
is \cref{exmp:mono-congruence:oo-conn-obj}.

\smallskip
\noindent\eqref{thm:Goodwillie:tower:nilpotent}
An object $E$ in a logos $\cE$ is $n$\=/nilpotent precisely if $(\{E\to 1\}\cong)^{n+1} = \Iso$.
The logos $\S X \quotient \idl X ^{n+1}$ thus classifies $n$\=/nilpotent object by definition.

\smallskip
\noindent\eqref{thm:Goodwillie:tower:hyper-reduction}
We saw in \cref{exmp:rad-congruence2} that the hyper-reduction of $\S {X\connected \infty}$ is $\cS$.
Then the result follows from \cref{thm:tower}~\eqref{thm:tower:4}.
\end{proof}

\begin{remark}
The same arguments with $\cC=\Finp$ provide the pointed version of \cref{thm:Goodwillie:tower}.
\end{remark}


\appendix

\section{Tensor-frames}
\label{sec:tensor-frames}

This appendix collects some results about what we called {\it $\otimes$\=/frames} (tensor-frames) 
which are commutative monoids in suplattices such that the unit is the top element (\cref{def:tensor-frame}).
Our guiding example is  the $\otimes$\=/frame of ideals in a commutative ring $A$ with the product of ideals.
The $\otimes$\=/frame $\Ideal A$ enhances the frame of Zariski open subsets of $\Spec A$ (which consists only of the radical ideals) by keeping the information about {\it powers} of ideals.
These powers are related to the infinitesimal structure of $A$ and $\Spec A$.
In this sense, a $\otimes$\=/frame can be thought of as a frame with an extra infinitesimal structure.

A frame is a $\otimes$\=/frame such that the product is idempotent.
We shall compare the two notions by proving that the inclusion of frames into $\otimes$\=/frames has a left adjoint, sending a $\otimes$\=/frame to its frame of radical elements (recovering the projection $\Ideal A \to \Op{\Spec A}$), as well as a right adjoint extracting the subframe of idempotent elements (see the diagram of adjunctions below).
These two frames are not in general isomorphic.

The geometric dual of a $\otimes$\=/frame can be called a $\otimes$\=/locale.
It is possible to define notions of points, of open and closed inclusions, of surjections and inclusions for $\otimes$\=/locales that generalize that of locales.
This makes $\otimes$\=/locales worthy of being geometric objects.
Such developments will not be presented here though, where we shall only focus on the algebraic notion of $\otimes$\=/frame.

\medskip
The first two sections of this appendix focus on the more general notion of a $\otimes$\=/lattice (monoid in suplattices without the condition that the unit is the maximal element).
The main result is \cref{localizationquantale} which will be one of our main tools to construct monoidal structures.
The comparisons between the categories of $\otimes$\=/lattices, $\otimes$\=/frames, and frames are summarized in the following diagram (where left adjoints are on top of their right adjoint).

\[
\begin{tikzcd}
\TLattice \ar[from=rr, hook']\ar[rr, shift left=3,"N"]\ar[rr, shift right=3,"(-)\downslice 1"']
&& \TFrame \ar[from=rr, hook']\ar[rr, shift left=3,"\sqrt-"]\ar[rr, shift right=3,"(-)\idem"']
&& \Frame
\end{tikzcd}
\]

\begin{table}[h!]
\caption{Some Algebra--Geometry dualities}
\label[table]{table:tensor-locales}
\begin{center}	
\renewcommand{\arraystretch}{1.6}
\begin{tabularx}{.6\textwidth}{
|>{\centering\arraybackslash}X
|>{\centering\arraybackslash}X|
}
\hline
{\it Algebra} & {\it Geometry}\\
\hline
commutative ring & affine scheme\\
\hline
Boolean algebras & Stone spaces\\
\hline
logos & topos\\
\hline
frame & locale\\
\hline
{\it $\otimes$\=/frame} & {\it $\otimes$\=/locale} \\
\hline
(commutative) $\otimes$\=/lattice & (commutative) quantale\\
\hline
\end{tabularx}
\end{center}	
\end{table}

\subsection{Tensor-lattice and quantales}
\label{sec:mframe}

Recall that a poset $P$ is said to be a {\it suplattice} if every subset $S\subseteq P$ has a supremum $\mathsf{sup}(S)=\bigvee_{x\in S} x$.
Every suplattice has a minimum element $\bot=\mathsf{sup}(\emptyset)$.
Every suplattice $P$ is also an {\it inf-lattice}, since 
\[
\mathsf{inf}(S)=\mathsf{sup}\{x\in P \ | \ \forall s\in S, \ x\leq s \}
\]
for every subset $S\subseteq P$. 
In particular, $P$ has a maximum element $\top=\mathsf{inf}(\emptyset)=\mathsf{sup}(P)$.
A {\it morphism} of suplattices is a map preserving suprema $\phi:P\to Q$. In particular, $\phi(\bot)=\bot$.
Every morphism of suplattices $\phi:P\to Q$ has a right adjoint $\phi_*:Q\to P$ given by
\[
\phi_*(y)=\mathsf{sup}\{ x\in P \ | \ \phi(x)\leq y \}
\]
for every $y\in Q$.
The map $\phi_*$ preserves infima.

\begin{definition}
A (commutative) {\it tensor-lattice}, or {\it $\otimes$\=/lattice}, is a suplattice $Q$ equipped with a commutative monoid structure $(Q,\star,1)$ for which the product $\star: Q\times Q\to Q$ preserves suprema in each variable.
If $P$ and $Q$ are $\otimes$\=/lattices, we shall say that a map  $\phi:P\to Q$ is a {\it morphism} if it is both a morphism of suplattices and a morphism of monoids.
We denote by $\TLattice$ the category of $\otimes$\=/lattices.
\end{definition}

Let us spell out the notion of $\otimes$\=/lattice.
By definition, the map $a\star(-): Q\to Q$ preserves suprema for every $a\in Q$.
Thus, we have
\[
a\star \bigl(\bigvee_{x\in S} x \bigr)=\bigvee_{x\in S} a\star x
\]
for every subset $S\subseteq Q$.
It follows that the map $a\star(-): Q\to Q$ has a right adjoint, 
denoted $a\div (-):Q\to Q$, and called the {\it division} by $a\in Q$. 
For every $a,b,c\in Q$ we have
\[
a\star b\leq c \ \Leftrightarrow \  b\leq a\div c\,. 
\]

\begin{remark}
A poset is a special kind of category, and a $\otimes$\=/lattice $(Q,\star,1)$ is a special kind of  a symmetric monoidal closed category.
By definition, the tensor product of two elements $x$ and $y$ of $Q$ is $x\star y$, and the internal hom $Hom(x,y)$ is the element $x\div y$ for every $x,y\in Q$.
\end{remark}

\begin{examples}
\begin{exmpenum}
\item\label{quantfrommonoid}
The power set $\powerset M$ of a commutative monoid $M$ has the structure of a $\otimes$\=/lattice where $A\star B=\{xy \ | \ x\in A, y\in B\}$, and where  $A\div B=\{x\in M \ | \ Ax\subseteq B\}$.
The unit of $\powerset M$ is the singleton $\{1\}$, where $1\in M$ is the unit of the monoid $M$.

\item\label{ex:ring-ideal}
The poset of ideals $\Ideal A$ of a commutative ring $A$ has the structure of a $\otimes$\=/lattice, where $I\star J$ is the ideal generated by the set $\{xy \ | \ x\in I, y\in J\}$, and where $I\div J=\{x\in A \ | \ Ix\subseteq J\}$.
The unit of the $\otimes$\=/lattice $\Ideal A$ is the largest ideal $A$.

\item\label{quantinterval01}
The real interval $[0,1]$ has the structure of a commutative $\otimes$\=/lattice, where for numbers $x,y\in [0,1]$, $x\star y=xy$ is the product and $x\div y= x^{-1}y$ is the division.

\item\label{framearequantal}
Recall that suplattice $F$ is said to be a {\it frame} if the map $x\wedge (-):F\to F$ preserves suprema for every element $x\in F$.
A frame $F$ is a $\otimes$\=/lattice with $x\star y:=x\wedge y$.
The unit element is the largest element of $F$.
We shall see in \cref{cor:charac-frames} that a $\otimes$\=/lattice is a frame iff $x\star x = x$ for all $x$.

\item\label{opensubset}
The lattice of open subsets of a topological space $X$ is a frame $\Op X$, thus a $\otimes$\=/lattice.

\item\label{Sierpinski} In particular, the Sierpi\'nski frame $S:=\{\bot<x<\top\}$ (with product defined by $x\star x = x$) is a $\otimes$\=/lattice.
A $\otimes$\=/lattice morphism $S\to Q$ is the same thing as an idempotent element in $Q$.
The $\otimes$\=/lattice $S$ is a frame, and if $Q$ is another frame, a $\otimes$\=/lattice morphism $S\to Q$ is the same thing as an element in $Q$.

\item\label{ex:free-mframe} The free $\otimes$\=/lattice on one generator is the powerset $\powerset{\mathbb N}$ with the product extending the sum of integers by commutation to suprema. The universal element is $1\in \mathbb N$ and the multiplication unit is $0\in \mathbb N$.

\end{exmpenum}
\end{examples}

Generalizing \cref{ex:free-mframe}, we have the following result whose proof we leave to the reader.
\begin{proposition}
\label{prop:free-quantale}
The free $\otimes$\=/lattice on a set $X$ is the powerset $\powerset{M(X)}$, where $M(X)$ is the free commutative monoid on the set $X$.
\end{proposition}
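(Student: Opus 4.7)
The plan is to verify the universal property of $\powerset{M(X)}$ directly by describing the unique extension. First, note that because a $\otimes$\=/lattice is by definition a \emph{commutative} monoid in suplattices, the monoid $M(X)$ appearing in the statement must be read as the free \emph{commutative} monoid on $X$; the example $\powerset{\mathbb{N}}$ in \cref{ex:free-mframe} is consistent with this. Let $\eta: X \to \powerset{M(X)}$ be the map sending $x \in X$ to the singleton $\{x\} \in \powerset{M(X)}$ (viewing $x$ as the corresponding generator of $M(X)$). I will show that for every $\otimes$\=/lattice $Q$ and every function $f: X \to Q$, there is a unique $\otimes$\=/lattice morphism $\bar f : \powerset{M(X)} \to Q$ with $\bar f \circ \eta = f$.

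First I will define $\bar f$. On a singleton $\{m\}$, where $m = x_1 \cdots x_n$ is a word in $M(X)$ (with $n=0$ giving the empty word $e$, the unit of $M(X)$), set
\[
\bar f(\{m\}) \;:=\; f(x_1) \star \cdots \star f(x_n),
\]
with the convention that the empty product is the unit $1 \in Q$. This is well-defined independently of the order chosen for the factors, because $M(X)$ is free \emph{commutative} and $(Q,\star,1)$ is a commutative monoid. Then extend to arbitrary subsets by
\[
\bar f(S) \;:=\; \bigvee_{m \in S} \bar f(\{m\}).
\]
By construction $\bar f$ preserves arbitrary suprema, since $S = \bigvee_{m \in S} \{m\}$ in $\powerset{M(X)}$ and suprema of suprema are suprema.

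Next I will verify that $\bar f$ is a monoid morphism. Preservation of the unit is immediate: $\bar f(\{e\}) = 1$ by the empty product convention, and $\{e\}$ is the unit of the convolution product on $\powerset{M(X)}$. For the product, given $S, T \subseteq M(X)$, the convolution is $S \star T = \{mn \mid m \in S,\, n \in T\}$, so
\begin{align*}
\bar f(S \star T)
&= \bigvee_{m \in S,\, n \in T} \bar f(\{mn\})
= \bigvee_{m \in S,\, n \in T} \bar f(\{m\}) \star \bar f(\{n\}) \\
&= \left(\bigvee_{m \in S} \bar f(\{m\})\right) \star \left(\bigvee_{n \in T} \bar f(\{n\})\right)
= \bar f(S) \star \bar f(T),
\end{align*}
where the middle equality uses that concatenation of words corresponds to the product in $Q$, and the following one uses the defining property of a $\otimes$\=/lattice that $\star$ preserves suprema in each variable separately.

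Finally, uniqueness is forced: any $\otimes$\=/lattice morphism $g : \powerset{M(X)} \to Q$ satisfying $g \circ \eta = f$ must agree with $\bar f$ on singletons $\{x\}$ for $x \in X$; it must then agree on all singletons $\{m\}$ for $m \in M(X)$ because it preserves products and the unit; and finally it must agree on all subsets because every $S$ is the supremum of its singletons and $g$ preserves suprema. The only subtlety is the commutativity point flagged above; beyond that the argument is a routine chase through the two free constructions (free commutative monoid on a set, then free suplattice on a set), packaged together by the fact that $\powerset{-}$ is strong symmetric monoidal from $(\Set,\times)$ to $(\mathsf{SupLat},\otimes)$ and therefore carries commutative monoids to commutative monoids.
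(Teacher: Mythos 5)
The paper offers no proof of this proposition (it is explicitly ``left to the reader''), so there is nothing to compare against; your direct verification of the universal property is exactly the expected argument, and it is correct: the definition of $\bar f$ on singletons, its sup-extension, the check that convolution of subsets matches the product in $Q$ via preservation of suprema in each variable, and the uniqueness chase through singletons are all sound. Your opening caveat is also well taken: since a $\otimes$\=/lattice is by definition a \emph{commutative} monoid in suplattices, $M(X)$ must be read as the free commutative monoid, a reading consistent with \cref{ex:free-mframe} and confirmed by the paper itself in \cref{prop:free-normalquantale}, where it writes ``free (commutative) monoid''.
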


\subsection{Closure operators and quotients of tensor-lattices}

The main result of this section is \cref{localizationquantale} which is a criteria for the existence of monoidal structures on reflective subposets of a $\otimes$\=/lattice.

Recall that a subset $P'$ of a poset $P$ is {\it reflective} if 
the inclusion $i:P'\subseteq P$ has a left adjoint $\rho:P\to P'$ called the {\it reflector}. 
The map $R:=i\rho:P\to P$ is then a closure operator:
\begin{enumerate}[label=(\roman*)]
\item $x\leq y\Rightarrow R(x)\leq R(y)$ for every $x,y\in P$;
\item $x\leq R(x)$ for every $x\in P$ 
\item $RR(x)=R(x)$ for every $x\in P$.
\end{enumerate}
Conversely, if $R:P\to P$ is a closure operator, then the subset of $R$\=/closed elements $P'=\{x\in P \ | \ R(x)=x\}$ is reflective with reflector $\rho:P\to P'$ given by $\rho(x)=R(x)$.
Then we get back $R=i\rho:P\to P$, where $i:P'\subseteq P$ is the inclusion.
The subset $P'$ is a suplattice if $P$ is a suplattice: the supremum in $P'$ of a subset $S\subseteq P'$ is the image by $\rho$ of supremum of $S$ in $P$.

\begin{proposition}
\label{localizationquantale}
Let $P=(P,\star, 1)$ be a commutative $\otimes$\=/lattice, let $R:P\to P$ be a closure operator and let $P'=\{x\in P \ | \ R(x)=x\}$.
Then the following two conditions are equivalent:
\begin{propenum}
\item\label{localizationquantale:1}
    for every $x\in P$, $x\div P'\subseteq P'$;
\item\label{localizationquantale:2}
    for every $x,y\in P$, $R(x\star y)=R(x\star R(y))$.
\end{propenum}
If these conditions hold, let us put $x\cdot y :=R(x\star y)$ for every $x,y\in P$.
Then
\[
x\cdot y
\ =\ 
R(x)\cdot y
\ =\ 
x\cdot R(y)
\ =\ 
R(x)\cdot R(y)
\]
for every $x,y\in P$.
If $y\in P'$, then $R(x)\div y=x\div y$ for every $x\in P$.
Moreover, the poset $P'$ has the structure of a $\otimes$\=/lattice with the operation $(x,y)\mapsto x\cdot y$, and the reflector $\rho:P\to P'$ is a morphism of $\otimes$\=/lattices.
\end{proposition}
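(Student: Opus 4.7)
The plan is to proceed in four short steps: first establish the equivalence $(1) \iff (2)$ by exploiting the adjunction $x \star (-) \dashv x \div (-)$; second derive the multiple expressions for $x \cdot y$ from (2); third verify the identity $R(x) \div y = x \div y$ for $y \in P'$; and fourth check that $(P', \cdot, R(1))$ is a $\otimes$-lattice with $\rho$ a morphism.

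For $(1) \Rightarrow (2)$, the inequality $R(x \star y) \leq R(x \star R(y))$ is immediate from monotonicity of $R$ and $y \leq R(y)$. For the reverse, since $x \star y \leq R(x \star y) \in P'$, we have $y \leq x \div R(x \star y)$, and by hypothesis $x \div R(x \star y) \in P'$, so $R(y) \leq x \div R(x \star y)$, i.e., $x \star R(y) \leq R(x \star y)$, and applying $R$ gives the desired inequality. For $(2) \Rightarrow (1)$, fix $z \in P'$ and set $w := R(x \div z)$; I want $w = x \div z$, so it suffices to show $x \star w \leq z$. By (2),
\[
R(x \star w) \;=\; R\bigl(x \star R(x \div z)\bigr) \;=\; R\bigl(x \star (x \div z)\bigr) \;\leq\; R(z) \;=\; z,
\]
so $x \star w \leq z$, whence $w \leq x \div z$ and equality holds.

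Granting (2), the formula $x \cdot y = R(x) \cdot y = x \cdot R(y) = R(x) \cdot R(y)$ is a direct application of (2) together with commutativity of $\star$. For $y \in P'$, the direction $R(x) \div y \leq x \div y$ follows from $x \leq R(x)$, while $x \div y \leq R(x) \div y$ reduces to $R(x) \star (x \div y) \leq y$: by (2), $R(R(x) \star (x \div y)) = R(x \star (x \div y)) \leq R(y) = y$.

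The $\otimes$-lattice structure on $P'$ is then routine. Commutativity is immediate. Associativity follows from $(x \cdot y) \cdot z = R(R(x \star y) \star z) = R((x \star y) \star z)$, using (2) to strip off the inner $R$, and the symmetric computation for $x \cdot (y \cdot z)$. The unit law is $R(1) \cdot y = R(1 \star y) = R(y) = y$ for $y \in P'$. Distributivity of $\cdot$ over suprema in $P'$ reduces, after noting $\bigvee^{P'} y_i = R(\bigvee^P y_i)$, to the general identity $R(\bigvee R(a_i)) = R(\bigvee a_i)$ (both inequalities follow from monotonicity and idempotence of $R$) combined with distributivity of $\star$ in $P$. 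Finally, $\rho \colon P \to P'$ preserves suprema as a left adjoint, sends $1$ to the unit $R(1)$, and satisfies $\rho(x \star y) = R(x \star y) = \rho(x) \cdot \rho(y)$ by construction. The one step where something nontrivial is happening is exchanging $R$ past $\star$; this is exactly the content of (2), and every subsequent calculation is an application of it.
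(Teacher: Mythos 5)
Your proof is correct and follows essentially the same route as the paper: the equivalence $(1)\iff(2)$ is established by the same adjunction manipulations, and the monoid axioms on $P'$ are verified by the same computations hinging on $R(x\star R(y))=R(x\star y)$. The only minor deviations are that you check sup-preservation of $\cdot$ directly via $R(\bigvee R(a_i))=R(\bigvee a_i)$ where the paper instead exhibits $x\div(-)$ as a right adjoint to $x\cdot(-)$ on $P'$, and that you explicitly verify the auxiliary identities $x\cdot y=R(x)\cdot y=x\cdot R(y)=R(x)\cdot R(y)$ and $R(x)\div y=x\div y$ for $y\in P'$, which the paper states but leaves largely implicit.
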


\begin{proof} 
\noindent (1)$\Ra$(2) 
For every $x,y\in P$, we have $x\star y\leq R(x\star y)$
and hence $y\leq x\div R(x\star y)$.
Thus, $R(y)\leq x\div R(x\star y)$ by (1), since $R(x\star y)\in P'$. 
It follows that $x\star R(y)\leq R(x\star y)$ and hence that $R(x\star R(y))\leq R(x\star y)$.
But we have $R(x\star y)\leq R(x\star R(y))$, since $y\leq R(y)$.
The equality $R(x\star y)=R(x\star R(y))$ is proved. 

\smallskip
\noindent (2)$\Ra$(1) 
If $y\in P'$, let us show that $x\div y\in P'$ for every $x\in P$.
For every $z\in P$, we have
\[
z\ \leq x\div y
\ \Rightarrow \ 
x\star z\ \leq\ y 
\ \Rightarrow \ 
R(x\star z)\ \leq\ y
 \ \Rightarrow \
R(x\star R(z))\ \leq\ y
 \ \Rightarrow \
x\star R(z)\ \leq\ y
 \ \Rightarrow \
R(z)\ \leq\ x\div y
\]
since $R(x\star z)=R(x\star R(z))$ by (2).
In particular, if $z:=x\div y$ then $R(x\div y)\leq x\div y$. 
This proves that $x\div y\in P'$. 
The equivalence (1)$\iff$(2) 
is proved.

\smallskip
If the conditions 
(1) 
and 
(2) 
hold,
let us show that $P'$ has the structure of a $\otimes$\=/lattice  with the operation defined by putting $x\cdot y :=R(x\star y)$ for $x,y\in P'$. 
First, $P'$ is a suplattice and the reflector $\rho:P\to P'$ preserves suprema, since $P$ is a suplattice.
The product $\cdot:P'\times P'\to P'$ is commutative since $x\cdot y =R(x\star y)=R(y\star x)=y\cdot x$.
Observe that for every $x,y\in P$, we have $R(x\star y)=R(y \star x)=R(y\star R(x))=R(R(x)\star y)$ by 
condition (2)%
, since the operation $\star$ is commutative.
It follows that for every $x,y,z\in P'$ we have
\[
x\cdot (y\cdot z)\ :=\ R(x\star (y\cdot z))\ =
R(x\star R(y\star z))\ =\ R(x\star (y\star z))
\]
\[
= R((x\star y)\star z)\ =\ R(R(x\star y)\star z)\ =
R((x\cdot y)\star z)\ =\ (x\cdot y)\cdot z\,.
\]
Hence the product $\cdot: P'\times P'\to P'$ is associative.
The element $R(1)\in P'$ is a unit, since
\[
x\cdot R(1)
\ =\ 
R(x\star R(1))
\ =\ 
R(x\star 1)
\ =\ 
R(x)
\ =\ 
x
\]
for every $x\in P'$ by condition (2). 
The map $x\div (-):P'\to P'$ is right adjoint to the map $x\cdot (-):P'\to P'$  for every $x\in P'$, since we have 
\[
x\cdot z \leq y \ \Leftrightarrow  \ R(x\star z)\leq y
\  \Leftrightarrow \ x\star z\leq y \ \Leftrightarrow \
  z\leq x\div y
\]
for every $y,z\in P'$.
This completes the proof that $P'$ is a $\otimes$\=/lattice.
The map  $\rho:P\to P'$ is a morphism of monoids, since we have
\[
\rho(x\star y)
\ =\ 
R(x\star y)
\ =\ 
R(x\star R(y))
\ =\ 
R(R(x)\star R(y))
\ =\ 
R(x)\cdot R(y)
\ =\ 
\rho(x)\cdot \rho(y)
\]
for every $x,y\in P$. 
Hence the map $\rho:P\to P'$ is a morphism of $\otimes$\=/lattices, since it also preserves suprema.
\end{proof}

\begin{remark}
Quotients of $\otimes$\=/lattices are localizalitions of the underlying suplattice.
As such they are controlled by the class $W$ or relations $x\leq x'$ that become equalities in the quotient.
For the quotient to be compatible with the monoidal structure, $W$ need to satisfy the following compatibility with the product: if $x\leq x'$ is in $W$, then, for every $y$, $x\star y\leq x'\star y$ must be in $W$.
When translated as a condition on the $W$\=/local objects, this is the condition \cref{localizationquantale:1}.
\end{remark}

\begin{lemma}
\label{divisionmorphism}
Let $\phi:P\to Q$ be a morphism of $\otimes$\=/lattices with right adjoint $\phi_*:Q\to P$.
Then the following identity holds
\[
x\div \phi_*(y)
\ =\ 
\phi_*\big(\phi(x)\div y\big)
\]
for every $x\in P$ and $y\in Q$.
\end{lemma}

\begin{proof}
For any $z\in P$, we have equivalences 
\begin{align*}
z\ \leq\ x\div \phi_*(y)
    &\ \Leftrightarrow\  x\star z\ \leq\ \phi_*(y)\\
    &\ \Leftrightarrow\  \phi(x\star z)\ \leq\ y\\
    &\ \Leftrightarrow\  \phi(x)\star \phi(z)\ \leq\ y\\
    &\ \Leftrightarrow\  \phi(z)\ \leq\ \phi(x)\div y\\
    &\ \Leftrightarrow\  z\ \leq\ \phi_*(\phi(x)\div y)\,.\qedhere
\end{align*}
\end{proof}

\begin{proposition}[Image factorization]
\label{prop:tensor-lattice:image-facto}
Let $\phi:P\to Q$ be a morphism of $\otimes$\=/lattices, with right adjoint ${\phi_*: Q\to P}$.
Then the map $R:=\phi_* \phi:P\to P$ is a closure operator satisfying the conditions of \cref{localizationquantale}.
If $P'=\{x\in P\ | \ R(x)=x\}$ then the morphism $\phi:P\to Q$ admits a factorisation in the
category of $\otimes$\=/lattices
\[ \begin{tikzcd}
P\ar[rr,"\phi"] \ar[rd,"{\rho}"']&& Q \\
& P' \ar[ru,"{\phi'}"']
\end{tikzcd}
\] 
where $\rho:P\to P'$ is the reflector. 
Moreover, both the right adjoint to $\rho$ and $\phi'$ are injective morphisms of posets.
\end{proposition}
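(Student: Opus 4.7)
The plan is to verify in turn each of the three assertions, using that $P'$ is precisely the image of $\phi_*$. First, I would check that $R=\phi_*\phi$ is a closure operator: monotonicity is immediate since both $\phi$ and $\phi_*$ are monotone; the inflationary property $x\le R(x)$ is the unit of the adjunction $\phi\dashv\phi_*$; and idempotency follows from the triangle identity $\phi\phi_*\phi=\phi$, which gives $R\circ R=\phi_*\phi\phi_*\phi=\phi_*\phi=R$. A short unwinding then shows that $P'=\{x\in P\mid R(x)=x\}$ coincides with the image of $\phi_*$: indeed any $\phi_*(y')$ is fixed by $R$ by the other triangle identity $\phi_*\phi\phi_*=\phi_*$, and conversely any $R$-fixed $x$ equals $\phi_*(\phi(x))$ by definition.

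Next I would verify condition~\cref{localizationquantale:1} of \cref{localizationquantale}, namely that $x\div y\in P'$ for every $x\in P$ and every $y\in P'$. This is where \cref{divisionmorphism} does the work: writing $y=\phi_*(y')$ for some $y'\in Q$, the identity
\[
x\div y\ =\ x\div\phi_*(y')\ =\ \phi_*\bigl(\phi(x)\div y'\bigr)
\]
exhibits $x\div y$ as an element of the image of $\phi_*$, hence of $P'$. This is the step I expect to be the main obstacle, in the sense that it is the only place where the nontrivial compatibility between the adjunction and the internal hom is used; everything else is formal.

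Having established condition~\cref{localizationquantale:1}, I would invoke \cref{localizationquantale} to endow $P'$ with a commutative $\otimes$-lattice structure whose product is $x\cdot y=R(x\star y)$ and whose unit is $R(1)$, and to conclude that the reflector $\rho^*:P\to P'$ is a morphism of $\otimes$-lattices. Finally, I would check that $\phi':P'\to Q$ defined as the restriction of $\phi$ is also a morphism. The factorization $\phi=\phi'\rho^*$ is immediate from $\phi\rho^*(x)=\phi\phi_*\phi(x)=\phi(x)$. Preservation of the product uses the same triangle identity:
\[
\phi'(x\cdot y)\ =\ \phi\bigl(R(x\star y)\bigr)\ =\ \phi\phi_*\phi(x\star y)\ =\ \phi(x\star y)\ =\ \phi'(x)\star\phi'(y),
\]
and $\phi'(R(1))=\phi\phi_*\phi(1)=\phi(1)=1_Q$. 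For suprema, the supremum of a family $S\subseteq P'$ computed in $P'$ is $R(\bigvee_P S)$, so $\phi'(\bigvee_{P'}S)=\phi\phi_*\phi(\bigvee_P S)=\phi(\bigvee_P S)=\bigvee_{s\in S}\phi'(s)$, completing the verification.
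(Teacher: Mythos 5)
Your proof is correct and follows exactly the route the paper intends: the paper states the proposition without proof, immediately after \cref{divisionmorphism} and \cref{localizationquantale}, and your argument—identifying $P'$ with the image of $\phi_*$, using \cref{divisionmorphism} to verify condition (1) of \cref{localizationquantale}, then checking via the triangle identity $\phi\phi_*\phi=\phi$ that $\phi'=\phi|_{P'}$ is a morphism and that $\phi=\phi'\rho^*$—is precisely the intended assembly of those two results. No gaps.
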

\begin{proof}
To see that $R=\phi_*\phi$ is a reflector, we need only to check that it is idempotent (the other conditions are clear).
The triangular relations for the adjunction $\phi\dashv\phi_*$ give
$\phi_*\leq \phi_*\phi\phi_*\leq \phi_*$, that is $\phi_*\phi\phi_*=\phi_*$.
From there, we get $\phi_*\phi\phi_*\phi = \phi_*\phi$, that is $R^2=R$.
Dually, putting $S=\phi\phi_*$, we find that $S^2=S$ as an endomorphism of $Q$.
In other words, every adjunction of posets is idempotent.
This implies that $\phi\dashv\phi_*$ restricts to an isomorphism between the posets of fixed points of $R$ and $S$.
We get a factorization of $\phi$:
\[ \begin{tikzcd}
P\ar[rr,"\phi"] \ar[d,"{\rho}"']&& Q \\
P'={\sf Fix}(R) \ar[rru,"{\phi'}"']\ar[rr,equal] && Q'={\sf Fix}(S)\ar[u, "\sf can"', hook']\,,
\end{tikzcd}
\] 
and we define $\phi'$ as the composition of the isomorphism $P'=Q'$ and the inclusion $Q'\to Q$.

Using the relation $\phi_*\phi\phi_*=\phi_*$, the set $P'$ is equivalently described as  $P'=\{x\in P\,|\,\exists y\in Q, x=\phi_*(y)\}$ (\ie the image of $\phi_*$).
Then the relation $x\div P'\subseteq P'$ of \cref{localizationquantale:1} is satisfied by \cref{divisionmorphism}, and $\rho:P\to P'$ is a morphism of $\otimes$\=/lattices.

Let us see that $\phi'$ is also such a morphism.
By definition of $\phi'$, it is enough to check that the subposet $Q'\subseteq Q$ is closed under suprema and products.
We use the description of $Q'$ as the image of $\phi$, $Q'={\{y\in Q\,|\,\exists x\in P, y=\phi(x)\}}$.
Let $x'_i$ be a family of elements in $Q'$. 
By characterization of $Q'$, it is the image of a family $x_i$ of elements in $P$.
Then $\bigvee x'_i = \bigvee\phi(x_i) = \phi(\bigvee x_i)$ shows that the suprema is in $Q'$.
Similarly, given two elements $x'$ and $y'$ in $Q'$, with lifts $x$ and $y$ in $P$, the equality
$x'y' =\phi(x)\phi(y) = \phi(xy)$ show that their product is in $Q'$.

For the last statement:
the morphism $\phi'$ is injective by construction, and $\rho_*$ is injective since $\rho$ is a localization of suplattices.
\end{proof}

\begin{definition}
\label{def:quotient}
We shall say that a morphism of $\otimes$\=/lattices $\phi:P\to Q$ is a {\it quotient} if the map $\phi_*:Q\to P$ is injective.
The corresponding morphism of commutative quantales is called an {\it inclusion}.
We shall say that a morphism of $\otimes$\=/lattices $\phi:P\to Q$ is an {\it injection} if $\phi$ is injective on objects.
The corresponding morphism of commutative quantales is called a {\it surjection}.
Geometrically, \cref{prop:tensor-lattice:image-facto} provides a factorization of a morphism of commutative quantales into a surjection followed by an inclusion.
\end{definition}

\subsection{Tensor-frames}
\label{sec:normal-quantale}

\begin{definition}
\label{def:tensor-frame}
Let $Q=(Q,\star,1)$ be a $\otimes$\=/lattice. 
We shall say that an element $x\in Q$ is {\it normal} if $\top \star x= x$, where $\top $ is the top element of $Q$.
We shall say that the $\otimes$\=/lattice $Q=(Q,\star,1)$ is a {\it $\otimes$\=/frame} if $1=\top$ (if every element $x\in Q$ is normal).
The category $\TFrame$ of $\otimes$\=/frames is defined as a full subcategory of that of $\otimes$\=/lattices.
\end{definition}

\begin{examples}
\begin{exmpenum} 

\item \label{quantfrommonoid2}
The power set $\powerset M$ of a commutative monoid $M$ has the structure of a $\otimes$\=/frame by  \Cref{quantfrommonoid}.
A subset $A\subseteq M$ is normal in $\powerset M$ if and only if it is an {\it ideal} (\ie if $M\star A\subseteq A$).
The subset $Mx :=\{y\star x\,|\,y\in M\}$ is the smallest ideal containing the element $x$.
The relation $y\in Mx$ is a preorder $x\leq y$ on $M$ such that all invertible elements (in particular the unit) are maximal.
A subset $A\subseteq M$ is normal if and only it is upward closed with respect to this preorder.
The set of ideals of $M$ is a $\otimes$\=/frame for the product of ideals.

\item \label{ex:ring-ideal2} 
The $\otimes$\=/lattice $\Ideal A$ of \cref{ex:ring-ideal} is a $\otimes$\=/frame.

\item 
The $\otimes$\=/lattice $[0,1]$ of \cref{quantinterval01} is a $\otimes$\=/frame. 

\item \label{framenormalquantale}
Any frame is a $\otimes$\=/frame.

\item \label{subnormal}
If $(Q,\star, 1)$ is a $\otimes$\=/lattice, then the poset $Q\downslice 1=\{x\in Q \ | \ x\leq 1\}$ is a sub-$\otimes$\=/lattice which is an $\otimes$\=/frame. 
The $\otimes$\=/lattice $Q\downslice 1$ is the coreflection of $Q$ in $\otimes$\=/frames: for any $\otimes$\=/frame $N$, any morphism $N\to Q$ sends $\top$ to 1 and therefore factors through $Q\downslice 1$.

\item \label{normal-quotient}
Generalizing \cref{quantfrommonoid2}, if $(Q,\star, 1)$ is a $\otimes$\=/lattice, the the poset ${\sf N}(Q) = \{x\,|\, \top\star x = x\}$ of normal elements is a $\otimes$\=/frame with unit $\top$.
The inclusion ${\sf N}(Q)\subseteq Q$ has a right adjoint (sending $x$ to $\top\star x$) which is a morphism of $\otimes$\=/lattices.
This morphism $Q\to {\sf N}(Q)$ is the reflection of $Q$ in $\otimes$\=/frames: 
if $P$ is a $\otimes$\=/frame any morphism $Q\to P$ factors uniquely through ${\sf N}(Q)$.

\item The free $\otimes$\=/frame on one generator $x$ is $F(x):=\{\bot<\dots<x^2<x<\top\}$.
If $Q$ is a $\otimes$\=/frame, a $\otimes$\=/frame morphism $F(x)\to Q$ is the same thing as an element in $Q$.
A $\otimes$\=/frame morphism $Q\to F(x)$ can be called a $\infty$\=/jet of arc in $Q$.

\item The $n$\=/nilpotent $\otimes$\=/frame on one generator $x$ is $F_n(x):=\{\bot<\dots<x^2<x<\top\}$ with the relation $x^{n+1}=\bot$.
If $Q$ is a normal frame, a $\otimes$\=/frame morphism $F_n(x)\to Q$ is the same thing as a nilpotent element of order $n$ in $Q$.
A $\otimes$\=/frame morphism $Q\to F_n(x)$ can be called a $n$\=/jet of arc in $Q$.

\end{exmpenum}
\end{examples}

We expand a bit on \cref{quantfrommonoid2}.
Let $[1]$ be the monoid $(\{0<1\},{\sf inf})$.
The characteristic functions of ideals define a bijection between ideals of $M$ and morphisms of preordered monoids $\phi:M\op\to [1]$.
The Yoneda map $M\to {\sf Ideal}(M) = \fun {M\op}{[1]}$ sends an element to its principal ideal $Mx$.
The product of $M$ extends in an $\otimes$\=/frame structure on ${\sf Ideal}(M)$ by commutation with suprema.
The proof of the following result is left to the reader.
\begin{proposition}
\label{prop:free-normalquantale}
The free $\otimes$\=/frame on a commutative monoid is ${\sf Ideal}(M)$.
The free $\otimes$\=/frame on a set $X$ is ${\sf Ideal}(M(X))$, where $M(X)$ is the free commutative monoid on $X$.
\end{proposition}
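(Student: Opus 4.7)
The plan is to establish a universal property of the form $\TFrame({\sf Ideal}(M),Q) = \mathsf{CMon}(M,Q)$ (where the right-hand side consists of commutative monoid morphisms into the underlying multiplicative monoid $(Q,\star,1)$), and then derive the second statement by composing this adjunction with the free commutative monoid adjunction $\Set \rightleftarrows \mathsf{CMon}$.

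The natural candidate for the unit of the adjunction is the ``principal ideal'' map $\eta:M \to {\sf Ideal}(M)$, $x \mapsto Mx$. First I would verify that $\eta$ is a morphism of commutative monoids: the principal ideal of the unit $1\in M$ is $M=\top$, which is the unit of ${\sf Ideal}(M)$, and since $M$ is commutative we have $Mx \cdot My = M(xy)$ in ${\sf Ideal}(M)$ (the product in ${\sf Ideal}(M)$ being the reflection in normal elements of the $\otimes$-lattice product from \cref{quantfrommonoid}, which for two ideals is already normal).

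Given a monoid morphism $\phi:M\to(Q,\star,1)$ into a $\otimes$-frame, I would define its extension by the natural formula
\[
\hat\phi(I) \;:=\; \bigvee_{x\in I}\phi(x),
\]
and then check three things: (i) preservation of suprema, which follows immediately since every ideal $I$ equals $\bigvee_{x\in I}Mx$ and by direct inspection of the formula; (ii) preservation of the unit, since $\hat\phi(M)=\bigvee_{x\in M}\phi(x)\geq \phi(1)=1$, and this equals $\top$ because $Q$ is a $\otimes$-frame so $1=\top$; and (iii) preservation of the product of ideals. Uniqueness follows because any $\otimes$-frame morphism extending $\phi$ must agree with $\hat\phi$ on principal ideals $Mx$ (it sends $Mx$ to $\phi(x)$, using that $\top\star\phi(x)=\phi(x)$ in $Q$), and then on all ideals by preservation of suprema.

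The main obstacle is verifying (iii), preservation of the product. Using that $IJ=\bigvee_{x\in I,\,y\in J} M(xy)$ as a supremum in ${\sf Ideal}(M)$, one computes
\[
\hat\phi(IJ)\;=\;\bigvee_{x\in I,\,y\in J}\phi(xy)\;=\;\bigvee_{x\in I,\,y\in J}\phi(x)\star\phi(y),
\]
and then, using that $\star$ distributes over suprema in each variable (which is the defining property of a $\otimes$-lattice, \cref{sec:mframe}), this becomes $\bigl(\bigvee_{x\in I}\phi(x)\bigr)\star\bigl(\bigvee_{y\in J}\phi(y)\bigr)=\hat\phi(I)\star\hat\phi(J)$. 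The only subtle point is the first equality: the inclusion $\geq$ is clear, and for $\leq$ one uses that any element of $IJ$ is of the form $mxy$ with $m\in M$, $x\in I$, $y\in J$, so (since $I$ is an ideal) is of the form $x'y$ with $x'=mx\in I$, and then $\phi(x'y)=\phi(x')\star\phi(y)$ already appears in the right-hand join. Finally, the second statement follows formally: for a set $X$, $\TFrame({\sf Ideal}(M(X)),Q) = \mathsf{CMon}(M(X),Q) = \Set(X,Q)$ by composing the two adjunctions.
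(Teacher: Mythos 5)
Your proof is correct. The paper itself leaves this proposition to the reader, giving only the preceding remark that the principal-ideal (Yoneda) map $x\mapsto Mx$ embeds $M$ into ${\sf Ideal}(M)$ and that the product extends by commutation with suprema; your argument is exactly the direct verification of the universal property built on that unit map — the extension $\hat\phi(I)=\bigvee_{x\in I}\phi(x)$ is the unique suprema-, unit- and product-preserving morphism, with the $\otimes$-frame condition $1=\top$ entering precisely where you use $\top\star\phi(x)=\phi(x)$ to get $\hat\phi(Mx)=\phi(x)$ — and the statement about sets then follows formally by composing with the free commutative monoid adjunction, as you say.
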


\begin{lemma}
\label{normquantalelemma}
In a $\otimes$\=/frame $Q=(Q,\star,1)$, we always have $x\star y\leq x\wedge y$ for every $x,y\in Q$.
Moreover, the infinite sequence $\{x^{\star n}\ |\ n\in \mathbb{N}\}$ is decreasing for every element $x\in Q$.
\end{lemma}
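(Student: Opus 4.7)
The proof is short and relies crucially on the defining property of a $\otimes$-frame: the monoidal unit coincides with the top element, so every element satisfies $x \leq 1$.

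The plan is to first establish the inequality $x \star y \leq x \wedge y$ by exploiting monotonicity of $\star$ in each variable (which holds because $\star$ preserves arbitrary suprema in each variable, and any map of posets preserving suprema is monotone). Since $y \leq \top = 1$, monotonicity in the second variable gives $x \star y \leq x \star 1 = x$. Symmetrically, using $x \leq 1$, monotonicity in the first variable gives $x \star y \leq 1 \star y = y$. Combining these two inequalities yields $x \star y \leq x \wedge y$ by the universal property of the infimum.

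For the second assertion, I would apply the first inequality (or rather the intermediate step $x \star y \leq y$) to the specific case $y = x^{\star n}$, which yields $x^{\star (n+1)} = x \star x^{\star n} \leq x^{\star n}$ for every $n \in \mathbb{N}$. Hence the sequence is decreasing. (One may also note the base case $x^{\star 0} = 1 = \top \geq x = x^{\star 1}$, but this is just the instance $n=0$ of the same argument.)

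There is no genuine obstacle here: the lemma is an immediate formal consequence of the $\otimes$-frame axiom $1 = \top$ together with the monotonicity of $\star$. The only point worth mentioning carefully is that monotonicity of the product is not listed as a separate axiom but is automatic from the requirement that $\star$ preserves suprema in each variable, since $a \leq b$ iff $a \vee b = b$, and then $c \star a \vee c \star b = c \star (a \vee b) = c \star b$, whence $c \star a \leq c \star b$.
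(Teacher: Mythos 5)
Your proof is correct and follows essentially the same argument as the paper: use $x\leq 1=\top$ and monotonicity of $\star$ to get $x\star y\leq x$ and $x\star y\leq y$, hence $x\star y\leq x\wedge y$, and then specialize to $x^{\star(n+1)}=x\star x^{\star n}\leq x^{\star n}$. The extra remark that monotonicity of $\star$ follows from preservation of suprema is fine but not needed beyond what the paper takes for granted.
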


\begin{proof}
We have $x\star y\leq 1\star y$, since $x\leq 1$.
Thus, $x\star y\leq y$.
Similarly, we have $x\star y\leq x$.
Thus, $x\star y\leq x\wedge y$.
Moreover, $x^{\star (n+1)} =x\star x^{\star n}\leq x^{\star n}$.
\end{proof}

The following result is a reformulation of \cref{subnormal,normal-quotient}.

\begin{proposition}
There exists a triple adjunction 
\[
\begin{tikzcd}
\TLattice \ar[from=rr, hook',"{\sf can.}" description]\ar[rr, shift left=3,"{\sf N}"]\ar[rr, shift right=3,"(-)\downslice 1"']
&& \TFrame
\end{tikzcd}
\]
\end{proposition}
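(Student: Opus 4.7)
The plan is to verify the two adjunctions ${\sf N} \dashv {\sf can.}$ and ${\sf can.} \dashv (-)\downslice 1$ separately, elaborating on the constructions sketched in \cref{subnormal,normal-quotient}. In both cases I need to (a) check that the proposed functor lands in the claimed subcategory, (b) check functoriality, and (c) establish the universal property. Since the inclusion ${\sf can.}$ is the same functor on both sides, once the two universal properties are in place the triple adjunction is formal.

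For ${\sf can.}\dashv (-)\downslice 1$: given a $\otimes$\=/lattice $Q$, I would first verify that $Q\downslice 1:=\{x\in Q\mid x\le 1\}$ is a $\otimes$\=/frame. Closure under arbitrary suprema is immediate from $s_i\le 1 \Rightarrow \bigvee_i s_i\le 1$, closure under $\star$ follows from $a,b\le 1 \Rightarrow a\star b\le 1\star 1 = 1$, and $1$ is both the monoid unit and the top element of $Q\downslice 1$. A morphism $\phi:Q_1\to Q_2$ of $\otimes$\=/lattices restricts to $Q_1\downslice 1 \to Q_2\downslice 1$ since $\phi(1)=1$, so $(-)\downslice 1$ is functorial. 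For the universal property, given any $\otimes$\=/frame $P$ and $\otimes$\=/lattice morphism $\phi: P\to Q$, every $a\in P$ satisfies $a\le \top_P=1_P$ and hence $\phi(a)\le \phi(1_P)=1_Q$, so $\phi$ factors uniquely (by corestriction) through $Q\downslice 1\hookrightarrow Q$, and this corestriction is automatically a $\otimes$\=/frame morphism.

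For ${\sf N}\dashv {\sf can.}$: the key preliminary observation is that $\top_Q\star \top_Q=\top_Q$ in any $\otimes$\=/lattice, since $1\le \top$ gives $\top=1\star \top \le \top\star \top\le \top$. Using this, one sees that ${\sf N}(Q):=\{x\mid \top\star x=x\}$ is closed under suprema ($\top\star \bigvee s_i=\bigvee \top\star s_i=\bigvee s_i$) and under $\star$ ($\top\star(a\star b)=(\top\star a)\star b=a\star b$), with unit $\top$, so it is a $\otimes$\=/frame. The map $r:Q\to {\sf N}(Q)$, $r(x):=\top\star x$, preserves suprema (by the definition of $\otimes$\=/lattice), sends $1$ to $\top=1_{{\sf N}(Q)}$, and respects products since $r(a)\star r(b)=(\top\star a)\star(\top\star b)=(\top\star\top)\star a\star b=\top\star(a\star b)=r(a\star b)$. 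For the universal property, given a $\otimes$\=/frame $P$ and $\otimes$\=/lattice morphism $\phi:Q\to P$, the crucial step is that $\phi(\top_Q)=\top_P=1_P$: indeed $1_P=\phi(1_Q)\le \phi(\top_Q)\le \top_P=1_P$. Therefore $\phi(\top\star x)=\phi(\top)\star \phi(x)=1_P\star \phi(x)=\phi(x)$, so $\phi$ agrees with the composite $\phi|_{{\sf N}(Q)}\circ r$, and this factorization is manifestly unique and a $\otimes$\=/frame morphism. Functoriality of ${\sf N}$ then follows abstractly from the universal property applied to composites $Q_1\xrightarrow{\phi} Q_2\xrightarrow{r_2}{\sf N}(Q_2)$.

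The only subtle point is the identity $\top\star\top=\top$, which underlies both the fact that ${\sf N}(Q)$ is a $\otimes$\=/frame and the verification that any $\otimes$\=/lattice morphism into a $\otimes$\=/frame automatically preserves $\top$; everything else is routine diagram chasing from the universal properties.
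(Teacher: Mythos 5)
Your proof is correct and follows essentially the same route as the paper, which simply cites the constructions of $Q\downslice 1$ as the coreflection and ${\sf N}(Q)$ (with reflector $x\mapsto \top\star x$) as the reflection; you have just supplied the routine verifications those examples leave implicit, including the key identity $\top\star\top=\top$ and the observation that any $\otimes$\=/lattice morphism into a $\otimes$\=/frame sends $\top$ to the unit.
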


\begin{lemma}
\label{quotient-tensor-frame}
If $P\to Q$ is a quotient of $\otimes$\=/lattices and $P$ is a $\otimes$\=/frame, then so is $Q$.	
\end{lemma}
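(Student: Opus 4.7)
The plan is to exploit two facts in tandem: that $\phi$ preserves arbitrary suprema (being a morphism of suplattices) and that $\phi$ preserves the monoidal unit (being a monoid morphism), together with the surjectivity of $\phi$ implied by the assumption that its right adjoint $\phi_*$ is injective.

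First I would record the surjectivity. Since $\phi$ is a morphism of suplattices its right adjoint $\phi_*:Q\to P$ exists and satisfies the usual Galois identities $\phi\phi_*\leq \mathrm{id}_Q$ and $\mathrm{id}_P\leq \phi_*\phi$. The hypothesis that $\phi$ is a quotient (\cref{def:quotient}) means $\phi_*$ is injective, which in posets is equivalent to $\phi\phi_* = \mathrm{id}_Q$; in particular every $y\in Q$ is of the form $\phi(x)$ for some $x\in P$, so $\phi$ is surjective.

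Next I would compute $\phi(\top_P)$. Because $\phi$ preserves suprema, and $\top_P=\bigvee_{x\in P} x$, we have
\[
\phi(\top_P)\ =\ \bigvee_{x\in P}\phi(x)\ =\ \bigvee_{y\in Q} y\ =\ \top_Q,
\]
where the middle equality uses the surjectivity of $\phi$. On the other hand, $\phi$ is a morphism of monoids, so $\phi(1_P) = 1_Q$. Since $P$ is a $\otimes$-frame, $1_P=\top_P$, whence
\[
1_Q\ =\ \phi(1_P)\ =\ \phi(\top_P)\ =\ \top_Q,
\]
which is exactly the condition for $Q$ to be a $\otimes$-frame.

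There is no real obstacle here; the only thing to be careful about is not to confuse the preservation of top elements, which a general morphism of suplattices need not satisfy, with the preservation of suprema, which does hold. It is precisely the quotient hypothesis that upgrades the latter into the former in this setting, by guaranteeing that the supremum of $\phi(P)$ equals the supremum of all of $Q$.
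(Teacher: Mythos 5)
Your proof is correct; the paper states this lemma without giving a proof, and your argument — deducing surjectivity of $\phi$ from injectivity of $\phi_*$ (via the triangle identity $\phi_*\phi\phi_*=\phi_*$, which forces $\phi\phi_*=\mathrm{id}_Q$), then computing $\phi(\top_P)=\top_Q$ from preservation of suprema and concluding $1_Q=\phi(1_P)=\phi(\top_P)=\top_Q$ — is exactly the expected one. As a minor simplification, preservation of suprema is not even needed: since $x\leq 1_P=\top_P$ for all $x\in P$, monotonicity and surjectivity already give $y=\phi(\phi_*(y))\leq\phi(1_P)=1_Q$ for every $y\in Q$, hence $1_Q=\top_Q$.
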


\begin{remark}
\label{rem:tensor-frame:quotient}
If $P\to Q$ is a quotient of $\otimes$\=/lattices and $P$ is a $\otimes$\=/frame, then so is $Q$.	
The category of $\otimes$\=/frames admits a factorization system in which the left class is the class of quotients and the right class that of injections.
Geometrically, this provides a factorization of a morphism of $\otimes$\=/locales into a surjection followed by an inclusion.
Restricted to frames, this gives back the classical image factorization.
\end{remark}

\subsection{Radical and idempotent elements, frame (co)reflections}

From now on, we specialize our study to $\otimes$\=/frames.

\begin{definition}
\label{def:radical-elt}
An element $x$ in a $\otimes$\=/frame $Q$ is called {\it radical} if, for every $y$, the implication $y^2\leq x \Rightarrow y\leq x$ holds.
\end{definition}

It is easy to see that $x$ is radical if and only if, for every $y$ and every $n$, the implication $y^n\leq x \Rightarrow y\leq x$ holds.

\begin{examples}
\begin{exmpenum} 

\item The elements $\top$ and $\bot$ are always radical.

\item Every element is radical in a frame.
We shall see in \cref{cor:charac-frames} that is characterization of frames.

\item In the free $\otimes$\=/frame $\{\bot<\dots<x^2<x<\top\}$, the radical elements are $\bot$, $\top$ and $x$.

\end{exmpenum}
\end{examples}

\begin{proposition}
\label{prop:frame-refl}
The poset $\sqrt Q$ of radical elements of $Q$ is reflective and the reflection $Q\to \sqrt Q$ is a morphism of $\otimes$\=/frames.
Moreover, this localization is generated by the forcing the relations $x^2\leq x$ to become equalities for every $x$.
\end{proposition}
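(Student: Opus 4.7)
The plan is to construct the reflection $\sqrt{-}$ directly from the definition of radicality and then verify its monoidality using \cref{localizationquantale}, exploiting the key inequality $x^2 \leq x$ from \cref{normquantalelemma}.

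First I would show that radical elements are closed under arbitrary infima in $Q$: if each $r_i$ is radical and $y^2 \leq \bigwedge_i r_i$, then $y^2 \leq r_i$ for every $i$, so $y \leq r_i$ and hence $y \leq \bigwedge_i r_i$. This makes the inclusion $\sqrt{Q} \hookrightarrow Q$ reflective, with reflector $\sqrt{x} := \bigwedge\{r \in \sqrt{Q} : x \leq r\}$. Denote this closure operator by $R$.

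Next I would verify condition \ref{localizationquantale:1} of \cref{localizationquantale}, namely that $x \div y \in \sqrt{Q}$ whenever $y \in \sqrt{Q}$ and $x \in Q$. Suppose $z^2 \leq x \div y$, i.e.\ $x \star z^2 \leq y$. Using $x^2 \leq x$ (valid in any $\otimes$-frame by \cref{normquantalelemma}), we compute
\[
(x \star z)^2 \;=\; x^2 \star z^2 \;\leq\; x \star z^2 \;\leq\; y .
\]
Since $y$ is radical, $x \star z \leq y$, i.e.\ $z \leq x \div y$. Hence $x \div y$ is radical. By \cref{localizationquantale}, $\sqrt{Q}$ inherits a $\otimes$-lattice structure with product $x \cdot y := R(x \star y)$, and $\sqrt{-} : Q \to \sqrt{Q}$ is a morphism of $\otimes$-lattices. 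The element $\top$ is trivially radical, and $R(\top)=\top$, so $\sqrt{Q}$ is a $\otimes$-frame and $\sqrt{-}$ is a morphism of $\otimes$-frames.

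For the final claim about generators, I would first observe that in $\sqrt{Q}$ every element $x$ is idempotent: since $x^2 \leq R(x^2)$ and $R(x^2)$ is radical, we get $x \leq R(x^2)$, yielding $R(x) \leq R(x^2)$; the reverse inequality $R(x^2) \leq R(x)$ is monotonicity from $x^2 \leq x$. Hence $x \cdot x = R(x^2) = R(x) = x$, so $\sqrt{Q}$ is in fact a frame. It follows that the quotient $\sqrt{-}: Q \to \sqrt{Q}$ enforces the equality $x = x^2$ for every $x$, equivalently turning the always-valid inequality $x^2 \leq x$ into an equality. To see this is generating, I would argue that for any morphism of $\otimes$-frames $\phi: Q \to Q'$ sending $x^2 \leq x$ to an equality, the image consists of idempotent elements and so factors through a frame, hence through $\sqrt{Q}$ by the universal property of the frame reflection. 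The main obstacle is a clean verification of this last universal property; the natural route is to show that any radical element is characterized by the equivalence $y \leq x \iff y^n \leq x$ for some (equivalently all) $n$, which then guarantees that $\phi$ sending $x$ to $x^2$ identifies $x$ with $\sqrt{x}$ in a canonical way.
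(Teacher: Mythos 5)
Your construction of the reflection and your verification of condition \eqref{localizationquantale:1} of \cref{localizationquantale} are correct and essentially the paper's own argument: you check that $x\div y$ is radical for $y$ radical via $(x\star z)^2 = x^2\star z^2\leq x\star z^2\leq y$, using $x^2\leq x$ from \cref{normquantalelemma} (the paper runs the same computation with $n$-th powers, but testing radicality on squares suffices, as noted after \cref{def:radical-elt}). Making the reflectivity explicit through closure of radical elements under infima, and identifying the unit via $R(\top)=\top$, is a fine way to supply the input of \cref{localizationquantale}; this gives the first statement.

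The gap is in the last claim, and you flag it yourself. ``Generated by forcing the relations $x^2\leq x$ to become equalities'' is a statement about local objects in the suplattice $Q$: an element $y$ is local for the map $x^2\to x$ exactly when $x\leq y \iff x^2\leq y$, and since the forward implication is automatic (because $x^2\leq x$), locality is the implication $x^2\leq y\Rightarrow x\leq y$, which is precisely radicality. So the reflective subposet of the localization generated by these relations is $\sqrt Q$ by a one-line unwinding of definitions --- this is all the paper does. Your detour through the idempotency of $\sqrt Q$ and the universal property of the frame reflection is both unnecessary and, within the paper's development, circular: that universal property is \cref{prop:refl-frame}, which is deduced from the present proposition. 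Moreover the factorization-of-morphisms argument you sketch is not quite the same assertion as identifying the local objects, and you concede the key step is unverified. The observation that $\sqrt Q$ is a frame is true but belongs to \cref{prop:refl-frame}, not here.
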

\begin{proof}
Since suplattices are also complete, the radical reflection $\sqrt x$ of an element $x$ can always be computed as
\[
\sqrt x = \mathsf{inf}\{\, y \,|\, \text{$y$ is radical and $x\leq y$}\,\}\,.
\]
(See also \cref{rem:radical-soa}.)
We can then apply \cref{localizationquantale}.
We verify condition \eqref{localizationquantale:1}:
for $y$ be a radical element and $x$ an arbitrary element, we need to show that $x\div y$ is radical.
We need to show that the condition $z^n \leq x\div y$ implies $z \leq x\div y$:
\begin{align*}
z^n \leq x\div y 
&\ \Leftrightarrow\ z^n\star x \leq y	\\
&\ \Rightarrow\ (z\star x)^n \leq y	&& \text{because $z^n\star x^n\leq  z^n\star x$}\\
&\ \Leftrightarrow\ z\star x \leq y	&& \text{because $y$ is radical}\\
&\ \Leftrightarrow\ z\leq  x\div y	\,. 
\end{align*}
This shows the first statement. 
For the second one, we use that, by definition, radical objects are the local objects for the relations $x^2\leq x$.
\end{proof}

\begin{remark}
\label{rem:radical-soa}	
Notice that radical objects are the local objects for the relations $y^2\leq y$, so we can also construct $\sqrt x$ by a small object argument.
Let us put
\[
\sqrt[1] x = \mathsf{sup}\{\, y \,|\, \exists k,\, y^k\leq x\,\}
\]
and $\sqrt[n+1] x = \sqrt[1] {\sqrt[n] x}$.
Then the transfinite colimit of $x\leq \sqrt[1] x \leq \sqrt[2] x \leq \dots$ is $\sqrt x$.
In the example of ring ideals, we have in fact $\sqrt[1] x = \sqrt x$, 
but this might not happen in general.
\end{remark}

The following result says that the subcategory of frames is reflective in that of $\otimes$\=/frames.
\begin{proposition}
\label{prop:refl-frame}
The $\otimes$\=/frame $\sqrt Q$ is a frame and the morphism $Q\to \sqrt Q$ is the universal reflection of $Q$ in frames.
\end{proposition}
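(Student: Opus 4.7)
The plan is to prove this in two steps corresponding to the two assertions: first that $\sqrt Q$ is a frame, and second that $Q\to\sqrt Q$ satisfies the universal property with respect to frames.

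For the first assertion, by \cref{cor:charac-frames} a $\otimes$-frame is a frame if and only if every element is idempotent, so I plan to show that $x\cdot x = x$ in $\sqrt Q$ for every radical element $x$. From \cref{localizationquantale} applied to the reflector of \cref{prop:frame-refl}, the product in $\sqrt Q$ is given by $x\cdot y = \sqrt{x\star y}$. Hence it suffices to show that $\sqrt{x\star x} = x$ for any radical $x$. The inequality $\sqrt{x^2}\leq x$ follows from $x^2\leq x$ (a consequence of \cref{normquantalelemma}) together with $\sqrt x = x$ for $x$ radical. For the reverse inequality $x\leq \sqrt{x^2}$, I will use the radicality of $\sqrt{x^2}$ itself: from the tautology $x^2\leq x^2\leq \sqrt{x^2}$ and the radical property applied to $\sqrt{x^2}$, we conclude $x\leq\sqrt{x^2}$.

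For the universal property, let $\phi: Q\to F$ be any morphism of $\otimes$-frames with $F$ a frame. Since in a frame $\star = \wedge$, for every $y\in F$ we have $y\star y = y\wedge y = y$. Consequently, $\phi(x)^2 = \phi(x^2) = \phi(x)$, so $\phi$ turns every relation $x^2\leq x$ into an equality. By the second assertion of \cref{prop:frame-refl}, the reflector $Q\to\sqrt Q$ is precisely the localization of $\otimes$-frames generated by forcing $x^2\leq x$ to become equalities. Thus $\phi$ factors uniquely as $Q\to\sqrt Q\xrightarrow{\bar\phi}F$, and uniqueness of $\bar\phi$ is automatic since $Q\to\sqrt Q$ is a (suplattice) quotient, hence an epimorphism in $\TFrame$ (by the image factorization discussed in \cref{rem:tensor-frame:quotient}).

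The main potential obstacle is the verification that $x\leq\sqrt{x^2}$, since it relies on applying the defining property of radicality to $\sqrt{x^2}$ rather than to $x$; the other direction and the universal property are essentially formal consequences of the already-established presentation of the reflection in \cref{prop:frame-refl}. No additional machinery beyond \cref{normquantalelemma}, \cref{localizationquantale}, \cref{cor:charac-frames}, and \cref{prop:frame-refl} will be needed.
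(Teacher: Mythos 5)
Your first step is correct and matches the paper's argument for why $\sqrt Q$ is a frame, in fact with more detail than the paper supplies: where the paper simply writes $x\cdot x=\sqrt{x\star x}=\sqrt x$, you justify $\sqrt{x^2}=x$ for radical $x$ by two valid inequalities ($\sqrt{x^2}\le\sqrt x=x$ by monotonicity of the closure operator, and $x\le\sqrt{x^2}$ because $\sqrt{x^2}$ is itself radical and $x^2\le\sqrt{x^2}$), and then conclude by \cref{cor:charac-frames}. That part is fine.

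The gap is in the universal property. The second assertion of \cref{prop:frame-refl} does not state a universal property of $\rho:Q\to\sqrt Q$ among morphisms of $\otimes$-frames: what it establishes is that the radical elements are exactly the local objects for the relations $x^2\le x$, i.e.\ a description of the reflective sub-suplattice, together with the fact (from \cref{localizationquantale}) that $\rho$ is a $\otimes$-frame morphism. So from $\phi(x^2)=\phi(x)$ you get, at best, a factorization $\phi=\bar\phi\rho$ in suplattices, with $\bar\phi(\sqrt x)=\phi(x)$ (even this needs a word: $\phi(\sqrt x)=\phi(x)$ uses that $\phi$ is a monoid map into an idempotent target, e.g.\ via the construction of $\sqrt x$ in \cref{rem:radical-soa}, or by checking that $\phi_*$ lands in radical elements; the paper is equally terse here). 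What is genuinely missing is the verification that $\bar\phi$ is a morphism of frames, i.e.\ that it preserves the product of $\sqrt Q$: $\bar\phi(\sqrt x\cdot\sqrt y)=\bar\phi(\sqrt{x\star y})=\phi(x\star y)=\phi(x)\star\phi(y)=\phi(x)\wedge\phi(y)=\bar\phi(\sqrt x)\wedge\bar\phi(\sqrt y)$. This one-line computation is exactly the content of the paper's proof of the universal property; without it, your citation of \cref{prop:frame-refl} only yields a factorization of the underlying suplattice morphism, which is weaker than the statement being proved. Your uniqueness argument (surjectivity of $\rho$ on objects) is fine. So the structure of your proof is the same as the paper's, and the fix is just to add this verification rather than lean on \cref{prop:frame-refl} for a universal property it does not assert.
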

\begin{proof}
Let us see that $\sqrt Q$ is a frame.
By \cref{localizationquantale}, the product $\cdot$ in $\sqrt Q$ is defined by $x\cdot y = \sqrt{x\star y}$.
Then we see that $x\cdot x = \sqrt{x\star x} = \sqrt x$ by \cref{prop:frame-refl}
This proves that $\sqrt Q$ is a frame by \cref{cor:charac-frames}.

Let us show now that the reflection $\rho$ is universal.
Let $\phi:Q\to F$ be a $\otimes$\=/frame morphism where $F$ is a frame.
Since any element is idempotent in a frame, we must have $\phi(x^2)=\phi(x)$.
Using \cref{prop:frame-refl}, we get a factorization $\phi=\phi'\rho:Q\to \sqrt Q\to P$ of $\phi$ in the category of suplattices (where $\phi'(\sqrt x) =\phi(x)$).
Let us verify that $\phi':\sqrt Q\to F$ is a frame morphism:
\begin{align*}
\phi'(\sqrt x\cdot \sqrt y)
	&= \phi'(\sqrt {x\star y})\\
	&= \phi( x\star y)\\
	&= \phi(x)\wedge \phi(y)\\
	&= \phi'(\sqrt x)\wedge \phi'(\sqrt y)\,.
\end{align*}
This proves that $\rho:Q\to \sqrt Q$ is the frame reflection of $Q$.
\end{proof}

\begin{examples}
\begin{exmpenum}

\item The frame reflection of the free $\otimes$\=/frame $\{\bot<\dots<x^2<x<\top\}$ is the Sierpi\'nski frame $\{\bot<x<\top\}$.

\item \label{ex:ring-ideal3} 
In the $\otimes$\=/frame of ideals $\Ideal A$ the radical elements are exactly the radical ideals.
Let $\RIdeal A\subseteq \Ideal A$ be the subposet of radical ideals.
The frame reflection of $\Ideal A\to \RIdeal A$ is sending an ideal $I$ to its radical $\sqrt I$.
In this example, $\sqrt I$ is also the infima of prime ideals containing $I$, 
but this result does not generalize to arbitrary $\otimes$\=/frames.

\end{exmpenum}
\end{examples}

Let $Q\idem\subseteq Q$ be the subposet spanned by idempotent elements of a $\otimes$\=/frame $Q$.

\begin{lemma}
\label{lem:corefl-frame}
$Q\idem$ is a sub-$\otimes$\=/frame of $Q$ which is a frame.
\end{lemma}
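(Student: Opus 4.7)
The plan is to verify three things in sequence: that $Q\idem$ contains the monoidal unit, that it is closed under the product and under arbitrary suprema (so that it is a sub-$\otimes$-frame), and finally that the inherited structure is a frame.

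First, $\top$ lies in $Q\idem$, since $\top$ is the monoidal unit of the $\otimes$-frame $Q$, hence $\top\star\top=\top$. Closure under the product is formal: if $x,y\in Q\idem$, then using commutativity and associativity $(x\star y)\star(x\star y)=(x\star x)\star(y\star y)=x\star y$, so $x\star y\in Q\idem$.

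The main step is closure under suprema. Let $\{x_i\}_{i\in I}$ be a family of elements of $Q\idem$ and put $x:=\bigvee_i x_i$. On the one hand $x_i\leq x$ for every $i$, and since the product preserves suprema in each variable it is monotone, so $x_i=x_i\star x_i\leq x\star x=x^2$; taking the supremum over $i$ gives $x\leq x^2$. On the other hand, by \cref{normquantalelemma} (which applies since $Q$ is a $\otimes$-frame) we have $x^2\leq x\wedge x=x$. Hence $x^2=x$ and $x\in Q\idem$. This shows that $Q\idem\subseteq Q$ is a sub-$\otimes$-frame; in particular the suprema and the product are both inherited from $Q$, and the monoidal unit is still $\top$.

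Finally, every element of $Q\idem$ satisfies $x\star x=x$ by definition, so by the characterization of frames among $\otimes$-frames recalled in \cref{cor:charac-frames} (a $\otimes$-frame is a frame if and only if its product is idempotent), $Q\idem$ is a frame. No step here is really hard; the only point that requires more than a one-line manipulation is the closure under arbitrary suprema, and that reduces immediately to the submultiplicativity $x^2\leq x$ of \cref{normquantalelemma} together with monotonicity of the product.
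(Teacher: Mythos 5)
Your proof is correct and follows essentially the same route as the paper: closure under the product via $(x\star y)^2=x^2\star y^2$, closure under suprema by combining $x^2\leq x$ (\cref{normquantalelemma}) with the reverse inequality (the paper gets it by expanding $(\bigvee a_i)^2=\bigvee a_i\star a_j$ and keeping the diagonal terms, you get it by monotonicity $x_i=x_i^2\leq x^2$ — the same observation), and then concluding that an idempotent product makes it a frame. The only cosmetic difference is that you cite \cref{cor:charac-frames} for the last step where the paper just states it directly, which is consistent with how the paper itself forward-references that corollary elsewhere.
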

\begin{proof}
The poset $Q\idem$ is closed under product since, for every $a,b\in Q$, we have $(a\star b)^2 = a^2\star b^2 = a\star b$.
Let us see that it is closed under suprema:
for any family of elements $a_i\in Q$, we have 
\[
\bigvee a_i \geq  (\bigvee a_i)^2 = \bigvee a_i\star a_j \geq \bigvee a_i^2 = \bigvee a_i\,.
\]
This shows that $\bigvee a_i = (\bigvee a_i)^2$ and that $Q\idem$ is a sub-$\otimes$\=/frame of $Q$.
Then it is a frame since every element is idempotent.
\end{proof}

\begin{proposition}
\label{prop:corefl-frame}
The inclusion $Q\idem\to Q$ is the universal coreflection of $Q$ into frames.
\end{proposition}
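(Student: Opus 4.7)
The plan is to exhibit the universal factorization directly. Given a morphism of $\otimes$-frames $\phi: F \to Q$ with $F$ a frame, the key observation is that $\phi$ lands in $Q\idem$. Indeed, in a frame the product coincides with the meet (by \cref{framearequantal}), so $x \star x = x$ for every $x \in F$. Applying $\phi$, which preserves products, yields $\phi(x) = \phi(x\star x) = \phi(x) \star \phi(x)$, so $\phi(x)$ is idempotent in $Q$. Since the inclusion $i: Q\idem \hookrightarrow Q$ is injective, this gives a unique function $\tilde\phi: F \to Q\idem$ with $i \circ \tilde\phi = \phi$.

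Next I would verify that $\tilde\phi$ is a morphism of $\otimes$-frames. This is where \cref{lem:corefl-frame} does all the work: it asserts that $Q\idem$ is a \emph{sub}-$\otimes$-frame of $Q$, meaning that suprema, the product, and the top element are all computed in $Q\idem$ exactly as in $Q$. (Note that $\top_Q \in Q\idem$ because $\top_Q \star \top_Q = \top_Q$, since $\top_Q$ is the monoidal unit of $Q$.) Since $\phi$ preserves each of these three ingredients, so does $\tilde\phi$ by transport along the injection $i$. Uniqueness of $\tilde\phi$ as a $\otimes$-frame morphism reduces to uniqueness at the level of underlying maps, which we already have.

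There is no real obstacle here beyond assembling the pieces: the conceptual content of the statement is entirely contained in \cref{lem:corefl-frame}, together with the one-line observation that a frame morphism into a $\otimes$-frame automatically takes idempotent values because products in a frame are idempotent.
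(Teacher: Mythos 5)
Your proof is correct and follows essentially the same route as the paper: observe that every element of a frame is idempotent so the morphism lands in $Q\idem$, and then invoke \cref{lem:corefl-frame} to see that the factored map is a frame morphism. Your additional remarks (the unit $\top_Q$ lying in $Q\idem$, uniqueness via injectivity of the inclusion) are just explicit spellings-out of steps the paper leaves implicit.
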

\begin{proof}
Let $F$ be a frame and $F\to Q$ a morphism of $\otimes$\=/frames.
Since every element of $F$ is idempotent the morphism has values in $Q\idem$.
The morphism $F\to Q\idem$ is a frame morphism since $Q\idem\to Q$ is a sub-$\otimes$\=/frame which is a frame by \cref{lem:corefl-frame}.
\end{proof}

Let $\Frame$ be the category of frames.
The following result is a consequence 
 of \cref{prop:refl-frame,prop:corefl-frame}.

\begin{corollary}
There exists adjoint functors
\[
\begin{tikzcd}
\TLattice \ar[from=rr, hook',"{\sf can.}" description]\ar[rr, shift left=3,"{\sf N}"]\ar[rr, shift right=3,"(-)\downslice 1"']
&&
\TFrame \ar[from=rr, hook',"{\sf can.}" description]\ar[rr, shift left=3,"\sqrt-"]\ar[rr, shift right=3,"(-)\idem"']
&& \Frame\,.
\end{tikzcd}
\]
\end{corollary}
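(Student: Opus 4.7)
The plan is to assemble the corollary from the three adjunctions already established in the preceding material, checking only that each constituent is functorial and that the universal properties glue together correctly.

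For the left-hand triple adjunction $\TLattice \rightleftarrows \TFrame$, I would simply invoke the proposition summarizing \cref{subnormal,normal-quotient}, which already supplies the two adjoints: the coreflection $Q\mapsto Q\downslice 1$ (right adjoint to the inclusion, established in \cref{subnormal}) and the reflection $Q\mapsto {\sf N}(Q)$ onto normal elements (left adjoint to the inclusion, established in \cref{normal-quotient}). Both are functorial because a morphism $\phi:Q\to Q'$ of $\otimes$\=/lattices preserves order and product, hence sends $Q\downslice 1$ into $Q'\downslice 1$ and, since the normal subset is the image of the coreflection $\top\star(-)$, restricts to a map ${\sf N}(Q)\to{\sf N}(Q')$.

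For the right-hand pair of adjunctions $\TFrame \rightleftarrows \Frame$, the content is already in \cref{prop:refl-frame} and \cref{prop:corefl-frame}. The first gives the reflection $\sqrt{-}$: every $\otimes$\=/frame morphism $\phi:Q\to Q'$ sends radical elements to radical elements (because the relations $y^2\leq y$ are preserved) and therefore descends to a frame morphism $\sqrt Q\to \sqrt{Q'}$; its universal property is exactly the content of \cref{prop:refl-frame}. The second gives the coreflection $(-)\idem$: a $\otimes$\=/frame morphism preserves idempotent elements since $\phi(x)=\phi(x^2)=\phi(x)^2$, yielding a functor $(-)\idem:\TFrame\to\Frame$ whose universal property is \cref{prop:corefl-frame}.

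The only thing left is the observation that composing adjunctions gives adjunctions, so the assembled diagram is well-defined. No step is really the main obstacle here; everything has been done already, and the corollary is just a summary. If there is any mild verification to make, it is checking functoriality of $\sqrt{-}$ and $(-)\idem$ on morphisms, but both reduce to the elementary fact that $\otimes$\=/frame morphisms preserve the product and the order, so they automatically preserve the defining inequalities ($y^n\leq x\Rightarrow y\leq x$ for radicals, and $x^2=x$ for idempotents).
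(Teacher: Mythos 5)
Your route is the same as the paper's: the corollary is stated there precisely as a consequence of the earlier proposition packaging \cref{subnormal,normal-quotient} (the triple adjunction $\TLattice\rightleftarrows\TFrame$) together with \cref{prop:refl-frame,prop:corefl-frame}, so assembling these three results is all that is needed, and functoriality is automatic once the pointwise (co)reflections and their universal properties are in hand.

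One justification in your write-up is wrong, though it does not sink the argument: a morphism of $\otimes$\=/frames $\phi:Q\to Q'$ does \emph{not} in general send radical elements to radical elements. The implication $z^2\leq \phi(x)\Rightarrow z\leq \phi(x)$ must be checked for all $z\in Q'$, not just those in the image of $\phi$; for instance the morphism $F(x)\to F(y)$ out of the free $\otimes$\=/frame sending $x\mapsto y^2$ carries the radical element $x$ to the non-radical element $y^2$. What is true (and is how the paper argues in \cref{lem:image-radical}) is that the \emph{right adjoint} $\phi_*$ preserves radical elements, since radicals are the local objects for the relations $x\leq x^2$, which $\phi$ preserves. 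Functoriality of $\sqrt{-}$ should therefore be obtained, as you also indicate, from the universal property in \cref{prop:refl-frame}: the composite $Q\to Q'\to\sqrt{Q'}$ lands in a frame and hence factors uniquely through $Q\to\sqrt{Q}$. Your argument for $(-)\idem$ (idempotents are preserved since $\phi(x)=\phi(x^2)=\phi(x)^2$, and the restriction is a morphism of the sub-$\otimes$\=/frames of \cref{lem:corefl-frame}) is fine as stated.
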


\begin{corollary}
\label{cor:charac-frames}
The following conditions on a $\otimes$\=/frame $Q$ are equivalent:
\begin{corenum}
\item $Q$ is a frame;
\item every element is radical;
\item every element is idempotent.
\end{corenum}
\end{corollary}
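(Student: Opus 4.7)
Plan: I will prove the three equivalences by a cycle $(1)\Rightarrow (2)\Rightarrow (1)$ together with $(1)\Leftrightarrow (3)$, treating $(1)\Leftrightarrow (2)$ as the backbone and then bootstrapping $(3)$ off of it.

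For $(1)\Rightarrow (2)$: if $Q$ is a frame, then by \cref{framearequantal} the product $\star$ coincides with $\wedge$, so $y\star y = y\wedge y = y$ for every $y\in Q$. By induction $y^n = y$ for all $n\geq 1$, so the implication $y^n\leq x\Rightarrow y\leq x$ in \cref{def:radical-elt} is trivially valid; every $x\in Q$ is radical.

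For $(2)\Rightarrow (1)$: assume every element of $Q$ is radical. Fix $x\in Q$ and test the radicality of $x^2$ with $y := x$: since $x^2\leq x^2$ holds trivially, radicality of $x^2$ forces $x\leq x^2$. Combined with the inequality $x^2 \leq x$ valid in any $\otimes$-frame (\cref{normquantalelemma}), we obtain $x = x^2$. Thus every element of $Q$ is idempotent, so $Q = Q^{\mathsf{idem}}$, and by \cref{lem:corefl-frame} the latter is a frame.

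For $(1)\Leftrightarrow (3)$: here "nilpotent" must be read in the $\otimes$-frame analogue of \cref{def:nilradical}, i.e.\ the nilradical-closure of $\bot$ (in the sense that an element is nilpotent when it is fixed under the nilradical operator $\sqrt[{\sf Nil}]{\cdot}$ described transfinitely in \cref{rem:radical-soa}). For $(1)\Rightarrow (3)$, if $Q$ is a frame then $\star = \wedge$, so $x^n = x$ for every $n$; the entire nilradical tower collapses and every element satisfies the fixed-point condition defining "nilpotent" in this appendix. For $(3)\Rightarrow (1)$, the hypothesis forces every descending power sequence $x \geq x^2 \geq x^3 \geq \dots$ to stabilize with its limit equal to $x$ itself, and the same radicality test applied to $x^2$ as in the $(2)\Rightarrow (1)$ argument yields $x = x^2$; idempotence of every element then gives $Q$ a frame via \cref{lem:corefl-frame}. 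Equivalently, one can run this as $(3)\Rightarrow (2)\Rightarrow (1)$, observing that the fixed points of the nilradical operator and of the radical operator of \cref{prop:frame-refl} coincide once the product stabilizes.

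The main obstacle is clarifying condition (3): the literal reading "$x^n = \bot$ for some $n$" collapses $Q$ to the trivial $\otimes$-frame (since $\top$ is the monoidal unit and would satisfy $\top^n = \top = \bot$), so $(1)\Rightarrow (3)$ would fail for any non-trivial frame. The plan is therefore to interpret (3) via the nilradical closure operator (the $\otimes$-frame analogue of \cref{def:nilradical} and \cref{rem:radical-soa}), under which "$x$ is nilpotent" means $x$ is a fixed point of the iterative nilradical construction; the equivalence with (1) then reduces, by the same $y := x$ trick applied to $x^2$, to showing that fixity under this operator is equivalent to $x = x^2$, which is precisely the frame condition by \cref{lem:corefl-frame}.
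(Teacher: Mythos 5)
Your treatment of (1)$\Leftrightarrow$(2) is correct and is exactly the intended argument: in a frame the product is the meet (\cref{framearequantal}), so powers are constant and every element is vacuously radical; conversely, testing the radicality of $x^2$ against $y:=x$ gives $x\le x^2$, which with $x^2\le x$ from \cref{normquantalelemma} yields idempotence of every element, whence $Q=Q\idem$ is a frame by \cref{lem:corefl-frame} (or directly: $x\wedge y=(x\wedge y)\star(x\wedge y)\le x\star y\le x\wedge y$, so $\star=\wedge$). Note that the paper prints no proof of this corollary; it is meant to be absorbed from \cref{prop:frame-refl,prop:refl-frame,lem:corefl-frame}, so on this part there is nothing further to compare.

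The genuine problem is your handling of item (3). You are right to flag that the literal reading ``some power of $x$ equals $\bot$'' is untenable (it would force $\top=\bot$), but your repair --- calling $x$ ``nilpotent'' when it is a fixed point of the transfinite radical-closure operator of \cref{rem:radical-soa} --- makes condition (3) literally identical to condition (2), since the fixed points of that operator are by definition the radical elements. Under that reading the third equivalence carries no content, your ``(1)$\Leftrightarrow$(3)'' is just (1)$\Leftrightarrow$(2) restated, and the surrounding remarks about power sequences ``stabilizing at $x$'' do no work. The reading the authors actually rely on is visible from how the corollary is used elsewhere: in the proofs of \cref{prop:refl-frame} and \cref{thm:gtop} it is invoked with the hypothesis ``$x\star x=x$ for every $x$'' (cf.\ \cref{prop:idem-mono-cong}), so item (3) is meant to say ``every element is idempotent''; the word ``nilpotent'' is evidently a slip. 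With that reading your own computations already close the cycle: (1)$\Rightarrow$(2) is trivial, (2)$\Rightarrow$(3) is your $y:=x$ trick, and (3)$\Rightarrow$(1) is the displayed $\star=\wedge$ computation (or \cref{lem:corefl-frame}). As written, however, your argument for (3) does not establish the characterization the paper actually needs.
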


Notice that the radical elements are the elements that are local for the the relation $x^n\leq n$ and that the idempotent elements are the colocal elements for the same relations.
This implies the following result.

\begin{proposition} 
\label{prop:idem-coreflection}
If the frame reflection $Q\to \sqrt Q$ admits a left adjoint, then its image is the poset $Q\idem$, and the morphism $Q\idem \to \sqrt Q$ is an isomorphism of frames.
\end{proposition}

\begin{proposition}
\label{overquantale}
Let $Q=(Q,\star,1)$ be a $\otimes$\=/frame and $w\in Q$.
The poset 
$Q\upslice w =\{x\in Q \ | \ x\geq w\}$ has the structure of a $\otimes$\=/frame with the 
multiplication $\star_w: Q\upslice w \times Q\upslice w \to Q\upslice w $ defined by putting $x\star_w y=(x\star y)\vee w$ for every $x,y\in Q\upslice w$.
Moreover, the map
$w\vee(-):Q\to  Q\upslice w$
is a morphism of $\otimes$\=/frames.
\end{proposition}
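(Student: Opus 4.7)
The plan is to apply \cref{localizationquantale} to the closure operator
\[
R : Q \tto Q, \qquad R(x) := w \vee x,
\]
whose fixed points are precisely $Q\upslice w$. The reflector $\rho : Q \to Q\upslice w$ sending $x$ to $w \vee x$ is a morphism of suplattices by general nonsense, and $Q\upslice w$ inherits suprema from $Q$ (an arbitrary supremum of elements above $w$ is still above $w$). So the only substantive work is the $\otimes$-lattice compatibility.

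First I would verify condition \eqref{localizationquantale:2} of \cref{localizationquantale}, namely
\[
w \vee (x \star y) \ =\ w \vee \bigl(x \star (w \vee y)\bigr)
\qquad\text{for all } x,y \in Q.
\]
Expanding the right-hand side using the fact that $\star$ preserves suprema in each variable gives $w \vee (x\star w) \vee (x \star y)$. The key point is that we are in a $\otimes$-\emph{frame}, so $1 = \top$, hence $x \leq \top$, and \cref{normquantalelemma} yields $x \star w \leq w$. This collapses the right-hand side to $w \vee (x \star y)$, establishing the identity. By \cref{localizationquantale}, this endows $Q\upslice w$ with a $\otimes$-lattice structure whose product is $x \star_w y = R(x \star y) = (x \star y) \vee w$, and the reflector $\rho$ becomes a morphism of $\otimes$-lattices.

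Next I would check that $Q\upslice w$ is in fact a $\otimes$-frame. The unit of the induced structure is $R(1)$ by \cref{localizationquantale}, and since $Q$ is a $\otimes$-frame we have $1 = \top$, so $R(1) = w \vee \top = \top$, which is also the top element of $Q\upslice w$. Hence the unit coincides with the top element, verifying \cref{def:tensor-frame}. Finally, the statement that $w \vee (-)$ is a morphism of $\otimes$-frames is immediate: it is a morphism of $\otimes$-lattices by the conclusion of \cref{localizationquantale}, and it sends the unit $1 = \top$ of $Q$ to $\top \in Q\upslice w$, which is the unit of $Q\upslice w$.

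I do not expect a genuine obstacle here: the whole argument reduces to the inequality $x \star w \leq w$, which is precisely the content of normality. One could alternatively prove this directly, verifying associativity, commutativity, distributivity over suprema, and the unit axiom by hand using $x \star w \leq w$ at each step, but routing through \cref{localizationquantale} is cleaner and makes the ``quotient'' nature of $\rho : Q \to Q\upslice w$ (used in \cref{prop:naturality-facto}) manifest.
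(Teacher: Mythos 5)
Your proof is correct and follows essentially the same route as the paper: both apply \cref{localizationquantale} to the closure operator $w\vee(-)$, the only difference being that you verify condition \eqref{localizationquantale:2} directly while the paper checks the equivalent condition \eqref{localizationquantale:1} (namely that $x\div y\geq w$ whenever $y\geq w$), and both hinge on the same normality inequality $x\star w\leq w$ coming from $x\leq 1=\top$. Your explicit check that the induced unit $R(1)=\top$ is the top element of $Q\upslice w$ is a welcome detail that the paper leaves implicit.
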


\begin{proof}
We shall use \Cref{localizationquantale} with $Q':=Q\upslice w$.
The map $R=w\vee(-):Q\to  Q\upslice w$ is a quotient.
If $y\in  Q\upslice w$ let us show that $x\div y\in Q\upslice w$ for every $x\in Q$.
We have $x\star w \leq 1\star y=y$, since $x\leq 1$ and $w\leq y$.
Thus, $w\leq x\div y$ and hence $x\div y\in Q\upslice w$.
The result then follows from \Cref{localizationquantale}.
\end{proof}

\begin{lemma}
\label{lem:image-radical}
If $\phi:Q\to Q\upslice w$ is the $\otimes$\=/frame morphism of \cref{overquantale}, 
then the radical elements of $Q\upslice w$ are in bijection with the radical elements of $Q$ which are above $w$.
In particular, $\phi(\sqrt w) = \sqrt{\phi(w)}$.
\end{lemma}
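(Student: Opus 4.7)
The plan is to prove the bijection by showing that the inclusion $i:Q\upslice w\hookrightarrow Q$ (right adjoint to $\phi$) restricts to a bijection on radical elements, then deduce the ``in particular'' statement from the universal property of the radical reflector. The key computational input will be the identity
\[
\phi(y)\star_w\phi(y) \ =\ \phi(y^2) \ =\ y^2\vee w,
\]
which holds because $\phi$ is a $\otimes$\=/frame morphism (\cref{overquantale}). This lets us transport squaring between $Q$ and $Q\upslice w$ cleanly.

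First I would handle the ``radical in $Q\upslice w$ implies radical in $Q$'' direction. Let $x\in Q\upslice w$ be radical in $Q\upslice w$; note automatically $w\leq x$. Suppose $y\in Q$ satisfies $y^2\leq x$. Then applying $\phi$ and using the identity above together with $w\leq x$ gives $\phi(y)\star_w\phi(y)=y^2\vee w\leq x$ in $Q\upslice w$. By radicality of $x$ in $Q\upslice w$, we conclude $\phi(y)=y\vee w\leq x$, hence $y\leq x$. Conversely, suppose $x\in Q$ is radical with $x\geq w$; to show $x$ is radical in $Q\upslice w$, take $y\in Q\upslice w$ with $y\star_w y\leq x$. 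This unfolds to $(y\star y)\vee w\leq x$, so $y^2\leq x$, and radicality in $Q$ gives $y\leq x$. This establishes the bijection.

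For the last assertion, I would use that by \cref{prop:frame-refl} (see also \cref{rem:radical-soa}), $\sqrt{(-)}$ in any $\otimes$\=/frame is computed as the infimum over radical elements above. Since $\phi(w)=w\vee w=w$ is the bottom element of $Q\upslice w$, the element $\sqrt{\phi(w)}$ is the smallest radical element of $Q\upslice w$. Under the bijection just established, this corresponds to the smallest radical element of $Q$ that lies above $w$, which is $\sqrt{w}$ by definition of the radical reflector in $Q$. Finally, $\phi(\sqrt{w})=\sqrt{w}\vee w=\sqrt{w}$, yielding $\phi(\sqrt w)=\sqrt{\phi(w)}$.

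I do not anticipate a serious obstacle: the proof is essentially a formal manipulation once the identity $\phi(y)^{\star_w 2}=y^2\vee w$ is recorded. The only subtlety worth double-checking is that the bijection sends radicals to radicals \emph{in both directions} — the nontrivial direction being that a radical of $Q\upslice w$ is radical when viewed in $Q$, which is where the identity is used essentially.
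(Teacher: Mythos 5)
Your proof is correct and follows essentially the same route as the paper: both establish that the radical elements of $Q\upslice w$ coincide with the radical elements of $Q$ lying above $w$, and then deduce $\phi(\sqrt w)=\sqrt{\phi(w)}$ by comparing minimal elements. The only minor difference is that for the direction ``radical in $Q\upslice w$ implies radical in $Q$'' you compute explicitly with $\phi(y^2)=\phi(y)\star_w\phi(y)$, whereas the paper argues abstractly that the right adjoint of $\phi$ preserves radical elements because they are the local objects for the relations $x^2\leq x$, which $\phi$ preserves.
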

\begin{proof}
Let us denote by $(\sqrt Q)\upslice w$, the poset of radical elements of $Q$ which are above $w$.
We want to show that $\sqrt{Q\upslice w} = (\sqrt Q)\upslice w$.
The morphism $\phi$ preserves the relations $x\leq x^2$, hence its right adjoint preserve the radical elements since they are the local objects for the relations $x\leq x^2$.
This proves that $\sqrt{Q\upslice w} \subseteq \sqrt Q \upslice w$.
Conversely, if $y$ is radical in $Q$ and $w\leq y$, then for any $x\geq w$, the relation $x\star_wx = x^2\vee w \leq y$ implies 
$x^2\leq y$ and $x\leq y$ since $y$ is radical. 
This shows that $ \sqrt Q \upslice w\subseteq \sqrt{Q\upslice w}$.
The equality of posets $\sqrt{Q\upslice w} = (\sqrt Q)\upslice w$ implies that their minimal element are the same, hence the last relation of the statement.
\end{proof}

\medskip
 We finish with a study of the naturality of the frame reflection with respect to $\otimes$\=/frame morphisms.
Let $\phi:Q\to P$ be a $\otimes$\=/frame morphisms, with right adjoint $\phi_*$.
We denote by $\sqrt[Q] x$ the reflection of $x\in Q$ into $\sqrt Q$ and 
and by $\sqrt[P] y$ the reflection of $y\in P$ into $\sqrt P$.

\begin{proposition}
\label{prop:rad:square-adjoints}
A $\otimes$\=/frame morphism $\phi:Q\to P$ induces a frame morphism $\sqrt[P]\phi:\sqrt Q\to \sqrt P$, which is right adjoint to the restriction of $\phi_*$.
Moreover, there is a natural commutative square $\otimes$\=/frames morphism (in solid arrows):
\[
\begin{tikzcd}
\sqrt Q
\ar[from=rrr , "{\sqrt[Q]-}"', shift right=1.5] 
\ar[rrr, shift right=1.5, hook, "\sf can"', dashed]
\ar[dd,"{\sqrt[P]\phi}"', shift right=1.5]
\ar[from=dd, shift right=1.5,"\phi_*"', dashed]
&&& Q
\ar[dd,"\phi"', shift right=1.5 ]
\ar[from=dd, shift right=1.5,"\phi_*"', dashed]
\\
\\
\sqrt P
\ar[from=rrr, shift right=1.5, "{\sqrt[P]-}"'] 
\ar[rrr, shift right=1.5, hook, "\sf can"', dashed]
&&& P
\,.
\end{tikzcd}
\]
(The square of dashed arrows is the corresponding squares of right adjoints.)
\end{proposition}

\begin{proof}
We've seen in \cref{prop:frame-refl} that the $\otimes$\=/frame quotient $Q\to \sqrt Q$ is generated by forcing the relations $x^2\leq x$ to become equalities, for every $x$ in $Q$.
Since $\phi:Q\to P$ is a $\otimes$\=/frame morphism, it clearly sends the generating relations for $Q\to \sqrt Q$ into the generating relations for $P\to \sqrt P$.
Thus $\phi$ induces a morphism $\sqrt Q\to \sqrt P$.
Under the identification of $\sqrt Q$ with radical elements of $Q$, this morphism is given by 
$\sqrt[P]\phi:\sqrt Q\subseteq Q\xto\phi P\xto{\sqrt[P]-}\sqrt P$.
The commutation of the solide square follows.
Another consequence of $\phi$ preserving the generating relations of the quotients, is that $\phi_*$ send local objects to local objects, that is, restricts to a morphism $\sqrt P\to \sqrt Q$.
\end{proof}


\providecommand{\bysame}{\leavevmode\hbox to3em{\hrulefill}\thinspace}

\end{document}